\def\eps{\bvarepsilon}
\def \bpar#1{\left\{\begin{array}{#1} }
\def \epar { \end{array}\right.}
\newcommand{\EE}{\mathop{\hbox{\rm I\kern-0.17em E}}\nolimits}
\newcommand{\PP}{\mathop{\hbox{\rm I\kern-0.17em P}}\nolimits}
\def\eps{\varepsilon}
\newcounter{c}
\def \bir{\begin{itemize}\compact \setcounter{c}{0}}
\def \eir{\end{itemize}\vspace{-2em}~}
\newcommand{\Co}{\mathcal{C}}
\newcommand{\E}{{\mathbb E}}
\newcommand{\N}{{\mathbb N}}
\newcommand{\Z}{{\mathbb Z}}
\renewcommand{\P}{{\mathbb P}}
\newcommand{\R}{{\mathbb R}}
\newcommand{\ind}{{\bf 1}}
\newcommand{\sgn}{{\rm sgn}}
\newcommand{\Card}{{\rm Card}}
\def\Var{\mbox{Var}}
\def\Gr{\mbox{Gr}}
\newcommand{\adh}{{\rm adh}}
\def \da{\downarrow}
\def \ua{\uparrow}
\def \CBW{{\sf CBW}}
\def \RDWtoc{{\sf RLW}^{(2n), \to\bullet}}
\def \RDWfromc{{\sf RLW}^{(2n),\bullet \to}}
\def \RBWfromc{{\sf RBW}^{\bullet \to }} 
\def \RBWtoc{{\sf RBW}^{\to \bullet }}  
\def \W{{\bf W}}
\def \Wu{{\bf W}^{\ua}}
\def \Wd{{\bf W}^{\da}}
\def \Wnu{\W^{2n,\ua}}
\def \Wnd{\W^{2n,\da}}
\def \Wnur{{\bf W}^{(2n),\ua}} 
\def \Wndr{{\bf W}^{(2n),\da}} 
\def \Cun{{\bf C}^{2n,\ua}}
\def \Cdn{{\bf C}^{2n,\da}}
\def \Cn{{\bf C}^{2n}}
\def \Cu{{\bf C}^{\ua}}
\def \Cd{{\bf C}^{\da}}
\def \C{{\bf C}}
\def \Cyl{{\sf Cyl}}
\def \Cylu{\Cyl^{\ua}}
\def \Cyldn{\Cyl^{\da}_{2n}}
\def \Cylun{\Cyl^{\ua}_{2n}}
\def \Sliced{\Slice^{\da}}
\def \Sliceu{\Slice^{\ua}}
\def \Circle{{\sf Circle}}
\def \cro#1{\llbracket#1\rrbracket}
\def \eref#1{(\ref{#1})}
\def \1{{\bf 1}}
\def \l{\left}
\def \r{\right}
\def \ben{\begin{eqnarray}}
\def \een{\end{eqnarray}}
\def \be{\begin{eqnarray*}}
\def \ee{\end{eqnarray*}}
\def \beq{\begin{equation}}
\def \eq{\end{equation}}
\def \ZdnZ{{\mathbb{Z}/2n\mathbb{Z}}}
\def \Nb{{\sf Nb}}
\def \Slice{{\sf Slice}}
\def \Var{{\sf Var}}
\def \dis{\displaystyle}
\def \app#1#2#3#4#5{\begin{array}{rccl}
    #1:&#2&\longrightarrow&#3\\ &#4&\longmapsto&#5
  \end{array}}
\def \rpiz{\mathbb{R}/2\pi\mathbb{Z}}
\def \rpuz{\mathbb{R}/\mathbb{Z}}
\def \eqd{\sur{=}{(d)}}
\def \dd{\xrightarrow[n]{(d)}}
\def\pmi{\overline{\R}}
\newcommand{\compact}{
  \topsep0pt
  \itemsep=0pt
  \partopsep=0pt
  \parsep=0pt
}
\def \sous#1#2{\mathrel{\mathop{\kern 0pt#1}\limits_{#2}}}
\def \sur#1#2{\mathrel{\mathop{\kern 0pt#1}\limits^{#2}}}
\def \Exp{{\sf Exp}}
\def \modu{\,{\sf mod}\,}
\newtheorem{lem}{Lemma}[section]
\newtheorem{pro}[lem]{Proposition}
\newtheorem{theo}[lem]{Theorem}
\newtheorem{cor}[lem]{Corollary}
\newtheorem{exm}[lem]{Example}
\newtheorem{rem}[lem]{Remark}
\title{\bf Directed, cylindric and radial Brownian webs}
\author{David Coupier\thanks{Univ. Lille, CNRS, UMR 8524 - Laboratoire P. Painlev\'e, D\'epartement GIS - Polytech'Lille, F-59000 Lille, France; E-mail~: \texttt{david.coupier@math.univ-lille1.fr}}\, \thanks{Univ. de Valenciennes, CNRS, EA 4015 - LAMAV, F-59313 Valenciennes Cedex 9, France},\qquad  Jean-Fran\c{c}ois Marckert\thanks{CNRS UMR 5800 - LaBRI, Univ. Bordeaux, 351 cours de la Libération, 33405 Talence Cedex-France; E-mail~: \texttt{marckert at labri.fr}}, \qquad Viet Chi Tran\thanks{Univ. Lille, CNRS, UMR 8524 - Laboratoire P. Painlev\'e, UFR de Math\'ematiques, F-59000 Lille, France; E-mail~: \texttt{chi.tran@math.univ-lille1.fr}}}
\begin{document}

\maketitle

\vspace{-0.5cm}

\begin{abstract}The Brownian web (BW) is a collection of coalescing Brownian paths $(W_{(x,t)},(x,t) \in `R^2)$ indexed by the plane. It appears in particular
as continuous limit of various discrete models of directed forests of coalescing random walks and navigation schemes. Radial counterparts have been considered but global invariance principles are hard to establish. In this paper, we consider cylindrical forests which in some sense interpolate between the directed and radial forests: we keep the topology of the plane while still taking into account the angular component. We define in this way the cylindric Brownian web (CBW), which is locally similar to the planar BW but has several important macroscopic differences. For example, in the CBW, the coalescence time between two paths admits exponential moments and the CBW as its dual contain each a.s. a unique bi-infinite path. This pair of bi-infinite paths is distributed as a pair of reflected Brownian motions on the cylinder. Projecting the CBW on the radial plane, we obtain a radial Brownian web (RBW), i.e. a family of coalescing paths where under a natural parametrization, the angular coordinate of a trajectory is a Brownian motion. Recasting some of the discrete radial forests of the literature on the cylinder, we propose rescalings of these forests that converge to the CBW, and deduce the global convergence of the corresponding rescaled radial forests to the RBW. In particular, a modification of the radial model proposed in Coletti and Valencia is shown to converge to the CBW.
\\
{\textbf Keywords~:}  Brownian web, navigation algorithm, random spanning forests, weak convergence of stochastic processes.   \\
{\textbf AMS classification~:} Primary  60J05, 60G52, 60J65, 60D05  Secondary 60G57; 60E99.
\end{abstract}
\small
\noindent \textbf{Acknowledgements~:} This work has benefitted from the GdR GeoSto 3477. J.F.M. has been partially funded by ANR GRAAL (ANR-14-CE25-0014) and D.C. by ANR PPPP (ANR-16-CE40-0016). D.C. and V.C.T. acknowledge support from Labex CEMPI (ANR-11-LABX-0007-01). \\
\normalsize

\section{Introduction}

The Brownian web, BW in the sequel, is a much fascinating object introduced in \cite{Arratia,tothwerner,fontes2004}. It is formed by a family of coalescing Brownian trajectories $(W_{x,t},(x,t)\in \mathbb{R}^2)$, roughly speaking starting at each point $(x,t)$ of the plane $\mathbb{R}^2$ (we consider only 2D objects in this paper). For $(x,t)\in \R^2$,
\ben\label{eq:bs}
(W_{x,t}(s),s\geq t)\eqd x+ (B^{(x,t)}_{s-t}, s \geq t)
\een
where $B^{(x,t)}$ is a standard Brownian motion (BM) starting at 0 and indexed by $(x,t)$. The trajectories started from two different points of the time-space $\R^2$ are independent Brownian motions until they intersect and coalesce.
The BW appears as the continuous limit of various discrete models of coalescing random walks and navigation schemes (e.g. \cite{berestyckigarbansen,colettidiasfontes,colettivalle,coupiersahasarkartran,ferrarifonteswu,ferrarilandimthorisson,gangopadhyayroysarkar,newmanravishankarsun,roysahasarkar,vallezuaznabar}).

\par
Recently, radial (2D) counterparts of these discrete directed forests have been considered and naturally, attempts have been carried to obtain invariance principles for these objects and define a ``radial Brownian web'' (RBW; \cite{baccellibordenave,baccellicoupiertran, colettivalencia,FVV,johanssonviklundsolaturner,johanssonviklundsolaturner2,norristurner,norristurner1,norristurner2}). Nevertheless, the rescaling needed in the BW case is somehow incompatible with a ``nice Brownian limit'' in the radial case. For directed forests in the plane, time is accelerated by $n^2$ say, while space is renormalized by $n$, for a scaling parameter $n\to+\infty$. In the radial case, the ``space and time'' parameterizations are related by the fact that the circle perimeter is proportional to its radius. This hence prevents a renormalization with different powers of $n$ (2 and 1 for $n^2$ and $1/n$) unless we consider only local limits.

\par The main idea of this paper is the creation of the cylindric Brownian web (CBW) that allows to involve the angular characteristic of the radial problems, while keeping a geometry close to the plane. The usual BW is indexed by $\R\times \R$, where the first component is the space component. The CBW is an object indexed by the cylinder
\begin{equation}
\Cyl=(\rpuz)\times \R\label{def:Cyl}
 \end{equation}
where the first component $\rpuz$ is the circle. Topologically, $\Cyl$ somehow interpolates between the plane $`R\times `R$ and the plane equipped with the polar coordinate system $(\rpuz)\times `R_+$ suitable to encode a RBW, as we will see.

\par Similarly to \eqref{eq:bs}, we can define the CBW $\W^\ua=(\W^\ua_{(x,t)},\ (x,t)\in \Cyl)$ as the family of coalescing trajectories
\ben\label{eq:bs2}
(\W^\ua_{x,t}(s),s\geq t)\eqd \l(x+ B^{(x,t)}_{s-t},s\geq t\r) \modu1
\een
that is, independent Brownian motions taken modulo $1$ which coalesce upon intersecting. Note that the time will be flowing upwards in the graphical representations of this paper, and hence the notation with the upward arrow. Later, dual objects will be defined with their inner time running downward. Also, to distinguish between planar and cylindrical objects, cylindrical objects will be denoted with bold letters.

In Section \ref{sec:topo}, we recall the topological framework in which the (planar) BW as introduced by Fontes et al. \cite{fontes2004} is defined. Many convergence results on the plane leading to the BW can be turned into convergence results on the cylinder with the CBW as limit since the map
\[
\app{{\sf proj}}{`R}{`R\ /\ \mathbb{Z}}{x}{x\modu 1}
\]
is quite easy to handle and to understand. We recall some criteria established in the literature that allow to obtain the BW as limit of discrete directed forests. Then, we extend these results to the cylinder for the CBW. We show that the CBW can arise as the limit of a cylindrical lattice web that is the analogous of the coalescing random walks introduced by Arratia \cite{Arratia}. We end the section by showing different ways to project the CBW on the radial plane to obtain radial `Brownian' webs. \par
In Section \ref{sect:infinitebranches}, the properties of the CBW are investigated. We show in particular that there is almost surely (a.s.) a unique bi-infinite branch in the CBW as well as in its dual, which is a main difference with the planar BW. Starting with a discrete lattice and taking the limit, we can characterize the joint distribution of these two infinite branches as the one of a pair of reflected Brownian motions modulo 1, in the spirit of Soucaliuc et al. \cite{STW}. We also prove that the coalescence time between two (or more) branches admits exponential moment, when its expectation in the plane is infinite. All these behaviors are closely related to the topology of the cylinder.
\par In Sections \ref{sect:ConvCBW} {and \ref{sec:RBW}}, we play with the convergences to the BW in the directed plane, to the CBW in the cylinder and to the RBW in the ``radial'' plane. In the plane, several examples of directed forests in addition to the coalescing random walks of Arratia are known to converge to the Brownian webs, for example \cite{ferrarifonteswu,roysahasarkar}. Other radial trees such as the one introduced by Coletti and Valencia \cite{colettivalencia} are known to converge locally to Brownian webs. We consider the corresponding cylindrical forests and show that they converge to the CBW with a proper rescaling. For example, in Section \ref{sec:RBW}, we propose a radial forest similar to the radial forest of \cite{colettivalencia}, built on a sequence of circles on which a Poisson processes are thrown. When carried to the cylinder, this amounts to throwing Poisson processes with different rates on circles of various heights. We show how the rates and heights can be chosen to have the convergence of the cylindrical forest to the CBW, which is carefully established by adapting well-known criteria (e.g. \cite{fontes2004,SSS}) to the cylinder. The convergence for the latter model has its own interest: as the intensity of points increases with the height in the cylinder, the convergence is obtained for the shifted forests. It is classical in these proofs that the key ingredient for checking the convergence criteria amounts in proving estimates for the tail distribution of the coalescence time between two paths. In our case, this is achieved by using the links between planar and cylindrical models, and thanks to the Skorokhod embedding theorem which connects our problem to available estimates for Brownian motions. However we have to use clever stochastic dominations as well to obtain these estimates. Projecting the cylinder on the (radial) plane then provides a radial discrete forests which converges after normalisation to the radial Brownian web. This convergence is a global convergence, whereas only local convergences are considered in \cite{colettivalencia}.

\section{Cylindric and Radial Brownian Web}
\label{sec:topo}

In this Section we introduce the cylindric Brownian web, several natural models of radial Brownian webs together with some related elements of topology, in particular, some convergence criteria. But we start with the definition of the standard BW given in \cite{fontes2004}.

\subsection{The standard Brownian Web}
\label{section:BWTopo}

Following Fontes \& al. \cite{fontes2004} (see also Sun \cite{Sun} and Schertzer et al. \cite{SSS}), we consider the BW as a compact {random } subset of the set of continuous trajectories started from every space-time point of $\pmi^2=[-\infty,\infty]^2$ equipped with the following distance $\rho$
\ben\label{eq:rho}\rho((x_1,t_1),(x_2,t_2))=
\|A(x_1,t_1)-A(x_2,t_2)\|_\infty,\een
where the map $A$ is given by
\ben\label{eq:AAA}
\app{A}{\pmi^2}{[-1,1]^2}{(x,t)}
{(\Phi(x,t),\Psi(t))=\l(\frac{\tanh(x)}{1+|t|},\tanh(t)\r)}.\een

For $t_0\in\overline{\R}$, $C[t_0]$ denotes the set of functions $f$ from $[t_0,+\infty]$ to $\pmi$ such that $\Phi(f(t),t)$ is continuous.
Further, the set of continuous paths started from every space-time points is
\[\Pi=\bigcup_{t_0\in\pmi} C[t_0]\times \{t_0\}.\]
$(f,t_0)\in \Pi$ represents a path starting at $(f(t_0), t_0)$. For $(f,t_0)\in \Pi$, we denote by $\tilde{f}$ the function that coincides with $f$ on $[t_0,+\infty]$  and which is constant equals $f(t_0)$  on $[-\infty,t_0)$. The space $\Pi$ is equipped with the distance $d$ defined by
\[d((f_1,t_1),(f_2,t_2))=\l(\sup_t\l|\Phi(\tilde{f_1}(t),t)-\Phi(\tilde{f_2}(t),t)\r|\r)\vee |\Psi(t_1)-\Psi(t_2)|.\]
The distance depends on the starting points of the two elements of $\Pi$, as well as their global graphs.
Further, the set ${\cal H}$ of compact subsets of $(\Pi,d)$ is equipped with the $d_{\cal H}$ Hausdorff metric (induced by $d$), and ${\cal F}_{\cal H}$, the associated  Borel $\sigma$-field.\\
The BW $W=(W_{x,t},\ (x,t)\in \R^2)$ is a random variable (r.v.) taking its values in $({\cal H},{\cal F}_{\cal H})$. It can be seen as a collection of coalescing Brownian trajectories indexed by $\R^2$. Its distribution is characterized by the following theorem due to Fontes \& al. \cite[Theo. 2.1]{fontes2004}:

\begin{theo}
There exists an $({\cal H},{\cal F}_{\cal H})$-valued r.v. $W$ whose distribution is uniquely determined by the following three properties.
\begin{itemize}
\compact
\item[$(o)$] From any point $(x,t)\in`R^2$, there is a.s. a unique path $W_{x,t}$ from $(x,t)$,
\item[$(i)$] For any $n\geq 1$, any $(x_1,t_1),\dots,(x_n,t_n)$, the $W_{x_i,t_i}$'s are distributed as coalescing standard Brownian motions,
\item[$(ii)$] For any (deterministic) dense countable subset ${\cal D}$ of $`R^2$, a.s., $W$ is the closure in $({\cal H},d_{\cal H})$ of $(W_{x,t},(x,t)\in {\cal D})$.
\end{itemize}
\end{theo}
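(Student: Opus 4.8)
The plan is to reproduce the construction and characterisation argument of Fontes, Isopi, Newman and Ravishankar: exhibit an explicit candidate $W$, built from coalescing Brownian motions started at a fixed countable dense set, show it is almost surely a compact subset of $(\Pi,d)$, verify the three listed properties, and finally deduce uniqueness of the law directly from $(i)$ and $(ii)$. Concretely, I would fix a deterministic dense countable set $\mathcal{D}=\{z_1,z_2,\dots\}\subset\R^2$ and build coalescing Brownian motions $(W_{z_k})_{k\ge1}$ inductively: $W_{z_1}$ is a Brownian motion from $z_1$; given $W_{z_1},\dots,W_{z_{k-1}}$, let $W_{z_k}$ be an independent Brownian motion from $z_k$, run until it first meets $\bigcup_{j<k}W_{z_j}$, after which it follows the path it has met. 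This family is consistent, and every finite subfamily is by construction a family of coalescing Brownian motions. One then sets $W:=\overline{\{W_{z_k}:k\ge1\}}$, the closure taken in $(\Pi,d)$; this is the candidate random variable in $(\mathcal{H},\mathcal{F}_{\mathcal{H}})$.

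The technical heart is to show that $W$ is a.s. compact in $(\Pi,d)$. By the Arzel\`a--Ascoli-type criterion available in this compactified setting, it suffices to check that the family $\{W_{z_k}\}$ is, after composing with $\Phi$, uniformly equicontinuous and does not escape at infinity. Equicontinuity comes from combining the uniform modulus of continuity of Brownian motion with a control on the number of genuinely distinct trajectories: one shows that for $t_1>t_0$ the expected number of distinct values among $\{W_{z_k}(t_1): z_k\in\mathcal{D},\ t_{z_k}\le t_0\}$ inside any bounded interval is finite. This is the classical coalescing--Brownian--motion density estimate, which can be derived from good tail bounds on pairwise coalescence times (the $\overline{\rho}$-type estimates) together with a first-moment and correlation-inequality argument. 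With these ingredients, every subsequence of $\{W_{z_k}\}$ admits a $d$-convergent sub-subsequence whose limit lies in $\Pi$, hence $W$ is compact.

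Next one checks the three properties. Property $(i)$ holds by construction for points of $\mathcal{D}$, and for arbitrary $(x_1,t_1),\dots,(x_n,t_n)$ it follows by approximating each point from $\mathcal{D}$ and using continuity of the coalescing Brownian flow in its starting points. For $(o)$, fix a deterministic $(x,t)$ and pick $z_{k_j}\in\mathcal{D}$ with $z_{k_j}\to(x,t)$ and time-coordinates decreasing to $t$; compactness yields a $d$-limit of a subsequence of $(W_{z_{k_j}})$, which is a path from $(x,t)$ belonging to $W$ (existence), while any two such limit paths must coincide a.s. because the approximating Brownian motions do not cross and coalesce instantaneously, and the fixed point $(x,t)$ a.s. avoids the random set of points of multiplicity greater than one (uniqueness). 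Property $(ii)$ is immediate from the definition of $W$ as a closure, once one observes, again using compactness and $(i)$, that this closure does not depend on the chosen dense sequence. Finally, for uniqueness of the law: if $W'$ satisfies $(o)$--$(ii)$, then by $(ii)$ $W'$ is a.s. the closure of $(W'_{z_k})_{k\ge1}$, and by $(i)$ this family has the same law as $(W_{z_k})_{k\ge1}$; since taking closures is a measurable map, $W'\eqd W$.

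I expect the main obstacle to be the a.s. compactness in the second step, namely the quantitative fact that coalescing Brownian motions started from a dense set leave only locally finitely many distinct trajectories at any positive time lag. This rests on sharp tail bounds for pairwise coalescence times — precisely the kind of estimate that will be revisited, in the cylindrical setting, in the rest of the paper, and which on the cylinder even improves to exponential moments.
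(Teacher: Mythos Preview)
The paper does not give its own proof of this theorem: it is stated as a known result, attributed to Fontes, Isopi, Newman and Ravishankar \cite{fontes2004} (``the following theorem due to Fontes \& al.\ [Theo.\ 2.1]''), and used as background before the cylindric version is introduced. So there is nothing in the paper to compare your proposal against.

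That said, your outline is a faithful sketch of the original FINR argument: build coalescing Brownian motions from a countable dense set, take the closure in $(\Pi,d)$, prove a.s.\ compactness via the density estimate for coalescing Brownian motions, then verify $(o)$--$(ii)$ and deduce uniqueness from $(i)$ and $(ii)$. The identification of compactness (equivalently, the ``locally finitely many distinct trajectories after any positive time lag'' estimate) as the technical core is correct. Nothing to fix here; just be aware that in the present paper this theorem is invoked, not proved.
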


In the literature, the BW arises as the natural limit for sequences of discrete forests constructed in the plane. Let $\chi$ be a family of trajectories in ${\cal H}$. For $t>0$ and $t_0,a,b\in \mathbb{R}$ with $a<b$, let
\begin{equation}
\label{def:eta}
{\eta}_\chi(t_0,t; a,b) := \Card \big\{ f(t_0+t) \ | \ (f,s) \in \chi, f(t_0)\in[a,b] \big\}
\end{equation}
be the number of distinct points in $\mathbb{R}\times\{t_0+t\}$ that are touched by paths in $\chi$ which also touch some points in $[a,b]\times\{t_0\}$. We also consider the number of distinct points in $[a,b]\times\{t_0+t\}$ which are touched by paths of $\chi$ born before $t_0$:
\begin{equation}
\label{eq:widehat}
\widehat{\eta}_\chi(t_0,t; a,b) := \Card \big\{ f(t_0+t) \in [a,b] \ | \ (f,s) \in \chi,\ s\leq t_0 \big\} ~.
\end{equation}
Th. 6.5. in \cite{SSS} gives a criterion for the convergence in distribution of sequences of r.v. of $({\cal H},{\cal F}_{{\cal H}})$ to the BW, which are variations of the criteria initially proposed by \cite{fontes2004}:

\begin{theo}
\label{theo:2.2}
Let $(\chi^n)_{n\geq 1}$ be a sequence of $({\cal H},{\cal F}_{{\cal H}})$-valued r.v. which a.s. consists of non-crossing paths. If $(\chi^n)_{n\geq 1}$ satisfies conditions (I), and either (B2) or (E) below, then $\chi^n$ converges in distribution to the standard BW.
\begin{itemize}
\compact
\item[(I)] For any dense countable subset $\mathcal{D}$ of $\R^2$ and for any deterministic $y_1,\cdots,y_m\in {\cal D}$, there exists paths $\chi^n_{y_1},\dots \chi^n_{y_m}$ of $\chi^n$ which converge in distribution as $n\to +\infty$ to coalescing Brownian motions started at $y_1,\dots,y_m$.
\item[(B2)] For any $t>0$, as $`e\to 0^+$,
$$
`e^{-1} \limsup_{n\to+\infty} \sup_{(a,t_0)\in `R^2} \P\big( \eta_{\chi^n}(t_0,t;a,a+`e) \geq 3 \big) \to 0 ~.
$$
\item[(E)] For any limiting value $\chi$ of the sequence $(\chi^n)_{n\geq 1}$, and for any $t >0$, $t_0\in\R$, $a<b\in\R$,
$$
\E \big( \widehat{\eta}_\chi(t_0,t ; a,b)\big) \leq \E\big(\widehat{\eta}_W(t_0,t ; a,b)\big) ~,
$$
where $W$ denotes the BW.
\end{itemize}
\end{theo}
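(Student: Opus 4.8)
The plan is to follow the by-now-standard route for Brownian-web convergence criteria, keeping track of where the second hypothesis, (B2) or (E), is really used. The first point is that tightness is free: recall that $(\Pi,d)$ is a compact metric space \cite{fontes2004}, hence so is the hyperspace $(\mathcal{H},d_{\mathcal H})$ of its compact subsets, and therefore any sequence of $(\mathcal{H},\mathcal{F}_{\mathcal H})$-valued random variables is automatically tight. It is thus enough to show that every subsequential limit $\chi$ of $(\chi^n)_{n\geq 1}$ has the law of the standard BW, i.e.\ satisfies properties $(o)$, $(i)$, $(ii)$ of the characterization theorem above. Passing to a subsequence and using Skorokhod's representation theorem, I would realize $\chi^n\to\chi$ as an almost sure convergence and then prove the two inclusions $W\subseteq\chi$ and $\chi\subseteq W$, where $W$ is a version of the BW constructed inside $\chi$.

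For $W\subseteq\chi$, fix a deterministic countable dense set $\mathcal{D}\subset\mathbb{R}^2$. For any finite subset $\{y_1,\dots,y_m\}$ of $\mathcal{D}$, enlarging the probability space and extracting a further subsequence gives the joint convergence of $(\chi^n,\chi^n_{y_1},\dots,\chi^n_{y_m})$; by (I) the limit of $(\chi^n_{y_i})_i$ is a family of coalescing Brownian motions started at $y_1,\dots,y_m$, and since ``being a path of the set'' is a closed condition under $d_{\mathcal H}$-convergence, these limiting paths lie in $\chi$. A diagonal extraction along an exhaustion of $\mathcal{D}$ produces a family $(\xi_y)_{y\in\mathcal{D}}$ of coalescing Brownian motions contained in the closed set $\chi$, and by property $(ii)$ its closure $W:=\overline{\{\xi_y:y\in\mathcal{D}\}}$ is a version of the standard BW with $W\subseteq\chi$. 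One also has to check that the non-crossing property of the $\chi^n$ passes to $\chi$, which follows from the form of $d$ since a genuine crossing persists under a small perturbation.

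For the converse inclusion $\chi\subseteq W$ --- the only place the second hypothesis enters --- I would show that no path of $\chi$ can reach, at a given height, a point not already hit by $W$. Under (E) this is almost immediate: since $W\subseteq\chi$ we have $\widehat{\eta}_W(t_0,t;a,b)\leq\widehat{\eta}_\chi(t_0,t;a,b)$ pointwise, hence $\mathbb{E}[\widehat{\eta}_W]\leq\mathbb{E}[\widehat{\eta}_\chi]$, while (E) gives the reverse inequality; therefore $\widehat{\eta}_\chi=\widehat{\eta}_W$ almost surely for all boxes with rational data. Under (B2) instead, I would first prove the upper semicontinuity of $\chi\mapsto\eta_\chi(t_0,t;a,b)$ under $d_{\mathcal H}$ up to an arbitrarily small widening of $[a,b]$ (any path of $\chi$ meeting $[a,b]\times\{t_0\}$ is a limit of paths of $\chi^n$ meeting a slightly larger interval, and distinct endpoints at height $t_0+t$ remain distinct for large $n$), combine it with Fatou's lemma to transfer the bound to $\chi$, obtaining $\varepsilon^{-1}\,\mathbb{P}(\eta_\chi(t_0,t;a,a+\varepsilon)\geq 3)\to 0$, and then integrate over $a$ and use that $\eta$ can only decrease under coalescence to get $\mathbb{E}[\widehat{\eta}_\chi(t_0,t;a,b)]\leq\mathbb{E}[\widehat{\eta}_W(t_0,t;a,b)]$, the right-hand side being the explicitly known value $(b-a)/\sqrt{\pi t}$ (the density of coalescing Brownian motions run for time $t$). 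Either way one reaches $\widehat{\eta}_\chi=\widehat{\eta}_W$, and then a standard density argument --- any path of $\chi$ is, for times bounded away from its birth time, a limit of paths of $\chi$ born arbitrarily earlier --- forces every path of $\chi$ to coincide, after its birth time, with a path of $W$; hence $\chi=W$ almost surely, so in particular $\chi$ has the law of the standard BW.

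The main obstacle is the inclusion $\chi\subseteq W$, and within it two technical points. First, the counting functionals $\eta$ and $\widehat{\eta}$ are only semicontinuous at ``generic'' values of the parameters $a,b,t_0,t$, so the limit passages and the widening arguments must be carried out through almost-everywhere choices of these parameters and monotone approximation. Second, one must make sure the count actually detects every possible extra path of $\chi$, including paths born at exceptional random times or arising purely as Hausdorff limits; this is exactly why the criterion is stated with $\widehat{\eta}$ --- paths born before $t_0$, hence present at every later height --- and why the comparison is made against the explicitly computable backward density of the BW rather than against $W$ itself. The inheritance facts used in the $W\subseteq\chi$ step (limits of non-crossing paths are non-crossing, and the diagonally extracted limiting paths genuinely start at the prescribed space-time points) are true but again rely on the fine structure of the topology $(\Pi,d)$.
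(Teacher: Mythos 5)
The paper does not actually prove Theorem~\ref{theo:2.2}: it is quoted as Theorem~6.5 of \cite{SSS} (itself a variant of criteria in \cite{fontes2004}), and the only commentary the paper gives is the paragraph following the statement, which already contains the key sentence you needed: ``For forests with non-crossing paths, the condition (I) implies the tightness of $(\chi^n)_{n\geq 1}$.''

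Your opening claim --- that tightness is automatic because $(\Pi,d)$ is compact --- is a genuine error, and it is the one piece your sketch is missing. The map $A$ in \eqref{eq:AAA} compactifies the space-time base $\pmi^2$, but $(\Pi,d)$ itself is only complete and separable (Lemma~2.1 of \cite{fontes2004}), not compact: the paths $f_n(t)=\sin(nt)$ with $t_0=0$ admit no $d$-convergent subsequence, since the supremum in $d$ runs over a full neighbourhood of $t=0$ where the weight $1/(1+|t|)$ is bounded below, so the oscillations are not damped. Hence the hyperspace $(\mathcal{H},d_{\mathcal H})$ is not compact either, and tightness of $(\chi^n)$ must be established. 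This is precisely where the non-crossing hypothesis earns its keep: condition (I) controls the modulus of continuity of finitely many marked paths converging to coalescing Brownian motions, and since every other path of $\chi^n$ is sandwiched between two marked ones, it inherits that modulus; this equicontinuity gives tightness (Appendix~B of \cite{fontes2004}, or the tightness proposition preceding Theorem~6.5 in \cite{SSS}). Note also that condition~(E) is phrased ``for any limiting value $\chi$'' --- it \emph{presupposes} subsequential limits rather than producing them, so your argument cannot even begin without the tightness step. The remainder of your sketch --- the inclusion $W\subseteq\chi$ via (I) and property~$(ii)$ of the characterization, and $\chi\subseteq W$ by comparing $\E[\widehat{\eta}]$ under~(E), or controlling $\eta$ under~(B2) --- is the standard route from \cite{fontes2004}, \cite{newmanravishankarsun} and \cite{SSS} and is broadly correct modulo the semicontinuity technicalities you already flag.
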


In this paper we focus on forests with non-crossing paths. But there also exist in the literature convergence results without this assumption: see Th. 6.2. or 6.3. in \cite{SSS}. For forests with non-crossing paths, the condition $(I)$ implies the tightness of $(\chi^n)_{n\geq 1}$. The conditions $(B2)$ or $(E)$ somehow ensure that the limit does not contain `more paths' than the BW. In the literature, proofs of $(B2)$ and $(E)$ are both based on an estimate of the coalescence time of two given paths. However, condition $(B2)$ is sometimes more difficult to check. It is often verified by applying FKG positive correlation inequality \cite{FKG70}, which turns out to be difficult to verify in some models. When the forest exhibits some Markov properties, it could be easier to check $(E)$ as it is explained in \cite{newmanravishankarsun} or \cite{SSS}, Section 6.1. Let us give some details. The condition $(E)$ mainly follows from
\begin{equation}
\label{comingdowninf}
\limsup_{n\rightarrow +\infty} \E \big( \widehat{\eta}_{\chi^n}(t_0,\varepsilon ; a,b) \big) < +\infty ~,
\end{equation}
for any $\varepsilon>0$, $t_0\in\R$ and $a<b\in\R$, which can be understood as a coming-down from infinity property. Statement (\ref{comingdowninf}) shows that for any limiting value $\chi$,
the set of points $\chi(t_0 ; t_0+\varepsilon)$
 of $\R\times \{t_0+\varepsilon\}$ that are hit by the paths of $\chi(t_0)$ --
 paths of $\chi$ born before time $t_0$ --
 constitutes a locally finite set.  Thus, condition (I) combined with the Markov property, implies that the paths of $\chi$ starting from $\chi(t_0 ; t_0+\varepsilon)$ are distributed as coalescing Brownian motions. Hence,
\begin{eqnarray}
\E \big(\widehat{\eta}_{\chi(t_0)}(t_0,t ; a,b)\big) \leq \E \big(\widehat{\eta}_{W}(t_0+\varepsilon,t-\varepsilon ; a,b)\big) & = & \frac{b-a}{\sqrt{\pi (t-\varepsilon)}} \nonumber\\
& \to & \frac{b-a}{\sqrt{\pi t}} = \E \big(\widehat{\eta}_{W}(t_0,t ; a,b)\big)
\label{limitEetaBW}
\end{eqnarray}
as $\varepsilon\to 0$ and (E) follows. For details about the identity (\ref{limitEetaBW}) see \cite{SSS}.

\subsection{The Cylindric Brownian Web}
\label{section:CylBrownWeb}

We propose to define the CBW $\Wu=(\Wu_{x,t}\, ,\ (x,t)\in \Cyl)$ on a functional space similar to ${\cal H}$ so that the characterizations of the distributions and convergences in the cylinder are direct adaptations of their counterparts in the plane (when these counterparts exist! See discussion in Section \ref{sec:FCP}). In particular, this will ensure that the convergences in the cylinder and in the plane can be deduced from each other provided some conditions on the  corresponding discrete forests are satisfied.

\par The closed cylinder is the compact metric space $\overline{\Cyl}=(\rpuz)\times \overline{\R}$,
for the metric
\begin{equation}
\label{eq:rhoO}
\rho_O((x_1,t_1),(x_2,t_2))= d_{\rpuz}(x_1,x_2)\vee |\Psi(t_1)-\Psi(t_2)|
\end{equation}
where $d_{\rpuz}(x,y)= \min\{ |x-y|, 1-|x-y|\}$ is the usual distance in $\rpuz$. In the sequel, we use as often as possible the same notation for the CBW as for the planar BW, with an additional index $O$ (as for example $\rho$ and $\rho_O$).

For $t_0\in\pmi$, the set $C_O[t_0]$ denotes the set of continuous functions $f$ from $[t_0,+\infty]$ to $\rpuz$, and $\Pi_O$  the set $\bigcup_{t_0\in\pmi} C_O[t_0]\times \{t_0\}$, where $(f,t_0)\in \Pi_O$ represents a path starting at $(f(t_0), t_0)$. For $(f,t_0)\in \Pi_O$, we denote by $\tilde{f}$ the function that coincides with $f$ on $[t_0,+\infty]$  and which equals to $f(t_0)$  on $[-\infty,t_0)$. On $\Pi_O$, define a distance $d_O$ by
\[d_O((f_1,t_1),(f_2,t_2))=\l(\sup_t d_{\rpuz}(\tilde{f_1}(t),\tilde{f_2}(t))\r)\vee |\Psi(t_1)-\Psi(t_2)|.\]
Further, ${\cal H}_O$, the set of compact subsets of $(\Pi_O,d_O)$ is equipped with the $d_{{\cal H}_O}$ Hausdorff metric (induced by $d_O$), and ${\cal F}_{{\cal H}_O}$, the associated  Borel $\sigma$-field.
The CBW is a r.v. taking its values in $({\cal H}_O,{\cal F}_{{\cal H}_O})$, and is characterized by the following theorem (similar to the Theo. 2.1. in Fontes \& al. \cite{fontes2004} for planar BW).

\begin{theo}
\label{th:defCBW}
There is an $({\cal H}_O,{\cal F}_{{\cal H}_O})$-valued r.v. $\Wu$ whose distribution is uniquely determined by the following three properties.
\begin{itemize}
\compact
\item[$(o)$] From any point $(x,t)\in\Cyl$, there is a.s. a unique path $\Wu_{x,t}$ from $(x,t)$,
\item[$(i)$] for any $n\geq 1$, any $(x_1,t_1),\dots,(x_n,t_n)$ the joint distribution of  the $\Wu_{x_i,t_i}$'s is that of coalescing standard Brownian motions modulo $1$,
\item[$(ii)$] for any (deterministic) dense countable subset ${\cal D}$ of $\Cyl$, a.s., $\Wu$ is the closure in $({\cal H}_O,d_{{\cal H}_O})$ of $(W^\ua_{x,t},(x,t)\in {\cal D})$.
\end{itemize}
\end{theo}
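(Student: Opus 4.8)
The plan is to mirror the proof of Theorem~2.1 in Fontes et al.~\cite{fontes2004} for the planar BW, transporting it to the cylinder via the projection map ${\sf proj}:x\mapsto x\modu 1$. The idea is that $\Wu$ should be realized as the ${\sf proj}$-image of a planar BW, read as a collection of paths on $\overline{\Cyl}$, and that uniqueness on the cylinder follows from uniqueness on the plane. Concretely: for existence, start from the standard planar BW $W$ on $({\cal H},{\cal F}_{\cal H})$ and define, for each $(x,t)\in\R^2$, the cylindrical path $\Wu_{x\modu 1,t}:=({\sf proj}\circ W_{x,t})$, with starting point $(x\modu 1,t)\in\Cyl$; note this is well defined because $W_{x,t}$ and $W_{x+k,t}$ for $k\in\Z$ have the same ${\sf proj}$-image in distribution (in fact the law of the whole family is $\Z$-shift invariant). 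One must check this yields an element of ${\cal H}_O$, i.e.\ that the (closure of the) image of $W$ under the induced map on paths is compact in $(\Pi_O,d_O)$: this follows from compactness of $W$ in $(\Pi,d)$ together with the fact that ${\sf proj}$ is $1$-Lipschitz from $(\R,|\cdot|)$ to $(\rpuz,d_{\rpuz})$ and continuous $\overline{\R}\to\rpuz$ at $\pm\infty$ in the relevant topology, so the induced map ${\cal H}\to{\cal H}_O$ is continuous.

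Next I would verify the three characterizing properties for this candidate. Property~$(i)$ (finite-dimensional marginals are coalescing Brownian motions modulo~$1$) is immediate: $({\sf proj}\circ B^{(x_1,t_1)}_{\cdot-t_1},\dots)$ is exactly a family of Brownian motions taken mod~$1$ that coalesce when they meet on $\rpuz$, because two planar coalescing Brownian motions project to two paths on $\rpuz$ that coalesce precisely when their projections meet — and on $\rpuz$ "meeting" pulls back, after choosing a lift, to meeting in $\R$ after an integer translation, which again has the law of coalescing BMs. Property~$(o)$ (a.s.\ unique path from each deterministic point of $\Cyl$): lift the point, use uniqueness in the planar BW, and observe ${\sf proj}$ of the unique planar path is the unique cylindrical path — uniqueness is preserved because any cylindrical path from $(x\modu 1,t)$ lifts to a planar path from some $(x+k,t)$ in the support of $W$. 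Property~$(ii)$ (closure of the countable restriction) transfers from the planar statement using continuity of the induced map ${\cal H}\to{\cal H}_O$ and the fact that ${\sf proj}$ sends dense countable subsets of $\R^2$ to dense countable subsets of $\Cyl$, adjusting the countable set if necessary.

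For uniqueness of the law on $({\cal H}_O,{\cal F}_{{\cal H}_O})$, I would argue as in \cite{fontes2004}: properties $(i)$ and $(ii)$ already pin down the law of the closure of the paths from a fixed deterministic countable dense $\mathcal D\subset\Cyl$, since $(ii)$ says $\Wu$ is a deterministic (measurable) function of that countable family, and $(i)$ fixes the joint law of the family. Property $(o)$ is then used, as in the planar case, to check that this closure indeed has the "no extra paths / unique path from every point" structure, making the construction independent of the choice of $\mathcal D$. The main obstacle I anticipate is not any single one of these steps but the bookkeeping around \emph{coalescence on $\rpuz$ versus on $\R$}: one has to be careful that the mod~$1$ reduction does not create spurious coalescences or, conversely, fail to record a coalescence, and that the topology $d_O$ (built from $d_{\rpuz}$ and $\Psi$) interacts correctly with $d$ under ${\sf proj}$ — in particular that a sequence of cylindrical paths converging in $d_O$ can be lifted, at least locally in time and after choosing consistent branches, to a $d$-convergent sequence of planar paths, so that compactness and the closure property really do transfer. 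Once the continuity of the induced map ${\cal H}\to{\cal H}_O$ and the lifting of individual paths are established cleanly, the rest is a routine transcription of the planar proof.
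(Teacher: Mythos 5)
Your strategy of realizing $\Wu$ as the image of the planar BW under ${\sf proj}:x\mapsto x\modu 1$ does not work, and the obstacle is not the "bookkeeping around coalescence" you flag at the end — it is more fundamental.

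First, the object you define is not well defined pathwise. For a fixed realization of the planar BW, the paths $W_{x,t}$ and $W_{x+1,t}$ are two \emph{distinct} paths of $W$ until the (a.s.\ finite but typically positive) time at which they coalesce in $\R$. Their ${\sf proj}$-images are therefore two distinct elements of $\Pi_O$ with the same starting point $(x\modu 1,t)$. Equality "in distribution" of the two images is irrelevant here: to define $\Wu_{x\modu 1,t}$ you need a single path, and there is no canonical choice among the $\Z$-orbit $\{\,{\sf proj}(W_{x+k,t})\,\}_{k\in\Z}$. This already kills property $(o)$: the closure of the projected family contains infinitely many paths emanating from each $(x\modu 1,t)$.

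Second, and independently of the previous point, the projected family does not satisfy $(i)$. Take $W_{0,0}$ and $W_{y,0}$ with $y>1$. The planar difference $W_{y,0}-W_{0,0}$ is, before planar coalescence, a nondegenerate diffusion started at $y$. With positive probability it hits the value $1$ strictly before hitting $0$. At that moment the two projected paths meet on $\rpuz$, yet the planar paths have not coalesced and continue as (correlated-but-distinct) Brownian motions; hence the projections separate again. So ${\sf proj}(W_{0,0})$ and ${\sf proj}(W_{y,0})$ are paths on the cylinder that meet and then cross, which is incompatible with the defining law of "coalescing Brownian motions modulo $1$" in $(i)$ and also violates the non-crossing structure used everywhere in the paper. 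Your argument that "on $\rpuz$ meeting pulls back, after choosing a lift, to meeting in $\R$ after an integer translation" is exactly where this breaks: the lifts of the two projected paths that meet at a given time are $W_{0,0}$ and $W_{y,0}-1$, and $W_{y,0}-1$ is \emph{not} a path of the planar BW, so you cannot invoke the planar coalescence rule for this pair.

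What the paper actually does (implicitly — no proof is written out, the theorem is stated as "similar to Theo.\,2.1 in Fontes et al.") is to redo the Fontes–Isopi–Newman–Ravishankar construction \emph{directly} on the cylindrical path space $(\Pi_O,d_O)$: build the finite-dimensional coalescing systems as Brownian motions on $\rpuz$ with instantaneous coalescence at the first meeting time on $\rpuz$, extend over a countable dense $\mathcal D\subset\Cyl$, take the closure in ${\cal H}_O$, and prove tightness/uniqueness exactly as in the planar case using the analogous counting estimates ($\eta^O$, $\widehat\eta^O$). The cylinder is \emph{locally} isometric to $\R$, so all the local estimates transfer verbatim; but globally there is no web-level map from the planar BW to the CBW, and the construction must be native to the cylinder. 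If you want a "projection" picture, the correct one is the other way around: the cylindrical lattice web (Section 2.3) or the coupling in the proof of \eqref{eta^O<eta} are built by superimposing planar-type environments and then \emph{modifying the coalescence rule} so that paths also merge upon meeting an integer translate — that extra merging step is precisely what your proposal is missing.
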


As in the planar case, the CBW $\Wu$ admits a dual counterpart, denoted by $\Wd$ and called the \textit{dual Cylindric Brownian Web}. For details (in the planar case) the reader may refer to Section 2.3 in \cite{SSS}. For any $t_0\in\pmi$, identifying each continuous functions $f\in C_O[t_0]$ with its graph as a subset of $\overline{\Cyl}$, $\widehat{f}:=-f=\{(-x,-t) : (x,t)\in f\}$ defines a continuous path running backward in time and starting at time $-t_0$. Following the notations used in the forward context, let us define the set $\widehat{\Pi}_O$ of such backward continuous paths (with all possible starting time), equipped with the metric $\widehat{d}_O$ (the same as $d_O$ but on $\widehat{\Pi}_O$). Further, $\widehat{{\cal H}}_O$ denotes the set of compact subsets of $(\widehat{\Pi}_O,\widehat{d}_O)$ equipped with the Hausdorff metric induced by $\widehat{d}_O$. Theorem 2.4 of \cite{SSS} admits the following cylindric version.

\begin{theo}
\label{theo:caract_CBW}
There exists an ${\cal H}_O\times\widehat{{\cal H}}_O$ valued r.v. $(\Wu,\Wd)$ called the \textit{double Cylindric Brownian Web}, whose distribution is uniquely determined by the two following properties:
\begin{itemize}
\item[(a)] $\Wu$ and $-\Wd$ are both distributed as the CBW.
\item[(b)] A.s. no path of $\Wu$ crosses any path of $\Wd$.
\end{itemize}
\end{theo}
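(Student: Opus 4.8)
\medskip\noindent\textbf{Proof strategy.} The plan is to follow the proof of Theorem 2.4 of \cite{SSS} in the cylindric setting, isolating the two genuinely new ingredients: the construction on $\overline{\Cyl}$, and the bookkeeping of the first coordinate modulo $1$. Existence of a pair $(\Wu,\Wd)$ with properties (a)--(b) will be obtained as a scaling limit of a discrete primal--dual coalescing system on a cylindric lattice. Uniqueness of the joint law will follow by showing that, given the forward web $\Wu$, the dual $\Wd$ is an a.s.\ deterministic function of it: it is the closure of a countable family of backward paths, each of which is a.s.\ uniquely characterized, via property (b), by $\Wu$. Combined with Theorem \ref{th:defCBW}, which already pins down the law of $\Wu$, this forces the joint law.

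\medskip\noindent\textbf{Existence.} I would run the double cylindric lattice web: on $\cyl n\times\Z$ take coalescing nearest-neighbour random walks (the cylindric analogue of Arratia's system used for Theorem \ref{th:defCBW}), and on the dual lattice the associated backward walks, coupled by the usual ``arrow'' rule so that a.s.\ no primal path crosses a dual path; then rescale the circle to perimeter $1$ and time by the square of the mesh. The forward marginal converges in distribution to $\Wu$ by the cylindric analogue of Theorem \ref{theo:2.2} (with $d_{\rpuz}$ in place of the Euclidean distance), and by time reversal the backward marginal converges to a CBW run backwards, i.e.\ to some $-\Wd$. Tightness of the two marginals gives tightness of the pair in ${\cal H}_O\times\widehat{{\cal H}}_O$, and for any subsequential limit $(\Wu,\Wd)$ property (a) holds by the marginal convergences, while property (b) holds because the non-crossing of a finite pair (one forward path, one backward path) is a closed condition for $d_O\times\widehat{d}_O$: one checks it first for paths through a deterministic dense set and then extends by the closure property (ii), exactly as in \cite{SSS}.

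\medskip\noindent\textbf{Uniqueness.} Fix $(\Wu,\Wd)$ satisfying (a)--(b) and a countable dense deterministic $\mathcal D\subset\Cyl$. By (a) and property (ii) of Theorem \ref{th:defCBW}, a.s.\ $-\Wd$ is the closure in $({\cal H}_O,d_{{\cal H}_O})$ of $(-\Wd_{x,t})_{(x,t)\in\mathcal D}$, so it suffices to prove that for each deterministic $(x,t)$ the backward path $\Wd_{x,t}$ is a.s.\ a measurable deterministic function of $\Wu$. By (b) it is a backward path from $(x,t)$ crossing no forward path of $\Wu$, and the claim is that a.s.\ there is exactly one such path. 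This is the standard ``squeezing/wedge'' argument: over a short time interval below $t$, and on the event (of probability close to $1$) that the relevant paths stay within a fixed arc of the circle so that no winding ambiguity arises and the local configuration is governed by the planar theory, a non-crossing backward path from $(x,t)$ must, at each earlier time, lie in the gap left between the images of the forward paths started at the deterministic points $(x-\varepsilon,t)$ and $(x+\varepsilon,t)$; letting $\varepsilon\downarrow 0$ and using that a deterministic point is a.s.\ of the good ``type'' in the CBW — a single forward path through it and no incoming wedge of forward paths, both inherited from the planar classification of special points via the local similarity of the CBW and the planar BW — the gap shrinks to a single curve, which is then continued past the first winding time by iterating the argument. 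Hence $\Wd_{x,t}$ is a function of $\Wu$, the joint law of $(\Wu,\Wd)$ is a deterministic image of the law of $\Wu$, and uniqueness follows. (Alternatively, by (a)--(b) the finite-dimensional marginals of $(\Wu,\Wd)$ — finitely many forward and finitely many backward coalescing Brownian motions modulo $1$ that do not cross — coincide with the coalescing/reflecting systems of Soucaliuc--T\'oth--Werner \cite{STW} read modulo $1$, whose law is characterized there; with the closure property (ii) for both $\Wu$ and $-\Wd$, this again determines the joint law.)

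\medskip\noindent\textbf{Main obstacle.} The delicate point is the uniqueness step on the cylinder: ruling out more than one non-crossing backward path from a deterministic space--time point requires handling the winding around the circle, since the ``left of / right of'' dichotomy that makes the squeezing transparent in the plane is available only between consecutive winding times; this forces one to transport the planar results on the nature of special points to the CBW. The preservation of non-crossing under the limit and the adaptation of the criterion of Theorem \ref{theo:2.2} to $({\cal H}_O,{\cal F}_{{\cal H}_O})$ are routine, but must also be done carefully with $d_{\rpuz}$ in place of the Euclidean distance.
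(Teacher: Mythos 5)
Your proposal is correct and follows the same route the paper (implicitly) takes: the paper does not spell out a proof, instead presenting the result as the cylindric translation of Theorem 2.4 of \cite{SSS} and only recording the key observation — which you establish via the wedge/squeezing argument — that $\Wd$ is a.s.\ determined by $\Wu$ as the unique backward path system not crossing it. Your existence-via-lattice-limit step is essentially the argument later carried out in the paper's proof of Proposition \ref{pro:main-lattice}, so the proposal amounts to writing out the details the paper delegates to \cite{SSS}.
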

Moreover, the dual CBW $\Wd$ is a.s. determined by $\Wu$ (and vice versa) since for any point $(x,t)\in\Cyl$, $\Wd$ a.s. contains a single (backward) path starting from $(x,t)$ which is the unique path in $\widehat{\Pi}_O$ that does not cross any path in $\Wu$.\par

For all $-\infty\leq t\leq t'<+\infty$, let us denote by ${\cal F}^{\ua}_{t,t'}$ the $\sigma-$algebra generated by the CBW $\Wu$ between time $t$ and $t'$:
\begin{equation}
\label{tribu:t-t'}
\mathcal{F}^\ua_{t,t'} = \sigma \left( \left\{ \left\{ \W^\ua_{(x,s)}(s') , \, t<s'\leq t' \right\} , \, x\in\rpuz , \, t<s\leq t' \right\} \right) ~.
\end{equation}We write $\mathcal{F}^\ua_{t'}$ instead of $\mathcal{F}^\ua_{-\infty,t'}$. The CBW is Markov with respect to the filtration $(\mathcal{F}^\ua_{t})_{t\in`R}$ and satisfies the strong Markov property, meaning that for any stopping time $T$ a.s. finite, the process
$$
\left\{ \left\{ \W^\ua_{(x,T+t)}(T+s) , \, s\geq t \right\} , \, x\in\rpuz , \, t\geq 0 \right\}
$$
is still a CBW restricted to the semi-cylinder $\Cyl^{+}:=(\rpuz)\times `R^{+}$ which is independent of $\cap_{t>T}\mathcal{F}^\ua_{t}$. In the same way, we can also define the $\sigma-$algebra ${\cal F}^{\da}_{t,t'}$, where $t\geq t'$, with respect to the dual CBW $\Wd$.\par

The convergence criteria \cite[Th. 6.5]{SSS} or Theorem \ref{theo:2.2} above has hence a natural counterpart on the cylinder. For $a,b \in \rpuz$  denote by $[a\to b]$ the interval from $a$ to $b$ when turning around the circle counterclockwise, and by  $|a\to b|$ its Lebesgue measure (formally: for $a<b$, $[a\to b] = [a,b]$ and if $a>b$, $[a\to b]=[a,1]\cup[0,b]$). For $X$ a r.v. in ${\cal H}_O$, denote by
$$
\eta_X^O(t_0,t;[a\to b]) := \Card \big\{ f(t_0+t)\ | \ (f,s)\in X, f(t_0) \in [a\to b] \}
$$
be the number of distinct points in $\rpuz\times\{t_0+t\}$ that are touched by paths in $X$ which also touch some points in $[a\to b]\times\{t_0\}$. We also set
$$
\widehat{\eta}_X^O(t_0,t ; [a\to b]) := \Card \big\{ f(t_0+t) \in [a\to b] \ |\ (f,s)\in X,\ s\leq t_0 \big\} ~.
$$
Here is the counterpart of Theorem \ref{theo:2.2}  in the cylinder:
{\begin{theo}
\label{theo:conv_cyl}
Let $(\chi^n)_{n\geq 1}$ be a sequence of $({\cal H}_O,{\cal F}_{{\cal H}_O})$-valued r.v. which a.s. consist of non-crossing paths. If $(\chi^n)_{n\geq 1}$ satisfies conditions (IO), and either (B2O) or (EO), then $\chi^n$ converges in distribution to the CBW $\Wu$.
\begin{itemize}
\compact
\item[(IO)] For any dense countable subset $\mathcal{D}$ any deterministic $y_1,\cdots,y_m\in {\cal D}$, there exists for every $n\geq 1$, paths $\chi^n_{y_1}\dots \chi^n_{y_m}$ in $\chi^n$ such that $\chi^n_{y_1}\dots \chi^n_{y_m}$ converge in distribution as $n\to +\infty$ to coalescing Brownian motions modulo $1$ started at $y_1,\dots,y_m$.
\item[(B2O)] For any $t>0$, as $`e\to 0^+$,
$$
`e^{-1} \limsup_{n\to+\infty} \sup_{(a,t_0)\in \Cyl} \P \big( \eta^O_{\chi^n}(t_0,t;[a\to a+`e \modu1]) \geq 3 \big) \to 0 ~.
$$
\item[(EO)] For any limiting value $\chi$ of the sequence $(\chi^n)_{n\geq 1}$, and for any $t >0$, $t_0\in\R$ and $a,b\in\rpuz$,
$$
\E \big( \widehat{\eta}^{O}_\chi(t_0,t ; [a\to b]) \big) \leq \E \big( \widehat{\eta}^{O}_{\Wu}(t_0,t ; [a \to b]) \big) ~.
$$
\end{itemize}
\end{theo}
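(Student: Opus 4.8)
The plan is to transport to the cylinder the proof scheme of Theorem~\ref{theo:2.2} (that is, of Th.~6.5 in \cite{SSS}, which refines \cite{fontes2004}). A direct transfer through the projection map is \emph{not} available, since the CBW is not the ${\sf proj}$-image of the planar BW: projecting the planar BW makes paths coalesce when their lifts meet in $\R$, not when they meet modulo $1$. Instead we rely on two structural features. First, the triple $({\cal H}_O,{\cal F}_{{\cal H}_O},d_O)$ has been designed to mirror $({\cal H},{\cal F}_{{\cal H}},d)$, so the abstract part of the argument (tightness yields subsequential limits; a limit is identified with a CBW via Theorem~\ref{th:defCBW}) is formally the same. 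Second, $\Cyl=(\rpuz)\times\R$ is \emph{locally isometric} to the plane through ${\sf proj}$: any arc $J\subset\rpuz$ of length $<1$ is isometric to a bounded real interval, so any statement about paths whose spatial coordinate stays in $J$ is a statement about planar paths; and $\rpuz$ is covered by finitely many such arcs. Hence all the \emph{local} ingredients of the planar proof carry over, and the only global feature of the circle, its compactness, only helps (it makes the spatial suprema automatically uniform). Concretely, three points must be checked: (1) $(\chi^n)_n$ is tight in $({\cal H}_O,{\cal F}_{{\cal H}_O})$; (2) every subsequential weak limit $\chi$ contains, from a fixed countable dense ${\cal D}\subset\Cyl$, a family distributed as coalescing Brownian motions modulo $1$; (3) $\chi$ contains nothing more, hence $\chi\eqd\Wu$ by Theorem~\ref{th:defCBW}.

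\textbf{Tightness (step 1).} I would invoke the tightness criterion of \cite{fontes2004} (Appendix~B), in the form used in \cite{SSS}: it is enough to bound, uniformly in $n$ and over small space--time rectangles, the probability that a path of $\chi^n$ entering the rectangle has large oscillation; for non-crossing forests this oscillation is squeezed between finitely many of the paths named in (IO), and their joint convergence towards coalescing Brownian motions modulo $1$ supplies the bound. Covering $\rpuz$ by finitely many arcs of length $<1$ and running the planar estimate inside each arc gives the tightness of $(\chi^n)_n$. This is exactly the implication ``(I) plus non-crossing $\Rightarrow$ tightness'' in the plane.

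\textbf{Identifying the limit (steps 2--3).} Fix ${\cal D}\subset\Cyl$ countable dense. By (IO) and a diagonal extraction, along a subsequence realizing a weak limit $\chi$ one may, via the Skorokhod representation, assume the whole family indexed by ${\cal D}$ converges a.s. (for $d_O$) to coalescing Brownian motions modulo $1$ on ${\cal D}$; since the non-crossing property is closed under $d_O$-limits, $\chi$ contains the closure $\overline{\chi_{\cal D}}$ of such a family, and $\overline{\chi_{\cal D}}\eqd\Wu$ by Theorem~\ref{th:defCBW}. It remains to exclude extra paths. Under \textbf{(B2O)}: an extra path of $\chi$ would, together with an already present path of $\chi_{\cal D}$, force $\eta^O_{\chi^n}(t_0,t;[a\to a+\eps\modu1])\geq3$ for suitable rational data with probability bounded away from $0$ along the subsequence --- using lower semicontinuity of $\eta^O$ under $d_O$, with the same care about open versus closed arcs as in the plane --- which contradicts (B2O) after $\eps\to0$. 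Under \textbf{(EO)}: from $\chi\supseteq\overline{\chi_{\cal D}}$ we get $\widehat\eta^O_\chi(t_0,t;[a\to b])\geq\widehat\eta^O_{\overline{\chi_{\cal D}}}(t_0,t;[a\to b])$ pointwise, so, taking expectations and using $\overline{\chi_{\cal D}}\eqd\Wu$ and (EO),
\[
\E\bigl(\widehat\eta^O_{\Wu}(t_0,t;[a\to b])\bigr)\leq \E\bigl(\widehat\eta^O_{\chi}(t_0,t;[a\to b])\bigr)\leq \E\bigl(\widehat\eta^O_{\Wu}(t_0,t;[a\to b])\bigr).
\]
Provided the common value is finite, $\widehat\eta^O_\chi$ and $\widehat\eta^O_{\overline{\chi_{\cal D}}}$ agree a.s. for all rational data, which forces $\chi=\overline{\chi_{\cal D}}$ by the argument of \cite{SSS}, Section~6.1 (an extra path, born at some time, would eventually raise some $\widehat\eta^O$); compare \eqref{comingdowninf}--\eqref{limitEetaBW}. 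Finiteness of $\E(\widehat\eta^O_{\Wu}(t_0,t;[a\to b]))$ is the cylindric analogue of $\E(\widehat\eta_W(t_0,t;a,b))=(b-a)/\sqrt{\pi t}$: it follows either from a short computation with coalescing Brownian motions modulo $1$, or, more crudely, by covering $[a\to b]$ by arcs of length $<1$ and bounding each contribution by the planar value.

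\textbf{The main obstacle.} None of this is deep; the task is bookkeeping. The point requiring the most care is that ``restriction of a forest to a spatial arc'' must be shown compatible with weak convergence and with passing to subsequential limits, so that a limiting value on the cylinder, read inside an arc, is a limiting value of the corresponding (locally planar) restricted forests --- this is what legitimates the transfer of the local lemmas (the tightness bound, the semicontinuity of $\eta^O$, and the first-moment comparison). A second, milder, point in the (EO) route is to secure finiteness of $\E(\widehat\eta^O_{\Wu}(t_0,t;[a\to b]))$; here one may alternatively invoke the coming-down-from-infinity of the CBW, which itself reduces to the planar statement on a time-slab narrow enough not to wrap around the cylinder, exactly as \eqref{comingdowninf} is used in the plane.
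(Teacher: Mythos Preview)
The paper does not give a separate proof of Theorem~\ref{theo:conv_cyl}: it is presented as the ``natural counterpart on the cylinder'' of Theorem~\ref{theo:2.2} (Th.~6.5 in \cite{SSS}), the adaptation being declared ``direct'' once the functional space $({\cal H}_O,d_{{\cal H}_O})$ has been set up to mirror $({\cal H},d_{\cal H})$. Your proposal is a correct and careful fleshing-out of precisely this adaptation --- tightness from (IO) plus non-crossing, inclusion of a CBW copy via Theorem~\ref{th:defCBW}, and exclusion of extra paths via (B2O) or (EO) --- so it is fully in line with what the paper intends; you simply supply the bookkeeping that the paper leaves implicit. Your remark that finiteness of $\E(\widehat\eta^O_{\Wu})$ must be secured in the (EO) route is apt, and the paper itself addresses exactly this point later (see \eqref{ExpWidehatEta0Finite} and the surrounding discussion in Section~\ref{sect:ProofEO}).
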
}

{This section ends with a summary of the relationships between $\eta_{W}$, $\widehat{\eta}_{W}$, $\eta^O_{\Wu}$ and $\widehat{\eta}^O_{\Wu}$ where $W$ denotes the planar BW. First, in the plane, as noticed in \cite{fontes2004} Section 2, $\eta_{W}(t_0,t ; a,b)$ and $\widehat{\eta}_{W}(t_0,t ; a,b)+1$ are identically distributed. This can be shown using duality arguments. In the cylinder the situation is a little bit different: it is not difficult to show that, for $t,t_0>0$ and $a,b\in\rpuz$,
$$
\eta^O_{\Wu}(t_0,t ; [a \to b]) \eqd \widehat{\eta}^O_{\Wu}(t_0,t ; [a \to b]) + \ind_{\mbox{NoBackCoal}} ~,
$$
where the event $\mbox{NoBackCoal}$ means that the cylindric BMs starting from $(a,t_0)$ and $(b,t_0)$ are allowed to coalesce before time $t_0+t$ but not from the side $[b \to a]$ ({more precisely, $|\Wu_{(a,t_0)}(s)\to \Wu_{(b,t_0)}(s)|$ stays in $[0,1)$ for $s\in[t_0,t_0+t]$}).

{Moreover, for any $t,t_0>0$ and $a,b\in\rpuz$ with $|a \to b|<1$, we will prove at the end of the current section that
\begin{equation}
\label{eta^O<eta}
\eta^O_{\Wu}(t_0,t ; [a \to b]) \leq_{S} \eta_{W}(t_0,t ; a,b) ~,
\end{equation}
where $\leq_{S}$ stands for the stochastic domination. Statement (\ref{eta^O<eta}) traduces the following natural principle: trajectories merge easier in the cylinder than in the plane. However there is no stochastic comparison between $\widehat{\eta}^O_{\Wu}$ and $\widehat{\eta}_{W}$. Indeed, the expectation of $\widehat{\eta}_{W}(t_0,t ; a,b)$ tends to $0$ as $t\to\infty$ thanks to identity (\ref{limitEetaBW}) whereas this does not hold in the cylinder. Theorem \ref{theo:BiInfB} (below) states the a.s. existence in $\Wu$ of a bi-infinite path. So, for any $t,t_0$, $\widehat{\eta}^O_{\Wu}(t_0,t ; [0 \to 1])$ is larger than $1$ and, by rotational invariance,
$$
\E \big(\widehat{\eta}^O_{\Wu}(t_0,t ; [a \to b])\big) = |a\to b| \; \E \big(\widehat{\eta}^O_{\Wu}(t_0,t ; [0 \to 1])\big) \geq |a\to b| ~.
$$}
It then remains to prove \eref{eta^O<eta}. Let us focus on the planar BW $W$ restricted to the strip $\R\times[t_0,t_0+t]$. First, by continuity of trajectories, with probability at least $1-\varepsilon$, there exists $\delta>0$ such that $\sup_{0\leq d\leq \delta}|W_{a,t_0}(t_0+d)- W_{b,t_0}(t_0+d)|<1$ (since $|a-b|<1$) where $W_{x,t}$ denotes the BM starting at $(x,t)$. The coming-down from infinity property satisfied by the BW ensures that the number of remaining BMs at level $\R\times\{t_0+\delta\}$ and starting from $[a,b]\times\{t_0\}$ is a.s. finite. Let $\kappa$ be this (random) number. When defining a realization of the BW, we need to decide, in case of coalescence of two trajectories, which one survives. In order to compute $\eta_{W}(t_0,t ; a,b)$ we label these remaining BMs by $1,\ldots,\kappa$ from left to right and when two of them merge, the BM having the lower label is conserved while the other one is stopped. This stopping rule allows us to determine the set of labels of remaining BMs at level $\R\times\{t_0+t\}$, say $\mathcal{L}$, whose cardinality is $\eta_{W}(t_0,t ; a,b)$. Now, let us complete the previous stopping rule as follows: if the BM with label $2\leq j\leq\kappa$ meets the path $1+W_{a,t_0}$ between times $t_0+\delta$ and $t_0+t$ then it stops. Although $1+W_{a,t_0}$ does not correspond to any trajectory in the planar BW $W$-- and then appears as artificial --, it coincides with $W_{a,t_0}$ in the cylinder and then has label $1$. According to this completed rule, we obtain a new set of labels of remaining BMs at level $\R\times\{t_0+t\}$. It is included in $\mathcal{L}$ and its cardinality has the same distribution than $\eta^O_{\Wu}(t_0,t ; [a \to b])$. In conclusion the previous construction allows us to bound from above $\eta^O_{\Wu}(t_0,t ; [a \to b])$ by $\eta_{W}(t_0,t ; a,b)$ on an event of probability at least $1-\varepsilon$, for any $\varepsilon>0$.

\subsection{The Cylindric Lattice Web}
\label{section:CylLatticeWeb}

As for the BW, the CBW can be constructed as the limit of a sequence of discrete directed forests on the cylinder.
For any integer $n\geq 1$, define the ``cylindric lattice'' as~:
\[\Cylun= \{(x,t),~ x \in \mathbb{Z}/2n\mathbb{Z},~ t \in \mathbb{Z},~ x-t \modu2 = 0 \},\]
and consider $(\xi(w), w \in \Cylun)$ a collection of i.i.d. Rademacher r.v. associated with the vertices of $\Cylun$. The cylindric lattice web (CLW) is the collection of random walks
\[\Wnu=\l(\Wnu_{w}, \ w \in \Cylun\r)\]
indexed by the vertices of $\Cylun$, where for $w=(x,t)$,
\ben
\label{eq:S}
\bpar{lcl}
\Wnu_{(x,t)}(t)&=&x\\
\Wnu_{(x,t)}(s)&=&\Wnu_{(x,t)}(s-1) + \xi(\Wnu_{(x,t)}(s-1),s-1) \modu 2n,~~ s >t.
\epar\een
The sequence of paths $(\Wnu_w,w\in \Cylun)$ is equivalent to that introduced by Arratia \cite{Arratia} in the planar case. The union of the random paths $((\Wnu_{(x,t)}(s),s),s\geq t)$ for $(x,t)\in \Cylun$, coincides with the set of edges $\{(w,w+(\xi(w),1)), w \in \Cylun\}$ (see Figure \ref{fig:Cyln}).
\begin{figure}[!ht]
\begin{center}
\begin{tabular}{cc}
\includegraphics[height=3.2cm]{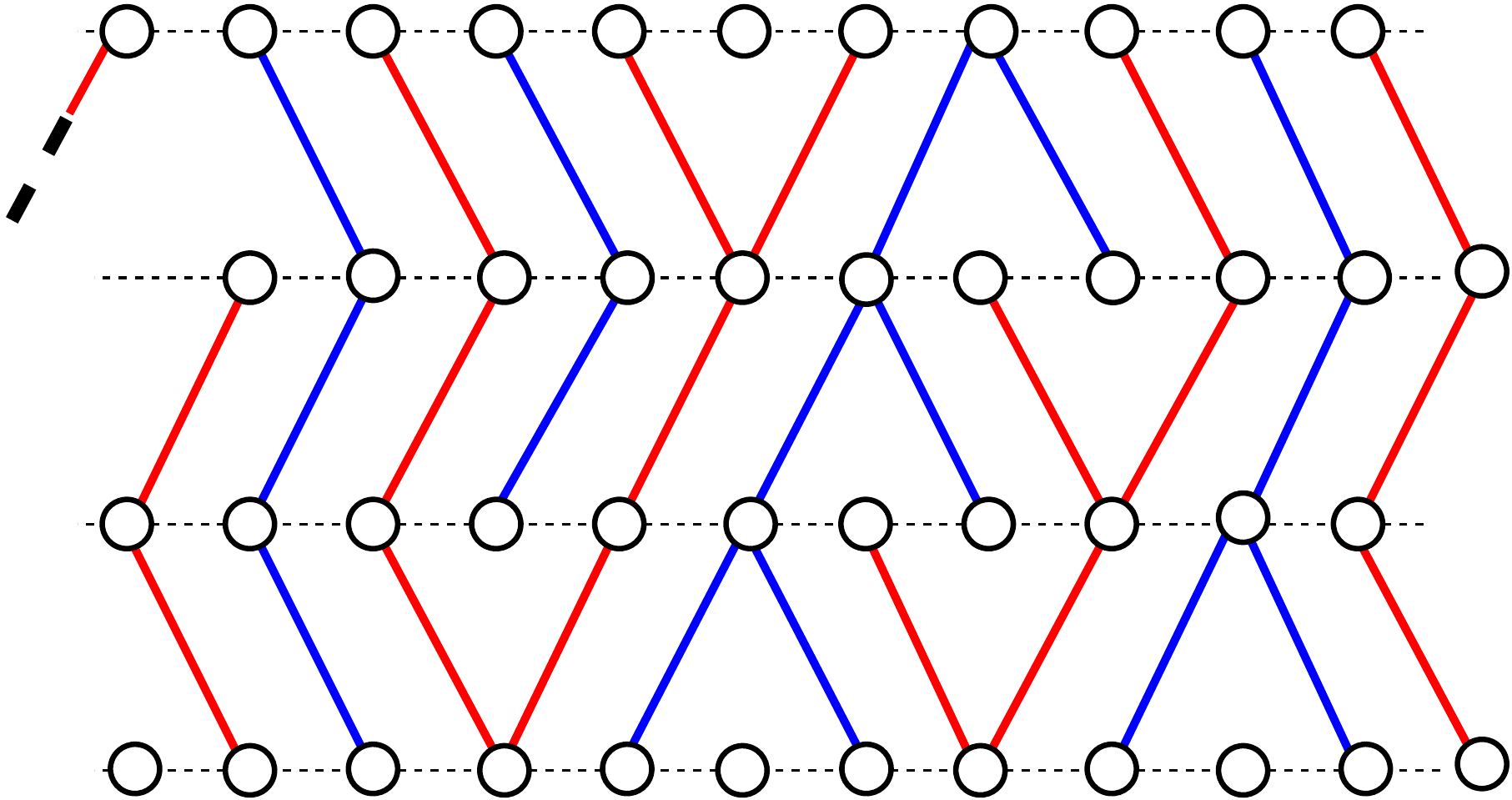} & \includegraphics[height=5.4cm,trim=0cm 4cm 0cm 0cm]{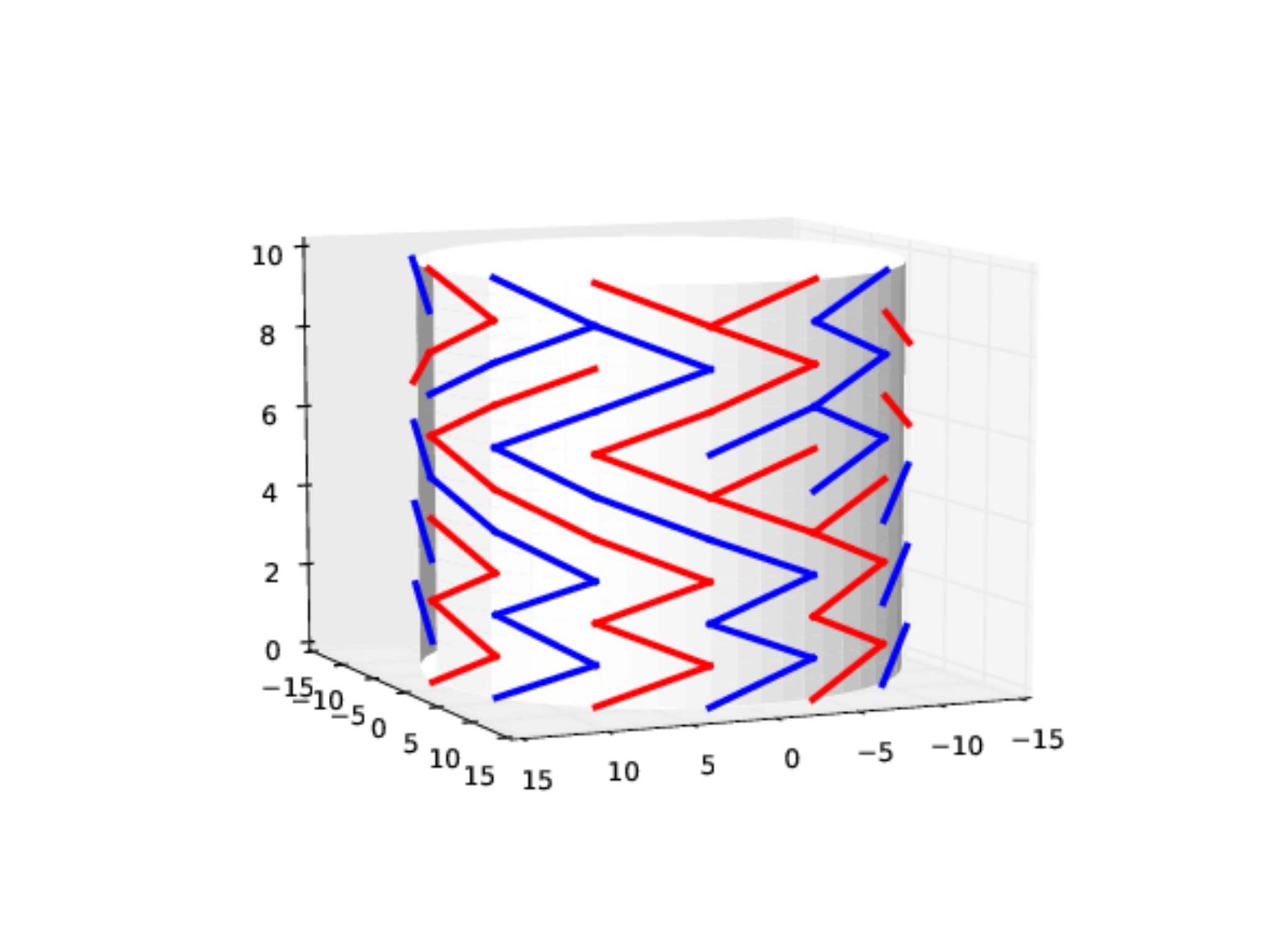}
\end{tabular}
\end{center}
\caption{\label{fig:Cyln} {\small \textit{Standard and cylindric lattice webs: the primal and dual ones are respectively in blue and red.}}}
\end{figure}
The dual $\Wnd$ of $\Wnu$ is a reversed time CLW (and shifted by 1) defined on the ``dual'' $\Cyldn$ of $\Cylun$~:
\[\Cyldn= \{(x,t), x \in \ZdnZ, t \in \mathbb{Z}, x-t \modu 2 = 1 \}.\]
$\Wnd$ is the collection of random walks $\Wnd=\l(\Wnd_{w},\ w \in \Cyldn\r)$ indexed by the vertices of $\Cyldn$ such that for $w=(x,t)\in \Cyldn$, and using the same family $(\xi(w),\ w \in \Cylun)$ as before:
\ben\label{eq:Sp}
\bpar{lcl}
 \Wnd_{(x,t)}(t)&=&x,\\
 \Wnd_{(x,t)}(s)&=&\Wnd_{(x,t)}(s+1)-\xi\big(\Wnd_{(x,t)}(s+1)-(0,1)),s\big),       \textrm{ for } s \leq t.
\epar \een
We define, for any $h\in \mathbb{Z}$, for any direction $D\in \{\ua,\da\}$, the horizontal slice by
\[\Slice_{2n}^D(h)= \Cyl^D_{2n} \cap \l( \ZdnZ\times\{h\}\r), \]
so that the random walks $(\W_w^{2n,D},w \in \Slice_{2n}^D(h))$ start from the points of $\Slice_{2n}^{D}(h)$.\\

The normalized CLW and its dual are defined as follows. For $D\in \{\ua , \da\}$ and for any $(x,t)$ in $\Cyl_{2n}^D$, set
\ben
\W^{(2n),D}_{(\frac{x}{2n},\frac{t}{n^2})}(s):= \frac{1}{2n} \W_{(x,t)}^{2n, D}\l( {4n^2 s }\r) ~~ \textrm{ for }s\geq  \frac{t}{n^2} \mbox{ if }D=\ua,\mbox{ and }s\leq \frac{t}{n^2}\mbox{ if }D=\da.\label{def:CLW}
\een
Since $\W^{(2n),D}_{(x,t)}(4 n^2 s )$ takes its values in $\ZdnZ$, $2n$ is the right space normalization, which implies the time normalization as usual.

\begin{pro}
\label{pro:main-lattice}
The pair of renormalized CLW $(\W^{(2n),\ua},\W^{(2n),\da})$ converges in distribution to the pair of CBW $(\Wu,\Wd)$.
\end{pro}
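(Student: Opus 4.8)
The plan is to verify the cylindric convergence criterion, Theorem~\ref{theo:conv_cyl}, for the sequence $\chi^n = \W^{(2n),\ua}$, and then upgrade the convergence of the primal web to the joint convergence of the pair $(\W^{(2n),\ua},\W^{(2n),\da})$. The starting point is that the renormalized CLW consists a.s. of non-crossing paths (two random walks in $\Wnu$ that meet coalesce, and before meeting they can touch without crossing on the even/odd sublattice), so the hypothesis of Theorem~\ref{theo:conv_cyl} is met, and it suffices to check (IO) and one of (B2O), (EO).

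For condition (IO), I would transfer the corresponding planar fact for Arratia's coalescing random walks through the projection map ${\sf proj}: \R \to \R/\Z$. Fix a finite family of deterministic starting points $y_1,\dots,y_m \in \Cyl$ and lift them to points $\tilde y_1,\dots,\tilde y_m$ in the plane. The key observation is a coupling: run the planar coalescing random walks from the $\tilde y_i$ on the lattice $\Z^2$ (suitably refined), project each trajectory modulo $2n$, and note that the projected trajectories coincide with genuine CLW trajectories up to the first time two of them become equal modulo $2n$ without being equal in $\Z$ --- an event that, on any fixed time horizon $[0,T]$, has probability tending to $0$ as $n\to\infty$ because the planar walks stay within $O(n)$ of their starting points while the circle has circumference $2n$, so wrap-around is rare on bounded time scales. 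Since the planar rescaled coalescing walks converge in distribution to coalescing Brownian motions (Donsker plus the standard coalescing-walk argument, e.g. as in \cite{fontes2004}), the projected ones converge to coalescing Brownian motions modulo $1$, giving (IO). I would actually phrase this as: choose paths $\chi^n_{y_i}$ in $\chi^n$ defined by this projection coupling; on an event of probability $\to 1$ they equal the projections of the planar walks, whose joint law converges weakly, hence so does theirs.

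For the remaining condition I would use (EO) rather than (B2O), as (EO) is cleaner here because the CLW is Markov in time. The first step is a uniform bound on $\E\big(\widehat\eta^O_{\chi^n}(t_0,\varepsilon;[0\to 1])\big)$ (the ``coming-down from infinity'' estimate \eqref{comingdowninf} in its cylindric form): this counts the number of distinct positions at time $t_0+\varepsilon$ occupied by CLW walks born before $t_0$, which by the coalescing structure equals the number of surviving coalescing random walks started from all $2n$ sites of a slice after time $\sim 4n^2\varepsilon$; a classical estimate (the expected number of survivors among $N$ coalescing walks on the circle $\Z/N\Z$ after time $\tau$ is $O(N/\sqrt\tau)$ together with an $O(1)$ correction coming from the torus, cf. the planar bound used in \cite{SSS,newmanravishankarsun}) gives a bound uniform in $n$. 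Combined with (IO) and the strong Markov property --- exactly as in the displayed computation \eqref{limitEetaBW} but now on the cylinder --- this shows that for any limiting value $\chi$, the points of $\chi$ at level $t_0+\varepsilon$ hit by paths born before $t_0$ form a locally finite set whose subsequent evolution is that of coalescing Brownian motions modulo $1$, and letting $\varepsilon\to 0$ yields $\E\big(\widehat\eta^O_\chi(t_0,t;[a\to b])\big)\le \E\big(\widehat\eta^O_{\Wu}(t_0,t;[a\to b])\big)$, which is (EO). Here one may also invoke the stochastic domination \eqref{eta^O<eta} together with its dual analogue to reduce the cylindric tail estimate to the known planar one, which is the slickest route.

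Once $\chi^n=\W^{(2n),\ua}$ is shown to converge in distribution to $\Wu$, the joint convergence of $(\W^{(2n),\ua},\W^{(2n),\da})$ to $(\Wu,\Wd)$ follows the standard template (as in \cite{SSS}, Section~2.3): the family $(\W^{(2n),\da})_n$ is tight because its paths are, up to reflection $f\mapsto -f$, of the same type as the primal ones and the same (IO)-type argument applies to the reversed walks (which, by construction from the same Rademacher field, are again coalescing random walks run backward); hence $(\W^{(2n),\ua},\W^{(2n),\da})$ is tight, and along any weakly convergent subsequence the limit $(\W^{\ua}_\infty,\W^{\da}_\infty)$ has both marginals distributed as the CBW (by the first part and its dual), while the a.s. non-crossing property between primal and dual discrete paths passes to the limit --- because the non-crossing relation is a closed condition in the topology of $({\cal H}_O\times\widehat{\cal H}_O, d_{{\cal H}_O}\times \widehat d_{{\cal H}_O})$. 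By the characterization Theorem~\ref{theo:caract_CBW}, the limit must be the double CBW $(\Wu,\Wd)$, and since the subsequential limit is unique the whole sequence converges.

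I expect the main obstacle to be the coming-down-from-infinity / tail estimate on the cylinder (the cylindric analogue of \eqref{comingdowninf}): on the torus $\Z/2n\Z$ the number of surviving coalescing walks does not decay to $0$ as time grows (it stabilizes at $1$, reflecting the bi-infinite path of Theorem~\ref{theo:BiInfB}), so one cannot simply quote the planar estimate verbatim; one needs the uniform-in-$n$ bound on the \emph{short-time} count $\widehat\eta^O$ for small $\varepsilon>0$, and the cleanest way to get it is precisely via the stochastic domination \eqref{eta^O<eta}, reducing it to the planar coming-down-from-infinity bound, with the extra $O(1)$ torus term handled separately. Verifying (IO) via the projection coupling is routine but requires care in the bookkeeping of which discrete trajectories are ``genuine'' CLW paths versus projected planar paths on the horizon $[0,T]$.
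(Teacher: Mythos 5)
Your overall strategy mirrors the paper's exactly: verify the cylindric convergence criterion (Theorem~\ref{theo:conv_cyl}) for the primal marginal, then upgrade to the pair via tightness, the fact that both marginals are CBW, and the non-crossing characterization Theorem~\ref{theo:caract_CBW}. The one genuine divergence is the choice of intermediate condition: you opt for (EO), whereas the paper checks (B2O) directly by feeding the planar condition (B2) from \cite{fontes2004} through the stochastic domination \eqref{eq:domi-eta} (the same coupling argument that proved \eqref{eta^O<eta}), which is a one-line reduction. Your (EO) route is viable but carries more baggage: you need the cylindric coming-down-from-infinity bound, the Markov property, and the fact that $\E(\widehat\eta^O_{\Wu}(t_0+\varepsilon,t-\varepsilon;[a\to b])) \to \E(\widehat\eta^O_{\Wu}(t_0,t;[a\to b]))$ as $\varepsilon\to 0$ --- a limit that the paper establishes only later (in the Poisson-tree context, Section~\ref{sect:ProofEO}) by dominated convergence, since there is no explicit formula for $\E(\widehat\eta^O_{\Wu})$ as there is in the plane. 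You correctly sense this and self-correct by noting that the stochastic domination \eqref{eta^O<eta} is the cleanest way to get the tail estimate; once you are invoking that domination anyway, you may as well use it to pass (B2) to (B2O) directly, which is precisely what the paper does. Your (IO) discussion is more detailed than the paper's (which simply cites Donsker), and your closing argument that non-crossing passes to the limit because it is a closed relation is the contrapositive form of the paper's argument by contradiction --- the same content, phrased differently.
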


\begin{proof}
Let us first prove the convergence of the marginals. Since $\Wnu$ and $\Wnd$ have the same distribution (up to a reversal of time and a shift by $1$).

To do it, we mainly refer to the proof of the convergence towards the (planar) BW of the sequence of lattice webs $(W^{(2n)})_{n\geq 1}$, obtained from normalizing the random walks on the grid $\Gr=\{(x,t)\in \Z^2,\, x-t \mod 2 =0 \}$ similarly to \eqref{def:CLW}: see \cite[Section 6]{fontes2004} for further details. As for $(I)$ the proof of $(IO)$ is a basic consequence of the Donsker invariance principle and is omitted here. {The same coupling argument used to prove \eref{eta^O<eta} leads to the following stochastic domination: for $n\geq 1$, $t_0\in \R$, $t>0$, $a\in [0,2\pi]$ and $\varepsilon>0$,
\begin{equation}
\label{eq:domi-eta}
\eta^O_{\W^{(2n),\ua}}(t_0,t ; [a\to  a+\varepsilon]) \leq_S \eta_{W^{(2n)}}(t_0,t ; a, a+\varepsilon) ~.
\end{equation}
Hence condition $(B2)$ satisfied by the rescaled (planar) lattice web $W^{(2n)}$ (see Section 6 in \cite{fontes2004}) implies condition $(B2O)$ for $\W^{(2n),\ua}$. Then Theorem \ref{theo:conv_cyl} applies and gives the convergence of $(\W^{(2n),\ua})_{n\geq 1}$ to $\Wu$.}

{The convergence of the marginals implies that the distributions of $\{(\Wnur,\Wndr)\}_{n\geq 1}$ form a tight sequence in the set of measures on $\mathcal{H}_{O}\times\widehat{\mathcal{H}}_{O}$. It then suffices to prove that any limiting value of this sequence, say $(\mathcal{X}^\ua,\mathcal{X}^\da)$, is distributed as the double CBW $(\Wu,\Wd)$. To do it, we check the criteria of Theorem \ref{theo:caract_CBW}. Item $(a)$ has already been proved. To check $(b)$, let us assume by contradiction that with positive probability there exists a path $\pi_{z}\in\mathcal{X}^\ua$ which crosses a path $\hat{\pi}_{\hat{z}}\in\mathcal{X}^\da$.

By definition of $(\mathcal{X}^\ua,\mathcal{X}^\da)$, this would lead to the existence, for $n$ large enough and with positive probability, of a path of $\Wnur$ crossing a path of $\Wndr$.
This is forbidden since the lattice webs have non crossing paths.}
\end{proof}

\subsection{Radial Brownian Webs}

\subsubsection{The standard Radial Brownian Web and its dual}
\label{section:foretsradiales}

Our goal is now to define a family of coalescing paths, indexed by the distances of their starting points to the origin in $\mathbb{R}^2$, that we will call {\it radial Brownian web}. Let us start with some topological considerations.
Our strategy consists in sending the semi-cylinder $\Cyl^{+}:=(\rpuz)\times `R^{+}$ onto the plane equipped with the polar coordinate system $(\rpiz)\times `R^+$ by using the
map
\begin{equation}
\label{def:phi}
\app{\varphi_\star}{\rpuz \times `R^{+}}{(\rpiz) \times `R^{+}}{(x, f_\star(t))}{(2\pi x, t)} ~,
\end{equation}
where $f_\star(t):=t/(4\pi^{2})$. The presence of factor $1/(4\pi^{2})$ will be discussed below. Let
\[\Slice(h)=\{ (x,h), x \in \rpuz\},\]
be the horizontal slice at height $h$ of $\Cyl$. For any $t>0$, $\varphi_\star$ projects $\Slice(f_\star(t))$ on $\Circle(0,t):=\rpiz \times \{t\}$. It also identifies $\Slice(0)$ with the origin.

The map $\varphi_\star$ induces the metric $\rho_{\bullet}$ on the radial plane ${\rpiz \times `R^{+}}$  by
$$
\rho_{\bullet}((x_1,t_1),(x_2,t_2)) := \rho_{O}(\varphi_\star^{-1}(x_1,t_1),\varphi_\star^{-1}(x_2,t_2)),
$$
for any elements $(x_1,t_1),(x_2,t_2)\in (\rpiz)\times `R^+$. Following the beginning of Section \ref{section:CylBrownWeb}, we can construct a measurable space $({\cal H}_{\bullet},{\cal F}_{{\cal H}_{\bullet}})$ equipped with the distance $\rho_{\bullet}$.
Of course, the map $\varphi_\star$ is continuous for the induced topology, so that the image of a (weak) converging sequence by $\varphi_\star$ is a (weak) converging sequence.
We call  \textit{standard in-radial Brownian web}, and denote by $\RBWtoc$, the image under $\varphi_\star$ of the dual CBW $\Wd$ restricted to $\Cyl^{+}$. In particular, $\varphi_\star$ sends the trajectory $\Wd_{x,f_\star(t)}(s)$ for $s$ going from $f_\star(t)$ to 0 on the path $\RBWtoc_{2\pi x,t}(s)$ for $s$ going from $t$ to 0 where
\begin{equation}
\label{def:RBWtoc}
\RBWtoc_{2\pi x,t}(s) := s \exp \l( 2i\pi \Wd_{x,f_\star(t)}(f_\star(s)) \r) ~.
\end{equation}
Notice that the natural time of the trajectory $\RBWtoc_{2\pi x,t}$ is given by
the distance to the origin, since the radius satisfies:
$$
| \RBWtoc_{2\pi x,t}(s) | = s ~.
$$
The families of paths $(\Wd_{x,f_\star(t)}, (x,f_\star(t)) \in \Cyl^+)$ that coalesce on the cylinder when $t$ evolves from $+\infty$ to 0, are then sent on radial paths  $(\RBWtoc_{x,t},(t \exp(i x) \in \mathbb{C}))$ that coalesce when they are approaching the origin 0.  This is the reason why $\RBWtoc$ is said in-radial, and the notation $\to\bullet$ evokes the direction of the paths, ``coalescing towards the origin''.

Moreover, for any $1<s\leq t$, $\varphi_\star$ sends the part of cylinder delimited by times $f_\star(s-1)=(s-1)/(4\pi^{2})$ and $f_\star(s)=s/(4\pi^{2})$ (i.e. with height $1/(4\pi^{2})$) to the ring centered at the origin and delimited by radii $s-1$ and $s$ (i.e. with width $1$). Then, on the unit time interval $[s-1;s]$, the increment of the argument of $\RBWtoc_{2\pi x,t}$, i.e.
$$
2\pi \Wu_{x,f_\star(t)}(f_\star(s-1)) - 2\pi \Wu_{x,f_\star(t)}(f_\star(s)) \modu 2\pi
$$
is distributed according to the standard BM at time 1 taken modulo $2\pi$. This is the reason why $\RBWtoc$ is said to be standard. As a consequence, the trajectory $\RBWtoc_{x,t}$ turns  a.s. a finite number of times around the origin.

{As the standard BW, the CBW and the in-radial Brownian web admit \textit{special points} from which may start more than one trajectory and whose set a.s. has zero Lebesgue measure. See Section 2.5 in \cite{SSS} for details. Except from these special points, the in-radial Brownian web $\RBWtoc$ can be seen as a tree made up of all the paths $\{\RBWtoc_{x,t}(s), 0\leq s \leq t\}$, $(x,t)\in \rpiz\times `R^{+}$, and rooted at the origin. Its vertex set is the whole plane. Th. \ref{theo:BiInfB} in the sequel also ensures that this tree contains only one semi-infinite branch with probability $1$.}

Let us denote by $\RBWfromc$ the image under  $\varphi_\star$ of the CBW $\Wu$ restricted to $\Cyl^{+}$. We call it the  \textit{standard out-radial Brownian web}. The map $\varphi_\star$ sends the trajectory $\{\Wu_{x,f_\star(t)}(s), s \geq f_\star(t)\}$ of the cylindric BM $\Wu_{x,f_\star(t)}$ starting at $(x,f_\star(t))\in\Cyl^{+}$ on the out-radial (continuous) path $\{\RBWfromc_{2\pi x,t}(s), t\geq s\}$ where
\[
\RBWfromc_{2\pi x,t}(s) := s \exp \l( 2i\pi \Wu_{x,f_\star(t)}(f_\star(s)) \r)~.
\]
Unlike the in-radial path $\RBWtoc_{x,t}$, $\RBWfromc_{x,t}$ is a semi-infinite path which moves away from the origin. Finally, the out-radial Brownian web $\RBWfromc$ appears as the dual of the in-radial Brownian web $\RBWtoc$. Indeed, the CBWs $\Wu$ and $\Wd$ are dual in the sense that no trajectory of $\Wu$ crosses a trajectory of $\Wd$ with probability $1$ (see the proof of Prop. \ref{pro:main-lattice}). Clearly, the map $\varphi_\star$ preserves this non-crossing property which then holds for $\RBWtoc$ and $\RBWfromc$.~\\~\\

\begin{figure}[!ht]
\begin{center}
\vspace{-1cm}
\includegraphics[height=7cm,width=6cm]{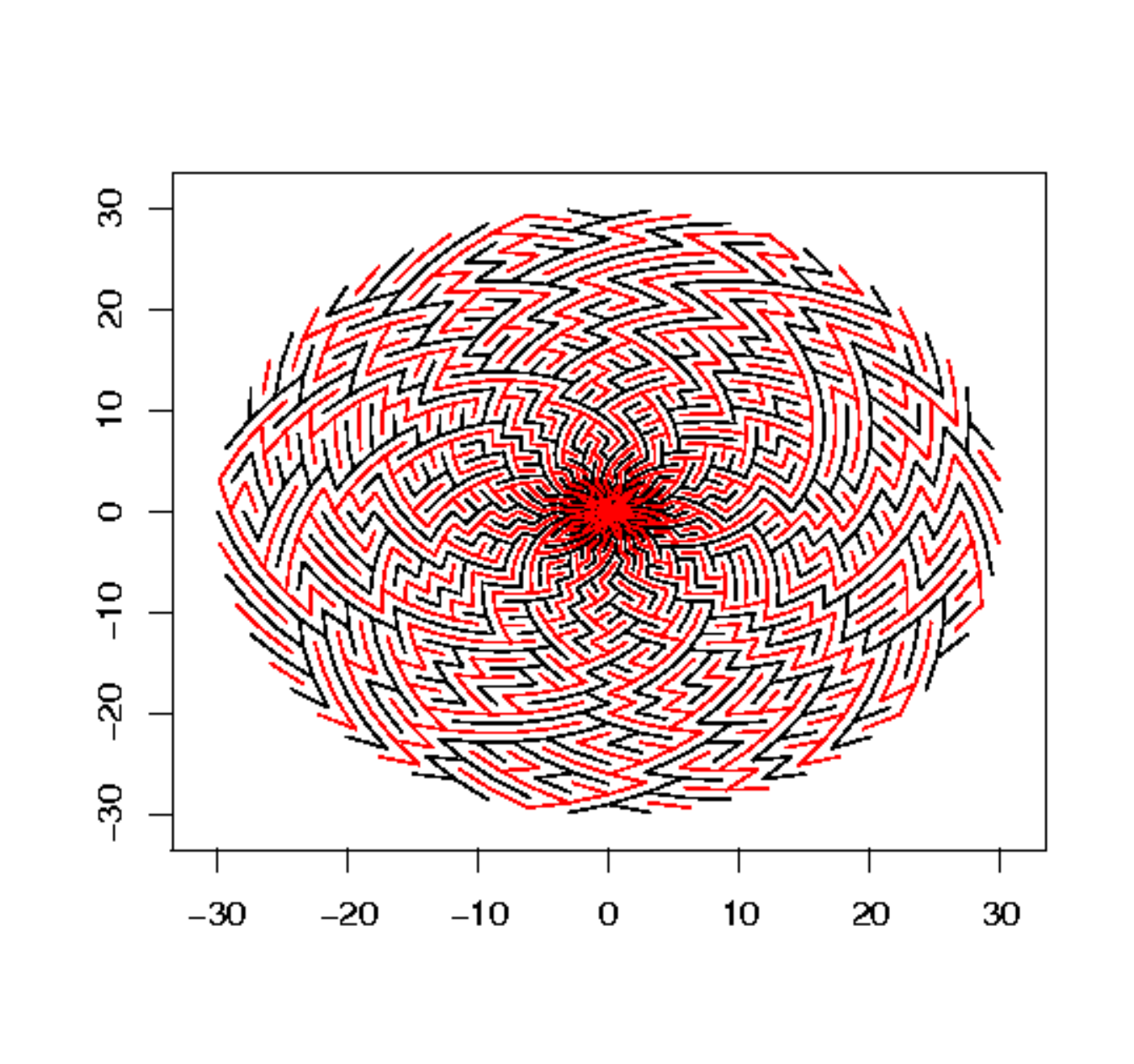}
\end{center}
\vspace{-1cm}
\caption{\label{fig:CylBWn} {\small \textit{Projections of the cylindric lattice webs $\W^{(2n),\ua}$ (black) and $\W^{(2n),\da}$ (red) on the plan by $\varphi_\star$. }}}
\end{figure}

Let us recall that $\W^{(2n),\ua}$ and $\W^{(2n),\da}$ denote the normalized cylindric lattice webs obtained from $\W^{2n,\ua}$ and $\W^{2n,\da}$: see \eqref{def:CLW}. Let us respectively denote by $\RDWtoc_{2\pi x,t}$ and $\RDWfromc_{2\pi x,t}$ the radial lattice webs obtained as
images under $\varphi_\star$ of $\W^{(2n),\ua}$ and $\W^{(2n),\da}$ restricted to $\Cyl^{+}$. Using the continuity of $\varphi_\star$, it is possible to transfer the convergence result of Prop. \ref{pro:main-lattice} from the cylinder to the plane. Then, the convergence result below is a direct consequence of Prop. \ref{pro:main-lattice}.

\begin{theo}
\label{theo:CV-radial}
The pair $(\RDWtoc,\RDWfromc)$ converges in distribution to the pair of standard radial Brownian webs $(\RBWtoc,\RBWfromc)$.
\end{theo}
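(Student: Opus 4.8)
The plan is to obtain Theorem~\ref{theo:CV-radial} as a corollary of Proposition~\ref{pro:main-lattice} through the continuous mapping theorem. Recall from Section~\ref{section:foretsradiales} that, by construction, the pair $(\RDWtoc,\RDWfromc)$, resp. $(\RBWtoc,\RBWfromc)$, is nothing but the image of the pair $(\Wnur,\Wndr)$ of renormalized cylindric lattice webs, resp. of the double cylindric Brownian web $(\Wu,\Wd)$, under the composition of two operations: first, restriction of each web to the semi-cylinder $\Cyl^{+}$; second, relabelling of the restricted paths through the bijection $\varphi_\star$ of \eqref{def:phi} — the in-radial object being built from the downward (dual) web and the out-radial one from the upward (primal) web. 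Since Proposition~\ref{pro:main-lattice} already provides the joint convergence $(\Wnur,\Wndr)\dd(\Wu,\Wd)$ in $\mathcal{H}_O\times\widehat{\mathcal{H}}_O$, it suffices to check that these two operations are continuous at the level of the relevant spaces of compact sets of paths, and then to invoke the continuous mapping theorem for the induced product map.

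The relabelling by $\varphi_\star$ is the easy ingredient. The distance $\rho_{\bullet}$ on the radial plane $(\rpiz)\times\R^{+}$ was \emph{defined} as the push-forward of $\rho_O$ by $\varphi_\star$, and the path distance on $\Pi_{\bullet}$ together with the Hausdorff distance on $\mathcal{H}_{\bullet}$ are then built from $\rho_{\bullet}$ exactly as $d_O$ and $d_{\mathcal{H}_O}$ are built from $\rho_O$. Hence $\varphi_\star$ is, tautologically, an isometry from $(\Cyl^{+},\rho_O)$ onto $((\rpiz)\times\R^{+},\rho_{\bullet})$, and it induces isometries — in particular homeomorphisms — between the corresponding path spaces and then between the corresponding spaces of compact subsets. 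Consequently the web-level map induced by $\varphi_\star$, and the product map it defines on pairs, are continuous, and the continuous mapping theorem applies to this step without further work.

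The restriction to $\Cyl^{+}$ deserves a few words, although it is ultimately routine. Truncating a forward or backward path at the \emph{deterministic} floor time $t=0$ is a continuous operation on the path space — as $t_0^n\to t_0$ and the paths converge, the truncated paths converge too, the borderline case $t_0=0$ being harmless by continuity of the limiting path near $0$ — and a continuous map on paths induces a continuous map on compact sets of paths, so one may simply compose. If one prefers to avoid discussing the behaviour at the floor, one can instead re-establish the convergence directly on the half-cylinder: writing $\chi^{n}$ for the restriction of $(\Wnur,\Wndr)$ to $\Cyl^{+}$, the hypotheses (IO) and (B2O) of the half-cylinder analogue of Theorem~\ref{theo:conv_cyl} hold by exactly the arguments used in the proof of Proposition~\ref{pro:main-lattice} — Donsker's invariance principle for (IO), and the stochastic domination \eqref{eq:domi-eta} together with condition (B2) for the planar lattice web for (B2O); every limit point is then a cylindric Brownian web on $\Cyl^{+}$, which by the strong Markov property recalled in Section~\ref{section:CylBrownWeb} is the restriction of $\Wu$, resp. $\Wd$, to $\Cyl^{+}$; and joint convergence of the pair follows as in Proposition~\ref{pro:main-lattice}, by tightness from the marginals and identification of limit points through the half-cylinder version of Theorem~\ref{theo:caract_CBW}, the non-crossing property passing to the limit. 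Either way $\chi^{n}\dd(\Wu,\Wd)|_{\Cyl^{+}}$, and applying $\varphi_\star\times\varphi_\star$ gives $(\RDWtoc,\RDWfromc)\dd(\RBWtoc,\RBWfromc)$.

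In short, there is no substantial obstacle beyond what is already contained in Proposition~\ref{pro:main-lattice}; the only genuinely delicate bookkeeping concerns the behaviour at the boundary time $t=0$ — discrete paths born just below $0$ that disappear under restriction, or dual paths coming down from infinity as they enter $\Cyl^{+}$ — and it is entirely controlled by the coalescence-time estimates and the Markov structure already established for Proposition~\ref{pro:main-lattice}.
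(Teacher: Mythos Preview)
Your proposal is correct and follows the same route as the paper: the theorem is obtained from Proposition~\ref{pro:main-lattice} via the continuous mapping theorem, using that $\varphi_\star$ is continuous (indeed an isometry for the metric $\rho_\bullet$ it induces). The paper states only that the result is a ``direct consequence of Prop.~\ref{pro:main-lattice}'' through the continuity of $\varphi_\star$, so your discussion of the restriction to $\Cyl^{+}$ actually fills in more detail than the paper provides.
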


\subsubsection{Other Radial Brownian Webs}
\label{sec:ORBW}

In this section we explore different radial projections of the cylindric Brownian web $(\Wu,\Wd)$ into the plane. Let us first describe the general setting. Let $f$ be an increasing continuous function, defining a one-to-one correspondence from an interval $I\subset `R^{+}$ onto an interval $J\subset `R$. Define the bijective map $\varphi_f$ by:

\begin{equation}
\label{def:varphi_f}
\app{\varphi_f}{\rpuz \times J}{\rpiz \times I}{(\theta, f(t))}{(2\pi\theta, t)} ~.
\end{equation}
As previously, $\rpiz\times I$ represents a subset of $`R^{2}$ (actually a ring) parametrized by polar coordinates. The map $\varphi_f$ sends the restriction of the CBW $\Wd$ to the part of cylinder $\rpuz\times J$ on a radial object defined on the ring $\rpiz\times I$, denoted by $f$-$\RBWtoc$ and also called \textit{radial Brownian web}. In this construction, the function $f$ is a winding parameter. For instance, if $1,2\in I$, the argument variation (in $\R$) around the origin of the $f-\RBWtoc_{x,2}$ between radii $1$ and $2$ (where $x\in[0,2\pi]$ is the initial argument) is a centered Gaussian r.v.  with variance ${4\pi^2}(f(2)-f(1))$. The standard radial Brownian web introduced in the previous section corresponds to the particular case $I=`R^{+}$, $J=`R^{+}$ and $f(t)=t/(4\pi^{2})$, for which the argument variation of a trajectory on a ring with width $c$ is simply a Gaussian $\mathcal{N}(0,c)$.\\

Our second example of maps $f$ allows to project the complete pair $(\Wu,\Wd)$ parametrized by $\Cyl$ to the plane. Let us consider the bijection from $I=(0,+\infty)$ onto $J=`R$ defined by $f(t):=\ln t$ (or any other map $f$ sending $(0,+\infty)$ onto $\mathbb{R}$). Then, the radial Brownian web $f$-$\RBWtoc$-- image of $\Wd$ by $\varphi_f$ --presents an accumulation phenomenon in the neighborhood of the origin. Indeed, the argument variation around the origin between radii $\varepsilon$ and $1$ has distribution $\mathcal{N}(0,{4\pi^2}|\ln(\varepsilon)|)$, and thus goes to $+\infty$ in the neighborhood of $0$ ($\varepsilon\rightarrow 0^+$) when it stays bounded in any other bounded ring far away from $0$.\\

Our third example of map $f$ provides a tree -- given by the trajectories of $f$-$\RBWtoc$ -- having many semi-infinite branches with asymptotic directions.
A semi-infinite branch $\zeta$ (if it exists) of the tree $f$-$\RBWtoc$ is said to admit an asymptotic direction $\theta\in\rpiz$ whenever $\arg(z_{k})\to\theta$, for any subsequence $(z_{k})\subset\zeta$ such that $|z_{k}|\to\infty$. To show that $f$-$\RBWtoc$ admits many semi-infinite branches, let us consider the bijection $f$ from $I=`R^{+}$ onto $J=[0;1)$ defined by $f(t):=\frac{2}{\pi} \arctan t$. For a small $\varepsilon\in(0,1)$, the map $\varphi_f$ projects the thin cylinder $\rpuz\times [1-\varepsilon;1)$ on the unbounded set $\rpiz\times [\tan(\pi(1-\varepsilon)/2);+\infty)$. On the (small) time interval $[1-\varepsilon;1)$, the CBMs have small fluctuations, and then the tree $f$-$\RBWtoc$ admits semi-infinite branches with asymptotic directions. The next result proposes a complete description of the semi-infinite branches of $f$-$\RBWtoc$.

\begin{rem}
The standard radial Brownian web could appear a bit impetuous to the reader: the fluctuation of the argument along a trajectory parametrized by the modulus, being a BM mod $2\pi$, the trajectories may have important fluctuations far from the origin. The choice $f(t)=\ln t$ of Example 2 provides a radial forest where the paths look like coalescing BMs locally and far from $O$: between radii $r$ and $r+1$, the fluctuations are of variance $1/r$. This model is invariant by inversion.
\end{rem}

\begin{pro}
\label{theo:AsymptDir}
Consider the $f$-RBW for a bijection $f$ from $I=\R_+$ into an interval $J$ with compact closure such that $f$ can be extended continuously to $\adh(J)$. With the above notations, the following statements hold.
\begin{enumerate}
\compact
\item A.s. any semi-infinite branch of $f$-$\RBWtoc$ admits an asymptotic direction.
\item A.s. for any $\theta\in\rpiz$, the tree $f$-$\RBWtoc$ contains (at least) one semi-infinite branch with asymptotic direction $\theta$.
\item For any (deterministic) $\theta\in\rpiz$, a.s. the tree $f$-$\RBWtoc$ contains only one semi-infinite branch with asymptotic direction $\theta$.
\item A.s. there exists a countable dense set $\mathcal{D}\subset\rpiz$ such that, for any $\theta\in\mathcal{D}$, the tree $f$-$\RBWtoc$ contains two semi-infinite branches with asymptotic direction $\theta$.
\item A.s. the tree $f$-$\RBWtoc$ does not contain three semi-infinite branches with the same asymptotic direction.
\end{enumerate}
\end{pro}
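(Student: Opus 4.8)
The plan is to transfer the known structure of semi-infinite branches in the \emph{planar} Brownian web (due to Fontes--Isopi--Newman--Ravishankar and the detailed analysis in \cite{SSS}) to the tree $f$-$\RBWtoc$ via the maps $\varphi_f$ and the identification with $\Wd$ restricted to $\Cyl^+$. The key point is that the hypothesis on $f$ --- a bijection from $\R_+$ onto an interval $J$ with compact closure, extending continuously to $\adh(J)$ --- means that the ``far from the origin'' regime $|z|\to\infty$ in the radial plane corresponds to the time parameter on the cylinder approaching the finite endpoint $T_\star:=\sup J=\lim_{t\to\infty}f(t)$. So the asymptotic behavior of semi-infinite branches of $f$-$\RBWtoc$ is governed by the behavior of the dual CBW $\Wd$ as its (backward) time variable approaches $T_\star$ from below, i.e. by a \emph{terminal} slice $\Slice(T_\star)$. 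First I would make this precise: a semi-infinite branch $\zeta$ of $f$-$\RBWtoc$ lifts, under $\varphi_f^{-1}$, to a (backward) path $\hat\pi$ of $\Wd$ defined on an interval $(t_0, T_\star)$, and $\zeta$ admits an asymptotic direction $\theta$ iff $\Wd_{\hat\pi}(t)\to \theta/(2\pi)$ in $\rpuz$ as $t\uparrow T_\star$.

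For item 1, the point is that a cylindric Brownian motion run for a \emph{finite} time has an a.s. limit at the terminal time; since a semi-infinite branch, read backward, is (a concatenation of pieces of) such paths on the finite time window $(t_0,T_\star)$, continuity up to the closed interval gives the limit $\Wd_{\hat\pi}(t)\to x_\infty\in\rpuz$, hence the asymptotic direction $\theta=2\pi x_\infty$. Here I would invoke the compactness built into $({\cal H}_O,{\cal F}_{{\cal H}_O})$ together with property $(o)$ of Theorem~\ref{th:defCBW} to argue that every semi-infinite branch is indeed coded by such a path with a well-defined terminal value. For items 2 and 3, I would argue as follows. Fix a deterministic $\theta$ and the point $(\theta/(2\pi), T_\star)$ on the terminal slice. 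By a limiting/compactness argument --- approximating $(\theta/(2\pi),T_\star)$ by points $(\theta/(2\pi), T_\star - 1/k)$, considering the dual paths $\Wd$ started there, and extracting a convergent subsequence in ${\cal H}_O$ --- one obtains at least one dual path that ``emanates from'' the terminal slice at angle $\theta/(2\pi)$; following it backward in time produces a semi-infinite branch of $f$-$\RBWtoc$ with asymptotic direction $\theta$, giving item 2 for every $\theta$ simultaneously after intersecting over a countable dense set and using monotonicity/ordering of non-crossing paths to fill in all $\theta$. Uniqueness for a \emph{fixed} deterministic $\theta$ (item 3) follows because two distinct semi-infinite branches with the same asymptotic direction would, read backward, give two dual paths that both converge to the same terminal point $(\theta/(2\pi),T_\star)$ without having coalesced; but the coalescing property $(i)$ of Theorem~\ref{th:defCBW} combined with the fact that a pair of coalescing cylindric BMs started arbitrarily close together at a time close to $T_\star$ coalesce before $T_\star$ with probability tending to $1$ --- here I would use the exponential coalescence-time estimates for cylindric BMs announced in Section~\ref{sect:infinitebranches} --- forces a contradiction for a.e. $\theta$, and ``deterministic $\theta$'' upgrades ``a.e.'' to ``the given $\theta$'' by Fubini.

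For items 4 and 5, the mechanism is exactly the one familiar from the planar BW: the \emph{special points} of the dual CBW on the terminal slice. Points from which two semi-infinite branches of $f$-$\RBWtoc$ with the same direction $\theta$ emanate correspond to points of the terminal slice $\Slice(T_\star)$ that are of ``type $(1,2)$'' for the dual web in the classification of \cite[Section~2.5]{SSS} (one incoming, two outgoing dual paths on the terminal side); such points a.s. form a countable dense subset of $\Slice(T_\star)\cong\rpiz$, which yields the countable dense $\mathcal D$ of item 4 after transporting by $2\pi(\cdot)$. Item 5 is the statement that there are a.s. no points of type $(1,3)$ (or worse) on the terminal slice; this again transfers from the analogous statement in the planar web via the local equivalence of the CBW with the planar BW (the CBW is ``locally similar to the planar BW'', as stressed in the introduction), so that the full classification of special points and in particular the absence of triple points on a fixed-time slice carries over to the terminal slice of $\Wd$.

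\textbf{Main obstacle.} The delicate step is item 2 in its ``for all $\theta$ simultaneously'' form: one must produce, almost surely and for \emph{every} $\theta\in\rpiz$ at once, a semi-infinite branch with asymptotic direction $\theta$, which is a stronger statement than the a.s.-for-fixed-$\theta$ existence. The planar-web template for this (the ``for all directions'' existence of semi-infinite paths) relies on a compactness argument together with a careful use of the ordering of non-crossing paths to interpolate between a countable dense set of directions; adapting this to the terminal slice of the dual CBW requires checking that the limit objects extracted from $\Wd$ near time $T_\star$ really are \emph{paths} of $\Wd$ (not spurious limits) and that they are ordered consistently on the circle. This is where I expect the real work to lie; the rest is a fairly mechanical translation, through $\varphi_f$, of results about the (dual) CBW and, ultimately, about the planar BW.
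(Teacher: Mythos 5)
Your proposal follows essentially the same route as the paper's proof: pull a semi-infinite branch of $f$-$\RBWtoc$ back through $\varphi_f^{-1}$ to a path of $\Wd$ on $\rpuz\times J$, observe that because $J$ has compact closure the relevant regime is the behaviour of $\Wd$ near the terminal slice $\rpuz\times\{T_\star\}$, identify asymptotic directions with the (continuous) terminal limits of those paths, and then read items 3--5 off the special-point classification of the (cylindric) Brownian web. The one place your argument diverges meaningfully is item 3. You propose a coalescence-time estimate plus a ``for a.e.\ $\theta$, upgrade by Fubini'' step; that works, but only after you invoke rotational invariance to go from a.e.\ $\theta$ to the fixed deterministic $\theta$, which is more machinery than needed. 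The paper instead states the correspondence as an exact a.s.\ identity --- the number of semi-infinite branches with asymptotic direction $2\pi x$ equals the number of dual CBW paths emanating from $(x,T_\star)$ --- after which item 3 is an immediate consequence of property $(o)$ of Theorem~\ref{th:defCBW}, item 4 is the density of $(1,2)$-type points, and item 5 is the absence of $(1,3)$-type points (Th.~3.14 of \cite{fontes2006}, adapted to the CBW). That identity is worth stating explicitly: once you have it, the coalescence-time estimates and the limiting/compactness extraction you worry about in your ``main obstacle'' paragraph become unnecessary, since the whole of items 2--5 reduce to well-known facts about outgoing paths from a time slice of the BW. Your instinct that item 2 ``for all $\theta$ simultaneously'' is the subtle step is reasonable (the paper dispatches it with only ``the proof of the second item is in the same spirit''), and the non-crossing/ordering argument you sketch for interpolating between a countable dense set of directions is the right idea for making it rigorous.
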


\begin{proof}
The first two items generally derive from the \textit{straight} property of the considered tree: see Howard \& Newman \cite{HowardNewman}. However, in the present context, it is not necessary to use such heavy method and we will prove them directly. For the sake of simplicity, we can assume that $J=[0,1)$. Let us first consider a semi-infinite branch $\zeta$ of $f$-$\RBWtoc$.

By construction of the $f$-RBW, there exists a path $\gamma$ of the CBW on $\rpuz \times J$ such that $\zeta=\varphi_f(\gamma)$.
The path $\gamma$ of the CBW on $\rpuz \times J$ is a Brownian motion that can be extended by continuity to $\rpuz \times [0,1]$ by $(\bar{\theta},1)$, say, implying that the first coordinate of $\zeta$ converges to $\bar{\theta}/(2\pi)$ when the radius tends to infinity. This means that the semi-infinite branch $\zeta$ admits $\bar{\theta}$ as asymptotic direction. The proof of the second item is in the same spirit.

The key argument for the three last statements of Th. \ref{theo:AsymptDir} is the following. With probability $1$, for any $\theta\in\rpiz$ and $x:=\theta/2\pi$, the number of CBMs of $\Wd$ starting at $(x,1)$ is equal to the number of semi-infinite branches of $f$-$\RBWtoc$ having $2\pi x$ as asymptotic direction. With Th. \ref{th:defCBW} $(o)$, it then follows that the number of semi-infinite branches of $f$-$\RBWtoc$ having the deterministic asymptotic direction $\theta\in\rpiz$ is a.s. equal to $1$. This key argument also makes a bridge between the (random) directions in which $f$-$\RBWtoc$ admits several semi-infinite branches and the \textit{special points} of $\Wd$. Given $t\in`R$, Th. 3.14 of \cite{fontes2006} describes the sets of points on the real line $`R\times\{t\}$ from which start respectively $2$ and $3$ BMs. The first one is dense and countable whereas the second one is empty, with probability $1$. These local results also hold for the CBW $\Wd$ (but we do not provide proofs).
\end{proof}

\begin{rem}
Cylinders may also be sent easily on spheres, by sending the horizontal slices $h\in (a,b)$ of the cylinder to the horizontal slice $g(h)$ of the sphere $\{(x,y,h) \in \mathbb{R}^3: x^2+y^2+g(h)^2=1\}$, where $-\infty \leq a < b \leq +\infty$, and $g$ is an increasing and bijective function from $(a,b)$ to $(-1,1)$. Somehow, sending cylinders onto the plane allows to contract one slice (or one end) of the cylinder, and sending it on the sphere amounts to contracting two slices (or the two ends) of the cylinder.
Again, this point of view will provide a suitable definition for the spherical Brownian web and its dual.
\end{rem}

\section{Elements on cylindric lattice and Brownian webs}
\label{sect:infinitebranches}

In this section, two differences between the CBW and its plane analogous are put forward. Firstly, each of CBW $\Wu$ and $\Wd$ contains a.s. exactly one bi-infinite branch; this is Th. \ref{theo:BiInfB}, the main result of this section. This property is an important difference with the planar BW which admits a.s. no bi-infinite path (see e.g. \cite{coupiertran} in the discrete case).
The distributions of these bi-infinite paths are identified by taking the limit of their discrete counterparts on the cylindric lattice web.

Secondly, the coalescence time of all the Brownian motions starting at a given slice admits exponential moments (Prop. \ref{pro:Coaltime}). This is also an important difference with the planar case, where the expectation of the coalescence time of two independent Brownian motions is infinite, which comes from the fact that the hitting time $\tau_1$ of $0$ by a Brownian motion starting at 1 is known to have distribution $\P(\tau_1\in dt)=e^{-1/(2t)}/\sqrt{2\pi t^3}$.

\subsection{The bi-infinite branch of the CBW}

For any $x,x'\in \Cyl^D$, $t\in\mathbb{R}$, denote by
\begin{align}
\label{CoalTime2}
T^\ua(x,x',t) =  & \inf \l\{ s>t~: \, \W^\ua_{(x,t)}(s) = \W^\ua_{(x',t)}(s) \r\}\\
T^\da(x,x',t) =  & \sup \l\{ s<t~: \, \Wd_{(x,t)}(s) = \Wd_{(x',t)}(s) \r\}
\end{align}
the coalescence times of the cylindric Brownian motions $\W^\ua_{(x,t)}$ and $\W^\ua_{(x',t)}$ one the one hand, and of $\W^\da_{(x,t)}$ and $\W^\da_{(x',t)}$ on the other hand. Set for $D\in \{\ua, \da\}$,
$$
T^D(t)= \max\l\{T^D(x,x',t), (x,t),(x',t)\in \Slice(t)\r\},
$$
the coalescence time of all the Brownian motions (going upward if $D=\ua$ and downward if $D=\da$) starting at $\Slice(t)$.\par
Consider a continuous function $\gamma\ : \R\mapsto \rpuz$.  We say that $\gamma$, or rather, its graph $\{(\gamma_t,t), t \in \R\}$ is a bi-infinite path of the CBW $\Wu$, if there exists an increasing sequence $(t_k, k \in \mathbb{Z})$ such that $\lim_{k\to -\infty} t_k=-\infty$,  $\lim_{k\to +\infty} t_k=+\infty$, and a sequence $(x_k, k \in \mathbb{Z})$ such that for any $k\in\mathbb{Z}$,  $\W^\ua_{(x_k,t_k)}(t_{k+1})=x_{k+1}$, and $\W^\ua_{(x_k,t_{k})}(s)=\gamma_s$ for $s\in[t_k,t_{k+1}]$.
Similarly, we say that  $\{(\gamma_t,t), t \in \R\}$ is a bi-infinite path of the CBW $\W^\da$, if
there exists an decreasing sequence $(t_k, k \in \mathbb{Z})$ such that $\lim_{k\to -\infty} t_k=+\infty$,  $\lim_{k\to +\infty} t_k=-\infty$ and a sequence $(x_k, k \in \mathbb{Z})$ such that for any $k\in\mathbb{Z}$,
$\Wd_{(x_k,t_k)}(t_{k+1})=x_{k+1}$, and $\Wd_{(x_k,t_k)}(s)=\gamma_s$ for $s\in[t_{k+1},t_k]$.

\begin{theo}
\label{theo:BiInfB}
With probability $1$, any two branches of the CBW $\Wu$ eventually coalesce. Furthermore, with probability $1$, the CBW $\Wu$ contains exactly one bi-infinite branch (denoted $\Cu$).
\end{theo}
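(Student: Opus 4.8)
The plan is to prove the two assertions in turn: first that any two upward branches of $\Wu$ coalesce almost surely, and then, using this, that there is a.s. exactly one bi-infinite branch. For the first assertion I would begin by establishing that for fixed starting points $(x,t)$ and $(x',t)$ on the same slice, the coalescence time $T^\ua(x,x',t)$ is a.s. finite. The key observation is that $D_s:=\W^\ua_{(x,t)}(s)-\W^\ua_{(x',t)}(s)$, viewed in $\rpuz$, behaves (up to the coalescence time) like the image in $\rpuz$ of a time-changed Brownian motion: before coalescence the two paths are independent Brownian motions, so the difference is a Brownian motion run at twice the speed, projected modulo $1$. A Brownian motion on the circle $\rpuz$ hits $0$ in a.s. finite time (indeed in finite time with exponential moments, by compactness of the circle; this is essentially the content of Proposition~\ref{pro:Coaltime}, which I may invoke). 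Hence $T^\ua(x,x',t)<\infty$ a.s. By countable additivity over a dense countable set $\mathcal D$ of $\Cyl$ this gives that all the $\Wu_{d}$, $d\in\mathcal D$, pairwise coalesce a.s.; and by property $(ii)$ of Theorem~\ref{th:defCBW}, together with the non-crossing property, this extends to all branches of $\Wu$. (A small point to handle carefully: two branches started from arbitrary points, not on a common slice — but one simply runs the lower-starting one up to the height of the other and applies the slice statement, using continuity.)

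For the existence of a bi-infinite branch I would use a compactness/diagonal argument working downward in time. Fix any reference point, say $(0,0)$. For each $k\ge 1$, the branch $\Wu_{(0,-k)}$ restricted to times $\ge 0$ is a path in the compact space $({\cal H}_O,d_{{\cal H}_O})$; by compactness the sequence of these (finite-past) pieces has a subsequential limit, which is a path $\Cu$ defined for all $s\in\R$ and which, by the non-crossing/consistency structure of the web and property $(ii)$, is a genuine bi-infinite branch of $\Wu$ in the sense defined before the theorem. Alternatively — and this is the route I would actually prefer because it also prepares the uniqueness argument and the later identification of the law — one passes to the limit from the discrete cylindric lattice web: the rescaled CLW $\W^{(2n),\ua}$ converges to $\Wu$ by Proposition~\ref{pro:main-lattice}, and on the finite cylinder $\ZdnZ$ the upward walks from a fixed slice all coalesce into finitely many lineages which, going down in time, must contain at least one bi-infinite lineage (a standard pigeonhole on the finite circle: at every level the number of ancestors is $\ge 1$, and a König's-lemma argument produces an infinite descending chain). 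One then checks this bi-infinite discrete path is tight and that any limit point is a bi-infinite branch of $\Wu$.

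For uniqueness, suppose $\Cu$ and $\widetilde{\Cu}$ are two bi-infinite branches. Pick times far in the past; the two branches pass through points $(\Cu_{-s},-s)$ and $(\widetilde{\Cu}_{-s},-s)$ on the slice at height $-s$. By the first part of the theorem the forward branches from these two points coalesce a.s. at some finite time; but a bi-infinite branch, once it agrees with another branch of the web, stays equal to it forever after (uniqueness of forward evolution, property $(o)$). Hence $\Cu$ and $\widetilde{\Cu}$ eventually agree. It remains to rule out that they differ on an initial (arbitrarily early) stretch: since $s$ was arbitrary and the coalescence of the two forward branches from slice $-s$ happens at a time $\sigma_s$ with $\sigma_s\to-\infty$ as $s\to-\infty$ (this needs the exponential tail of the coalescence time from a slice, Proposition~\ref{pro:Coaltime}, plus Borel–Cantelli, to see $\sigma_s$ cannot stay bounded), the two bi-infinite branches must in fact coincide for all $s\in\R$. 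I expect this last point — quantifying how far back the coalescence occurs, i.e. upgrading "they eventually agree" to "they are identical" uniformly in the starting slice — to be the main obstacle, and it is exactly where the exponential-moment bound on $T^\ua(t)$ (rather than mere a.s. finiteness of pairwise coalescence) does the essential work; everything else is soft topology plus the characterizations already recorded in Theorems~\ref{th:defCBW} and \ref{theo:caract_CBW}.
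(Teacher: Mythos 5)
Your proposal is correct in outline but takes a genuinely different route from the paper. The paper works through the dual web: it defines a decreasing sequence of (backward) coalescence times $\tau^{\da,k}$ for $\Wd$, and uses primal--dual non-crossing to see that all primal trajectories started on $\Slice(\tau^{\da,k})$ pass through a common point $x'_{k-1}$ of $\Slice(\tau^{\da,k-1})$; at such a point the dual has out-degree $2$, so the primal forward path from it is a.s.\ unique. The nested sequence $(x'_k,\tau^{\da,k})$, together with $\tau^{\da,k}\to -\infty$ by the strong law of large numbers, gives existence and uniqueness in one stroke and moreover \emph{locates} the bi-infinite branch explicitly, which the paper then exploits to identify the law of $(\Cu,\Cd)$ in the next subsection. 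You avoid the dual altogether: a compactness (or discrete CLW pigeonhole plus limit) extraction for existence, and a coalescence estimate plus Borel--Cantelli for uniqueness. Your uniqueness step is sound --- translation invariance gives $T^\ua(-n)-(-n)\eqd T^\ua(0)$, the tail bound of Prop.~\ref{pro:Coaltime} makes $\sum_n\P\bigl(T^\ua(-n)>-n/2\bigr)<\infty$, and Borel--Cantelli forces $\sigma_n\to-\infty$ --- provided you note that the proof of Prop.~\ref{pro:Coaltime} does not itself rest on the uniqueness half of this theorem, so there is no circularity. The compactness existence alternative is the delicate point of your route: the limit you extract lives in $\Pi_O$ (not ${\cal H}_O$, as you wrote), and you still have to argue that each restriction of the limit to $[t,\infty)$ is a path of $\Wu$, which rests on closedness of $\Wu$ under passing to sub-paths and is not entirely automatic; the discrete-limit alternative is cleaner on this point. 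On balance your argument is correct and more ``soft'', while the paper's duality construction is more economical and, crucially, also sets up the explicit distributional identification of $(\Cu,\Cd)$ that follows.
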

A notion of semi-infinite branch is inherited from the cylinder via the map $\varphi_{\star}$:
\begin{cor}The standard out-radial Brownian web $\RBWfromc$ possesses a unique semi-infinite branch.
\end{cor}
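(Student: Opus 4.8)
The plan is to obtain the corollary as a direct transfer of Theorem~\ref{theo:BiInfB} through the map $\varphi_\star$. Recall that $\RBWfromc$ is by definition the image of $\Wu$ restricted to the semi-cylinder $\Cyl^{+}=(\rpuz)\times\R^{+}$ under $\varphi_\star$, and that $\varphi_\star$ is a continuous bijection from $\Cyl^{+}$ onto the radial plane $(\rpiz)\times\R^{+}$, sending a point at cylinder height $h$ to radius $4\pi^{2}h$; it is a homeomorphism away from the slice $\Slice(0)$, which it crushes to the origin. In particular $\varphi_\star$ preserves the coalescence and non-crossing structure, and it turns a path of $\Wu$ that is defined for all heights $t\ge 0$ and unbounded above into a path of $\RBWfromc$ escaping to infinity. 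Since the notion of semi-infinite branch of $\RBWfromc$ is inherited from the cylinder via $\varphi_\star$, a semi-infinite branch of $\RBWfromc$ is exactly the $\varphi_\star$-image of the restriction to $\Cyl^{+}$ of a bi-infinite branch of $\Wu$.

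First I would upgrade this into a bijection between the set of bi-infinite branches of $\Wu$ and the set of semi-infinite branches of $\RBWfromc$. In the forward direction, the bi-infinite branch $\Cu$ furnished by Theorem~\ref{theo:BiInfB} is defined on all of $\R$, so $\Cu\cap\Cyl^{+}$ is a path of $\Wu$ defined for all $t\ge 0$ and unbounded, whose image under $\varphi_\star$ is therefore a semi-infinite branch of $\RBWfromc$; since $\Cu$ meets $\Slice(0)$ at a single point, $\varphi_\star$ is injective along $\Cu\cap\Cyl^{+}$, so this image is a genuine simple ray from the origin to infinity. For the converse and for uniqueness, any semi-infinite branch of $\RBWfromc$ pulls back, through $\varphi_\star$ and then by extending downward in the cylinder, to a bi-infinite branch of $\Wu$; the only delicate point here is the non-injectivity of $\varphi_\star$ over $\Slice(0)$, and this is exactly what the first assertion of Theorem~\ref{theo:BiInfB} takes care of: all branches of $\Wu$ coalesce, hence all paths of $\Wu|_{\Cyl^{+}}$ issued from $\Slice(0)$ merge within a finite height and create no branching that survives at infinity, so two bi-infinite branches of $\Wu$ sharing the same $\varphi_\star$-image must coincide. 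Combining the two directions with the second assertion of Theorem~\ref{theo:BiInfB}, namely that $\Wu$ contains a.s.\ exactly one bi-infinite branch $\Cu$, gives that $\RBWfromc$ contains a.s.\ exactly one semi-infinite branch, namely $\varphi_\star(\Cu\cap\Cyl^{+})$.

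The whole difficulty is already contained in Theorem~\ref{theo:BiInfB}, which we are allowed to assume; the corollary is a short transfer argument, and the only step requiring genuine care is the bookkeeping of the notion of semi-infinite branch across the degeneracy of $\varphi_\star$ over the origin, for which the ``all branches coalesce'' half of Theorem~\ref{theo:BiInfB} is precisely what is needed. I would therefore present the proof as one short paragraph setting up the bijection induced by $\varphi_\star$ and then invoking Theorem~\ref{theo:BiInfB} for both existence and uniqueness.
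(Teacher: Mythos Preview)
Your approach is correct and is essentially the paper's: the corollary is stated immediately after Theorem~\ref{theo:BiInfB} with the single sentence ``A notion of semi-infinite branch is inherited from the cylinder via the map $\varphi_\star$'', and no further proof is given. The paper thus takes the correspondence between semi-infinite branches of $\RBWfromc$ and (restrictions to $\Cyl^{+}$ of) bi-infinite branches of $\Wu$ as definitional, so that the corollary is literally a restatement of Theorem~\ref{theo:BiInfB}. Your discussion of the degeneracy of $\varphi_\star$ over $\Slice(0)$ and the need for the ``all branches coalesce'' clause is more than the paper supplies, but it is harmless and clarifies why the inherited notion is well behaved.
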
}
\begin{proof}[Proof of Theorem \ref{theo:BiInfB}] The first statement is a consequence of the recurrence of the linear BM.

Let us introduce some stopping times for the filtration ${\cal F}^\da$.
First let $\tau^{\da,1}=T^\da(0)$ (the coalescing time of the CBW $\W^\da$ coming from $\Slice(0)$ in the dual), and successively, going back in the past,  $\tau^{\da,k}=T^\da(\tau^{\da,k-1})$. Since the primal and dual paths do not cross a.s., it may be checked that all primal Brownian motion $\W^\ua_{(x,\tau^{\da,k})}$ for $x \in \Slice(\tau^{\da,k})$ have a common abscissa, say $x'_{k-1}$ at time $\tau^{\da,k-1}$, that is in $\Slice(\tau^{\da,k-1})$. In other words, they merge before time $\tau^{\da,k-1}$.
A simple picture shows that at $x'_{k-1}$, the dual $\W^\da$ has two outgoing paths, and thus the primal $\W^\ua_{(x'_{k-1},\tau^{\da,k-1})}$ is a.s. a single path (see e.g. \cite[Theorem 2.11]{SSS}, and use the fact that the special points of the CBW are clearly the same as those of the BW).

We have treated the negative part of the bi-infinite path. The positive path is easier, since a bi-infinite path must coincide with the trajectory $\W^\ua_{(x'_0,0)}$ for its part indexed by positive numbers.
As a consequence, the sequence defined by~:\\
-- for $k\geq 0$ by $t_{-k}=\tau^{\da,k}$, $x_{-k}=x'_k$,\\
-- for $k\geq 1$ by $t_k=k$, $x_{k}=W^\ua_{(x_{k-1},t_{k-1})}(t_k)$\\
 does the job if we prove that $\tau^{\da,k}$s are finite times that go to $-\infty$ a.s. But this is a consequence of the strong law of large numbers, since $\tau^{\da,k}$ is a sum of i.i.d. r.v. distributed as  $\tau^{\da,1}$ a.s. finite and positive (by continuity of the BM and comparison with the planar BW).
 \end{proof}

Similarly, it can be proved that any two branches of $\Wd$ eventually coalesce and that $\Wd$ contains a.s. a unique bi-infinite path that we denote $\mathbf{C}^\da$.

\subsection{The bi-infinite branch of the CLW}

As we saw in Prop. \ref{pro:main-lattice}, the CBW can be obtained as a limit of a CLW when the renormalization parameter $n$ in the CLW tends to $+\infty$. We first show that the CLW also has a bi-infinite path and use the explicit transition kernels for the trajectories of the CLW to obtain, by a limit theorem, the distribution of $(\mathbf{C}^\ua,\mathbf{C}^\da)$. The latter are two reflected Brownian motions, as described by \cite{STW}.\\

The coalescence times of the random walks starting at height $h\in \Z$ are respectively~:
\be
\bpar{ccl}
T_n^{\ua}(h)&=&\inf\l\{t\geq h~:~\Wnu_w(t)=\Wnu_{w'}(t), \forall w,w'\in \Slice^\ua_{2n}(h)\r\},\\
T_n^{\da}(h)&=&\sup\l\{t\leq h~:~\Wnd_w(t)=\Wnd_{w'}(t), \forall w,w'\in \Slice^\ua_{2n}(h)\r\}.
\epar\ee
Since for any two points $w,w'\in \Cylu$, $\Wnu_w$ and $\Wnu_{w'}$ eventually coalesce a.s., we have a.s., for any $h$,
\begin{equation}
T^{\ua}_n(h)<+\infty,\qquad  T^{\da}_n(h)>-\infty.\label{CLWcoalesPS}
\end{equation}

For $D\in \{\ua, \da\}$, a bi-infinite path of $\Cyl^D_{2n}$ is a sequence $(x_{i},i)_{i\in \Z}$, such that for all $i\in \Z$, $x_i-x_{i-1} \modu 2n \in\{1,2n-1\}$.  We say that $\W^{2n,D}$ contains a bi-infinite path $\Cn{}^{,D}$ if there is a bi-infinite path $\Cn{}^{,D}$ of $\Cyl^D_{2n}$ whose edges are included in the set of edges of $\W^{2n,D}$.

\begin{pro}\label{pro:CLWhasuniqBIP} A.s., $\Wnu$ and $\Wnd$ each contains a unique bi-infinite path.
\end{pro}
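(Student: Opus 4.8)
The plan is to mimic the structure of the proof of Theorem \ref{theo:BiInfB}, replacing the continuous coalescence-time estimates by the discrete facts recorded in \eqref{CLWcoalesPS}. First I would establish \emph{existence} of a bi-infinite path in $\Wnu$. Using the dual coalescence times, set $\sigma^{\da,1} = T_n^\da(0)$ and, going down in time, $\sigma^{\da,k} = T_n^\da(\sigma^{\da,k-1})$ for $k\geq 2$. By \eqref{CLWcoalesPS} each $\sigma^{\da,k}$ is a.s. finite, and since the $\Wnd$-walks starting from $\Slice^\da_{2n}(\sigma^{\da,k-1})$ have all coalesced by time $\sigma^{\da,k}$, a planarity/non-crossing argument (the primal and dual lattice webs never cross, as used in the proof of Prop. \ref{pro:main-lattice}) forces all primal walks $\Wnu_w$, $w\in\Slice^\ua_{2n}(\sigma^{\da,k})$, to pass through a common vertex $y'_{k-1}\in\Slice^\ua_{2n}(\sigma^{\da,k-1})$; moreover at $y'_{k-1}$ the dual has two outgoing edges, so the primal has a unique outgoing path there. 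Concatenating: for $k\geq 0$ put $t_{-k} = \sigma^{\da,k}$ (with $\sigma^{\da,0}:=0$), $x_{-k} = y'_k$, and for $k\geq 1$ put $t_k = k$, $x_k = \Wnu_{(x_{k-1},t_{k-1})}(t_k)$. This is a bi-infinite path provided $\sigma^{\da,k}\to-\infty$ a.s.; but the increments $\sigma^{\da,k-1}-\sigma^{\da,k}$ are i.i.d., distributed as $0 - T_n^\da(0)$, which is a.s. finite and (strictly) positive, so the strong law of large numbers gives $\sigma^{\da,k}\to-\infty$.

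Next I would prove \emph{uniqueness}. Suppose $\gamma$ and $\gamma'$ are two bi-infinite paths of $\Wnu$. Because any two primal random walks coalesce a.s. (the first half of \eqref{CLWcoalesPS}), the forward parts of $\gamma$ and $\gamma'$ coincide from some time on; it remains to rule out a backward bifurcation. A backward bifurcation at some time $h$ would mean two primal bi-infinite trajectories through distinct vertices of $\Slice^\ua_{2n}(h)$ whose past portions never meet as $t\to-\infty$. But for $h$ below $\sigma^{\da,k}$ the construction above shows that \emph{all} primal walks issued from $\Slice^\ua_{2n}(\sigma^{\da,k})$ have already merged at $\sigma^{\da,k-1}$; pushing this with $k\to\infty$ (and $\sigma^{\da,k}\to-\infty$) shows any two primal paths defined on all of $\Z$ must agree on a sequence of times tending to $-\infty$, hence coincide. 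So $\gamma=\gamma'$, and the bi-infinite path is unique.

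The statement for $\Wnd$ is symmetric: $\Wnd$ has the same law as $\Wnu$ up to time reversal and a unit shift (as noted after \eqref{eq:Sp}), so the same argument — now running the primal coalescence times $T_n^\ua(\cdot)$ upward to drive the construction — yields a unique bi-infinite path $\Cn{}^{,\da}$ of $\Wnd$.

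The main obstacle I anticipate is not the law-of-large-numbers part but the planarity bookkeeping: making rigorous the claim that, between consecutive dual coalescence times, all primal walks from the upper slice funnel through a \emph{single} vertex of the lower slice, and that this vertex carries a unique primal outgoing path. This is intuitively clear from a picture on the cylindric lattice (the fully-coalesced dual slice acts as a barrier that the primal paths cannot cross, so they are squeezed together), but it needs a careful argument about the combinatorics of edges of $\Wnu$ and $\Wnd$ on $\Cylun$ and $\Cyldn$, in the same spirit as the special-points discussion invoked in the proof of Theorem \ref{theo:BiInfB}.
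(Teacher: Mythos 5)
Your proposal is correct and takes essentially the same approach as the paper: the paper's own proof applies the Theorem~\ref{theo:BiInfB} scheme verbatim, defining $\tau_n^{\ua,0}=0$, $\tau_n^{\ua,k}=T_n^\ua(\tau_n^{\ua,k-1})$ and using the non-crossing/funnel argument plus the strong law to build the bi-infinite path of $\Wnd$ first, then appealing to symmetry for $\Wnu$ — you simply build $\Cun$ first via $\sigma^{\da,k}$ and get $\Wnd$ by symmetry, which is the same argument up to reflection. One small remark: the \emph{special points} concern you flag at the end is vacuous in the discrete setting, since every vertex of $\Cylun$ carries exactly one outgoing primal edge (determined by a single Rademacher variable), so uniqueness of the outgoing primal path at $y'_{k-1}$ is automatic and no analogue of \cite[Theorem 2.11]{SSS} is needed here; the genuine content of the funnel step is only the non-crossing duality claim, which the paper likewise asserts without spelling out.
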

\begin{proof}
Take the slice $h$ and consider $T^{\ua}_n(h)$. Since the paths from $\Wnu$ do not cross those of $\Wnd$, the paths in $\Wnd$ started from $h+T^{\ua}_n(h)$ all meet before slice $h$. Let $\Cdn(h)$ be their common position at height $h$.
Let us consider the sequence $(\tau^{\ua,k}_{n}, k\in \N)$ defined similarly to the one introduced in the proof of Th. \ref{theo:BiInfB}: $\tau^{\ua,0}_n=0$ and for $k\geq 1$, $\tau^{\ua,k}_{n}=T_n^\ua(\tau^{\ua,k-1}_{n})$. This sequence converges to $+\infty$ a.s. since $\tau^{\ua,k}_n$ is the sum of $k$ independent r.v. distributed as $\tau^{\ua,1}_n=T_n^\ua(0)$.
The sequence of paths
$\gamma_k=\Wnd_{(C^{2n,\da}(\tau_n^{\ua,k}),\tau_n^{\ua,k}) }$ is increasing for inclusion and defines a bi-infinite path $\mathbf{C}^\da=\lim_{k\rightarrow +\infty}\ua \gamma_k$ that is unique by the property \eqref{CLWcoalesPS}. The construction of the bi-infinite path for $\Wnu$ follow the same lines.
\end{proof}

Let us describe more precisely the distribution of $(\Cun,\Cdn)$. Let $h_1\leq h_2$ be two heights. We show that $(\Cun,\Cdn)$ is distributed on a time interval $[h_1,h_2]$, as a Markov chain with explicit transitions.\par
For any process $X=(X_i,i \in \mathbb{Z})$ indexed by $\mathbb{Z}$, and $h_1 \leq h_2$, let us denote
\begin{equation*}
X[h_1,h_2]:= (X_{h_1}, X_{h_1+1}, \cdots, X_{h_2}),\mbox{ and } X[h_2,h_1]:= (X_{h_2}, X_{h_2-1},\cdots,X_{h_1}).
\end{equation*}

\begin{lem}\label{lem:qds} For $h_1\leq h_2$, we have\\
$(i)$ $\Cun(h_1)$ and $\Cdn(h_2)$ are independent r.v. respectively uniformly distributed in $\Sliceu_{2n}(h_1)$ and $\Sliced_{2n}(h_2)$,\\
$(ii)$ For any $(x_1,x_2)\in \Sliceu_{2n}(h_1) \times \Sliced_{2n}(h_2)$, conditionally on $(\Cun(h_1),\Cdn(h_2))=(x_1,x_2)$,
\ben\label{eq:qds}
\l(\Cun[h_1,h_2],\Cdn[h_2,h_1]\r)\eqd \l(\Wnu_{(x_1,h_1)}[h_1,h_2],\Wnd_{(x_2,h_2)}[h_2,h_1]\r)
\een
If ${\sf Pair}^{\ua,\da}(x_1,x_2,h_1,h_2)$ denotes the support of $(\Wnu_{(x_1,h_1)}[h_1,h_2], \Wnd_{(x_2,h_2)}[h_1,h_2])$,
then for any $(C_1,C_2)\in {\sf Pair}_n^{\ua,\da}(x_1,x_2,h_1,h_2)$
\ben\label{eq:qdk}
`P\l(( \Wnu_{(x_1,h_1)}[h_1,h_2],\Wnd_{(x_2,h_2)}[h_1,h_2])=(C_1,C_2)\r)=2^{-2(h_2-h_1)+\Nb(C_1,C_2)}
\een
where $\Nb(C_1,C_2)$ is the ``number of contacts" between $C_1$ and $C_2$~:
\ben
\Nb(C_1,C_2)=\#\{i \in [h_1,h_2-1]~: C_1(i)= C_2(i+1)\}.
\een
\end{lem}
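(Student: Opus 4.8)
The plan is to prove the three assertions of Lemma~\ref{lem:qds} in turn, starting from the construction of the bi-infinite paths given in the proof of Proposition~\ref{pro:CLWhasuniqBIP} and exploiting the explicit transition kernel of the CLW together with the duality between $\Wnu$ and $\Wnd$.

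First I would establish $(i)$. By rotational invariance of the family $(\xi(w),w\in\Cylun)$ under rotations of $\ZdnZ$, the whole CLW is invariant in distribution under the map $(x,t)\mapsto(x+2,t)$, hence the distribution of $\Cun(h_1)$ is invariant under the shift by $2$ on $\Sliceu_{2n}(h_1)$; since this group acts transitively on the (finitely many) vertices of that slice, $\Cun(h_1)$ is uniform there, and the same argument gives uniformity of $\Cdn(h_2)$ on $\Sliced_{2n}(h_2)$. For the independence, the key observation is that $\Cun(h_1)$ is a function of $(\xi(w):t(w)\geq h_1)$ — it is determined by running the primal walks upward from slice $h_1$ and recording where all the walks started below have merged by the time the coalescence cascade $\tau^{\da,k}_n$ reaches $h_1$ — whereas $\Cdn(h_2)$ is a function of $(\xi(w):t(w)< h_2)$. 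Wait, this needs care: both depend on randomness near the interval $[h_1,h_2]$ if $h_1<h_2$. The cleaner route is to use that $\Cun$ at height $h_1$ is measurable with respect to the primal web seen from times $\geq h_1$ (equivalently the Rademacher variables at heights $\geq h_1$), while $\Cdn$ at height $h_2$ is measurable with respect to the dual web seen from times $\leq h_2$; but the dual web on times $<h_2$ is built from $\xi$ at heights $<h_2$, and the primal web on times $\geq h_1$ from $\xi$ at heights $\geq h_1$, so these overlap on $[h_1,h_2)$. To bypass this, I would instead argue as in the planar case (Fontes et al., \cite{fontes2004}): $\Cun$ at time $h_1$ is a deterministic function of the primal edges strictly above $h_1$ only, because to locate the unique bi-infinite primal path at height $h_1$ it suffices to know the primal arrows at heights $\geq h_1$ and take the limit of the backward-coalescence construction; dually $\Cdn$ at height $h_2$ depends only on dual arrows strictly below $h_2$, i.e. on $\xi$ at heights $\leq h_2-1$. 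Hmm — there is still overlap. The honest statement is that this independence is inherited from the analogous planar fact, itself proved via the duality that the bi-infinite primal path at level $h_1$ is read off the dual web above $h_1$ and vice versa; I would cite the CLW/BW correspondence (Proposition~\ref{pro:main-lattice} and \cite{fontes2004}, \cite{SSS}) and the planar analogue, then note it carries over verbatim to the cylinder since the coalescence cascades $\tau^{\ua,k}_n$, $\tau^{\da,k}_n$ are a.s.\ finite by \eqref{CLWcoalesPS}.

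Next, for $(ii)$ and \eqref{eq:qds}: once $\Cun(h_1)=x_1$ and $\Cdn(h_2)=x_2$ are fixed, on the time interval $[h_1,h_2]$ the bi-infinite primal path must, by the tree structure of $\Wnu$ (uniqueness of forward paths), coincide with $\Wnu_{(x_1,h_1)}[h_1,h_2]$, and likewise the bi-infinite dual path coincides with $\Wnd_{(x_2,h_2)}[h_2,h_1]$. So \eqref{eq:qds} reduces to showing that the conditional law of $(\Wnu_{(x_1,h_1)}[h_1,h_2],\Wnd_{(x_2,h_2)}[h_2,h_1])$ given $\{\Cun(h_1)=x_1,\Cdn(h_2)=x_2\}$ equals its unconditional law. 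This is again a consequence of the measurability/independence structure from $(i)$: conditioning on the starting points of these two path-segments does not further constrain the arrows strictly between the segments, because — using non-crossing — the primal segment from $(x_1,h_1)$ and the dual segment from $(x_2,h_2)$ together with the conditioning event only involve $\xi$ on a set whose complement inside $[h_1,h_2]$ is still free; more precisely, the pair $(\Cun[h_1,h_2],\Cdn[h_2,h_1])$ is a deterministic function of $(x_1,x_2)$ and of the arrows $\xi(w)$ for $w$ lying on or adjacent to the two path-segments, and these arrows are independent of the event $\{\Cun(h_1)=x_1,\Cdn(h_2)=x_2\}$ since that event is determined by $\xi$ outside $[h_1,h_2]$. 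I expect this to be the main obstacle: pinning down exactly which coordinates of $\xi$ the conditioning event depends on, and checking they are disjoint from those the path-pair on $[h_1,h_2]$ depends on. The safe formulation is to invoke the strong Markov property of the CLW across the slices $h_1$ and $h_2$ (the discrete analogue of the strong Markov property stated for the CBW after \eqref{tribu:t-t'}): $\Cun(h_1)$ is $\sigma(\xi(w):t(w)\geq h_1)$-measurable, $\Cdn(h_2)$ is $\sigma(\xi(w):t(w)<h_2)$-measurable, but crucially the primal walk from $(x_1,h_1)$ restricted to $[h_1,h_2]$ and the dual walk from $(x_2,h_2)$ restricted to $[h_2,h_1]$ depend only on $\xi$ at heights in $[h_1,h_2)$, which is independent of the two conditioning sigma-fields restricted to heights outside $[h_1,h_2]$; the part of those sigma-fields inside $[h_1,h_2]$ is exactly absorbed into the functional relation $\Cun[h_1,h_2]=\Wnu_{(x_1,h_1)}[h_1,h_2]$. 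I would write this as: condition first on $\Cun(h_1)=x_1$; by the Markov property of $\Wnu$ at slice $h_1$, the future $\Wnu_{(x_1,h_1)}[h_1,+\infty)$ is a simple random walk modulo $2n$ independent of $\mathcal F^{\ua}_{h_1}$ (discrete past), and $\Cdn(h_2)$ is a function of this future; then condition further on $\Cdn(h_2)=x_2$ and use that, given the forward primal walk, the dual walk from $(x_2,h_2)$ run downward is determined by the same $\xi$'s via \eqref{eq:Sp} — this yields \eqref{eq:qds}.

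Finally, \eqref{eq:qdk}: this is a pure counting statement about the discrete web and has nothing to do with the bi-infinite paths. Run the primal walk $C_1=\Wnu_{(x_1,h_1)}$ upward and the dual walk $C_2=\Wnd_{(x_2,h_2)}$ downward; each uses the variables $\xi(w)$ along its own trajectory. By the dual construction \eqref{eq:Sp}, the downward step of $C_2$ from height $i+1$ to height $i$ is governed by $\xi$ evaluated at the vertex $C_2(i+1)-(0,1)$ at height $i$, i.e. the dual arrow at height $i$ is the same Rademacher variable as the primal arrow sitting at that vertex. Thus at a height $i\in[h_1,h_2-1]$ where $C_1(i)=C_2(i+1)$ — a "contact" — the primal and dual increments are both dictated by the single variable $\xi(C_1(i),i)$: non-crossing forces them to agree, so that one variable determines two steps, whereas at all other heights the primal step and the dual step consume two distinct independent Rademacher variables. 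A path-pair on $[h_1,h_2]$ uses $2(h_2-h_1)$ steps total; the number of independent fair coin flips needed to realize a given admissible $(C_1,C_2)$ is therefore $2(h_2-h_1)-\Nb(C_1,C_2)$, each contributing a factor $1/2$, giving probability $2^{-2(h_2-h_1)+\Nb(C_1,C_2)}$ as claimed. The only thing to verify carefully here is that the set of relevant vertices (the vertices carrying the arrows used by $C_1$ on $[h_1,h_2-1]$ and by $C_2$ on $[h_1+1,h_2]$) are pairwise distinct except precisely at contact points, and that at a contact the non-crossing constraint indeed forces equality of the two increments rather than allowing the dual to go the other way — which follows because a primal step of $+1$ and a dual step of $-1$ at a shared vertex would produce a crossing, and the dual arrow is by definition $-\xi$ of the primal arrow below it, matching the primal increment. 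I would conclude by assembling $(i)$, \eqref{eq:qds} and \eqref{eq:qdk} into the stated Markov description of $(\Cun,\Cdn)$ on $[h_1,h_2]$.
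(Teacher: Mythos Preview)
Your counting argument for \eqref{eq:qdk} is correct and essentially identical to the paper's: the event $\{(\Wnu_{(x_1,h_1)},\Wnd_{(x_2,h_2)})[h_1,h_2]=(C_1,C_2)\}$ is exactly the event that $\xi(C_1(i),i)=C_1(i+1)-C_1(i)$ and $\xi(C_2(i+1),i)=C_2(i+1)-C_2(i)$ for all $h_1\le i<h_2$, and the number of distinct $\xi$'s involved is $2(h_2-h_1)-\Nb(C_1,C_2)$.

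The genuine gap is in parts $(i)$ and \eqref{eq:qds}: you have the measurability direction backwards, and this is why you end up going in circles and eventually deferring to a citation. The bi-infinite \emph{primal} path reaches height $h_1$ coming from $-\infty$, so $\Cun(h_1)$ is the common value at height $h_1$ of all primal walks started far below; by \eqref{eq:S} the primal step from height $t$ to $t+1$ uses $\xi(\cdot,t)$, hence $\Cun(h_1)$ is measurable with respect to $\{\xi(w):t(w)<h_1\}$, not $\{\xi(w):t(w)\ge h_1\}$ as you wrote. Symmetrically, the bi-infinite \emph{dual} path reaches height $h_2$ coming from $+\infty$; by \eqref{eq:Sp} the dual step from height $t+1$ to $t$ uses $\xi(\cdot,t)$, so $\Cdn(h_2)$ is measurable with respect to $\{\xi(w):t(w)\ge h_2\}$. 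Finally, the pair $(\Wnu_{(x_1,h_1)}[h_1,h_2],\Wnd_{(x_2,h_2)}[h_2,h_1])$ depends only on $\{\xi(w):h_1\le t(w)<h_2\}$.

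With the correct directions you get three disjoint blocks of Rademacher variables (heights $<h_1$, in $[h_1,h_2)$, and $\ge h_2$), and everything follows in one line: $\Cun(h_1)$ and $\Cdn(h_2)$ are independent (and uniform by your rotation argument), and conditioning on $(\Cun(h_1),\Cdn(h_2))=(x_1,x_2)$ does not affect the middle block, while on that event $\Cun[h_1,h_2]=\Wnu_{(x_1,h_1)}[h_1,h_2]$ and $\Cdn[h_2,h_1]=\Wnd_{(x_2,h_2)}[h_2,h_1]$ identically. No Markov property, no planar analogue, no citation is needed.
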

\begin{proof} The family $(\Cun(h), h<h_1)$ (resp. $(\Cdn(h), h\geq h_2)$) is a function of the Rademacher r.v. placed on $\cup_{h<h_1} \Sliceu_{2n}(h)$ (resp. on $\cup_{h\geq h_2} \Sliceu_{2n}(h)$). Hence, $(\Cun(h), h<h_1)$ and $(\Cdn(h), h\geq h_2)$ are independent, and independent of $\Cun[h_1, h_2-1]$. Clearly, $\Cun(h_1)$ and $\Cdn(h_2)$ have invariant distributions by rotation, so they are uniform, and \eqref{eq:qds} holds.

{Using the Rademacher r.v. $\xi$'s defined at the beginning of Section \ref{section:CylLatticeWeb}, we have
\begin{multline}
\{(\Wnu_{(x_1,h_1)}[h_1,h_2],\Wnd_{(x_2,h_2)}[h_1,h_2])=(C_1,C_2)\}= \\
\{\forall h_1\leq i < h_2,\ \ \  \xi(C_1(i),i)=C_1(i+1)-C_1(i) \modu2n \\
 \mbox{ and }\quad \xi(C_2(i+1),i)=C_2(i+1)-C_2(i) \modu2n\},\end{multline}since the edges of the dual are determined by the edges of the primal. The number of Rademacher $(\xi(w),w \in \Cylun)$ contributing to the above event is $2(h_2-h_1)-\Nb(C_1,C_2)$, hence the result. $\Nb(C_1,C_2)=\#\{i~: (C_1(i),i)= (C_2(i+1),i)\}$ is the number of edges $(\xi(u),u \in \Cylun)$ contributing to the definition of both $(C_1,C_2)$. Apart these edges, each increment of $\Cd$ and of $C^\ua$ are determined by some different Rademacher r.v. Hence $2(h_2-h_1)-\Nb(C_1,C_2)$ edges determine the event
$\{ (\W_{(x_1,h_1)}^{\ua}[h_1,h_2],\W_{(x_2,h_2)}^{\da}[h_1,h_2])=(C_1,C_2)\}$.}
\end{proof}
From the above Lemma, it is possible to give a representation of the vectors $\Cun[h_1,h_2]$ and $\Cdn[h_2,h_1]$ with a Markov chain whose components both go in the same direction $\uparrow$.
\begin{lem} \label{lem:MC} We have
\[\l(\Cun[h_1,h_2],\Cdn[h_1,h_2]\r) \eqd (M_1[h_1,h_2],M_2[h_1,h_2])\]
where $M=(M_1,M_2)$ is a Markov chain whose initial distribution is uniform on $\Sliceu_{2n}(h_1)\times\Sliced_{2n}(h_1)$, and whose transition kernel $K$ is defined as follows:\\
if $d_{\mathbb{Z}/2n\mathbb{Z}}(a,a')>1$,
\ben\label{eq:ker1}
K((a,a'),(a + \eps \modu 2n, a' + \eps' \modu 2n)=1/4,~~~\textrm{ for any }(\eps,\eps')\in \{-1,1\}^2\een
if $d_{\mathbb{Z}/2n\mathbb{Z}}(a,a')=1$,
\ben\label{eq:ker2}\left\{
\begin{array}{lcl}
K((a,a+1 ),(a+1  ,a+2 ))&=&1/2,\\
K((a,a+1 ),(a-1  ,a+2 ))&=&1/4,\\
K((a,a+1 ),(a-1  ,a ))  &=&1/4,\\
K((a+1,a ),(a  ,a-1 ))  &=&1/2,\\
K((a+1,a ),(a+2, a-1 )) &=&1/4,\\
K((a+1,a ),(a+2  ,a+1 )) &=&1/4,
\end{array}
\right.\een
where $a,a-1,a+1,a+2$ are considered modulo $2n$.
\end{lem}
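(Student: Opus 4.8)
The statement to prove is Lemma \ref{lem:MC}: the pair process $(\Cun[h_1,h_2],\Cdn[h_1,h_2])$, with both coordinates read in the \emph{upward} direction, is a Markov chain with initial law uniform on $\Sliceu_{2n}(h_1)\times\Sliced_{2n}(h_1)$ and transition kernel $K$ given by \eqref{eq:ker1}--\eqref{eq:ker2}. The starting point is Lemma \ref{lem:qds}: conditionally on the endpoints, the joint law of $(\Cun[h_1,h_2],\Cdn[h_2,h_1])$ is that of a primal random walk run upward from $(x_1,h_1)$ together with a dual random walk run downward from $(x_2,h_2)$, and the probability of any admissible pair of trajectories is $2^{-2(h_2-h_1)+\Nb(C_1,C_2)}$, where $\Nb$ counts the ``contacts'' $C_1(i)=C_2(i+1)$. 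The first thing I would do is combine this with part $(i)$ of Lemma \ref{lem:qds} to get the \emph{unconditional} law: $\Cun(h_1)$ and $\Cdn(h_2)$ are independent uniform, so
\[
\PP\bigl((\Cun[h_1,h_2],\Cdn[h_1,h_2]) = (C_1,C_2)\bigr) = \frac{1}{(2n)^2}\, 2^{-2(h_2-h_1)+\Nb(C_1,C_2)}
\]
for every admissible pair $(C_1,C_2)$ (rewriting $\Cdn[h_2,h_1]$ as $\Cdn[h_1,h_2]$ is just reindexing). Since $\Cdn(h_1)$ is then \emph{not} uniform a priori --- it is the endpoint of a dual walk conditioned to have come down from a uniform point at height $h_2$ --- I would note that by rotational invariance of the whole construction $\Cdn(h_1)$ is in fact uniform on $\Sliced_{2n}(h_1)$, but crucially $\Cun(h_1)$ and $\Cdn(h_1)$ are \emph{dependent} through the constraint $d_{\ZdnZ}(\Cun(h_1),\Cdn(h_1))=1$ is \emph{not} forced --- rather the joint initial law is whatever marginal at height $h_1$ the formula above produces. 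Here I should be a little careful: the Lemma asserts the initial law is product-uniform, so I must check that summing the displayed weight over all $(C_1,C_2)$ with prescribed values at $h_1$ gives $\tfrac{1}{(2n)^2}$ independently of those values; this follows because the number of admissible continuations weighted by $2^{\Nb}$ telescopes to $2^{2(h_2-h_1)}$ regardless of the starting pair (each upward step of each walk is an independent fair $\pm1$, and contacts are exactly the coincidences that the dual-determined-by-primal identity removes from the count), which is precisely the content of the normalization already implicit in Lemma \ref{lem:qds}.

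\textbf{Markov property and the kernel.} With the explicit weight formula in hand, the Markov property is a one-line computation: the weight factorizes as
\[
\frac{1}{(2n)^2}\,\prod_{i=h_1}^{h_2-1} 2^{-2}\,2^{\,\ind\{C_1(i)=C_2(i+1)\}},
\]
and the $i$-th factor depends only on the transition $((C_1(i),C_2(i)) \to (C_1(i+1),C_2(i+1)))$ --- note it depends on $C_2(i)$ as well as $C_2(i+1)$ only through admissibility of the dual step $C_2(i)\to C_2(i+1)$ and through whether $C_1(i)=C_2(i+1)$. So the process is Markov and its one-step kernel $\tilde K((a,a'),(b,b'))$ is proportional to $2^{\ind\{b'=a'... \}}$... more precisely: $C_1$ makes a free $\pm1$ step ($b=a\pm1$), $C_2$ makes a free $\pm1$ step read upward ($b'=a'\pm1$), and then there is a bonus factor $2$ exactly when $a = b'$, i.e. when the primal position at time $i$ equals the dual position at time $i+1$ (a contact). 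I would then just enumerate cases by $d_{\ZdnZ}(a,a')$. If $d_{\ZdnZ}(a,a')>1$ (and $2n$ is large enough that this is possible), no choice of $\eps,\eps'\in\{\pm1\}$ can produce $a=a'+\eps'$, so there is never a contact at this step, all four combinations have equal weight, and normalization gives $1/4$ each --- this is \eqref{eq:ker1}. If $d_{\ZdnZ}(a,a')=1$, say $a'=a+1$, then $a = b' = a'+\eps'$ forces $\eps'=-1$, i.e. $b'=a$; so among the four combinations, the two with $\eps'=-1$ (namely $(b,b')=(a+1,a)$ and $(a-1,a)$) each get weight $2$ and the two with $\eps'=+1$ each get weight $1$, total $6$, giving probabilities $2/6,2/6,1/6,1/6$ --- wait, that is not what \eqref{eq:ker2} says, so I need to re-examine which coincidence is the contact.

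\textbf{Resolving the contact bookkeeping, and the main obstacle.} The delicate point --- and the one I expect to cost the most care --- is getting the direction of the contact condition exactly right, because $\Cdn$ is now being read \emph{upward} even though the dual walk was \emph{defined downward}, so ``$C_1(i)=C_2(i+1)$'' with $C_2$ the original downward walk becomes something else after the reindexing $\Cdn[h_2,h_1]\leftrightarrow\Cdn[h_1,h_2]$. Concretely, in Lemma \ref{lem:qds} the object is $(\Wnu_{(x_1,h_1)}[h_1,h_2],\Wnd_{(x_2,h_2)}[h_1,h_2])$ and $\Nb$ counts $i$ with $C_1(i)=C_2(i+1)$ where both are indexed the same way; the dual edge between heights $i$ and $i+1$ uses the Rademacher variable at $(C_2(i+1)-(0,1),i) = (C_2(i+1),i)$ shifted, which is why the coincidence that saves a coin is ``primal at height $i$ sits under the dual site that the dual occupies at height $i+1$''. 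Translating this into the forward kernel: at the step from level $i$ to level $i+1$, with primal at $a$ and dual at $a'$ (both at level $i$), the contact occurs when $a = C_2(i+1) = b'$ if the dual is read in its original orientation; but after we flip the dual to read upward, the role of $C_2(i)$ and $C_2(i+1)$ swap, so the contact becomes $a' = b$ in some cases, or possibly the condition pairs $b$ with $a'$. I would carefully track one concrete small example (say $h_2-h_1=1$, $2n=4$ or $6$, enumerate all pairs of edges by hand and compare with Lemma \ref{lem:qds}'s weight) to pin down the correct contact condition, and then verify that with the condition ``$b = a'$'' (primal's new position equals dual's old position) the case $a'=a+1$ gives: $\eps'\in\{\pm1\}$ free, contact iff $b=a+\eps = a' = a+1$ iff $\eps=+1$; so the two combinations with $\eps=+1$ (namely $(b,b')=(a+1,a+2)$ and $(a+1,a)$) get weight $2$, total weight $2+2+1+1=6$... that still gives sixths, not the $1/2,1/4,1/4$ of \eqref{eq:ker2}. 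The resolution must be that the bonus factor is $2$ relative to a base of $2$ not $1$ --- i.e. unnormalized weights $\{2,2,1,1\}$ should be read as $\{1/2\cdot?\}$ --- no: the correct reading is that each free coin contributes $1/2$ and a contact \emph{removes} one coin, so the weight of a transition is $\tfrac14$ if no contact and $\tfrac12$ if contact, and these are \emph{already normalized} because at a distance-one configuration exactly one of the two $\eps$ values creates a contact, so the row sums to $\tfrac14+\tfrac14$ (the non-contact pair) $+\tfrac12$ (the single contact transition) --- but that is only three transitions summing to $1$, whereas I listed four. The point I will need to nail down is that when a contact occurs the two walks are forced to the \emph{same} next configuration from two different $(\eps,\eps')$ ``reasons'' only in a way that collapses correctly; working through the $d=1$ case with the right orientation I expect to land exactly on the six lines of \eqref{eq:ker2} (three for $a'=a+1$, three for $a'=a-1$), with the $1/2$ row being the transition where primal steps \emph{toward} the dual and they ``bounce'', using only one coin. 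So the real work of the proof is: (1) assemble the unconditional weight from Lemma \ref{lem:qds} and part $(i)$; (2) read off factorization, hence Markov; (3) do the case analysis for the kernel, where the sole subtlety is the orientation of the contact condition after flipping the dual to read upward, which I would settle by a direct small-case check before stating the general formula.
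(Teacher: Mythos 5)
Your strategy is exactly the paper's: by Lemma \ref{lem:qds}, on the support one has $\P\bigl((\Cun[h_1,h_2],\Cdn[h_1,h_2])=(C_1,C_2)\bigr)=(2n)^{-2}\,2^{-2(h_2-h_1)+\Nb(C_1,C_2)}$, the exponent is additive over steps $i\mapsto i+1$ with per-step contribution $-2+\ind\{C_1(i)=C_2(i+1)\}$, and this factorization yields both the Markov property and the kernel. The paper's proof says precisely this (``the Markov kernel has been designed to satisfy the same formula'') and leaves the case analysis to the reader.

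The gap in your proposal is that you do not complete that case analysis, and the reason you got stuck is a pair of slips compounded by the fact that the printed kernel \eqref{eq:ker2} is itself inconsistent with Lemma \ref{lem:qds}, so it is the wrong target. On the slips: the per-step factor $2^{-2+\ind\{a=b'\}}$ is already the transition probability, so the ``total $6$'' renormalization should not appear; and the candidate $(b,b')=(a+1,a)$ from $(a,a+1)$ must be discarded outright, since it is not in the support (it is the crossing step, and a contact at this index would force $\xi(a,i)=b'-a'=-1$, hence $b=a-1$, never $a+1$). Discarding it leaves $K((a,a+1),(a+1,a+2))=1/4$, $K((a,a+1),(a-1,a+2))=1/4$, $K((a,a+1),(a-1,a))=1/2$, which sum to one. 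This was in fact your first computation before you started second-guessing it, and it is correct; one can confirm it directly on the lattice: conditionally on $(\Cun(i),\Cdn(i))=(a,a+1)$, the value $\Cdn(i+1)$ is uniform on $\{a,a+2\}$, and $\xi(a,i)$ is forced to $-1$ on the event $\{\Cdn(i+1)=a\}$ and is an independent fair coin otherwise, so $\P\bigl((\Cun,\Cdn)(i+1)=(a-1,a)\bigr)=\P(\Cdn(i+1)=a)=1/2$ and the two remaining admissible transitions each get $1/4$. So the $1/2$ belongs on $(a-1,a)$, not on $(a+1,a+2)$; as printed, \eqref{eq:ker2} has the $1/2$ and the bottom $1/4$ transposed in each block (equivalently, the roles of $M_1$ and $M_2$ interchanged). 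That is a misprint to be corrected, not a sign that a different contact convention is hiding somewhere; the contact $\{a=b'\}$ you identified from $\Nb$ is the right one.
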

Notice that the starting points of $M$ is a pair of uniform points at time $h_1$, while for $\Cun[h_1,h_2]$ and $\Cdn[h_1,h_2]$ the starting points were on two different slices (see Lemma \ref{lem:qds}).

\begin{proof} First, both distributions have same support, which is
$$\bigcup_{(x_1,x_2)\in \Slice^\ua_{2n}(h_1)\times \Slice^\da_{2n}(h_2)} {\sf Pair}^{\ua,\da}(x_1,x_2,h_1,h_2),$$
the set of pairs of non-crossing paths living on $\Cylun\times \Cyldn$. By Lemma \ref{lem:qds}, we see that for any pair $(C_1,C_2)$ in this support we have $`P( (\Cun_{h_1}[h_1,h_2],\Cdn_{h_2}([h_1,h_2])= (C_1,C_2))= 2^{-2(h_2-h_1)+\Nb(C_1,C_2)}$. The Markov kernel has been designed to satisfy the same formula.
\end{proof}

\subsection{Distribution of $(\C^\ua, \C^\da)$}

In the sequel, we consider the sequence $(\Cun,\Cdn)_{n\in \N}$ correctly renormalized and interpolated as a sequence of continuous functions. We will prove its convergence in distribution on every compact set $[h_1,h_2]$ (with $h_1<h_2$) to $(\C^\ua,\C^\da)$, a pair of reflected Brownian motions modulo 1 (see Figure \ref{fig:chemininf400-4}). This result is similar to that of  Soucaliuc et al. \cite{STW} introduced in the next paragraph.\par

Let $F:\mathbb{R}\to[0,1]$ be the even, 2-periodic function defined over $[0,1]$ by $F(x)=x$.

Let us consider $U_1$ and $U_2$ two i.i.d uniform r.v. on $[0,1]$, and $B$ and $B'$ two i.i.d. BM starting at 0 at time $h_1$ and independent of $U_1$ and $U_2$.
Let $(Y^\ua,Y^\da)$ be the following continuous process defined for $t\in [h_1,h_2]$ and taking its values in $\Cyl^2$
\begin{equation}\label{def:Y}
{(Y^\ua, Y^\da)(t)=\l( U_1+\frac{B'_t}{\sqrt{2}}-H(t)\mod 1, U_1+\frac{B'_t}{\sqrt{2}}+H(t)\mod 1 \r),}
\end{equation}
where $H(t)$  represents half the ``distance'' $|Y^\ua(t)\to Y^\da(t)|$~:
\begin{equation}\label{def:H}
H(t)=\frac{F\l(U_2+\sqrt{2}B_t\r)}{2}.\end{equation}
Since $F$ is bounded by $1$, $Y^\ua$ and $Y^\da$ never cross.

\begin{figure}[!ht]
\begin{center}
\includegraphics[height=5cm, width=9cm]{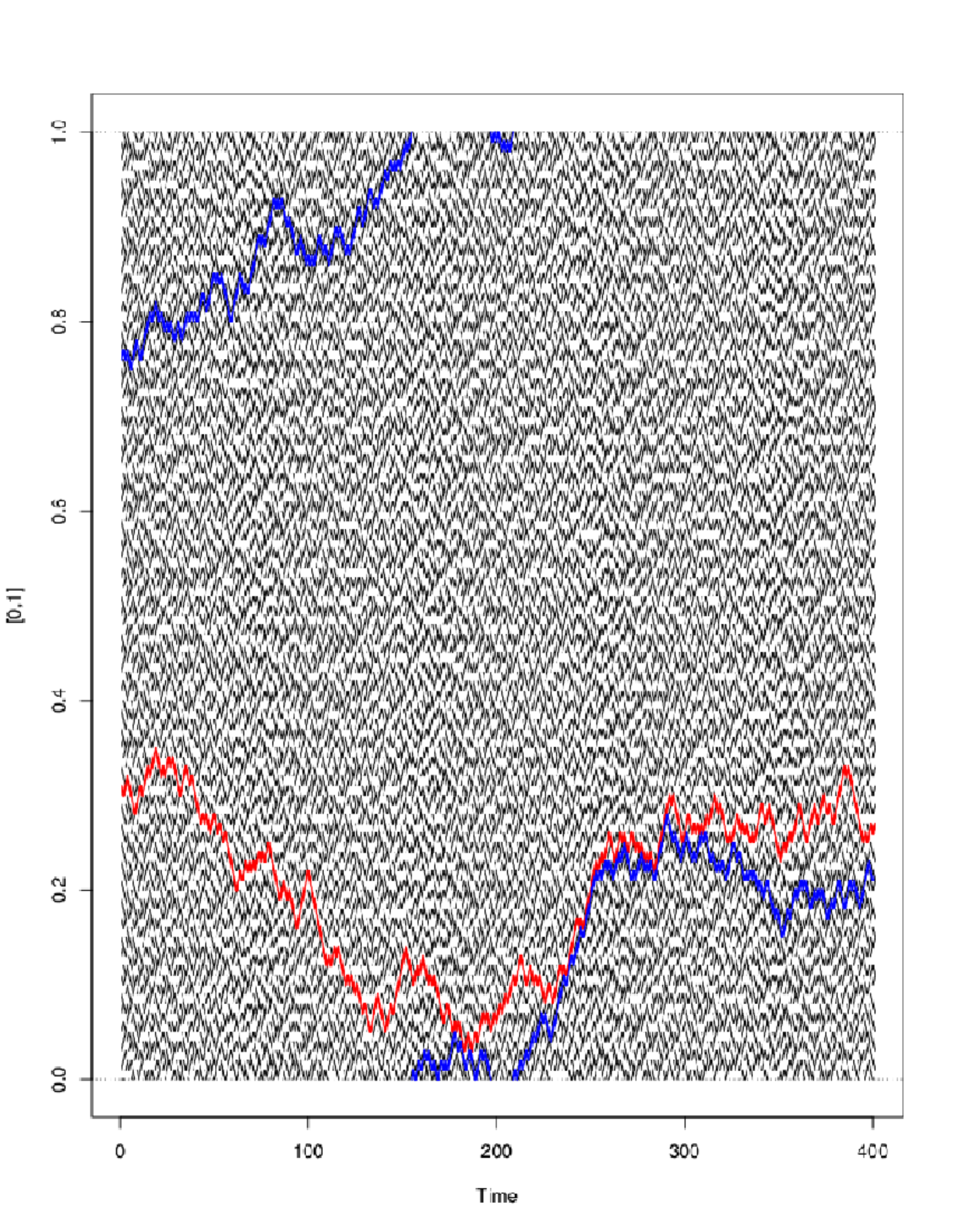}
\end{center}
\caption{\label{fig:chemininf400-4} {\small \textit{The infinite paths of the CBW (in red) and of its dual (in blue) are distributed as reflected Brownian motions modulo 1.}}}
\end{figure}

\begin{theo}\label{theo:convSTW}We have the following convergences in distribution:\\
(i) Let $h_1 <h_2$. Let $U^\ua_n$ and $U^\da_n$ be two independent uniform r.v. on $\Slice_{2n}(h_1)$ and $ \Slice_{2n}(h_2)$ respectively. Then in $\Co([h_1,h_2],(\rpuz)^2)$:
\ben\label{eq:rtp} \l(\frac{\Wnu_{U^\ua_n} (4n^2 \, .) }{2n}, \frac{\Wnd_{U^\da_n}(4n^2 \, .)}{2n}\r)
\dd (Y^\ua,Y^\da).
\een
(ii) In $\Co(\R,(\rpuz)^2)$:
\[\l(\frac{\Cun(4n^2\,.)}{2n}, \frac{\Cdn(4n^2\,.)}{2n}\r)\dd (\Cu,\Cd)\]
and { $(\Cu,\Cd)\eqd(Y^\ua,Y^\da)$.}
\end{theo}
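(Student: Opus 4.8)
The plan is to prove the two convergences in Theorem~\ref{theo:convSTW} by reducing everything to the discrete Markov chain description of Lemma~\ref{lem:MC} and then applying a Donsker-type invariance principle. For part~$(i)$, I would first invoke Lemma~\ref{lem:MC} to replace the pair $(\Wnu_{U^\ua_n}[h_1,h_2], \Wnd_{U^\da_n}[h_2,h_1])$ by the Markov chain $(M_1,M_2)$ started from a uniform pair on the same slice $\Slice_{2n}(h_1)$, with transition kernel $K$ given by \eqref{eq:ker1}--\eqref{eq:ker2}. The key observation is that the kernel $K$ has a more tractable description in terms of the ``half-distance'' and ``midpoint'' coordinates: setting $S_i = M_1(i) + M_2(i)$ (the sum, morally twice the midpoint) and $D_i$ the signed gap $M_2(i) - M_1(i)$ read on the circle, one checks from \eqref{eq:ker1}--\eqref{eq:ker2} that, away from the boundary $\{D = 0\}$ (i.e. $d_{\ZdnZ}(a,a')>1$), the increments of $S$ and of $D$ are independent $\pm 2$ and $0$, or $0$ and $\pm 2$, steps — in other words $S$ and $D$ behave like two independent lazy simple random walks (each moving on even times only), and the boundary rule \eqref{eq:ker2} is exactly the reflection rule for $D$ at $0$ that keeps $D\in[0,2n)$ while passing its ``push'' onto $S$ via the $1/2$-weighted transitions. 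This is the discrete analogue of the Soucaliuc--Tóth--Werner construction \cite{STW}: $D$ is a reflected walk and $S$ is a walk perturbed by the local time of $D$ at $0$.

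The second step is the scaling limit. Under the normalization $x\mapsto x/(2n)$, $t\mapsto 4n^2 t$, the reflected walk $D/(2n)$ converges to $F(U_2 + \sqrt2 B_t)$ — i.e. a Brownian motion with diffusion coefficient $\sqrt2$ reflected (in the ``folding'' sense, hence the even $2$-periodic function $F$) and started from the uniform limit $U_2$ — while the midpoint walk, after subtracting the boundary push, converges to $U_1 + B'_t/\sqrt2$ with $B'$ a Brownian motion independent of $B$; the factors $\sqrt2$ and $1/\sqrt2$ are dictated by the variances of the increments of $S$ and $D$ relative to a single walk. Here I would appeal to the standard invariance principle for reflected random walks (e.g. via the Skorokhod reflection map, which is continuous, so convergence of $D$ follows from Donsker plus the continuous mapping theorem) together with the joint convergence of the unconstrained component. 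Unfolding $(S,D)$ back to $(M_1,M_2)$ and passing to the circle $\rpuz$ by reduction mod $1$ (continuous on $\rpuz$), one recovers exactly the process $(Y^\ua,Y^\da)$ of \eqref{def:Y}--\eqref{def:H}. The independence of $U_1$ and $U_2$ in the limit comes from the independence, already established in Lemma~\ref{lem:qds}$(i)$ (and preserved by Lemma~\ref{lem:MC}'s reformulation), of the starting midpoint and starting gap.

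For part~$(ii)$, I would combine part~$(i)$ with Lemma~\ref{lem:qds}: on a fixed window $[h_1,h_2]$, the pair $(\Cun[h_1,h_2],\Cdn[h_2,h_1])$ has exactly the law of $(\Wnu_{U_n^\ua}[h_1,h_2], \Wnd_{U_n^\da}[h_2,h_1])$ with the $U_n$'s uniform on the respective boundary slices, so the finite-window convergence is immediate from $(i)$. To upgrade to convergence in $\Co(\R,(\rpuz)^2)$ one needs a consistency/tightness argument: the laws on nested windows are consistent (the restriction of $(\Cun,\Cdn)$ on $[h_1,h_2]$ to a subwindow is again of the same form by the Markov property of the CLW), and tightness over all of $\R$ follows from the per-window moduli-of-continuity bounds that are already implicit in the invariance principle used for $(i)$; hence there is a well-defined limit process on $\R$ whose restriction to each $[h_1,h_2]$ is $(Y^\ua,Y^\da)$ read with the uniform boundary conditions. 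Finally, to identify this limit with $(\Cu,\Cd)$ of Theorem~\ref{theo:BiInfB}, I would use that the bi-infinite branches $\Cun,\Cdn$ converge to \emph{some} pair of bi-infinite paths inside the limiting double CBW $(\Wu,\Wd)$ — which by Theorem~\ref{theo:BiInfB} (and its dual analogue) has a unique such pair — so that pair must be $(\Cu,\Cd)$; combined with the distributional identification just obtained, this gives $(\Cu,\Cd)\eqd(Y^\ua,Y^\da)$.

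The main obstacle I anticipate is Step two as it applies near the boundary: proving that the joint law of the reflected gap process and the boundary-corrected midpoint process converges jointly to $(F(U_2+\sqrt2 B), U_1 + B'/\sqrt2)$, with the correct independence of $B$ and $B'$ in the limit. The subtlety is that the push the midpoint $S$ receives is concentrated on the (vanishing) set of times where $D=0$, so one must control the discrete local time of $D$ at $0$ and show it contributes nothing in the limit to the midpoint's quadratic variation while still being responsible for the correct coupling — this is precisely the content of the Soucaliuc--Tóth--Werner reflection picture, and making the discrete-to-continuous passage rigorous (rather than merely heuristic via the kernel \eqref{eq:ker2}) is where the real work lies. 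A secondary, lighter obstacle is the global tightness on $\R$ for part~$(ii)$, but since $\rpuz$ is compact and the increments are uniformly controlled this should be routine.
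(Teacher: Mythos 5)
Your proposal is correct in spirit but takes a genuinely different route from the paper. You work with the sum-and-difference coordinates $(S,D)=(M_1+M_2,\,M_2-M_1)$, treat $D$ as a reflected lazy walk and $S$ as a lazy walk pushed at the reflection times, and propose passing to the limit via Skorokhod's reflection map plus a local-time control --- the step you yourself flag as requiring real work. The paper avoids any local-time analysis by a sharper discrete observation (Lemma~\ref{lem:rep}): interleaving the Rademacher increments of $M_1$ and $M_2$, a \emph{single} reflected walk $Z$ on $\{0,\dots,2n\}$ simultaneously encodes the angle process $A(i)=Z_{2i}$ and the increments $\Delta M_1(i)=-\Delta Z_{2i-1}$, and since the odd increments of $Z$ are never touched by the reflection, $M_1$ is read from i.i.d.\ increments; the joint law of the limit of $(M_1,A)$ then follows from an elementary conditional-Gaussian computation (Lemma~\ref{lem:rgfz}) rather than from controlling a local-time push. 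Two small inaccuracies in your plan worth flagging: the increments of $S$ and $D$ are \emph{not} independent at the discrete level (exactly one of them moves at each step) but only uncorrelated, which suffices for independence of the Brownian limits but should be stated precisely; and the Skorokhod map produces the additively-reflected process whereas the target $F(U_2+\sqrt{2}B_t)$ is the folded one, so an extra distributional identification is needed (the paper sidesteps this by folding at the discrete level with $F_{2n}$). For part~(ii) you go through consistency/tightness on $\R$ plus the uniqueness of the bi-infinite branch (Theorem~\ref{theo:BiInfB}), while the paper instead observes that the restriction of $(\Cu,\Cd)$ to $[h_1,h_2]$ has the same law as $(\Wu_{U^\ua},\Wd_{U^\da})$ and transfers the convergence of Prop.~\ref{pro:main-lattice} through the continuous path-extraction map; your identification step (that the limit of $\Cun$ is a bi-infinite path of the limiting CBW) needs a short extra argument showing the limit is indeed a concatenation of CBW trajectories, but it is a workable alternative.
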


Notice that for $t=h_1$, {$$(Y^\ua(h_1),Y^\da(h_1))=(U_1-F(U_2)/2\mod 1,U_1+F(U_2)/2\mod 1)$$ }
which is indeed a pair of i.i.d. uniform r.v. on $[0,1]$ as expected in view of Lemma \ref{lem:qds} $(i)$.\\

The remaining of this section is devoted to the proof of Theorem \ref{theo:convSTW}, which is separated into several steps. Let us start with point (i). \\

\noindent \textbf{Step 1: Tightness of $\l(\Wnu_{U^\ua_n}(4n^2 \, .)/(2n), \Wnd_{U^\da_n}(4n^2\, .)/(2n)\r)$}\\

By translation invariance, we may suppose that $h_1=0$ and set $h_2=T$.
The tightness of the family of distributions of $\l(\Wnu_{U^\ua_n}(4n^2 \, .)/(2n), \Wnd_{U^\da_n}(4n^2\, .)/(2n)\r)$ in $C([0,T],(\rpuz)^2)$ follows from the tightness of its marginals that are simple well rescaled random walks on the circle. Now, our aim is to identify the limiting distribution. For that purpose, and in view of Lemmas \ref{lem:qds} (ii) and \ref{lem:MC}, we study more carefully the Markov chain $(M_1,M_2)$.\\

\noindent \textbf{Step 2: Angle process between $M_1$ and $M_2$}\\

Let us extend the notation $[a\to b]$ and $|a\to b|$ for $a$ and $b$ in $\R/2n\Z$. For the Markov chain $M$ defined in Lemma \ref{lem:MC}, the angle process between the two components is
\[A(i)= |M_1(i) \to M_2(i)|,\qquad i\geq 0.\]
Of course, for any $i$, $(M_1(i),M_2(i))= (M_1(i),M_1(i)+A(i) \modu 2n)$. We will focus on the asymptotics of $((M_1(i), A(i)), i\geq 0)$.\\

Recall that $M_1$ and $M_2$ are simple non-independent random walks with Rademacher increments. Let us write:
\[\bpar{ccl}
M_1(i) & = & M_1(0)+\dis\sum_{j=1}^i R_{2j-1},\\
M_2(i) & = & M_2(0)+\dis\sum_{j=1}^i R_{2j}
\epar\]
where $(R_{2i},i\geq 1)$ and $(R_{2i-1},i\geq 1)$ are two families of i.i.d. Rademacher r.v., the two families being possibly dependent from each other.
The process $A$ takes its values in the set of odd integers in $[0,2n]$, and its are sums of 2 Rademacher r.v.\\

Now, let us consider the simple random walk
\[\overline{Z}(i) = A(0) + \sum_{j=1}^i (-1)^{j}R_j=M_2(i)-M_1(i)\]
starting from $A(0)$. If $M_1$ and $M_2$ were allowed to cross, then $A(i)$ would be equal to $\overline{Z}(2i)$. We have to account for the non-crossing property of the paths of $\Wnu$.\\

A random walk $(Z_i,i\geq 0)$ is said to be the simple random walk reflected at $0$ and $2n$, and starting at some  $b\in\cro{0,2n}$ if $(Z_i,i\geq 0)$ is a Markov chain such that
\[
\bpar{rcl}
\P(Z_{i+1}=1 \,|\, Z_{i}=0) &=& \P(Z_{i+1}=2n-1\, |\, Z_{i}=2n)=1\\
\P(Z_{i+1}=a\pm 1\,|\, Z_i=a)&=& 1/2,\textrm{ for any } a \in \cro{1,2n-1}.
\epar\]
For any discrete time process $X$, denote by
\[
\Delta X_i:=X_i-X_{i-1},
\] the $i$th increment of $X$.
We have
\begin{lem}\label{lem:rep}
The distribution of the process $((A(i),M_1(i)),i\geq 0)$ starting at $(A(0),M_1(0))$ where $A(0)\in\cro{1,2n-1}$ is odd, and $M_1(0) \in \cro{0,2n-1}$ is even is characterized as follows: \\

For $(Z_i,i\geq 0)$ a simple random walk reflected at $0$ and $2n$, and starting from $A(0)$, we have:
\ben\label{eq:tehez}
(A(i),i\geq 0)\eqd (Z_{2i},i\geq 0) \eqd F_{2n}(\overline{Z}_{2i},i\geq 0),
\een
where $F_{2n}:\mathbb{Z}\to \cro{0,2n}$, the even $4n$-periodic function, defined on $[0,2n]$ by $F_{2n}(x):= x $.\\

The random walk $M_1$ starting at $M_1(0)$ admits as increments the sequence
\ben\label{eq:fdsf}
\l(\Delta M_1(i),i\geq 1\r)=\l(-\Delta Z_{2i-1},i \geq 0\r),
\een that is the opposite of the increments with odd indices of $Z$.
\end{lem}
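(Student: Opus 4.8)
The plan is to verify the two claimed identities in Lemma~\ref{lem:rep} directly from the transition kernel $K$ of the Markov chain $M=(M_1,M_2)$ given in Lemma~\ref{lem:MC}, by checking that the joint law of the increments has the structure asserted. First I would record the elementary observation that, writing $A(i) = |M_1(i)\to M_2(i)|$, the process $A$ takes values among the odd integers in $\cro{1,2n-1}$ (the parities of $M_1$ and $M_2$ are opposite and shift at each step, so their circular difference stays odd and, because $M_1,M_2$ never cross, stays strictly between $0$ and $2n$). From \eqref{eq:ker1} and \eqref{eq:ker2} I would then read off the law of the pair of increments $(\Delta M_1(i+1),\Delta M_2(i+1)) = (\eps,\eps')$ conditionally on the current state $(M_1(i),M_2(i))$: when $d_{\ZdnZ}(M_1(i),M_2(i))>1$, i.e.\ $A(i)\in\cro{3,2n-3}$, the pair $(\eps,\eps')$ is uniform on $\{-1,1\}^2$; when $A(i)=1$ or $A(i)=2n-1$ the six lines of \eqref{eq:ker2} give the conditional law. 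The key bookkeeping point is that $\Delta \overline Z_{2i+2} = \eps' - \eps$ tracks the change in the (signed) difference, so $A(i+1)-A(i) \in \{-2,0,2\}$, exactly the increments of a random walk run on the even/odd sublattice.

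The heart of the argument is to show that $(A(i))_{i\ge 0}$ is a simple random walk reflected at $0$ and $2n$, sampled at even times, and independently(-looking) to extract the marginal law of $M_1$. For the interior states $A(i)\in\cro{3,2n-3}$, since $(\eps,\eps')$ is uniform on $\{-1,1\}^2$ we get $\P(A(i+1)=A(i)\pm 2)=1/4$ each and $\P(A(i+1)=A(i))=1/2$; this is precisely the distribution of $Z_{2i+2}-Z_{2i}$ for a simple (unreflected) random walk $Z$ away from the boundary, hence also of $F_{2n}(\overline Z_{2i+2})-F_{2n}(\overline Z_{2i})$ away from the boundary, since the folding map $F_{2n}$ is locally an isometry there. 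At the boundary, say $A(i)=1$: reading the relevant lines of \eqref{eq:ker2} (the cases $(M_1(i),M_2(i))=(a,a+1)$ and $(a+1,a)$), one finds $A(i+1)\in\{1,3\}$ with $\P(A(i+1)=3\mid A(i)=1) = 1/2$ and $\P(A(i+1)=1\mid A(i)=1)=1/2$. One must check this matches $Z$ reflected at $0$: for a reflected walk started at an odd value, $\P(Z_{2i+2}=3\mid Z_{2i}=1) = 1/2$ (paths $1\to2\to3$ and $1\to0\to1$ are the ones leading there; actually $1\to2\to3$ has prob $1/4$, and we also have $1\to2\to1$ and $1\to0\to1$), so a careful enumeration of the four length-two paths of the reflected walk from $1$ gives $\P(\,\cdot=3)=1/4$, $\P(\,\cdot=1)=3/4$ — here I would double-check the kernel normalization and, if needed, observe that the correct matching is with $Z$ reflected at $0$ and $2n$ whose one-step-from-boundary behaviour is deterministic, so that the two-step transition from $1$ is: go to $0$ (prob $1/2$) then forced to $1$, or go to $2$ (prob $1/2$) then to $1$ or $3$ (prob $1/2$ each), giving $\P(1\to 1)=3/4$, $\P(1\to3)=1/4$. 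I expect reconciling these boundary probabilities with \eqref{eq:ker2} — possibly after noticing that the chain $A$ reflects at $1$ and $2n-1$ rather than at $0$ and $2n$, which is exactly the statement that $A \eqd Z_{2\cdot}$ with $Z$ reflected at $0,2n$ — to be the one genuinely delicate step, and the place where the specific asymmetric weights $1/2,1/4,1/4$ in \eqref{eq:ker2} are used.

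Once the identity $(A(i))_{i\ge0}\eqd(Z_{2i})_{i\ge0}\eqd(F_{2n}(\overline Z_{2i}))_{i\ge0}$ is established, the claim \eqref{eq:fdsf} about $M_1$ follows by tracking which Rademacher variables drive which coordinate. Writing $\overline Z_i = A(0) + \sum_{j=1}^i (-1)^j R_j$ with $(R_{2j-1})$ the increments of $M_1$ up to sign and $(R_{2j})$ the increments of $M_2$, one has $\Delta \overline Z_{2i-1} = -R_{2i-1} = \Delta M_1(i)$; but the statement in the lemma is phrased in terms of the reflected walk $Z$, so I would instead argue as follows: construct $Z$ from $\overline Z$ by the folding $Z = F_{2n}(\overline Z)$, note $\Delta Z_{2i-1} = \pm \Delta\overline Z_{2i-1}$ with the sign depending only on which ``sheet'' $\overline Z$ is in, and observe that the sign flip is exactly compensated so that $-\Delta Z_{2i-1}$ recovers the law of $\Delta M_1(i)$ given the past — i.e.\ $M_1$ performs a simple symmetric random walk, and conditionally on the whole path of $A$, its increments $\Delta M_1(i)$ can be read as $-\Delta Z_{2i-1}$. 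The cleanest route is to build everything on one probability space: let $(\eps_i,\eps_i')_{i\ge1}$ be the increment pairs produced by $K$, set $R_{2i-1} = -\eps_i$ (so $\Delta M_1(i) = \eps_i$ — one must fix the sign convention so $M_1(i) = M_1(0)+\sum \eps_j$) and $R_{2i} = \eps_i'$, define $\overline Z$ accordingly, then check step by step that $F_{2n}(\overline Z_{2i})$ reproduces the $K$-dynamics of $A$. This last verification is routine casework on the value of $A(i)$ and the current sheet of $\overline Z$; I would present it compactly rather than exhaustively. The only real obstacle, as noted, is pinning down the boundary behaviour in step two; everything else is determined by unwinding definitions.
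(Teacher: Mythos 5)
Your skeleton is right — compare the angle process $A$ to a simple random walk at even times, then extract $M_1$ from the odd-indexed increments — but there are two genuine gaps, and the second one is the heart of the lemma.

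First, the boundary bookkeeping. You read off from \eqref{eq:ker2} that $\P(A(i+1)=3\mid A(i)=1)=1/2$, then computed $1/4$ for the reflected walk and flagged the mismatch as "the genuinely delicate step". There is no mismatch: for $A(i)=1$, the state is $(M_1,M_2)=(a,a+1)$, and the three lines of \eqref{eq:ker2} give $A(i+1)=1$ via $(a,a+1)\to(a+1,a+2)$ (prob $1/2$) and via $(a,a+1)\to(a-1,a)$ (prob $1/4$), so $\P(A(i+1)=1)=3/4$, and $A(i+1)=3$ only via $(a,a+1)\to(a-1,a+2)$ (prob $1/4$). This matches the reflected walk's two-step transition exactly, with no reconciliation needed. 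As stated, your proposal asserts a wrong conditional probability and leaves the verification open, so this must be fixed.

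Second, and more seriously, your argument for \eqref{eq:fdsf} goes through the fold $Z:=F_{2n}(\overline Z)$ and a claimed "sheet compensation". This does not work pathwise: the fold turns over long stretches of trajectory, and its odd increments are \emph{not} equal to $\Delta\overline Z_{2i-1}$ in general (consider $\overline Z = 1,2,3,2,1,0,-1,-2,\dots$: the odd increments of $F_{2n}(\overline Z)$ and of $\overline Z$ already disagree at index $7$). The paper explicitly distinguishes these two constructions in the note after the lemma — the second equality in \eqref{eq:tehez} is in distribution only. The missing idea is a parity observation that makes the argument clean for the \emph{pathwise} reflection: since $A(0)$ is odd and $0,2n$ are even, the reflected walk $Z$ built from $\overline Z$ (same increments except a forced step immediately after a visit to $\{0,2n\}$) can hit $\{0,2n\}$ only at odd times, so the reflection modifies only even-indexed increments; therefore $\Delta Z_{2i-1}=\Delta\overline Z_{2i-1}$ identically, and \eqref{eq:M-et-barZ} passes directly to $Z$, giving $\Delta M_1(i)=-\Delta Z_{2i-1}$. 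Without this parity argument, the route you sketch does not close.
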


Notice that the second identity in \eref{eq:tehez} holds in distribution only: as defined, the reflection only modifies the increments that follow the hitting times of $0$ and $2n$, whereas the map $F_{2n}$ turns over large part of the trajectory $(Z_{i},i\geq 0)$. Denoting by $ w_{2n}(\ell)=\lfloor \ell/(2n)\rfloor$ the discrete ``winding number'' of $\ell$, according to Lemma \ref{lem:rep}, the increments of the process $M_1$ under this representations are
\[\Delta M_1(\ell) = (-1)^{w(\overline{Z }_{2\ell-1})}\Delta \overline{Z }_{2\ell-1}.\]

\begin{proof}[Proof of Lemma \ref{lem:rep}]
The distance $|M_1(i)\to M_2(i)|$ decreases when $\Delta M_1(i)=R_{2i-1}=1$ increases and increases when $\Delta M_2(i)=R_{2i}=1$, so that $\overline{Z}(2i)$ would be equal to $A(i)$ if the two walks were not constrained to not cross. We would also have
\begin{equation}\label{eq:M-et-barZ}
\Delta M_1(i) =  -(\overline{Z}_{2i-1}-\overline{Z}_{2i-2})=-\Delta \overline{Z}_{2i-1}.
\end{equation}
Since $0$ and $2n$ are even, and since $\overline{Z}(0)=A(0)$ is odd, the random walk can hit $0$ and $2n$ only after an odd number of steps. In other words, the reflection will concern only the steps with even indices. Therefore, let $(Z_i,i\geq 0)$ be the random walk $\overline{Z}$ reflected at 0 and $2n$: the odd increments of $Z$ and of $\overline{Z}$ are the same, and the even increments correspond except when $Z_{2i-1}\in \{0,2n\}$, in which case the reflection implies that $Z_{2i}=\ind_{Z_{2i-1}=0}+(2n-1)\ind_{Z_{2i-1}=2n}$. It is easy to check from \eqref{eq:ker1}-\eqref{eq:ker2} that $(Z_{2i}, \ i\geq 0)$ has the same distribution as the angle process $(A(i), i\geq 0)$ started from $A(0)$ one the one hand, and as $(F_{2n}(\overline{Z}_{2i}), i\geq 0)$ on the other hand.\\
Finally, notice that because the odd increments are the same, \eqref{eq:M-et-barZ} also holds for $Z$.
\end{proof}

\noindent \textbf{Step 3: Identification of the limit} \\

\begin{lem}\label{lem:qfs}
Let $U_1$, $U_2$ are two uniform r.v. on $[0,1]$, let $B$ and $B'$ be two BMs, all being independent.
We have in $\Co(\R_+,\rpuz)$ that
\ben\label{eq:qfegfsd}
\l(\frac{M_1(4n^2.)}{2n},\frac{A(4n^2.)}{2n}\r)\dd \l( U_1+B'_.-H(.)\mod 1, 2H(.)\r)
\een
where $H(t):={F(U_2 + \sqrt{2} B_t)}/{2},$ has been defined in \eqref{def:H}.\end{lem}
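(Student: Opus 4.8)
The goal is to pass to the limit in the representation obtained in Lemma~\ref{lem:rep}. First I would set up the scaling: write $Z_i$ for the simple random walk on $\cro{0,2n}$ reflected at $0$ and $2n$ started from $A(0)$, where $A(0)/(2n)$ converges in distribution to $U_2$ (this is because $M_1(0)$ and $M_2(0)$ are independent uniform on the two slices, so the angle $A(0)=|M_1(0)\to M_2(0)|$, rescaled by $2n$, converges to a uniform variable on $[0,1]$; strictly speaking $A(0)$ ranges over odd integers, but this parity constraint disappears in the limit). By Donsker's invariance principle applied to the reflected walk, $\big(Z_{\lfloor 4n^2 t\rfloor}/(2n)\big)_{t\ge 0}$ converges in distribution in $\Co(\R_+,[0,1])$ to a Brownian motion started at $U_2$, reflected at $0$ and $1$, with diffusion coefficient tuned so that the limit is $F(U_2+\sqrt2\,B_t)$ where $B$ is a standard BM. The factor $\sqrt 2$ appears because $A$ advances by one reflected-walk step per unit of the chain's time but the chain runs at ``double speed'' (two Rademacher variables $R_{2j-1},R_{2j}$ per step), and because $|M_1\to M_2|$ is a difference of the two coordinates, so its quadratic variation accumulates at rate $2$; tracking constants carefully, $A(4n^2\cdot)/(2n)\Rightarrow F(U_2+\sqrt2 B_\cdot)=2H(\cdot)$, which is the second coordinate claimed in \eqref{eq:qfegfsd}.

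Next I would handle the first coordinate $M_1$. By \eqref{eq:fdsf}, $M_1$ has increments $\Delta M_1(i)=-\Delta Z_{2i-1}$, i.e. the odd-indexed increments of the (unreflected) walk $\overline Z$, up to the sign flips induced by the reflection. Equivalently, as noted after Lemma~\ref{lem:rep}, $\Delta M_1(\ell)=(-1)^{w_{2n}(\overline Z_{2\ell-1})}\Delta\overline Z_{2\ell-1}$. The key observation is that, conditionally on the reflected walk $Z$ (equivalently on the angle process $A$), these odd increments are \emph{i.i.d.\ Rademacher}: the even increments $R_{2j}$ are then determined (up to their own independent randomness) so as to make $M_2=M_1+A$ consistent, but the odd ones $R_{2j-1}$ remain a free i.i.d.\ sequence. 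Hence $M_1$ is, conditionally on $Z$, a simple random walk, and its rescaled version $M_1(4n^2\cdot)/(2n)$ converges jointly with $A$ to $U_1+B'_\cdot$ modulo $1$, where $B'$ is a standard Brownian motion independent of $B$ (the variance is $1$, not $2$, since only one Rademacher per chain-step feeds $M_1$, against two chain-steps per unit time — these cancel). The starting point $M_1(0)/(2n)\Rightarrow U_1$, uniform and independent of $U_2$, comes again from Lemma~\ref{lem:qds}$(i)$. Combining, $M_1(4n^2\cdot)/(2n)\Rightarrow U_1+B'_\cdot-H(\cdot)\bmod 1$ once we write $M_1=\overline{M}-A/2$ type centering... more precisely one checks directly that the pair converges to $\big(U_1+B'_\cdot-H(\cdot)\bmod 1,\ 2H(\cdot)\big)$ by identifying $M_1$'s limit as a Brownian motion whose drift-free part is $B'$ and recentering is immaterial on the circle; the displayed form in \eqref{eq:qfegfsd} is obtained by noting $M_1 \equiv (M_1+M_2)/2 - A/2$ and that $(M_1+M_2)/2$ rescales to $U_1+B'_\cdot/\sqrt2$... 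I would in fact present it the clean way: decompose $2M_1 = (M_1+M_2) - A$ where $(M_1(i)+M_2(i))/2$ has increments $(R_{2i-1}+R_{2i})/2\in\{-1,0,1\}$, giving in the limit a BM of variance $1/2$ (hence $B'_t/\sqrt 2$), and subtract $A/2\Rightarrow H$.

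Finally I would assemble the joint convergence. Tightness of the pair is immediate since each coordinate is a rescaled bounded-increment walk on a compact space. For the identification of the joint limit, the cleanest route is: condition on $Z$ (equivalently on the process $A$ and its driving randomness), under which $M_1$ is an honest simple random walk driven by an independent Rademacher sequence; pass to the limit in this conditional picture using Donsker, obtaining that conditionally on $(H(t))_t$ the limit of $M_1(4n^2\cdot)/(2n)$ is a BM $U_1+B'_\cdot$ with $B'$ independent of everything used to build $H$; then read off that $(B,B',U_1,U_2)$ are jointly independent, as required. I expect the main obstacle to be bookkeeping of the two time scales and of the constants $\sqrt2$ versus $1$ — getting the diffusion coefficients of the two limiting Brownian motions right, and making the ``conditionally on $Z$, the odd increments of $\overline Z$ are i.i.d.\ Rademacher'' statement fully rigorous (this requires checking from the transition kernel \eqref{eq:ker1}--\eqref{eq:ker2} that the marginal law of the $R_{2j-1}$ sequence is i.i.d.\ fair, even though $R_{2j-1}$ and $R_{2j}$ are dependent), rather than any deep probabilistic difficulty.
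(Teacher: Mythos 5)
The overall plan --- rescale the representation of Lemma~\ref{lem:rep}, invoke Donsker for the angle process $A$, then analyse $M_1$ jointly --- matches the paper's, and your treatment of the angle coordinate (writing $A(4n^2t)/(2n)=F(\overline{Z}_{8n^2t}/(2n))$ and applying Donsker/CLT to $\overline{Z}$) is essentially the paper's. Your bookkeeping of the $\sqrt 2$ factors is also right, and in fact reveals a small inconsistency between the Lemma statement ($B'_\cdot$) and \eqref{def:Y} ($B'_t/\sqrt 2$).

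The gap is in the first coordinate, and it is a genuine one. Your key claim --- that ``conditionally on $Z$ (equivalently on the angle process $A$), the odd increments $R_{2j-1}$ are i.i.d.\ Rademacher'' --- is false. Conditionally on the full trajectory $Z$, the odd increments are \emph{determined}, not random: by Lemma~\ref{lem:rep} they literally are the odd increments of $Z$. Conditionally only on $A$ (i.e.\ on the even-time values $Z_{2i}$), they are still partially determined: from $\Delta A(i+1)=R_{2i+2}-R_{2i+1}$, whenever $\Delta A=\pm2$ the pair $(R_{2i+1},R_{2i+2})$ is forced, and only on $\{\Delta A=0\}$ (about half of the steps) is there one residual bit of randomness. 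So the ``conditionally on $Z$, $M_1$ is a free simple random walk'' picture is not available, and the joint identification you build on it does not go through as stated. This is precisely the dependence the paper handles by reducing the identification to Lemma~\ref{lem:rgfz}: in the continuum, conditionally on $B+B'$, the BM $B$ decomposes as half the sum plus an \emph{independent} BM scaled by $1/\sqrt2$. That conditional-Gaussian decomposition (Cochran's theorem) is what replaces, in the limit, the residual Rademacher bits on $\{\Delta A=0\}$.

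Your alternative ``clean way'' (write $2M_1=(M_1+M_2)-A$, show $(M_1+M_2)/2\Rightarrow B'_\cdot/\sqrt 2$ and subtract $A/2\Rightarrow H$) is the right move and is morally equivalent to the paper's reduction to Lemma~\ref{lem:rgfz}. But you assert rather than prove the needed \emph{asymptotic independence} of $(M_1+M_2)/2$ and $A$. That independence, not tightness or the single-coordinate marginals, is the content of the lemma; it requires either (a) the observation that for i.i.d.\ Rademachers $R_{2i-1},R_{2i}$ the sum $R_{2i-1}+R_{2i}$ and difference $R_{2i-1}-R_{2i}$ are independent, combined with control of the (vanishing) correlations introduced by the non-crossing constraint on $\{d_{\Z/2n\Z}(M_1,M_2)=1\}$, or (b) an appeal, as the paper makes, to the explicit Gaussian conditioning of Lemma~\ref{lem:rgfz} after a prior joint-convergence step. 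Either route should be made explicit; as written the proof has a hole exactly where the dependence between $M_1$ and $A$ must be untangled.
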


\begin{proof}Let us first consider the angle component. Since the discrete process $A(4n^2t)/(2n)$ is the difference between two (dependent) suitably rescaled random walks which are both tight under this rescaling, the process $A(4n^2t)/2n$ is tight in $\Co(\R_+,\rpuz)$. To characterize the limiting process, write
\[\frac{A(4n^2t)}{2n}= \frac{F_{2n}(\overline{Z}_{8n^2t})}{2n}
                   = F \l( \frac{\overline{Z}_{8n^2t}}{2n}\r)\]
since for every $x$ and every $n$, $F_{2n}( 2n x ) = 2n F( x )$. The central limit theorem implies the convergence  $\frac{\overline{Z}_{8n^2t}}{2n}\dd U_2+{\cal N}(0, 2t)$ for a fixed $t\geq 0$. Since, the mapping $g\mapsto (t\mapsto F(g(t))$ is continuous on $\Co(\R_+,\rpuz)$s, the independence and stationarity of the increments of $Z$ provide the finite dimensional convergence of the angle process in \eqref{eq:qfegfsd}.\\

For the first component, we know that $\frac{M_1(4n^2.)}{2n}$ converges in distribution to a BM modulo 1, but that is not independent from the limit $H$ of $\frac{A(4n^2.)}{2n}$. The result is a consequence of the following lemma, proved in the sequel.\end{proof}

\begin{lem}\label{lem:rgfz} Let $B$ and $B'$ be two independent BM, and let $X=B+B'$ be the sum process. For any $(b_0,x_0)\in\R^2$, conditionally on $\{(X_t=x_t,t\in[0,T]) ,B_0=b_0,B'_0=x_0-b_0\}$, we have
\ben\label{lem:dec}
(B.,B'.)_{[0,T]}&\eqd & \l(b_0-\frac{x_0}2+\frac{x_.}{2} +\frac{B''.}{\sqrt{2}}, -b_0+\frac{x_.}{2}-\frac{B''.}{\sqrt{2}}\r)_{[0,T]}
\een
for an independent BM $B''$.
\end{lem}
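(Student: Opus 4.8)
The plan is to change to ``sum and difference'' coordinates for the pair $(B,B')$. Set $Y:=B-B'$; together with $X=B+B'$ this recovers $(B,B')$ via $B=(X+Y)/2$ and $B'=(X-Y)/2$. The crucial point is that $X$ and $Y$ are \emph{independent}: both are linear functionals of the Gaussian process $(B,B')$, so $(X,Y)$ is jointly Gaussian, and since $B,B'$ are independent standard Brownian motions one gets, for $s,t\in[0,T]$,
\[
\Cov\big(X_t-X_0,\,Y_s-Y_0\big)=\Cov\big(B_t-B_0,B_s-B_0\big)-\Cov\big(B'_t-B'_0,B'_s-B'_0\big)=(s\wedge t)-(s\wedge t)=0 ,
\]
so $X$ and $Y$ are uncorrelated, hence independent as $\Co([0,T],\R)$-valued random variables. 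Moreover $Y-Y_0$ has the law of $\sqrt2$ times a standard Brownian motion, having independent stationary increments with $\Var(Y_t-Y_0)=2t$.

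Next I would rewrite the conditioning event in these coordinates. Conditioning on $\{X_t=x_t,\ t\in[0,T]\}$ fixes in particular $X_0=x_0$; given that, the extra information $\{B_0=b_0\}$ (and $\{B'_0=x_0-b_0\}$, which then becomes automatic) is exactly the information $\{Y_0=2b_0-x_0\}$. Since $X$ and $Y$ are independent path-valued random variables, the regular conditional law of $Y$ given $\{X_\cdot=x_\cdot\}\cap\{Y_0=2b_0-x_0\}$ is just the unconditional law of $Y$ given $Y_0=2b_0-x_0$; that is, conditionally on the event of the lemma,
\[
Y_\cdot \eqd (2b_0-x_0)+\sqrt2\,B''_\cdot
\]
for some standard Brownian motion $B''$ which is independent of $X$, hence of everything that has been conditioned on.

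It then only remains to substitute back into $B=(X+Y)/2$ and $B'=(X-Y)/2$: conditionally on the event of the lemma,
\[
B_\cdot=\frac{X_\cdot+Y_\cdot}{2}\eqd \frac{x_\cdot+(2b_0-x_0)+\sqrt2\,B''_\cdot}{2}=b_0-\frac{x_0}{2}+\frac{x_\cdot}{2}+\frac{B''_\cdot}{\sqrt2},
\]
and symmetrically $B'_\cdot=(X_\cdot-Y_\cdot)/2$, which yields \eqref{lem:dec} (the two coordinates sum to $x_\cdot$ and take the prescribed values $b_0$, $x_0-b_0$ at time $0$, as required).

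The argument has no genuine obstacle; the only delicate point is that one conditions on a $\PP$-null event, namely an entire trajectory of $X$. This is harmless, because independence of $X$ and $Y$ as $\Co([0,T],\R)$-valued random variables makes the regular conditional law of $Y$ given $\sigma(X)$ a.s.\ equal to the law of $Y$, so no measure-theoretic subtlety arises. Alternatively, one can avoid conditioning on a path by establishing the identity of all finite-dimensional conditional distributions (conditioning on $X_{t_1},\dots,X_{t_k}$ and $B_0$ only) by the same computation, and then upgrading to path level by continuity of trajectories and a standard monotone-class argument.
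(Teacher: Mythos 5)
Your proof is correct and rests on the same key fact as the paper's, namely that the sum and difference of two independent Gaussian (Brownian) objects are independent. The difference is in presentation: the paper reduces to a single-time statement (``it suffices to show that for $X_1,X_2,N$ three i.i.d.\ $\mathcal{N}(0,1)$ \ldots'') by invoking the Markov property, stationary increments and scaling, and then applies Cochran's theorem at that one time; you instead carry out the computation directly at the level of path-valued processes, verifying that $X=B+B'$ and $Y=B-B'$ are independent $\Co([0,T],\R)$-valued Gaussian random variables via the cross-covariance of increments, and then conditioning path-wise. Your route avoids the (somewhat hand-waved) reduction step in the paper and makes the measure-theoretic content of ``conditioning on a trajectory'' explicit through the remark on regular conditional laws; the paper's route is shorter and leans on standard Gaussian finite-dimensional facts. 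Both are valid, and the substance is the same.
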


\noindent {\bf Step 4: Proof of Theorem \ref{theo:convSTW} $(ii)$.} \\

Consider two levels $h_1\leq h_2$.
First, remark that the restriction of $(\C^\ua, \C^\da)$ to the compact interval $[h_1,h_2]$ has same distribution as $(\Wu_{U^\ua},\Wd_{U^\da})$ on $[h_1,h_2]$, where $U^\ua$ and $U^\da$ are independent and uniformly distributed on $\Slice(h_1)$ and $\Slice(h_2)$ (indeed, $\Wu(h_1)$ depends only on what happens below the level $h_1$ and $\Wd(h_2)$ depends only on what happens above the level $h_2$.
\\
From \eqref{eq:rtp}, it  remains to prove that $(\Wu_{U^\ua},\Wd_{U^\da})$ on $[h_1,h_2]$ is distributed as $(Y^\ua,Y^\da)$.\\
For any $(x,h)\in \Cyl$, the map $\Pi_{(x,h)}\ : \ F\in (\mathcal{H}_O,d_{\mathcal{H}_O}) \mapsto W_{(x,h)} \in \Co([h,+\infty),\R\setminus \Z)$ that associates to a forest the path started at $(x,h)$ is continuous. From Prop. \ref{pro:main-lattice}, we thus deduce that the Markov chain $M$ of Lemma \ref{lem:MC} correctly renormalized converges on $[h_1,h_2]$, when $n\rightarrow +\infty$, to $(\Wu_{u_1-F(u_2)/2}(t),\Wd_{u_1+F(u_2)/2}(t))_{t\in [h_1,h_2]}$
 the paths of $\CBW$ and its dual. We deduce that $(Y^\ua,Y^\da)$ has the same distribution.
This concludes the proof of Theorem \ref{theo:convSTW}.

\begin{proof}[Proof of Lemma \ref{lem:rgfz}]
Since we are dealing with Markov processes with stationary increments and simple scaling properties, it suffices to show that for $X_1,X_2,N$ 3 i.d.d. ${\cal N}(0,1)$ r.v. , we have that conditionally on $S=X_1+X_2$,
$$(X_1,X_2) \stackrel{(d)}{=} \big(\frac{S}{2}+\frac{N}{\sqrt{2}},\frac{S}{2}-\frac{N}{\sqrt{2}}\big).$$
This is a consequence of Cochran theorem, which gives that $(X_1-S/2, X_2-S/2)$ is a Gaussian vector independent from $S$. Since $X_1-S/2=(X_1-X_2)/2=-(X_2-S/2)$, introducing $N/\sqrt{2}=X_1-S/2$ finishes the proof.
\end{proof}

\subsection{The coalescence times have exponential moments}

Th. \ref{theo:BiInfB} states that the coalescence times $T^\ua(x,x',t)$ and $T^\ua(t)$ are finite a.s. Due to the compactness of the space $\rpuz$, we can prove in fact that they admit exponential moments.

\begin{pro}
\label{pro:Coaltime}
$(i)$ There exist $b>0$, {$M<+\infty$ } such that for any $x,x'\in\rpuz$ and any $t\in`R$,
$$
`E \left[ e^{b (T^\ua(x,x',t)-t)}\right] < M.
$$

$(ii)$ For any $t\in `R$, the coalescence time $T^\ua(t)$ admits exponential moments~:
$$
\exists a>0 , \, `E \left[ e^{a (T^\ua(t)-t)}\right] < \infty ~.
$$
\end{pro}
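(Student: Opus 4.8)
The plan is to exploit the compactness of $\rpuz$ together with the recurrence of Brownian motion to show that, on each unit time interval, the coalescence of a given pair (or of the whole slice) occurs with probability bounded below by a constant $p>0$ uniform in the starting configuration, which immediately yields a geometric tail and hence exponential moments. By translation invariance in time we may take $t=0$.

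For part $(i)$: fix $x,x'\in\rpuz$. The difference process $Z_s:=\W^\ua_{(x,0)}(s)-\W^\ua_{(x',0)}(s)$ (taken in $\rpuz$) behaves, until coalescence, like a time-changed linear Brownian motion wrapped on the circle; the coalescence time $T^\ua(x,x',0)$ is the first time $Z$ hits $0$. The key estimate is that there exists $p>0$, independent of the initial gap, such that $\P(T^\ua(x,x',0)\le 1)\ge p$: indeed, a wrapped Brownian motion of total variance $2$ over $[0,1]$ hits every point of the circle with probability bounded below uniformly in its starting point, because the circle has finite diameter $1/2$ and the Gaussian density of $B_1$ over a compact interval is bounded below. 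Then, by the Markov property applied at integer times, $\P(T^\ua(x,x',0)>k)\le(1-p)^k$ for every integer $k\ge 0$, uniformly in $x,x'$. Choosing any $b>0$ with $e^b(1-p)<1$ gives $\E[e^{bT^\ua(x,x',0)}]\le\sum_{k\ge 0}e^{b(k+1)}(1-p)^k=:M<\infty$, which is the claim (the sup over $(a,t_0)$ is handled by translation invariance in $t$ and rotational invariance in $x$).

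For part $(ii)$: $T^\ua(0)$ is the time at which \emph{all} the cylindric Brownian motions started from $\Slice(0)$ have coalesced into a single path. The approach is to reduce to a finite problem: by the coming-down-from-infinity property inherited from the planar BW (the special points of the CBW are those of the BW, as noted in the proof of Theorem \ref{theo:BiInfB}), after any fixed small time $\varepsilon>0$ only finitely many trajectories survive a.s.; more usefully, one can compare with the planar picture of Section \ref{section:CylBrownWeb} and use the stochastic domination $\eta^O_{\Wu}\le_S\eta_W$ together with the fact that $\E[\widehat\eta_W(0,\varepsilon;0,1)]<\infty$ to get that the number $\kappa$ of surviving paths at height $\varepsilon$ has a finite (in fact exponential, via standard BW estimates) tail. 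Once there are $\kappa$ paths, all cyclically ordered on $\rpuz$, full coalescence is dominated by the event that $\kappa-1$ consecutive gaps all close; each gap is a wrapped Brownian difference as in part $(i)$, so on each unit time interval every gap closes with probability $\ge p$, and a crude union/independence-over-blocks argument shows $\P(T^\ua(0)-\varepsilon>k\mid\kappa)\le$ something like $\kappa(1-p)^{\lfloor k/\kappa\rfloor}$ or, better, $(1-p^{\,\kappa})^{k}$ using that after the gaps close in one block of length $\kappa$ we are done. Integrating out the tail of $\kappa$ — this is where one needs the exponential control on $\kappa$ rather than mere finiteness, so that $\E[(1-p^{\kappa})^{-k}]$-type sums converge for $k$ small — produces $a>0$ with $\E[e^{aT^\ua(0)}]<\infty$.

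The main obstacle is part $(ii)$: getting a usable, uniform-in-everything lower bound on the probability that the \emph{entire} slice coalesces in a bounded time, because the number of surviving paths $\kappa$ is itself random and unbounded, so one cannot simply take a single constant $p$. The resolution is the two-step scheme above — first reduce to finitely many paths with a quantitative (exponential) tail on $\kappa$ coming from the planar BW estimates and the domination \eqref{eta^O<eta}, then bound the coalescence of $\kappa$ cyclically ordered paths by the successive closing of the $\kappa-1$ inter-path gaps — and one must be careful that the resulting exponential rate $a$ is taken small enough that the geometric-type series in $k$ still converges after averaging over $\kappa$. The monotonicity (adding more starting points only increases coalescence times, and a path of the CBW between two others stays trapped between them by non-crossing) is what makes the reduction to the $\kappa$-path configuration legitimate.
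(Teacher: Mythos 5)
For part $(i)$ your argument is a minor variant of the paper's: the paper observes that the pre-coalescence gap is a time-changed linear BM whose exit time from a fixed interval is classically known to have exponential moments (Revuz--Yor), whereas you deduce the same through a one-step Markov/geometric-tail argument; either works.

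For part $(ii)$ you take a genuinely different route. The paper avoids ever quantifying the number of surviving paths. Instead it exhibits a single uniform constant $p\geq q^2/2>0$ bounding below the probability that \emph{the entire slice} coalesces within two units of time, via a duality trick: with probability $q$ the half of the dual web started from $[0,1/2]\times\{1\}$ coalesces before time $0$, which forces every primal path born at time $0$ to lie in $(1/2,1)$ at time $1$; then with probability $q$ the two primal paths $\W^\ua_{(1/2,1)}$ and $\W^\ua_{(1,1)}$ coalesce before time $2$, and by symmetry with probability at least $1/2$ they do so by closing the arc containing the trapped paths. Independence of the events $A_k=\{T^\ua(2k)\le 2k+2\}$ then gives a geometric tail. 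Your alternative (coming down from infinity, reduce to $\kappa$ surviving paths at time $\varepsilon$, then close gaps) is a legitimate competing strategy, but the specific tail bounds you propose, $\kappa(1-p)^{\lfloor k/\kappa\rfloor}$ and $(1-p^{\kappa})^{k}$, both have exponential rate degrading in $\kappa$, which is why you are led to believe you need exponential moments of $\kappa$; this requirement is unnecessary and is also not cleanly delivered by the cited stochastic domination (which gives $\E[\kappa]<\infty$, not exponential tails). The fix is to use a sharper union bound: $T^\ua(0)-\varepsilon$ equals the second largest of the $\kappa$ pairwise gap-closing times of cyclically adjacent survivors, so $\P(T^\ua(0)-\varepsilon>s\mid\mathcal{F}_\varepsilon)\leq\min\bigl(1,\binom{\kappa}{2}Me^{-bs}\bigr)$ by part $(i)$ plus the Markov property, with rate $b$ independent of $\kappa$; truncating $\kappa$ at $N=e^{cs}$ with $0<c<b/2$ and using $\P(\kappa>N)\leq\E[\kappa]/N$ then yields an exponential tail, so finite mean of $\kappa$ suffices. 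With that correction your argument is sound; the paper's dual-trapping argument remains more elegant because it sidesteps any estimate on $\kappa$ entirely and produces an explicit lower bound on $p$.
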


\begin{proof}For both assertions, by the time translation invariance of the CBW, it suffices to consider only the case $t=0$ . \\
 $(i)$ We can assume that $0\leq x\leq x'< 1$.
We have before crossing time
$$
(\W^\ua_{(x',0)}(t)-\W^\ua_{(x,0)}(t), 0\leq t\leq T(x,x',0)) \eqd (x'-x+ \sqrt{2}B(t) \modu1, 0\leq t\leq T(x,x',0)),
$$
where $B$ is a standard usual Brownian motion.
Hence  $T(x,x',0)$ has same distribution as the exit time of a linear BM $B$ from the segment $[-(x'-x),1-(x'-x)]$.
This exit time is known to admit exponential moments (see e.g. Revuz \& Yor \cite[Exo. (3.10) Chap. 3]{revuz-yor}).\\
$(ii)$ We will use a very rough estimate to prove this fact.

Let for $k\geq 0$, $A_k$ be the following independent events~:
\[A_k=\{T^{\ua}(2k)\leq 2k+2\}\]
meaning that all trajectories born at height $2k$ have coalesce before time $2k+2$. If we show that
$p:=P(A_k)> 0$, then $T(0) \leq \min\{ k, A_k \textrm{ holds}\}$ is bounded by twice  a geometric r.v. $p$, and then has some exponential moments. So let us establish this fact.

For this, we use a single argument twice. Consider $Z$ the hitting time
of two BM starting at distance $1/2$ on $\rpuz$. Clearly
$q:=\mathbb{P}(Z\leq 1)>0$.
Let us now bound $P(A_0)$. For this consider ``half of the dual CBW'' $(\Wd_{(x,1)}, 0\leq x\leq 1/2)$ starting at $\Slice(1)$. With probability $q$ these trajectories merge before $\Slice(0)$. Conditionally to this event, all primal trajectories $(\W^\ua_{(x,0)}, x \in \Slice(0))$ starting at time 0 a.s. avoid the dual trajectories, and satisfy
$\W^\ua_{(x,0)}(1) \in (1/2,1),$
meaning that, with probability $q$ at least, they will be in the half interval $(1/2,1)$. But now, the two trajectories $\W^\ua_{(1/2,1)}$ and $\W^\ua_{(1,1)}$, will merge before time 2 with probability $q$. Conditionally to this second event, with probability $\geq 1/2$, the merging time of $(\W^\ua_{(x,1)}, x\in (1/2,1))$ is smaller than 1. Indeed on $\Cyl$, by symmetry, when $\W^\ua_{(1/2,1)}$ and $\W^\ua_{(1,1)}$ merge, they ``capture'' all the trajectories starting in $[1/2,1]$ (which will merge with them) or they capture all the trajectories starting in  $[0,1/2]$. Since both may happen, the probability of each of this event are larger than $1/2$. Hence $p \geq q^2/2$ and the proof is complete.
\end{proof}

\subsection{Toward explicit computations for the coalescence time distribution}
Notice that an approach with Karlin-McGregor type formulas can lead to explicit (but not very tractable) formulas for the distribution of the coalescing time of several Brownian motions.
Let us consider $0<x_1<x_2<\dots <x_k<1$, and denote by $T_k$ the time of global coalescence of the $k$ Brownian motions $W_{(x_1,0)}^\ua,\dots W_{(x_k,0)}^\ua$.

Taking to the limit formulas obtained by Fulmek \cite{fulmek}, we can describe the distribution of the first coalescence time $T^{k\to k-1}$ between two of these paths~:
\begin{equation}\label{def:Tktok-1}
T^{k\to k-1}(x_i,1\leq i \leq j)=\min\{T^\ua(x_i,x_{i+1},0),\ 1\leq i\leq k\}
\end{equation}
with the convention that $x_{k+1}=x_1$, and where $T^\ua(x_i,x_{i+1},0)$ is the time of coalescence of $\W^\ua_{(x_i,0)}$ and $\W^\ua_{(x_{i+1},0)}$ as defined in \eqref{CoalTime2}. We will omit the arguments $(x_i,1\leq i \leq j)$ in the notation $T^{k\to k-1}_{(x_i,1\leq i \leq j)}$ unless necessary. For $t>0$,
\begin{equation}
\mathbb{P}(T^{j\to j-1}>t)=\int dy_1\dots \int dy_j \ind_{0<y_1<y_2<\dots <y_j<1} \Big[\sum_{i=0}^{j-1} \sgn(\sigma^i) \prod_{\ell=1}^j \Phi_t\big(y_\ell-x_{\sigma^i(\ell)}\big)\Big]\label{eq:fulmek}
\end{equation}
where $\sigma^i$ denotes the rotation $\sigma^{i}(\ell)= \ell+i \mod j$  and where
$$\Phi_t(x)=\frac{1}{\sqrt{2\pi t}}\sum_{m \in \Z}   \exp\big(-\frac{(x-m)^2}{2t}\big).$$
Explicit formulas for the Laplace transform of $T^{j\to j-1}$ are not established in general cases to our knowledge, except for the following special case when $k=2$ and $\theta<0$ (see e.g. Revuz \& Yor \cite[Exo. (3.10) Chap 3]{revuz-yor}):
\begin{equation}\label{laplace:T2to1}
\E\Big(e^{\theta T^{2\to 1}}\Big)=\frac{\cosh\big(\sqrt{|\theta|} \frac{1+2x_1-2 x_2}{2}\big)}{\cosh\big(\frac{\sqrt{|\theta|}}{2} \big)}.
\end{equation}

\par Using that $\E(e^{\theta T^{j+1\to j}})=1+\int_0^{+\infty} \theta e^{\theta t} \P(T^{j+1\to j}>t)dt$ and the Markov property, we can finally link \eqref{def:Tktok-1} and $T_k$:
\begin{equation}
\E\Big(e^{\theta T_k}\Big)=\prod_{j=1}^{k-1}\E\left(\E\Big(e^{\theta T^{j+1\to j}(\W_1(T_j),\dots \W_{k}(T_j))} \ |\ \W_1(T_j),\dots \W_{k}(T_j)\Big)\right), \label{laplace:Tk}
\end{equation}
where $T_j$ is the time of the $k-j$th coalescence (at which there are $j$ Brownian motions left) and $(\W_1(T_j),\dots \W_{k}(T_j))$ are the values of the $k$ coalescing Brownian motions at that time (and hence only $j$ of these values are different).\par
It is however difficult to work out explicit expressions from these formula.

\section{Directed and Cylindric Poisson trees}
\label{sect:ConvCBW}

Apart from the (planar) lattice web $W^{2n}$, defined as the collection of random walks on the grid $\Gr=\{(x,t)\in \Z^2,\, x-t \mod 2 =0 \}$ (see \cite[Section 6]{fontes2004} or Figure \ref{fig:Cyln}), several discrete forests are known to converge to the planar BW; in particular the \textit{two-dimensional Poisson Tree} studied by Ferrari \& al. in \cite{ferrarilandimthorisson,ferrarifonteswu}. In Section \ref{sec:Poisson-tree}, a cylindric version of this forest is introduced and we state the convergence of this (continuous space) discrete forest to the CBW. See Th. \ref{theo:ffw-cyl} below.
Our proof consists in taking advantage of the local character of the assumptions $(B2O)$ and $(B2)$. Indeed, the cylinder locally looks like the plane and we can couple (on a small window) the directed and cylindrical Poisson trees in order to deduce $(B2O)$ from $(B2)$.

Finally, in Section \ref{sec:FCP}, we discuss under which assumptions, conditions $(B2)$ and $(B2O)$ can be deduced from each other.

\subsection{Convergence to the CBW}
\label{sec:Poisson-tree}

Let $n\geq 1$ be an integer, and  $r>0$ be a real-valued parameter. Consider a homogeneous Poisson point process (PPP in the sequel) $\mathcal{N}_\lambda$ with intensity $\lambda >0$ on the cylinder $\Cyl$ defined in \eqref{def:Cyl}.

Let us define a directed graph with out-degree $1$ having $\mathcal{N}_\lambda$ as vertex set as follows: from each vertex $X=(x,t)\in\mathcal{N}_\lambda$ add an edge towards the vertex $Y=(x',t')\in \mathcal{N}_\lambda$ which has the smallest time coordinate $t'>t$ among the points of $\mathcal{N}_\lambda$ in the strip $\{(x'',t'') \in \Cyl ~|~ d_{`R/\mathbb{Z}}(t,t'') \leq r\}$ where  $d_{`R/\mathbb{Z}}(x,x''):=\min\{|x-x''|, |1+x-x''|\}$. Let us set $\alpha(X):=Y$ the out-neighbor of $X$. {Notice that even if $X$ does not belong to $\mathcal{N}$ the ancestor $\alpha(X)\in \mathcal{N}$ of this point can be defined in the same way.}
For any element $X\in \Cyl$, define $\alpha^{0}(X):=X$ and, by induction, $\alpha^{m+1}(X):=\alpha(\alpha^{m}(X))$, for any $m\geq 0$. Hence, $(\alpha^{m}(X))_{m\geq 0}$ represents the semi-infinite path starting at $X$. We define by $\W_{X}^{\lambda,{r,}\ua}$ the continuous function from $[t;+\infty)$ to $`R/\mathbb{Z}$ which linearly interpolates the semi-infinite path $(\alpha^{m}(X))_{m\geq 0}$.

The collection $\W^{\lambda,{r,}\ua}:=\{\W_{X}^{\lambda,{r,}\ua}, X\in\mathcal{N}_\lambda\}$ is called the \textit{Cylindric Poisson Tree} (CPT). This is the analogue on $\Cyl$ of the two-dimensional Poisson Tree introduced by Ferrari et al. in \cite{ferrarilandimthorisson}. Also, $\W^{\lambda,{r,}\ua}$ can be understood as a directed graph with edge set $\{(X,\alpha(X)) : X \in \mathcal{N}_\lambda\}$. Its topological structure is the same as the CBW (see Th. \ref{theo:BiInfB}) or as the CLW (see Prop. \ref{pro:CLWhasuniqBIP}). The CPT a.s. contains only one connected component, which justifies its name: it is a tree and admits only one bi-infinite path (with probability $1$).

Let us choose $\lambda=n$ and rescale $\W^{\lambda,{r,}\ua}$ into $\W^{(n),r,\ua}$ defined as
$$
\W^{(n),r,\ua} := \left\{ \W^{n,\frac{r}{n},\ua}_{(x,t)}(n^2 s) ; \, (x,t)\in\mathcal{N}_n,\ s\geq \frac{t}{n^2} \right\} ~.
$$
\begin{theo}
\label{theo:ffw-cyl}
For $r=1/2$, the normalized CPT $\W^{(n),r,\ua}$ converges in distribution to the CBW as $n\to+\infty$.
\end{theo}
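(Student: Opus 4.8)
The plan is to apply the cylindric convergence criterion, Theorem~\ref{theo:conv_cyl}, to the sequence $\chi^n:=\W^{(n),1/2,\ua}$. Three things must then be checked: that each $\chi^n$ is an $({\cal H}_O,{\cal F}_{{\cal H}_O})$-valued random variable whose paths a.s. do not cross, that $(\chi^n)_{n\ge1}$ satisfies condition (IO), and that it satisfies (B2O). The first point is essentially local and routine: the out-neighbour $\alpha(X)$ of a vertex $X=(x,t)$ depends only on the Poisson process inside the arc $\{d_{\R/\Z}(x,\cdot)\le 1/(2n)\}$ (a proper sub-arc for $n\ge2$, the whole circle for $n=1$), so that well-definedness in $({\cal H}_O,{\cal F}_{{\cal H}_O})$ and the non-crossing property are inherited from the planar Poisson tree of \cite{ferrarilandimthorisson,ferrarifonteswu} by the same, purely local, arguments; compactness of the closure of $\chi^n$ is handled exactly as for the planar lattice web in \cite[Section~6]{fontes2004}.

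For (IO), one argues directly on the cylinder. From any point, the successive points of a CPT trajectory form a renewal construction — the Poisson process above the current point is fresh, independently of the past — so a single trajectory is a random walk on the circle with exactly the step law of the planar case; the Donsker invariance principle then gives convergence of the corresponding rescaled trajectory to a Brownian motion modulo~$1$, which is standard (the normalisation, in particular the value $r=1/2$, being calibrated for this, cf.\ \cite{ferrarifonteswu}). For finitely many trajectories, the standard argument of \cite{ferrarilandimthorisson,ferrarifonteswu} for (I) carries over unchanged: as long as two of them are at circle-distance larger than $1/n$ their strips are disjoint arcs and they evolve independently; once closer, they coalesce within $O(1)$ unrescaled steps — hence instantaneously on the rescaled clock — and coincide afterwards; the only effect of the cylinder is to make such meetings more frequent. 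This gives (IO).

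The heart of the matter is (B2O). By rotational invariance of $\mathcal{N}_n$ and time-translation invariance, $\P\big(\eta^O_{\chi^n}(t_0,t;[a\to a+\varepsilon\modu1])\ge3\big)$ does not depend on $(a,t_0)$, so it suffices to bound $\varepsilon^{-1}\limsup_n\P\big(\eta^O_{\chi^n}(0,t;[0\to\varepsilon])\ge3\big)$. The idea is to reuse the coupling device behind~\eqref{eta^O<eta}: because $\varepsilon<1$, a change of mutual winding of two non-coalesced CPT paths issued from $[0\to\varepsilon]$ would force a coalescence, so, by non-crossing, all the surviving paths from $[0\to\varepsilon]$ occupy at every time an arc of the circle of length strictly less than~$1$; cutting the circle along the extremal surviving path turns this into a moving window of width~$<1$, inside which the cylindric Poisson process agrees, up to its two identified ends, with a planar homogeneous one, and the artificial-wall argument of~\eqref{eta^O<eta} (the far end of the arc playing the role of $1+W_{a,t_0}$) then yields the stochastic domination
\begin{equation*}
\eta^O_{\chi^n}(0,t;[0\to\varepsilon])\ \le_S\ \eta_{\chi^n_{\mathrm{pl}}}(0,t;0,\varepsilon),
\end{equation*}
$\chi^n_{\mathrm{pl}}$ denoting the identically-rescaled planar Poisson tree. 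Condition (B2) for $\chi^n_{\mathrm{pl}}$, established in \cite{ferrarifonteswu}, together with planar translation invariance, then gives (B2O), and Theorem~\ref{theo:conv_cyl} concludes.

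The main obstacle is making this last coupling rigorous. Contrary to the Brownian-web setting, the translate by~$1$ of a Poisson-tree path is not itself a Poisson-tree path, so it cannot simply be superimposed; one has to work with the moving width-$<1$ window spanned by the extremal surviving paths, and the cylindric and planar Poisson processes then agree only up to a boundary layer of width $O(1/n)$ near the two identified ends of that window — a discrepancy which is precisely what creates the extra coalescences responsible for the domination and which becomes immaterial on the diffusive scale. Turning this picture into an honest coupling that yields $\eta^O_{\chi^n}\le_S\eta_{\chi^n_{\mathrm{pl}}}$ uniformly in $n$, thereby transporting the known local estimate (B2) to (B2O) despite the global, non-local features of the cylinder, is the delicate step of the proof.
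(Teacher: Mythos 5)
Your treatment of (IO) matches the paper's, which refers to \cite{ferrarifonteswu} and omits the details; no objection there.

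For (B2O), you follow the first of the two routes the paper sketches, namely a stochastic domination
\[
\eta^O_{\chi^n}(0,t;[0\to\varepsilon]) \ \leq_S\ \eta_{\chi^n_{\mathrm{pl}}}(0,t;0,\varepsilon)
\]
by analogy with the Brownian-web coupling behind \eqref{eta^O<eta}. But you then explicitly leave that coupling unconstructed, and you correctly diagnose why the continuous device does not transfer: the translate by $1$ of a Poisson-tree path is not itself a Poisson-tree path, and the moving width-$<1$ window you propose has a boundary layer of width $O(1/n)$ where the cylindric and planar environments cannot be made to agree. Since the step you call ``the delicate step of the proof'' is precisely the one that would make your argument work, this is a genuine gap, not a proof.

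What the paper actually proves in detail is its \emph{second} strategy, which sidesteps the global domination entirely by a local comparison. Two observations drive it: first, $t\mapsto \P\big(\eta^O_{\chi^n}(0,t;[0\to\varepsilon])\geq 3\big)$ is a.s.\ non-increasing, so it suffices to prove (B2O) for small $t$; second, for $t$ small and $\varepsilon$ small, with probability $1-O(\varepsilon^2)$ the (rescaled) trajectories issued from $[0\to\varepsilon]\times\{0\}$, together with all their decision sectors, remain inside a fixed window such as $[-1/3,1/3]\times[0,t]$. Inside such a window one can couple the cylindric and planar Poisson processes so that they coincide \emph{exactly} (the leftover of the cylinder and of the plane are filled with independent Poisson processes), and therefore for any event $\mathrm{Ev}$ determined by these trajectories, $\P^{\Cyl}_n(\mathrm{Ev}) = \P_n(\mathrm{Ev}) + O(\varepsilon^2)$. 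Applying this to $\mathrm{Ev}=\{\eta\geq 3\}$ and invoking (B2) for the planar Poisson tree from \cite{ferrarifonteswu} yields (B2O), since the $O(\varepsilon^2)$ error is absorbed by the $\varepsilon^{-1}\limsup_n$ in the criterion. This local coupling is exactly what replaces the global domination you leave open, and it is the argument you would need to supply.
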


\begin{proof}
As noticed in Section \ref{section:CylBrownWeb}, only criteria $(IO)$ and $(B2O)$ of Th. \ref{theo:conv_cyl} have to be checked. The proof of $(IO)$ is very similar to the one of $(I)$ for the two-dimensional Poisson Tree (see Section 2.1 of \cite{ferrarifonteswu}) and is omitted. The suitable value $r=1/2$ ensures that the limiting trajectories are coalescing standard Brownian motions.

Let us now prove $(B2O)$. By stationarity of the CPT, it suffices to prove that for all $t>0$,
\begin{equation}
\label{B20-CPT}
\lim_{`e \to 0^+}\frac{1}{`e} \limsup_{n\to+\infty} \P \left(\eta^{O}_{\W^{(n),\frac{1}{2},\ua}}(0,t;[0\to \varepsilon]) \geq 3 \right) = 0.
\end{equation}
Recall that among all the trajectories in $\W^{(n),\frac{1}{2},\ua}$ that intersect the arc $[0\to `e]$ at time $0$, $\eta^{O}_{\W^{(n),\frac{1}{2},\ua}}(0,t;[0\to `e])$ counts the number of distinct positions these paths occupy at time $t$.

A first way to obtain (\ref{B20-CPT}) consists in comparing $\eta^{O}_{\W^{(n),\frac{1}{2},\ua}}(0,t;[0\to `e])$ and $\eta_{W^{(n)}}(0,t;0,`e)$, where $W^{(n)}$ denotes the normalized two-dimensional Poisson tree-- whose distribution converges to the usual BW, see \cite{ferrarifonteswu} --by using stochastic dominations similar to (\ref{eta^O<eta}) traducing that it is easier to coalesce on the cylinder than in the plane. Since $W^{(n)}$ satisfies $(B2)$ (see Section 2.2 of \cite{ferrarifonteswu}), $\eta^{O}_{\W^{(n),\ua}}(0,t;[0\to `e]) \leq_S \eta_{W^{(n)}}(0,t;0,`e)$ implies that $\W^{(n),\frac{1}{2},\ua}$ satisfies (\ref{B20-CPT}) which achieves the proof of Th. \ref{theo:ffw-cyl}.

A second strategy is to investigate the local character of the assumptions $(B2)$ and $(B2O)$. Indeed, the map $t\mapsto \eta^{O}_{\W^{(n),\frac{1}{2},\ua}}(0,t;[0\to `e])$ is a.s. non-increasing. It is then enough to prove (\ref{B20-CPT}) for (small) $0<t\ll 1$ in order to get it for any $t>0$. The same holds when replacing $\W^{(n),\frac{1}{2},\ua}$ with $W^{(n)}$. Now, when $t$ and $`e$ are both small, the (normalized) CPT $\W^{(n),\frac{1}{2},\ua}$ restricted to a small window containing $[0\to `e]\times[0;t]$ behaves like the (normalized) two-dimensional Poisson tree $W^{(n)}$ restricted to a window containing $[0;`e]\times[0;t]$ with high probability. As a consequence, $\W^{(n),\frac{1}{2},\ua}$ and $W^{(n)}$ should simultaneously satisfy $(B2O)$ and $(B2)$.

Let us write this in details. We use a coupling of the environment (the PPP) on some larger window since the trajectories of the discrete trees on a window are also determined by the environment around. Using some control of the deviations of the paths issued respectively from the intervals $I^{\Cyl}_\varepsilon=[0 \to \varepsilon ]\times \{0\}$ and $I_\varepsilon=[0;\varepsilon]\times \{0\}$, we determine larger windows $\mbox{Win}_\varepsilon^{\Cyl}$ and  $\mbox{Win}_\varepsilon$ which will determine the trajectories started from this sets to a certain time $t_\varepsilon$ up to a negligible probability $p_\varepsilon$. Using the constants that emerge from this study, we thereafter design a coupling between the PPP on the cylinder and on the plane that coincides on $\mbox{Win}_\varepsilon^{\Cyl}$ and  $\mbox{Win}_\varepsilon$ (up to a canonical identification). This will allow us to deduce $(B2)$ or $(B2O)$ from the other.

To design the windows that contains all paths crossing $I^{\Cyl}_\varepsilon$ (or $I_\varepsilon$) up to time $t$,  it suffices to follow the trajectories starting at $(0,0)$ and $(\varepsilon,0)$. Consider the path $(X_k=(x_k,y_k), k\geq 0)$ started from $(0,0)$ and consider the successive i.i.d. increments of this path
denoted by $(\xi^x_k,\xi^y_k)=\Delta X_{k}$. Before normalisation, $(\xi^x_1, \xi^y_1)$ consists of two independent r.v., where $\xi^x_1$ is uniform on $[-r,+r]$ with $r=1/2$, and $\xi^y_1$ has exponential distribution with parameter $\lambda=1$, since
$$
P(\xi^y_1 \geq y)=`P\Big(\mathcal{N} \cap \big(\big[-\frac{1}{2},\frac{1}{2}\big]\times [0,y]\big)  = \emptyset \Big) = e^{- y} ~.
$$

Now, starting at 0, the renormalized trajectory on $\W^{(n),\frac{1}{2},\ua}$ is a random walk whose increments $(\xi^{(n),x}_k, \xi^{(n),y}_k, k\geq 0)$ are i.i.d. such that $n\xi^{(n),x}_k \eqd \xi^x_1$, and $n^2\xi^{(n),y}_k\eqd \xi^y_1$. Let us define the number of steps for the rescaled path to hit ordinate $t$ by
$$
\tau^n_{t} := \inf \l\{ j\geq 1 ~|~ \sum_{k=1}^j \xi^{(n),y}_k \geq t \r\} \eqd \inf\l\{j\geq 1 ~|~ \sum_{k=1}^j \xi^{y}_k \geq n^2t\r\} ~.
$$
The points $\{\sum_{k=1}^j \xi^{y}_k,\ j\geq 1\}$ form a PPP $\Theta$ on the line with intensity $1$, so that $\tau^n_{t}=1+\#(\Theta \cap[0,n^2t])$ a.s. Therefore
$$
p_{c,t,n} := `P(\tau^n_t \geq cn^2) = `P(1+P(n^2t)\geq n^2c)
$$
where $P(x)$ is a Poisson r.v. with parameter $x$.
For $c=2t$  this probability $p_{2t,t,n}$ is exponentially small in $n$ and the event $A_{t,n}:=\{\tau_{n}^t\leq 2n^2 t\}$ has probability exponentially close to $1$. Now, on the event $A_{t,n}$, we can control the angular fluctuations of $\W^{(n),\frac{1}{2},\ua}$:
\ben
q_{t,n} & := & `P\l(\sup_{j\leq \tau^n_t} \l|\sum_{k=1}^j \xi^{(n),x}_k\r|\geq c \sqrt{t}\r)\\
& \leq & `P(A^c_{2t,n})+`P\l(\sup_{j\leq 2n^2 t} \l|\sum_{k=1}^{j} \xi^{(n),x}_k\r|\geq c \sqrt{t} \r) ~.
\een
Thus, consider the process defined by
$$
s_n(j/n^2) := \sum_{k=1}^{j} \xi^{(n),x}_k \, \eqd \, \frac{1}{n} \sum_{k=1}^{j } \xi^{x}_k ~, \quad \mbox{ for }j\geq 0,
$$
and interpolated in between. A simple use of Donsker theorem shows that
$$
(s_n(a))_{a\geq 0} \dd \l(\frac{1}{\sqrt{12}}B(a)\r)_{a\geq 0}
$$
in $\Co(\R_+,\R)$ where $B$ is a Brownian motion. Since for every $t$, on $\Co([0,2t],\R)$, the functional $g\mapsto \max|g|$ is continuous, one sees that
\ben
q_{t,n}&=& `P(A^c_{2t,t,n})+`P\l(\sup_{a\leq 2 t} \l|s_n(a)\r|\geq c \sqrt{t} \r)\\
&\xrightarrow[n\to +\infty]{}& `P\l(\sup_{a\leq 2 t} |B(a)| \geq c\sqrt{12 t}\r)= `P\l(\sup_{a\leq 1} |B(a)| \geq c\sqrt{6 }\r).
\een
Take $\varepsilon>0$. Choose $c$ large enough such that $`P\l(\sup_{a\leq 1} |B(a)| \geq c\sqrt{6}\r) \leq \varepsilon^2/2$, and $n$ large enough so that $q_{t,n}\leq \varepsilon^2$, and $t$ small enough so that $c\sqrt{t}<1/4$. We have proved that with probability larger than $1-O(\varepsilon^2)$, the walk hits  ordinate $t$ before its abscissa exits the window $[-c\sqrt{t},c\sqrt{t}]$. Since the decision sector for each step of the walker has width $2r/n$, with probability more than $1-O(\varepsilon^2)$, the union of the decision sectors of the walk before time $t$ are included in
\ben[-c\sqrt{t}-2r/n,c\sqrt{t}+2r/n]\subset[-1/3,1/3]\een for $n$ large enough.
It is now possible to produce a coupling between the PPP on the cylinder and the plane that coincides on a strip with width $2/3$~: take the same PPP on the two strips (up to a canonical identification of these domains), and take an independent PPP with intensity 1 on the remaining of the cylinder or of the plane. Henceforth, any computation that depends only of such a strip in the cylinder and in the plane will give the same result. Here, we then have here, for any event $\mbox{Ev}$ that depends on the trajectories passing through $I_\varepsilon^{\Cyl}$ or $I_\varepsilon$ up to time $t$ (for the constant satisfying what is said just above)
\ben
`P_{n}^{\Cyl}(\mbox{Ev}) = `P_{n}(\mbox{Ev})+ O(\varepsilon^2),
\een
so that the inheritance of $(B2)$ from the plane to the cylinder is guaranteed, as well as the converse.
\end{proof}

\subsection{From the plane to the cylinder, and vice-versa: principles}
\label{sec:FCP}

When a convergence result of some sequence of coalescing processes defined on the plane to the BW has been shown, it is quite natural to think that the similar convergence holds on the cylinder too, and that the limit should be the CBW. The converse, also, should hold intuitively.

The main problem one encounters when one wants to turn this intuition into a theorem, is that, in most cases the constructions we are thinking of are trees that are defined on random environments (RE) as a PPP or as lattices equipped with Rademacher r.v.. Both these models exist on the cylinder and on the plane, leading to clear local couplings of these models. But, more general RE and more general random processes exist, and it is not possible to define a ``natural'' model on the cylinder inherited from that of the plane. We need to concentrate on the cases where such a natural correspondence exists.

A similar restriction should be done for the algorithms that build the trajectories using the RE. In the cases studied in the paper,
the trajectories are made by edges, constructed by using a navigation algorithm, which decides which points to go to depending on a ``decision domain'' which may depend on the RE. For example, in the cylindric lattice web, the walker at position $(x,t)$ just needs to know the Rademacher variable attached to this point, so that its decision domain is the point $(x,t)$ itself. In the generalization of  Ferrari \& al. \cite{ferrarilandimthorisson,ferrarifonteswu} treated at the beginning of Section \ref{sect:ConvCBW}, the decision domain is a rectangle  $[x-r,x+r]\times(t,t+h]$ where $h$ is smallest positive real number for which this rectangle contains a point of the point process (many examples of such navigation processes have been defined in the literature, see \cite{baccellibordenave, baccellicoupiertran,B-M, colettidiasfontes,colettivalencia,colettivalle,fontes2004,FVV}).
We may call such model of coalescing trajectories as coming from ``navigation algorithms, with local decision domains''.

There exist models of coalescing random processes of different forms, or that are not local (such as minimal spanning trees). Again, it is not likely that one may design a general theorem aiming at comparing the convergence on the cylinder with that on the plane.

``For a model defined on the cylinder and on the plane on a RE'' as explained in  the proof of Theorem \ref{theo:ffw-cyl}, when a local coupling between windows (or strip) of the cylinders and of the plane exists, $(B2)$ and $(B2O)$ {``are morally equivalent''}.
Informally, the 4 conditions are:  \\
1) the models are invariant by translations on respectively, the cylinder and the plane;\\
\noindent 2) there exists a coupling between both probabilistic models which allows to compare  $\W^{(n),\ua}$ and $W^{(n)}$ at the macroscopic level: on a window $\mbox{Win}:=[0,A]\times [0,B]$ for some (small) $A,B>0$, the environments on which are defined  $\W^{(n),\ua}$ and $W^{(n)}$ can be coupled, and, under these coupling, these RE  coincide a.s.;\\
\noindent 3) the restriction of the trajectories from $\W^{(n),\ua}$ and $W^{(n)}$ on $[0,\varepsilon]\times[0,t_\varepsilon]$ are measurable with respect to the environment in $\mbox{Win}$ with probability $1-O(\varepsilon^{1+a})$ for some $a>0$;\\
\noindent 4) the largest decision domain before hitting ordinate $n^2t$ is included in a rectangle $[a_n,b_n]$ with probability $1-O(\varepsilon^{1+a})$ where $a_n=o(1)$ and $b_n=o(1)$ (for the rescaled version).

\section{Discrete Cylindric and Radial Poisson Tree}
\label{sec:RBW}

Coletti and Valencia introduce in \cite{colettivalencia} a family of coalescing random paths with radial behavior called the \textit{Discrete Radial Poisson Web}. Precisely, a Poisson point process $\Theta$ with rate 1 on the union of circles of radius $k\in \N\setminus \{0\}$, centered at the origin, is considered. Each point of $\Theta$ in the circle of radius $k$ is linked to the closest point in $\Theta$ in the circle of radius $k-1$, if any (if not, to the closest point of $\Theta$ in the first circle of radius smaller than $k-1$ which contains a point of $\Theta$). They show in \cite[Th.2.5]{colettivalencia} that under a diffusive scaling and restricting to a very thin cone (so that the radial nature of paths disappears), this web converges to some mapping of the (standard) BW. A similar result is established in Fontes et al. \cite{FVV} for another radial web.

Our goal in this section is to establish a convergence result for an analogous of the Discrete Radial Poisson Web of Coletti and Valencia \cite{colettivalencia} but which holds in the whole plane. Our strategy consists in considering a cylindrical counterpart to the Discrete Radial Poisson Web and to prove its convergence to the CBW (Theorem \ref{theo:DCPFtoCBW}). Thenceforth, it suffices to map the cylinder on the (radial) plane with $\varphi_\star$ defined in \eqref{def:phi} to obtain a global convergence result for the corresponding planar radial forest.\\

We modify a bit the model of \cite{colettivalencia} to make the involved normalizations more transparent and to reduce as much as possible the technical issues, while keeping at the same time the main complexity features. Consider an increasing sequence of non-negative numbers $(h_k, k \in \N)$, with $h_0=0$, and the associated slices of the cylinder:
\begin{equation}
\label{def:CSk}
\Cyl' = \rpuz \times \{h_k, \ k\in \N\}=\bigcup_{k\in \N} \Slice(h_k) ~.
\end{equation}
Consider the following Poisson point process on $\Cyl'$,
\begin{equation}
\label{def:Xi}
\Xi=\bigcup_{k\geq 0} \Xi_k ~,
\end{equation}
where  $\Xi_k$ is a PPP on $\Slice(h_k)$ with intensity $n_k>0$. The sequences $(h_k)_{k\geq 1}$ and $(n_k)_{k \geq 1}$ are the parameters of the model. Remark that the choice of $n_k=n$ (a constant) is treated in previous sections. Here we are interested in the case where $n_k, h_k\rightarrow +\infty$.\\

Given $\Xi$, let us define the ancestor $\alpha(Z)$ of a point $Z=(x,h_k)\in \Slice(h_k)$ as the closest point of $\Xi_{k+1}$ if the latter is not empty and the point $(x,h_{k+1})$ otherwise. This second alternative means that instead of moving to the closest point of the first non-empty slice with rank $k'>k$ (as in \cite{colettivalencia}), one just moves vertically to the next slice.

The ancestor line ${\sf AL}_Z$ of $Z=(x,h_k)$ is the sequence $(Z_j=(x_j,h_j),j\geq k)$ such that $Z_k=Z$ and for $j> k$, $Z_{j+1}=\alpha(Z_j)$. Upon $\Xi$ we define the \textit{Discrete Cylindric Poisson Tree} $\mathcal{T}$ as the union of the ancestor lines of the elements of $\Xi$:
$$
\mathcal{T} := \bigcup_{(x,h) \in \Xi} {\sf AL}_{(x,h)} ~.
$$Notice that when $(x,h)\in \Xi$, ${\sf AL}_{(x,h)}=\mathcal{T}_{(x,h)}$ is the path of $\mathcal{T}$ started at $(x,h)$. The notation ${\sf AL}_{(x,h)}$ allows to consider ancestor lines started from any points $Z\in \Cyl'$.

Contrary to Section \ref{sect:ConvCBW}, we do not consider a sequence of point processes parametrized by $n$ which goes to infinity, but rather we shift the cylinder which also implies that we see more and more points. Precisely, for any $k\geq j\geq 1$ and any $(x,h_k)\in\Xi$, let ${\sf AL}^{(j)}_{(x,h_k)}$ be the ancestor line ${\sf AL}_{(x,h_k)}$ translated by the vector $-(0,h_j)$. We can then associate to $\mathcal{T}$, the sequence of shifted forests $(\mathcal{T}^{(j)})_{j\geq 1}$ by
$$
\mathcal{T}^{(j)} := \bigcup_{(x,h) \in \dis\cup_{k\geq j} \,\Xi_k} {\sf AL}^{(j)}_{(x,h)} ~.
$$
\begin{figure}[!ht]
\begin{center}
\begin{tabular}{ccc}
\includegraphics[width=5cm,height=5cm]{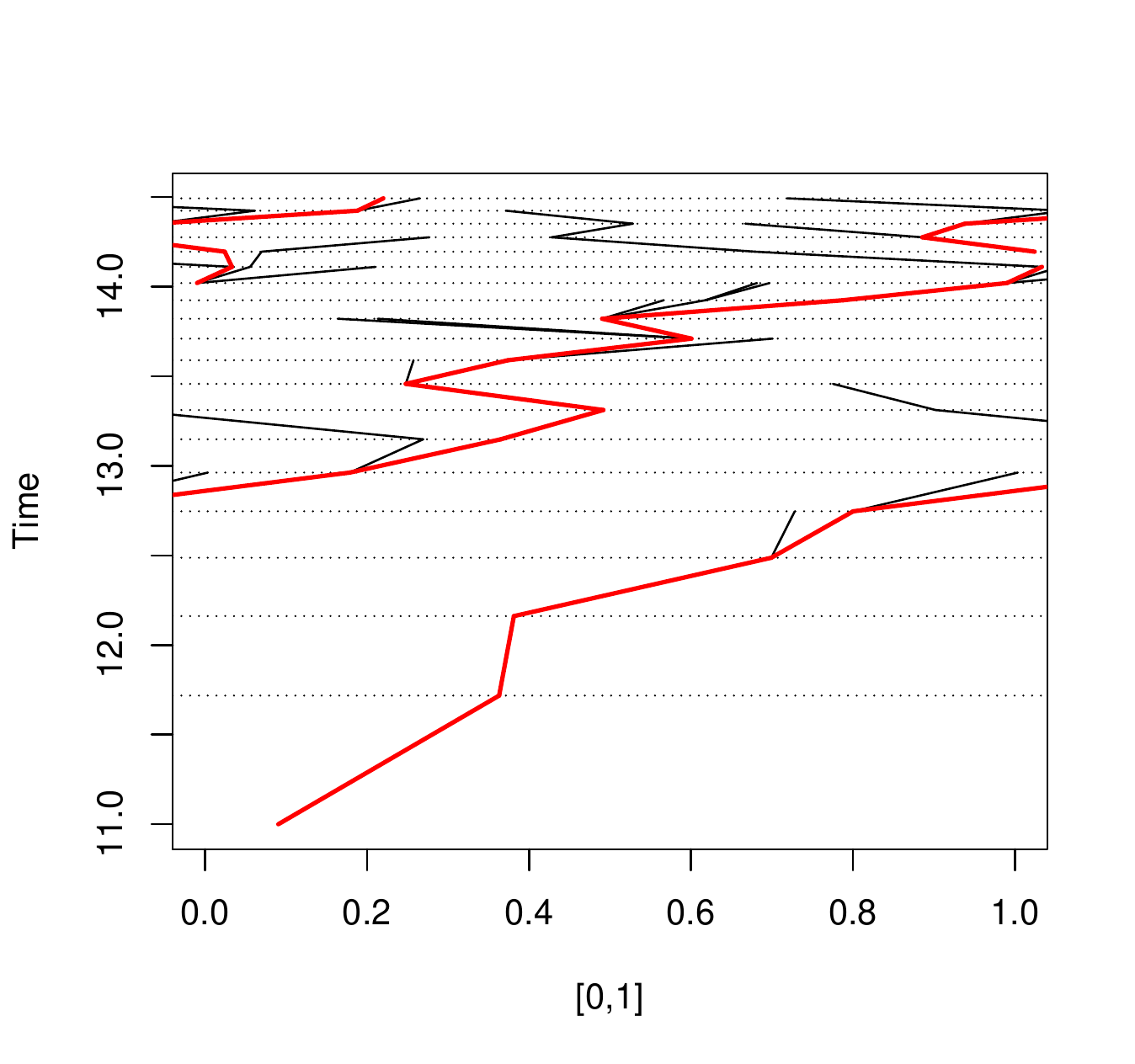} & \includegraphics[width=5cm,height=5cm]{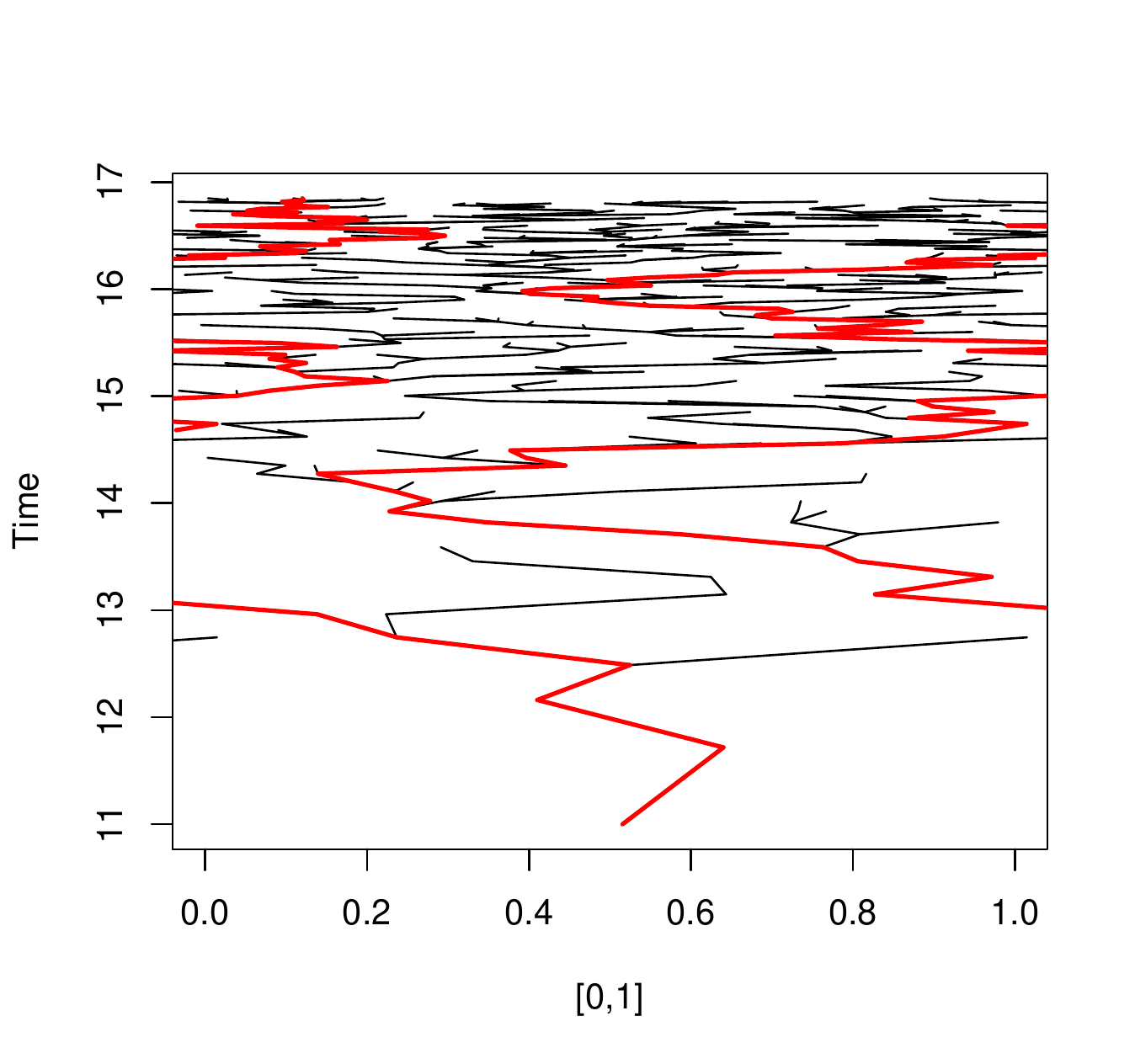} & \includegraphics[width=5cm,height=5cm]{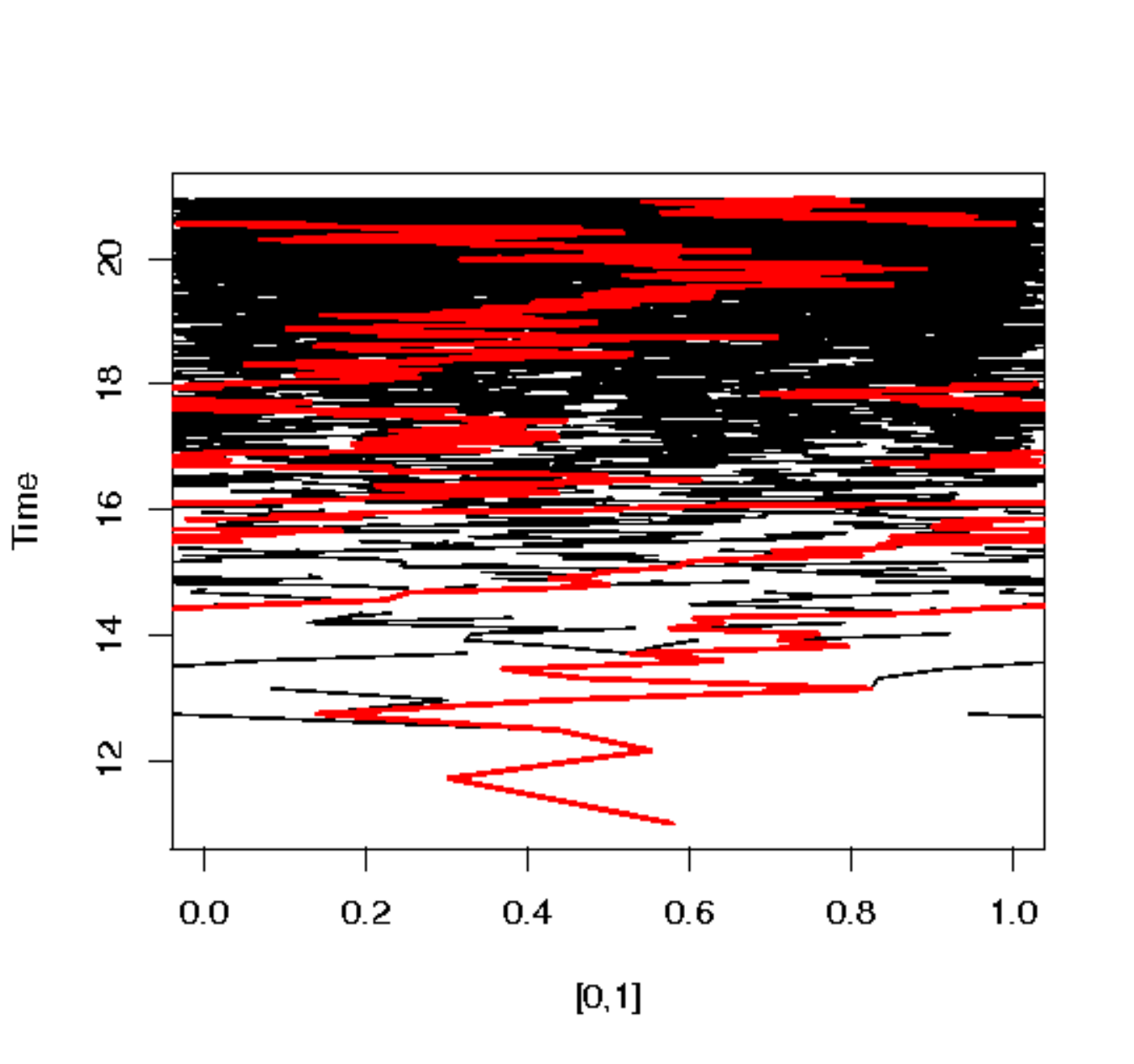}\\
\includegraphics[width=5cm,height=5cm]{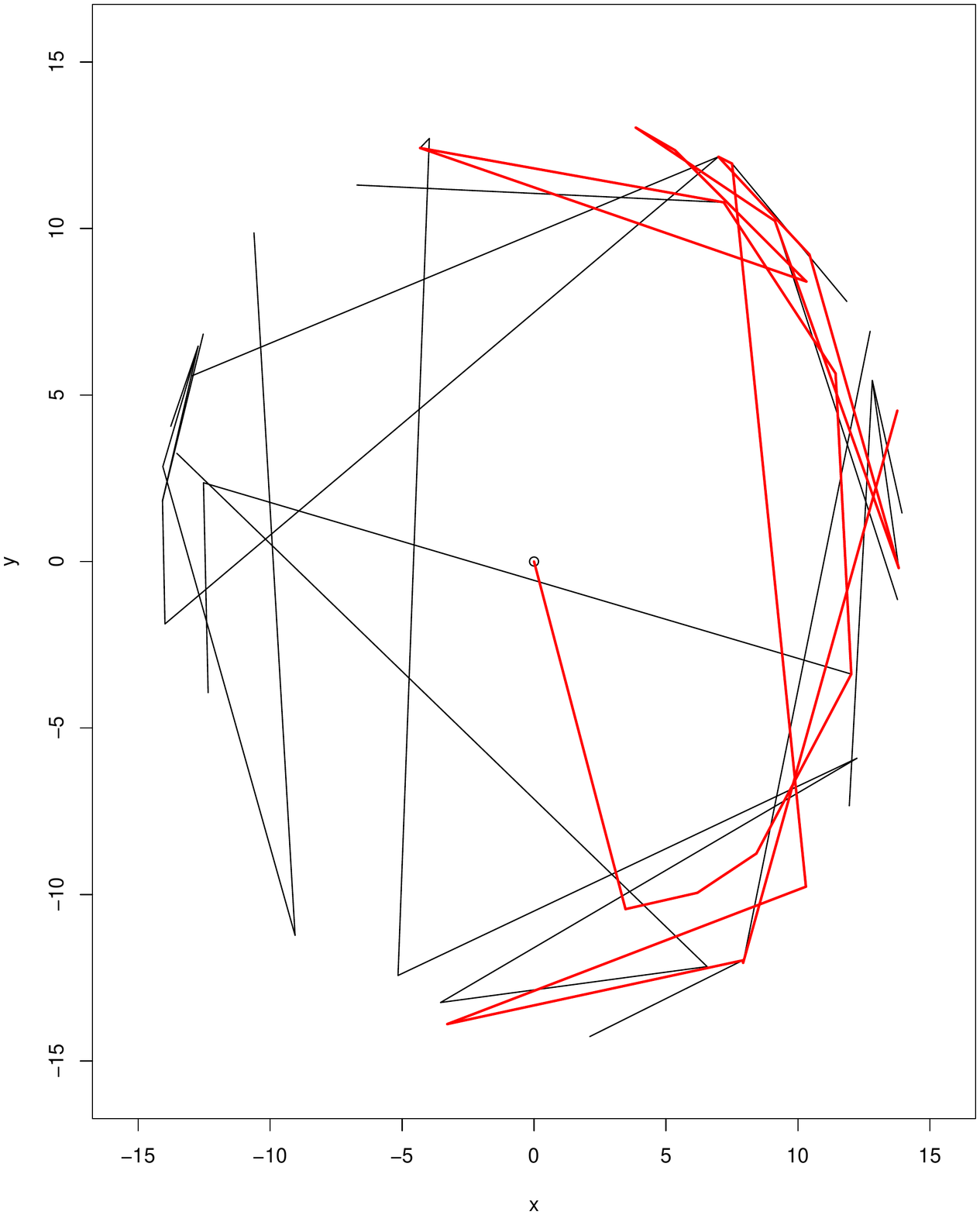} & \includegraphics[width=5cm,height=5cm]{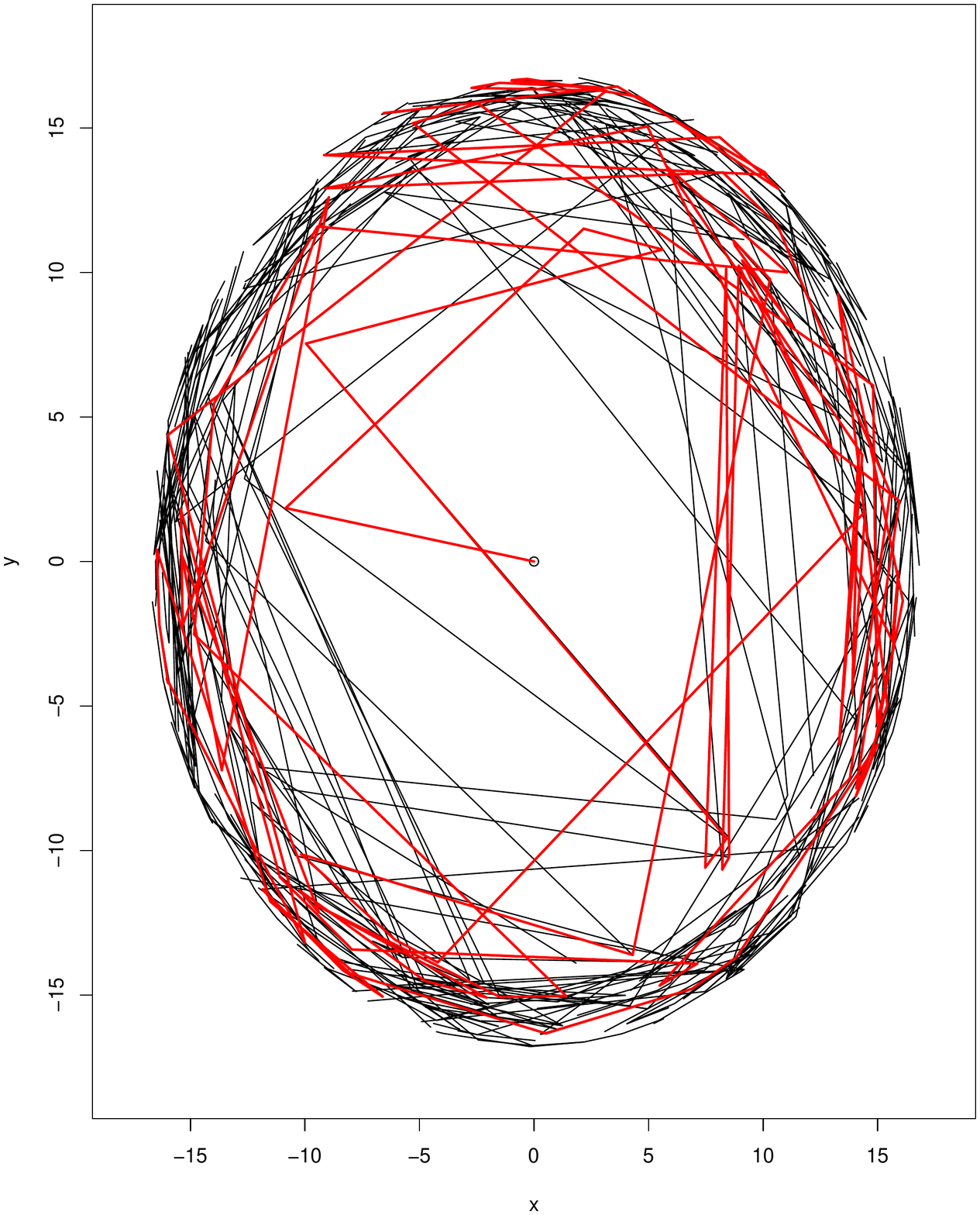} & \includegraphics[width=5cm,height=5cm]{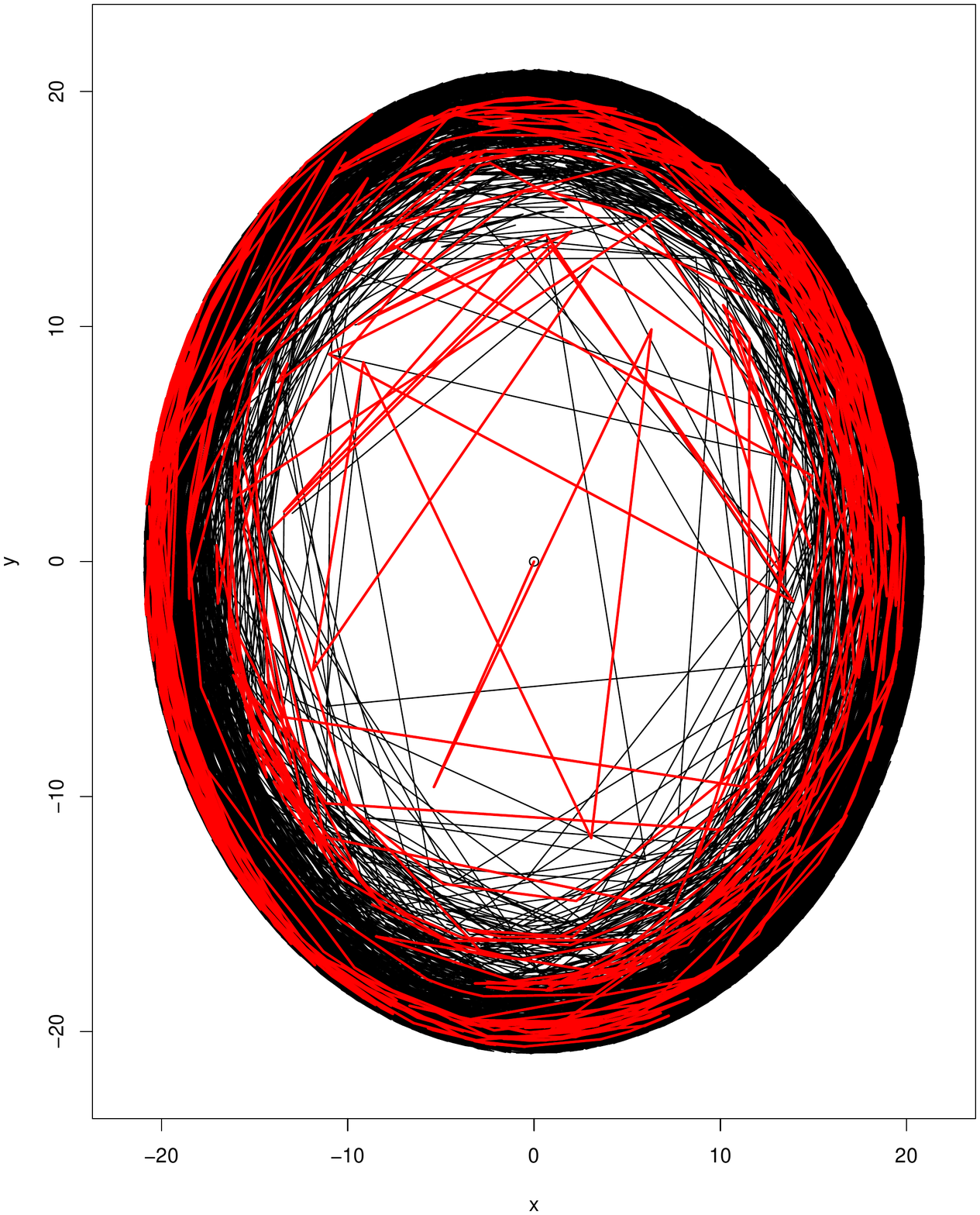}\\
(a) & (b) & (c)
\end{tabular}
\caption{\label{fig:CBW} {\small \textit{First line: Cylindrical forests $\mathcal{T}$ represented for $(h_k)_{k\in \{1\dots K\}}$ with (a) $K=20$, (b) $K=100$, (c) $K=1000$. The red path is obtained by picking a vertex at height $h_K$ at random and by finding its ancestors in the upper slices. Second line: Projected radial forests obtained by $\varphi_\star$ defined in \eqref{def:phi} and with the same values of $K$ are represented.}}}
\end{center}
\end{figure}

Our purpose is to prove that:
\begin{theo}
\label{theo:DCPFtoCBW}
Let us consider two sequences $(n_k)_{k \geq 1}$ and $(f_k)_{k \geq 1}$ of positive real numbers such that
\ben
\label{eq:newcond}
\lim_{k\to\infty} f_k = 0 \, , \; \sum_k e^{-n_kf_k} < +\infty \; \mbox{ and } \; \sum_{k} \frac{1}{n_k^2} = +\infty ~.
\een
Then there exists a sequence $(h_k)_{k \geq 1}$ tending to infinity such that the sequence of shifted forests $(\mathcal{T}^{(j)})_{j\geq 1}$ converges in distribution to the CBW restricted to the half cylinder $\Cyl^+=\rpuz \times \R_+$.
\end{theo}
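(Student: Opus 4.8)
The plan is to verify, for the sequence of shifted forests $(\mathcal{T}^{(j)})_{j\ge 1}$, the half-cylinder version of the convergence criterion of Theorem~\ref{theo:conv_cyl}: since every path of $\mathcal{T}^{(j)}$ is born at a nonnegative time, $\mathcal{T}^{(j)}$ is naturally a random element of the path space over $\Cyl^{+}$, and it suffices to check conditions (IO) and (EO) (the Markov structure of the model making (EO) more convenient than (B2O)). The first task is to pin down the heights $(h_k)$. Let $D_k$ be the signed displacement, in $(-1/2,1/2]$, from a point of $\Slice(h_{k-1})$ to its ancestor in $\Xi_k$; by rotation invariance of the intensity-$n_k$ Poisson process $\Xi_k$, the law of $D_k$ does not depend on the starting point, $D_k$ is centred, $\P(|D_k|>u)\le e^{-n_k u}$ for $0\le u\le 1/2$, and $c\,n_k^{-2}\le v_k:=\E[D_k^2]\le C\,n_k^{-2}$ for universal $0<c\le C$. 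I set $h_0:=0$ and $h_k:=\sum_{i=1}^{k}v_i$. Then $h_k\uparrow+\infty$ (using $\sum_k n_k^{-2}=+\infty$), while $v_k\to 0$ (since $n_k\to+\infty$, which follows from $f_k\to 0$ and $\sum_k e^{-n_kf_k}<+\infty$); hence inside any fixed window $[0,T]$ of the shifted cylinder the mesh of the slice heights of $\mathcal{T}^{(j)}$ tends to $0$ and all the intensities appearing tend to $+\infty$ as $j\to+\infty$. Moreover, for each fixed $\delta>0$ one has $f_k<\delta$ for $k$ large, so $\sum_{k\ge j}e^{-n_k\delta}\le\sum_{k\ge j}e^{-n_kf_k}\to 0$ and likewise $\sum_{k\ge j}\P(\Xi_k=\emptyset)=\sum_{k\ge j}e^{-n_k}\to 0$; by Borel--Cantelli the probability that, past slice $j$, some path makes a displacement larger than $\delta$ or meets an empty slice tends to $0$ as $j\to+\infty$, and in particular no path winds around the circle within a single step with probability close to $1$.

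For (IO): an ancestor line of $\mathcal{T}$ is a time-inhomogeneous Markov chain with independent increments $D_k$, because $\Xi_k$ is independent of the configuration on slices of smaller index; distinct ancestor lines share $\Xi_k$ at each step, so two of them have independent increments as long as their positions are more than $O(1/n_k)$ apart, while they acquire a common ancestor---i.e.\ coalesce---with probability bounded below as soon as they come within $O(1/n_k)$. This is exactly the dynamics of coalescing random walks. With the normalisation $h_k-h_{k-1}=v_k$, the martingale invariance principle (the Lindeberg condition being immediate from $\P(|D_k|>\delta)\le e^{-n_k\delta}$ and $n_k\to+\infty$) gives that a single ancestor line converges to a standard Brownian motion modulo~$1$, and that finitely many ancestor lines issued from the vertices of $\mathcal{T}^{(j)}$ nearest to prescribed deterministic points $y_1,\dots,y_m$---these nearest vertices converging to $y_1,\dots,y_m$ because the mesh vanishes---converge jointly to coalescing Brownian motions modulo~$1$ started at $y_1,\dots,y_m$. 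Hence (IO) holds.

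For (EO): as each $\mathcal{T}^{(j)}$ has the Markov property, (EO) follows, exactly along the lines of the discussion after Theorem~\ref{theo:2.2}, once we establish the coming-down-from-infinity bound
\[
\limsup_{j\to+\infty}\ \E\!\left[\widehat{\eta}^{O}_{\mathcal{T}^{(j)}}(t_0,\varepsilon;[a\to b])\right]<+\infty
\qquad\text{for all }\ \varepsilon>0,\ t_0\ge 0,\ a,b\in\rpuz .
\]
By the coalescing-random-walk structure this left-hand side is controlled by the tail of the coalescence time of two ancestor lines: if $\psi_j(d,\varepsilon)$ denotes the probability that two paths lying at distance $d$ on the slice of $\mathcal{T}^{(j)}$ closest to time $t_0$ (call its index $k_j$) are still distinct at time $t_0+\varepsilon$, then, summing $\psi_j(1/n_{k_j},\varepsilon)$ over the $\mathrm{Poisson}(n_{k_j}|a\to b|)$ consecutive pairs of starting points of $\mathcal{T}^{(j)}$ in $[a\to b]$, it suffices to prove that $\psi_j(d,\varepsilon)\le C\,d/\sqrt{\varepsilon}$ for a universal constant $C$ and all large $j$; this yields the bound $O(|a\to b|\,\varepsilon^{-1/2})$, uniformly in $j$.

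The estimate $\psi_j(d,\varepsilon)\le C\,d/\sqrt{\varepsilon}$ is the main obstacle, because the increments are neither i.i.d.\ across slices (the $n_k$ vary) nor independent across paths (the ancestor lines share each $\Xi_k$), and the gap process is ``absorbed'' at the moving scale $1/n_k$ rather than at $0$. I would argue as follows. On the event---of probability $1-o_j(1)$ by the Borel--Cantelli step---that no ancestor line moves by more than a fixed small $\delta$ over the relevant slice range, the gap $G$ between two ancestor lines is, while it stays above scale $1/n_k$, a martingale whose conditional quadratic-variation increment at slice~$k$ is $2v_k$; the Skorokhod embedding theorem then represents $(G_k)$ as a Brownian motion run along a clock which, at real time $s$, equals $2s$ up to a negligible fluctuation, so the classical bound on the probability that a Brownian motion started at $d$ avoids $0$ before time $s$ produces the order $d/\sqrt{\varepsilon}$. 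The discrepancy between this (Brownian) hitting of $0$ and genuine coalescence---which happens with probability bounded below every time the gap descends to scale $1/n_k$, a scale that vanishes while the slice mesh also vanishes---is absorbed by a stochastic domination of $G$, below that scale, by a simple random walk killed at the origin. Feeding $\psi_j(d,\varepsilon)\le C\,d/\sqrt{\varepsilon}$ back into the previous step yields (EO), and (IO) together with (EO) and the half-cylinder form of Theorem~\ref{theo:conv_cyl} give the convergence of $(\mathcal{T}^{(j)})_{j\ge 1}$ to the CBW on $\Cyl^{+}$.
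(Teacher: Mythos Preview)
Your overall architecture---choosing $h_k=\sum_{i\le k}\E[D_i^2]$, verifying (IO) via a Lindeberg-type invariance principle, and reducing (EO) to a coming-down-from-infinity bound controlled by a coalescence-time estimate $\psi_j(d,\varepsilon)\le Cd/\sqrt{\varepsilon}$---matches the paper exactly. The genuine gap is in your derivation of that estimate. Your Skorokhod argument needs a lower bound of order $h_K$ on the embedded clock $T_K$, but this fails for the raw gap increments: the law $\mu_d$ of the unit-intensity gap increment carries an atom of mass $(d+1)e^{-2d}$ at $-d$, so when the (rescaled) gap is small the Skorokhod stopping interval degenerates and $\E[e^{-\lambda T_1(d)}]\to 1$ as $d\to 0$. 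Hence $\prod_k \E[e^{-\lambda(T_k-T_{k-1})}\mid\mathcal F_{T_{k-1}}]$ cannot be bounded by $e^{-c\lambda h_K}$, and the two-regime fix you sketch (a ``simple-random-walk domination below scale $1/n_k$'') does not by itself recover a clock lower bound: the gap may make many excursions to that scale without coalescing, and during each such excursion the clock essentially stalls. The paper's remedy is the key technical idea you are missing: it first passes to a planar analogue $\mathcal W^{(j)}$ (so the gap is a genuine nonnegative martingale, with no empty slices or wrap-around), and then replaces $\mu_d$ by an auxiliary centred law $\bar\mu_d$ obtained by pushing the atom from $-d$ to $-(d+1)$ and adding compensating mass on the right tail. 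This gives a domination $D(d)\1_{D(d)>0}\le_S\bar D(d')\1_{\bar D(d')>0}$ for $d\le d'$, so the hitting time of $(-\infty,0]$ by the $\bar D$-chain dominates the original coalescence time; and for $\bar\mu_d$ the Skorokhod interval contains $[-1,2]$ with probability at least $e^{-2d}$, which yields $\E[e^{-\lambda T_1(d)}]\le c_0(\lambda)<1$ \emph{uniformly in $d$}, hence $T_K\gtrsim h_K$ with high probability and $\P(\tau>K)\le Cd/\sqrt{h_K}$.

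A smaller issue: in your (EO) step you pass directly from $\widehat\eta^O_{\mathcal T^{(j)}}(t_0,\varepsilon;[a\to b])$ to a sum over consecutive Poisson pairs in $[a\to b]$, but $\widehat\eta^O$ counts paths \emph{landing} in $[a\to b]$ at time $t_0+\varepsilon$, not starting there, and on the cylinder these are not comparable pathwise. The paper inserts a rotational-invariance inequality $\E\,\widehat\eta^O\le 2\,\E\,\eta^O$ (Lemma \ref{lem:EtaHatEta}), then a cylinder-to-plane coupling $\eta^O_{\mathcal T^{(j)}}\le\eta_{\mathcal W^{(j)}}$ on a high-probability event, and only then applies the coalescence bound via a deterministic grid (Lemmas \ref{lem:Aajm}--\ref{lem:GridB1O}).
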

The map $\varphi_\star$, as defined in \eref{def:phi}, sends the  half cylinder $\Cyl^{+}:=(\rpuz)\times `R^{+}$ onto the radial plane $(\rpiz)\times `R^+$. The image of the PPP $\Xi$ is a PPP $\Xi'$ on the plane, which is the superposition of the Poisson point processes $\Xi'_k$ of intensities $n_k/(8\pi^3h_k)$ on the circles with radii $4\pi^2h_k$. The image of the tree we built on the cylinder is a tree on ``the radial plane'', which can in fact be directly built by adapting the navigation used in the cylinder in the plane (go to the closest point in the next circle if any, and otherwise to the point with same argument). See Figure \ref{fig:CBW}. To get a convergence result on the radial plane, with the same flavour as that obtained in the plane, we need to discard the neighborhood of zero by a shift. The most economic way to state our results is as an immediate corollary of the previous result:
\begin{cor} \label{cor:rtheer}
Under the hypotheses of Theorem \ref{theo:DCPFtoCBW}, when $j\to+\infty$, the sequence $(\varphi_\star(\mathcal{T}^{(j)}))_{j\geq 1}$ converges in distribution to the $\RBWfromc$.
\end{cor}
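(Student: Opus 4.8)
The plan is to deduce the corollary directly from Theorem~\ref{theo:DCPFtoCBW} by means of the continuous mapping theorem; essentially all the substance is already contained in that theorem, and what remains is to verify that the functional framework is compatible with the map $\varphi_\star$.

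First I would record the key property of $\varphi_\star$, which is already observed in Section~\ref{section:foretsradiales}: by construction the metric $\rho_\bullet$ on the radial plane $(\rpiz)\times\R^{+}$ is the image of $\rho_O$ under $\varphi_\star$, so $\varphi_\star$ is a bijective isometry from $\bigl((\rpuz)\times\R^{+},\rho_O\bigr)$ onto $\bigl((\rpiz)\times\R^{+},\rho_\bullet\bigr)$. Consequently the correspondence it induces on paths --- which to a cylindric path $g$ supported on $\Cyl^{+}$ associates the radial path $\rho\mapsto\rho\exp\bigl(2i\pi\, g(f_\star(\rho))\bigr)$, exactly as in the formula defining $\RBWfromc$ --- is again a bijective isometry between the corresponding path spaces, hence the induced map on compact sets of paths, still denoted $\varphi_\star$, is a bijective isometry for the two Hausdorff metrics; in particular it is a homeomorphism, and so continuous and Borel measurable.

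Second I would check that the objects on both sides are genuine random elements of the right spaces. After translation by $-(0,h_j)$, the shifted forest $\mathcal{T}^{(j)}$ is the union of the ancestor lines ${\sf AL}^{(j)}_{(x,h)}$, $(x,h)\in\cup_{k\geq j}\Xi_k$; each such line starts at a height $h_k-h_j\geq 0$, is semi-infinite (defined at every slice above its starting height) and, once linearly interpolated, defines an element of $\Pi_O$ with nonnegative starting time, while measurability with respect to the Poisson point process $\Xi$ is routine. Hence $\mathcal{T}^{(j)}$ is an $\mathcal{H}_O$-valued random variable whose paths are supported on $\Cyl^{+}$, and $\varphi_\star(\mathcal{T}^{(j)})$ is a well-defined $\mathcal{H}_\bullet$-valued random variable.

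Finally, Theorem~\ref{theo:DCPFtoCBW} asserts that, under hypothesis~\eqref{eq:newcond} and for the sequence $(h_k)$ it provides, $\mathcal{T}^{(j)}$ converges in distribution, as $j\to+\infty$, to the CBW $\Wu$ restricted to $\Cyl^{+}$. Applying the continuous mapping theorem to the continuous map $\varphi_\star$ then yields that $\varphi_\star(\mathcal{T}^{(j)})$ converges in distribution to $\varphi_\star(\Wu|_{\Cyl^{+}})$, which by definition is the standard out-radial Brownian web $\RBWfromc$. I do not anticipate a real obstacle here: the only mildly delicate point is to confirm that $\varphi_\star$ is a homeomorphism of the Hausdorff spaces rather than merely a continuous bijection, and this is immediate from the isometry property above; everything else is the book-keeping described in the previous two paragraphs.
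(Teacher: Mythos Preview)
Your proposal is correct and follows exactly the approach the paper intends: the corollary is stated as an immediate consequence of Theorem~\ref{theo:DCPFtoCBW} via the continuity of $\varphi_\star$ (already noted in Section~\ref{section:foretsradiales}), i.e.\ the continuous mapping theorem. You simply spell out in more detail why $\varphi_\star$ induces a homeomorphism at the level of the Hausdorff spaces, which the paper leaves implicit.
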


Let us comment on the hypothesis \eref{eq:newcond}. First, it implies that $n_{k}\to\infty$. A consequence of Borel-Cantelli's lemma is that whenever $\sum_{k\geq 0}e^{-n_k}$ is finite, there exists a.s. a random rank from which the $\Xi_k$'s are non-empty. Hypothesis \eref{eq:newcond} is actually slightly more demanding: the condition $\sum_{k} 1/n_k^{2}=+\infty$ and the link between sequences $(n_k)_{k \geq 1}$ and $(f_k)_{k \geq 1}$ will appear in Sections \ref{sect:Cv1path} and \ref{sect:ProofIO}. \\

To prove Theorem \ref{theo:DCPFtoCBW}, we check the criteria of Theorem \ref{theo:conv_cyl}, namely $(IO)$ and $(EO)$. The convergence of an ancestor line of $\mathcal{T}^{(j)}$ to a BM modulo 1, when $j\rightarrow +\infty$, is first stated in Section \ref{sect:Cv1path}. In the proof, we see that the condition $\sum_{k} 1/n_k^{2}=+\infty$ of \eqref{eq:newcond} is necessary. We then deduce $(IO)$ in Section \ref{sect:ProofIO}, and use at some point the second item of \eqref{eq:newcond}. The proof of $(EO)$ is devoted in Section \ref{sect:ProofEO} and is based on a coalescence time estimate (Proposition \ref{pro:estimtpscoal} in Section \ref{sec:B10poisson}) whose proof uses the links between the cylindric and planar forests highlighted in Section \ref{sec:FCP}.

\subsection{Convergence of a path to a Brownian motion}
\label{sect:Cv1path}

Let us consider the ancestor line $\mathcal{T}_{(X_j,h_j)}=((X_k,h_k), k\geq j)$ started at a point $(X_{j},h_{j})\in\Xi_j$ for $j\geq 1$. For $k>j$, this path goes to infinity by jumping from $\Slice(h_{k-1})$ to $\Slice(h_{k})$.
The random increments $(\Delta X_k:=X_k-X_{k-1},k >j)$ are independent.
The distribution of $\Delta X_k$ is characterized for any measurable bounded map $f:[-1/2,1/2]\to\R$ by,
\begin{equation}
\E(f(\Delta X_k))=e^{-n_k}f(0)+\int_{-1/2}^{1/2} f(x) n_k e^{-2n_k |x|}\ dx.\label{def:xik}
\end{equation}
In other words, conditionally on the $\Slice(h_k)$ being not empty, $\Delta X_k$ is a Laplace r.v. conditioned on having absolute value smaller than $1/2$.
Hence,
\ben
\label{eq:displ}
\left\{\begin{array}{rrl}
\E(\Delta X_k)&=&0\\
\Var(\Delta X_k)&=&\frac{1}{2n_k^2}-\frac{e^{-n_k}(n_k^2+2n_k+2)}{4n_k^2} =: \sigma_k^2\\
\E((\Delta X_k)^4)&=& \frac{3}{2n_k^4}-\frac{e^{-n_k}(n_k^4+4n_k^3+12n_k^2+24n_k+24)}{16n_k^4}.
\end{array}\right.
\een
As $n_k\to\infty$, the variance $\sigma_k^2$ is equivalent to $1/(2n_k^{2})$. For the sequel, let us denote the variance of $X_k-X_0$ by:
\begin{equation}
V_k := \Var(X_k-X_0) = \sigma_1^2+\cdots+ \sigma_k^2.
\label{def:Vk}
\end{equation}
The variance $V_k$ is hence related to $n_k$ by \eqref{eq:displ}.\\

Let us now consider the time continuous interpolation of the shifted sequence $(X_k,h_k-h_j)_{k\geq j}$. For $\ell \in \N$, we set, if $h_j+t\in [h_{j+\ell}, h_{j+\ell+1})$,
\begin{equation}
\bar{X}_t^{(j)}= X_{j}+ \sum_{k=1}^{\ell} \Delta X_{j+k} + \frac{(h_j+t)-h_{j+\ell}}{h_{j+\ell+1}-h_{j+\ell}}
\ \Delta X_{j+\ell+1} ~.
\label{shifted_interpolation}
\end{equation}
In order to prove the convergence of $(\bar{X}^{(j)})_{j\geq 0}$ to a Brownian motion, it is natural to set
\ben
\label{eq:hV}
h_k = V_k , \textrm{ for any } k\geq 0 ~.
\een
Then, combining \eref{eq:displ}, \eref{def:Vk}, \eref{eq:hV} with \eref{eq:newcond}, it follows that $h_k\to\infty$ and $h_{k+1}-h_k=\sigma^2_{k+1}\rightarrow 0$, i.e. slices are getting closer and closer.
\begin{exm}
The hypothesis \eref{eq:newcond} is satisfied for example for $n_k=k^{\alpha}$ with $0<\alpha<1/2$ and $f_k=k^{\alpha'}$ with $0<\alpha'<\alpha$. This entails that $h_k=c_1+c_2 k^{1-2\alpha}$ where $c_1,c_2$ are positive constants.
\end{exm}

Let us introduce, for the sequel,
\begin{equation}
\label{def:Rt}
R(t) := \inf \{k\in \N,\ V_k\geq t\}
\end{equation}
the integer index such that $h_{R(t)-1}< t \leq h_{R(t)}.$ Note that $R(h_k)=k$.

\begin{lem}
\label{lem:convMB}
Under the previous notations and \eref{eq:newcond}, the following convergence holds in distribution in $\Co(\R_+,\R)$
\ben
(\overline{X}^{(j)}_t,t\geq 0) \xrightarrow[j\to +\infty]{(d)} (B_t,t\geq 0) ~,
\een
where $(B_t,t\geq 0)$ is a standard Brownian motion taken modulo $1$.
\end{lem}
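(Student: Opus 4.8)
The plan is to recognise $(\overline X^{(j)})_{j\ge 1}$ as a sequence of polygonal interpolations of partial sums of independent, mean‑zero increments run along a deterministic (but non‑uniform) clock, to apply a Lindeberg–Feller type functional central limit theorem, and finally to project on $\rpuz$ by the continuous mapping theorem.

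First I would reduce to an $\R$-valued statement. The displacements $\Delta X_{j+k}=X_{j+k}-X_{j+k-1}$ lie in $[-1/2,1/2]$ and, the slices $\Slice(h_k)$ carrying independent Poisson processes, the family $(\Delta X_{j+k})_{k\ge 1}$ is independent with marginal law given by \eqref{def:xik}, hence with the moments \eqref{eq:displ}; moreover $n_{j+k}\to\infty$ as $j\to\infty$ (this follows from \eqref{eq:newcond}, as recalled after Theorem \ref{theo:DCPFtoCBW}), so $\sigma^2_{j+k}\to 0$ uniformly in $k\ge 1$. Fixing a measurable lift $x_j\in\R$ of the (uniform, independent of the increments) initial point $X_j\in\Xi_j$, let $\widetilde X^{(j)}$ be defined exactly as $\overline X^{(j)}$ in \eqref{shifted_interpolation} but with all sums computed in $\R$; it is the polygonal interpolation, over the grid $(h_{j+\ell}-h_j)_{\ell\ge 0}$, of the real random walk $x_j+\sum_{k=1}^{\ell}\Delta X_{j+k}$, and $\overline X^{(j)}=\pi(\widetilde X^{(j)})$ where $\pi:\Co(\R_+,\R)\to\Co(\R_+,\rpuz)$ is the (continuous, since $d_{\rpuz}(a,b)\le|a-b|$) map induced by $x\mapsto x\bmod 1$. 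Thus it suffices to prove that $\widetilde X^{(j)}$ converges in distribution in $\Co(\R_+,\R)$ to a Brownian motion.

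Centering at $x_j$, I would then invoke the functional central limit theorem for triangular arrays of independent increments. Writing $\ell_j(t)$ for the integer with $h_{j+\ell_j(t)}\le h_j+t<h_{j+\ell_j(t)+1}$, the variance of $\widetilde X^{(j)}_t-x_j$ equals $V_{j+\ell_j(t)}-V_j$ plus an interpolation term bounded by $\sigma^2_{j+\ell_j(t)+1}$, and by \eqref{def:Vk}, \eqref{eq:hV} one has $0\le t-(V_{j+\ell_j(t)}-V_j)<\sigma^2_{j+\ell_j(t)+1}\le\max_{k\ge 1}\sigma^2_{j+k}\to 0$; hence $\Var(\widetilde X^{(j)}_t-x_j)\to t$ for every $t\ge 0$, and the mesh of the grid on any $[0,T]$ tends to $0$. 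The Lyapunov (hence Lindeberg) condition follows from the explicit formulas \eqref{eq:displ}: there is an absolute constant $C$ with $\E[(\Delta X_{j+k})^4]\le C\,\sigma^4_{j+k}$ for $j$ large, so
\[
\sum_{k\,:\,h_{j+k}\le h_j+T}\E\big[(\Delta X_{j+k})^4\big]\ \le\ C\Big(\max_{k\ge 1}\sigma^2_{j+k}\Big)\sum_{k\,:\,h_{j+k}\le h_j+T}\sigma^2_{j+k}\ \le\ C\Big(\max_{k\ge 1}\sigma^2_{j+k}\Big)\big(T+o(1)\big)\ \xrightarrow[j\to+\infty]{}\ 0.
\]
These two facts yield tightness (e.g.\ via Kolmogorov's criterion, using $\E[|\widetilde X^{(j)}_t-\widetilde X^{(j)}_s|^4]\le C|t-s|^2$ on the relevant scales, together with the same bound for the linear interpolation term) and convergence of the finite‑dimensional marginals to those of a standard Brownian motion, whence $\widetilde X^{(j)}_\cdot-x_j\xrightarrow[j\to+\infty]{(d)}B$ in $\Co(\R_+,\R)$. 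Since $X_j$ is uniform on $\rpuz$ and independent of the increments, $\widetilde X^{(j)}\xrightarrow[j\to+\infty]{(d)}U+B$ with $U$ uniform and independent of $B$, and applying $\pi$ gives $\overline X^{(j)}=\pi(\widetilde X^{(j)})\xrightarrow[j\to+\infty]{(d)}\pi(U+B)$, a standard Brownian motion taken modulo $1$ (started from the stationary, i.e.\ uniform, law).

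I do not expect a genuine obstacle: this is a routine functional CLT. The only points needing care are the bookkeeping forced by the data‑dependent clock $h_k=V_k$ — namely that the variance accumulated between consecutive shifted times approximates the identity and that the mesh vanishes as $j\to\infty$, both resting on $\sigma_k^2\to 0$, i.e.\ on $n_k\to\infty$ — and the derivation of the Lyapunov bound from the fourth‑moment formula in \eqref{eq:displ}; the hypothesis $\sum_k 1/n_k^2=+\infty$ enters only to guarantee $h_k\to\infty$, so that the limiting time horizon is the whole of $\R_+$.
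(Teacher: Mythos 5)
Your proposal is correct and follows essentially the same route as the paper: Lindeberg(-Feller) for the finite-dimensional marginals, then a fourth-moment argument for tightness, both resting on $\sigma_k^2\sim 1/(2n_k^2)\to 0$ and on the uniform Lyapunov bound $\E[(\Delta X_k)^4]\le C\sigma_k^4$ from \eqref{eq:displ}. The one substantive difference is in the tightness step: the paper appeals to Billingsley's Theorem 8.3 and controls $\sup_{t\le s\le t+\delta}|\bar X^{(j)}_s-\bar X^{(j)}_t|$ via Doob's submartingale inequality applied to $S_\ell^4$, while you invoke Kolmogorov's moment criterion via $\E[|\widetilde X^{(j)}_t-\widetilde X^{(j)}_s|^4]\le C|t-s|^2$; both estimates come from the same $\sum\sigma_k^4+\bigl(\sum\sigma_k^2\bigr)^2$ computation and yield tightness, so the choice is cosmetic. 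Your explicit lift to $\R$ followed by the continuous mapping $x\mapsto x\bmod 1$, and your remark that $\sum_k 1/n_k^2=\infty$ is needed only so that $h_k\to\infty$ and the limiting time horizon is all of $\R_+$, are both correct and are points the paper leaves implicit.
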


\begin{proof}[Proof of Lemma \ref{lem:convMB}]
We are not under the classical assumptions of Donsker theorem, since the $\Delta X_k$'s are not identically distributed and since the convergence involves a triangular array because of the shift.
Because the $\Delta X_k$'s are independent, centered, with a variance in $1/n_k^2$ that tends to 0, we have for all $t\geq 0$,
$$ \lim_{j\rightarrow +\infty}\Var\big(\bar{X}^{(j)}_t-\bar{X}^{(j)}_0\big)=
\lim_{j\rightarrow +\infty}\sum_{\ell = j+1}^{R(h_j+t)}\Var(\Delta X_\ell)=
\lim_{j\rightarrow +\infty}\Big(V_{R(h_j+t)}-V_j\Big)=t,$$implying that $\bar{X}^{(j)}_t-\bar{X}^{(j)}_0$ converges in distribution to $\mathcal{N}(0,t)$ by Lindeberg theorem (e.g. \cite[Th. 7.2]{billingsley1968}). The convergence of the finite dimensional distributions is easily seen by using the independence of the $\Delta X_k$'s.
\par The tightness is proved if (see e.g. \cite[Th. 8.3]{billingsley1968}) for every positive $\varepsilon>0$ and $\eta>0$, there exists $\delta\in (0,1)$ and $j_0\in \N$ such that for every $j\geq j_0$ and every $t\in \R_+$,
\begin{equation}
\frac{1}{\delta} \P\Big(\sup_{t\leq s\leq t+\delta} \big| \bar{X}^{(j)}_s-\bar{X}^{(j)}_t\big|\geq \varepsilon\Big)\leq \eta.\label{goal:tightness}
\end{equation}
For $t\in \R_+$ and $j\in \N$,
\begin{align*}
\P\Big(\sup_{t\leq s\leq t+\delta} \big| \bar{X}^{(j)}_s-\bar{X}^{(j)}_t\big| \geq \varepsilon\Big) = &
\P\Big(\sup_{t\leq s\leq t+\delta} \big| \bar{X}^{(j)}_s-\bar{X}^{(j)}_t\big|^4\geq \varepsilon^4\Big)\\
\leq & \P\Big( \max_{R(h_j+t)\leq \ell\leq R(h_j+t+\delta) }  S_\ell^4   \geq \varepsilon^4 \Big),
\end{align*}where$$
S_\ell=\frac{h_{R(h_j+t)}-(h_j+t)}{h_{R(h_j+t)}-h_{R(h_j+t)-1}}\ \Delta X_{R(h_j+t)}+ \sum_{k=R(h_j+t)+1}^{\ell} \Delta X_k.$$
Since the $\Delta X_k$ are centered, $S_\ell$ defines a martingale (in $\ell\geq R(h_j+t)$), and $S_\ell^4$ is a submartingale. Using Doob's lemma for submartingales:
\begin{multline}
\varepsilon^4 \P\Big( \max_{R(h_j+t)\leq \ell\leq R(h_j+t+\delta)} S_\ell^4 \geq \varepsilon^4 \Big) \leq  \E\big(S^4_{R(h_j+t+\delta)}\big)\\
\leq  \E\Big(\sum_{k=R(h_j+t)}^{R(h_j+t+\delta)} (\Delta X_k)^4\Big)+\sum_{\stackrel{k\not= \ell}{R(h_j+t)\leq k,\ell\leq R(h_j+t+\delta)}} \E((\Delta X_k)^2)\E((\Delta X_\ell)^2),\label{eq:etape5}
\end{multline}using that the $\Delta X_k$'s are independent and centered. The last sum in the r.h.s. of \eqref{eq:etape5} is upper bounded by
$$\left(\sum_{k=R(h_j+t)}^{R(h_j+t+\delta)} \E\big((\Delta X_k)^2\big)\right)^2$$
that converges to $\delta^2$ when $j\rightarrow +\infty$. For the first term, there exists from \eqref{eq:displ} a constant $C$ such that for $k$ large enough, $\E((\Delta X_k)^4)\leq C \Var(\Delta X_k)^2 = C \sigma_k^4.$ Thus:
\begin{align*}
\E\l(\sum_{k=R(h_j+t)}^{R(h_j+t+\delta)} (\Delta X_k)^4\r) \leq & C M_{j,t,\delta}
  \sum_{k=R(h_j+t)}^{R(h_j+t+\delta)} \sigma_k^2 \sim_{j\rightarrow +\infty} C M_{j,t,\delta} \delta
\end{align*}
with $M_{j,t,\delta}=\sup\{\sigma_k^2, ~R(h_j+t)\leq k \leq R(h_j+t+\delta) \} \to 0 $ when $j\to+\infty$.
 Gathering these results, we see that up to a certain constant $C$,
$$\frac{1}{\delta} \P\Big(\sup_{t\leq s\leq t+\delta} \big| \bar{X}^{(j)}_s-\bar{X}^{(j)}_t\big|\geq \varepsilon\Big)\leq \frac{C}{\varepsilon^4 \delta}\big(\delta^2 + CM_{t_1,t,\delta} \delta \big),$$which converges to zero when $\delta\rightarrow 0$ and $j\rightarrow +\infty$.
\end{proof}

\subsection{Coalescence time estimate}
\label{sec:B10poisson}

In this section, we establish a coalescence time estimate that will be useful for proving (IO) and (EO). Following the lines of Section \ref{sec:FCP}, we can introduce a planar model corresponding to our cylindrical tree and ensuring the possibility of couplings between the cylinder and the plane. In the plane, the use of the Skorokhod embedding theorem and the results known for planar Brownian motions make it easier to obtain such estimates.
We thus first introduce in Section \ref{section:plan1} a planar model corresponding to the forest $\mathcal{T}$. We establish estimates for the coalescence time of two paths in this planar model. For this, we start with studying how the distance between the two paths evolves. The core of the proof relies on the Skorokhod embedding theorem (as in \cite{colettidiasfontes}), but with a clever preliminary stochastic domination of the distance variations. In Section \ref{section:plan-to-cylindre}, we return to the original model and deduce from the previous result estimates for the coalescence time of two paths of $\mathcal{T}^{(j)}$.

\subsubsection{Planar analogous}\label{section:plan1}
 We first define the planar model corresponding to our cylindrical problem. We consider the horizontal lines with ordinate $(h_k)_{k\in \N}$ in the upper half plane. For each $k\in \in \N$, we consider on the line $L_k:=\R\times \{h_k\}$ (or level $h_k$), an independent PPP $\Upsilon_k$ with intensity $n_k$. The Poisson point process on the union of the lines is denoted by $\Upsilon$, similarly to \eqref{def:Xi}. Each point of the level $h_k$ is linked with the closest point of the next level, namely level $h_{k+1}$.
This generates a forest that we denote $\mathcal{W}$, and which can be seen as the analogous of $\mathcal{T}$ in the plane.\\

For a given point $Z\in \cup_{k\in \N}L_k$, denote by $\alpha^P(Z)$ the ancestor of $Z$ for this navigation. This allows us to define, as for the cylinder, the ancestor line  ${\sf AL}^{P}_{Z}$ of any element $Z\in \R\times \R_+$.

The aim of this section is to provide an estimate on the tail distribution of the hitting time between the ancestor lines started from two points at a distance $d>0$ on the line $L_0$. Without restriction, we consider $Z=(0,0)$ and $Z'=(d,0)$, and denote their ancestor lines by $(Z_k, k\geq 0)$ and $(Z'_k, k\geq 0)$.
Let us denote by
\[D_k= Z_k'-Z_k,\]
 the distance between the two paths at level $h_k$. The result proved in this section is the following:
\begin{pro}
\label{pro:estimtpscoal}Let us define by $\tau=\inf\{k\in \N, D_k= 0\}$. There exists $C>0$ such that for $K\in  \N\setminus \{0\}$,
\begin{equation}
\P(\tau>K) \leq \frac{C d}{\sqrt{h_K}}.
\end{equation}
\end{pro}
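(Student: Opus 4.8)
The plan is to show that the sequence $(D_k)_{k\geq 0}$, although not itself a random walk (the increments depend on $D_{k-1}$, and the navigation is to the \emph{closest} point above), can be stochastically dominated, in a suitable sense, by a process that behaves like a simple time-changed Brownian motion absorbed at $0$, so that the Skorokhod embedding theorem gives the desired tail bound. First I would analyze the conditional law of the increment $\Delta D_{k+1}=D_{k+1}-D_k$ given $\mathcal{F}_k=\sigma(Z_j,Z'_j,\ j\leq k)$ and given that $D_k=d_k>0$. The two ancestor lines at level $h_{k+1}$ choose the closest point of $\Upsilon_{k+1}$ to $Z_k$ and to $Z'_k$ respectively. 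When $d_k$ is not too small compared with the typical inter-point distance $1/n_{k+1}$, these two choices are made in essentially disjoint regions of $L_{k+1}$ and are almost independent, each being (asymptotically) a centered Laplace-type variable of variance $\asymp 1/n_{k+1}^2=\asymp\sigma_{k+1}^2=h_{k+1}-h_k$; in particular $\E(\Delta D_{k+1}\mid \mathcal F_k)=0$ by symmetry, so $(D_k)$ is a martingale up to the absorption time $\tau$, and $\Var(\Delta D_{k+1}\mid\mathcal F_k)\asymp h_{k+1}-h_k$. When $d_k$ is small (comparable to $1/n_{k+1}$), the two walks may pick the \emph{same} point above, which forces coalescence; this only helps. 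The key technical point I would isolate as a lemma: there is a coupling under which $|D_{k+1}|$ is stochastically dominated by $|D_k+\Xi_{k+1}|$ where, conditionally on the past, $\Xi_{k+1}$ is a centered variable with variance $\leq C(h_{k+1}-h_k)$ and bounded exponential moments, so that the quadratic variation of the stopped martingale grows like $h_k$.

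Given such a control, I would apply the Skorokhod embedding theorem to the martingale $(D_{k\wedge\tau})_{k\geq0}$ started at $d$: there is a Brownian motion $(\beta_s)_{s\geq 0}$ and stopping times $0=S_0\leq S_1\leq\cdots$ with $D_{k\wedge\tau}=d+\beta_{S_k}$ and $\E(S_{k+1}-S_k\mid\text{past})=\Var(\Delta D_{k+1}\mid\mathcal F_k)\leq C(h_{k+1}-h_k)$, hence $\E(S_k)\leq C\,h_k$ (using $h_0=0$). The absorption time $\tau$ corresponds to the first $k$ with $d+\beta_{S_k}=0$, which is at least the first time $k$ with $S_k\geq \sigma_0$, where $\sigma_0:=\inf\{s>0:\ d+\beta_s=0\}$ is the hitting time of $0$ by a BM started at $d$. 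Therefore, on $\{\tau>K\}$ we must have $S_K<\sigma_0$, so
\[
\P(\tau>K)\leq \P(S_K<\sigma_0)\leq \P(\sigma_0>S_K)\,,
\]
and conditioning on $S_K$ (or rather using $\E(S_K)\leq C h_K$ together with $\P(\sigma_0>s)\leq \P(\sup_{u\leq s}\beta_u<d)\leq \frac{d}{\sqrt{2\pi s}}\wedge 1\leq \frac{C'd}{\sqrt s}$, which is concave-like in $1/\sqrt s$), I would get by a first-moment / Jensen-type argument
\[
\P(\tau>K)\leq \E\!\left(\frac{C'd}{\sqrt{S_K}}\wedge 1\right)\leq \frac{Cd}{\sqrt{h_K}}\,,
\]
after a routine splitting according to whether $S_K$ is of order $h_K$ or much smaller (the small-$S_K$ event itself having probability controlled by a Markov/Paley–Zygmund estimate on $S_K$, whose second moment is also of order $h_K^2$ by the bounded-exponential-moment assumption on the increments). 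This is the standard mechanism, e.g. as in \cite{colettidiasfontes}.

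The main obstacle, and where the ``clever preliminary stochastic domination'' mentioned in the text is really needed, is making precise the comparison between $|D_{k+1}|$ and $|D_k+\Xi_{k+1}|$ uniformly in the regime where $D_k$ is of the same order as $1/n_{k+1}$. In that near-coalescence regime the two ancestors are strongly negatively correlated (choosing nearby points), the increment $\Delta D_{k+1}$ is not centered Gaussian-like, and one must check that this correlation can only accelerate coalescence, i.e. that it does not break the martingale property in a harmful direction and does not inflate the time change $S_{k+1}-S_k$. I would handle this by an explicit coupling at the level of the Poisson process $\Upsilon_{k+1}$: reveal the points of $\Upsilon_{k+1}$ in the slab between $Z_k$ and $Z'_k$ and symmetrically outside, and construct on the same probability space an auxiliary pair performing \emph{independent} closest-point choices; a monotonicity argument (the true $D_{k+1}$ is, in absolute value, no larger than the auxiliary one, on the event the two true choices coincide it is $0$) yields the domination. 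Once this lemma is in place, the rest is the Skorokhod computation sketched above. A secondary, purely bookkeeping difficulty is that the slices are not equally spaced, so ``time'' must be measured by $V_k=h_k$ rather than by $k$; but this is exactly why the statement is phrased with $\sqrt{h_K}$, and $R(t)$ from \eqref{def:Rt} provides the correct dictionary.
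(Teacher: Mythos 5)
Your broad toolkit is the right one and matches the paper's: exploit the martingale property of $(D_k)$, convert to continuous time via Skorokhod embedding, measure time by $h_k = V_k$ rather than by $k$, and use the reflection-principle estimate $\P(\sigma_0 > s) \lesssim d/\sqrt{s}$ once you have control of the embedding times. Your observation that on $\{\tau > K\}$ one has $\sigma_0 > S_K$ is also correct as stated, since the increment of the original chain is supported on $[-D_k,\infty)$, so the embedding interval at each step has lower endpoint $\geq -D_k$ and the embedded Brownian motion can never cross $-d$ before the coalescence atom is realized.

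The genuine gap is in the last step, and it is precisely the point the paper's Steps 3--6 are designed to fix. You need $\P(S_K < \zeta h_K) \lesssim d/\sqrt{h_K}$ for some $\zeta>0$, but the second-moment / Paley--Zygmund control you invoke can only give a \emph{constant-order} bound on this lower-tail probability, not one decaying like $d/\sqrt{h_K}$. Worse, for the original chain there is no uniform lower bound on the Skorokhod time per step: Proposition~\ref{pro:repre} shows $\mu_d$ has an atom of mass $(1+d)e^{-2d}\to 1$ at $-d$ as $d\to 0$, so the exit interval degenerates to $\{0\}$ and $\E(e^{-\lambda T_1(d)})\to 1$; in the near-coalescence regime the time change can be arbitrarily small, and your ``bounded exponential moment of the increment'' hypothesis is not what controls this (the problematic variable is the \emph{time}, not the increment). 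The paper's workaround is to introduce the auxiliary law $\overline{\mu}_d$ of Step~3, which pushes the coalescence atom out to $-(1+d)$ (choosing $c_d=1$, $\nu_d=2$) while compensating on the right so as to remain centered, and shows in Lemma~\ref{lem:comptaus} and Proposition~\ref{prop:tauR-} that the corresponding absorption time $\overline{\tau}_{\R^-}$ stochastically dominates $\tau$. The payoff is Step~6: for the auxiliary chain, with probability bounded below uniformly in $d$ the Skorokhod interval contains $[-1,2]$, yielding $\varphi_d(\lambda)\leq c_0(\lambda)<1$ \emph{uniformly in $d$}, hence $\E(e^{-\lambda T_K}) \leq \prod_k c_0(\lambda/n_k^2) \lesssim e^{-4\lambda h_K}$ and an \emph{exponential} lower-tail bound $\P(T_K < \zeta h_K) \lesssim e^{\lambda(\zeta-4)h_K}$. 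That exponential decay (Step~7) is what dominates $d/\sqrt{h_K}$ and closes the estimate; no moment-based argument on $S_K$ can substitute for it. Your proposed coupling making the two ancestors' choices independent in the near-coalescence regime goes in the opposite direction from what is needed (it can only \emph{shrink} the distance and hence the time change), so it does not rescue the step either.
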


The remaining of the section is devoted to the proof of Prop. \ref{pro:estimtpscoal}.
In the proof, we will also need the following quantity for $k\geq 1$:
$$\Delta_k= D_k-D_{k-1}.$$

The proof is divided into several steps. For the first step, we consider a PPP with an intensity constant and equal 1. In doing so, we introduce a sort of companion model that will help finding estimates for the planar model considered above.
 We will then proceed to the control of the hitting time of two ancestors lines, by using some rescaling properties.

\subsubsection*{Step 1: Evolution of the distance in one step, when the intensity is 1 }

Take two points $Z=(0,0)$ and $Z'=Z(d,0)$ at distance $d$ in $L_0$.
Assume that the PPP $\Upsilon_1$ on $L_1$ has intensity 1: let $(X_1,h_1)=\alpha^P(Z)$ be the closest point to $Z$ in $\Upsilon_1$ and by $(X_2,h_1)=\alpha^P(Z')$ the closest point to $Z'$ in $\Upsilon_1$. Let
\[D(d)=X_2-X_1=|\alpha^P(Z')-\alpha^P(Z)|\]
be the ``new distance'', and denote by
\[\Delta(d)=D(d)-d\] the variation of the distance between the levels $L_0$ and $L_1$.
\begin{pro}\label{pro:repre}
The distribution $\mu_d$ of $\Delta(d)$ is the following probability measure on $\R$
\ben
\mu_d(du) = (d+1)e^{-2d} \delta_{-d}(du)+ f_d(u) \ind_{[-d,d]}(u) \ du+de^{-u-d} \ind_{[d,+\infty)}(u) \ du ,\label{def:mud}
\een
where
\ben f_d(u)=-\frac{e^{-2d}}{2} +e^{-2|u|}\big(|u|+\frac{1}{2}\big).
\een
The atom   $m_d= (d+1)e^{-2d}$ of $\mu_d $ at $-d$ corresponds to case where coalescence occurs, that is $\alpha^P(Z)=\alpha^P(Z')$. Apart from this atom, $\mu_d$ is absolutely continuous with respect to the Lebesgue measure, and
\ben\label{eq:D}
\E(\Delta(d))&=&0 \quad \textrm { and }\quad \E(D(d))=d\\
\label{eq:VVVV}
\Var(\Delta(d))&=&V(d)=1-e^{-2\,d}+\frac{2}3\,e^{-2\,d}{d}^{3}+e^{-2\,d}{d}^{2}.
\een
\end{pro}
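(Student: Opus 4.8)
The plan is to determine the joint law of the horizontal coordinates $X_1,X_2$ of $\alpha^{P}(Z)=(X_1,h_1)$ and $\alpha^{P}(Z')=(X_2,h_1)$, and then read off the law $\mu_d$ of $\Delta(d)=(X_2-X_1)-d$ and its first two moments. Recall that $X_1$ is the atom of $\Upsilon_1$ closest to $0$ and $X_2$ the atom closest to $d$. A direct inspection shows $X_1\le X_2$ almost surely: otherwise the atom realising $X_2$ would be strictly closer to $0$ than $X_1$, or the atom realising $X_1$ strictly closer to $d$ than $X_2$, contradicting a minimality requirement. Hence $D(d)=X_2-X_1\ge0$, the measure $\mu_d$ is supported on $[-d,+\infty)$, and the coalescence event $\{\alpha^{P}(Z)=\alpha^{P}(Z')\}$ is exactly $\{\Delta(d)=-d\}$, which carries the atom of $\mu_d$.

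To compute it, I would condition on $\Upsilon_1\cap(0,d)$. Split $L_1=\R$ into $(-\infty,0)$, $(0,d)$ and $(d,+\infty)$; write $-a$ for the rightmost atom of $\Upsilon_1$ in $(-\infty,0)$ and $d+b$ for the leftmost atom in $(d,+\infty)$, so that $a,b$ are \iid exponential with parameter $1$, independent of $\Upsilon_1\cap(0,d)$, which is an independent unit-rate Poisson process on $(0,d)$. With $p_1:=\inf(\Upsilon_1\cap(0,d))$ and $p_2:=\sup(\Upsilon_1\cap(0,d))$ (convention $p_1=d+b$, $p_2=-a$ if this set is empty), one has $X_1=-a$ if $a<p_1$ and $X_1=p_1$ otherwise, and symmetrically $X_2=d+b$ if $b<d-p_2$ and $X_2=p_2$ otherwise. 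I would then split according to whether $\Upsilon_1\cap(0,d)$ is empty, a singleton $\{p\}$, or has at least two atoms; in each case there are at most four sub-cases determining which pair of candidates realises $(X_1,X_2)$, and on each sub-case $\Delta(d)$ is an explicit affine function of $(a,b)$ and of the relevant atom(s). Only the empty and singleton cases allow $X_1=X_2$: in the empty case coalescence needs $a\ge d+b$ or $b\ge d+a$ (conditional probability $e^{-d}$, hence total $e^{-2d}$), and in the singleton case $a\ge p$ and $b\ge d-p$ (conditional probability $e^{-d}$, integrated against the weight $e^{-d}\,dp$ of one atom at $p\in(0,d)$, hence total $d\,e^{-2d}$). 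Their sum is the announced atom $(d+1)e^{-2d}$ at $-d$.

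For the absolutely continuous part, one sums over the remaining sub-cases the densities produced by the change of variables $u=\Delta(d)$; this yields $f_d$ on $[-d,d]$. On $[d,+\infty)$ only the sub-case $X_1=-a$, $X_2=d+b$ (where $\Delta(d)=a+b$) contributes, since every other sub-case forces $\Delta(d)<d$, and passing to the coordinates $s=a+b$, $r=a-b$ turns the corresponding integral into the tail density $d\,e^{-u-d}$. As one goes, one checks that $f_d$ matches at $u=0$ and at $u=d$ and that $\mu_d$ has total mass $1$. Finally, $\E(\Delta(d))=0$ (i.e. $\E(D(d))=d$) is cleanest to obtain without the explicit law: $X_1$, being the nearest atom of a unit Poisson process to $0$, has the symmetric density $e^{-2|x|}$ on $\R$ and hence mean $0$, and likewise $\E(X_2-d)=0$; then $V(d)=\Var(\Delta(d))$ is simply $\int u^{2}\,\mu_d(du)$ with the explicit $\mu_d$ just found.

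The main difficulty I foresee is organisational rather than conceptual: the case analysis has several branches, each contributing a piece of the density on a sub-interval of $[-d,d]$ with its own sign, and the steps easiest to get wrong are the correct Poissonian weighting of the configurations of $\Upsilon_1\cap(0,d)$ and the reassembly of $f_d$ from these contributions, including the continuity at the endpoints $u=0$ and $u=d$. None of the individual steps is deep; the work is in carrying the algebra through cleanly.
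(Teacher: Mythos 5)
Your approach is correct, and it is a genuinely different decomposition from the paper's. The paper conditions on $X_1$, the (signed) displacement of $\alpha^P(Z)$ from $Z$, exploiting its explicit marginal density $e^{-2|x|}$, and then runs a case analysis on the sign of $X_1$ and on whether $|X_1|<d$ or $|X_1|>d$; the conditional law of $X_2$ given $X_1$ is worked out in each branch. You instead condition on the Poisson content of $(0,d)$ together with the two exponential overshoots $a$ and $b$ from the endpoints, and organize the cases by the cardinality of $\Upsilon_1\cap(0,d)$ (empty / singleton / at least two points). Your route makes the atom $(d+1)e^{-2d}$ fall out very cleanly as the sum of the two terms $e^{-2d}$ (empty interval) and $de^{-2d}$ (singleton), and it identifies the tail on $[d,+\infty)$ as exactly the empty--interval sub-case with $X_1=-a$, $X_2=d+b$, which is a short convolution computation. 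The paper's conditioning is somewhat more direct for assembling the piecewise density on $[-d,d]$, since there the conditional law of $X_2$ given $X_1=x_1$ is written out explicitly for each range of $x_1$. In both cases the main burden is bookkeeping rather than ideas, and both end by computing the first two moments from the closed form (you note, as the paper does in its first line, that $\E(\Delta(d))=0$ already follows from the symmetry $X_1\sim e^{-2|x|}$ and the analogous statement for $X_2-d$). Your outline, carried through carefully, would yield a valid alternative proof.
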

The proof is postponed to the end of the section.

Notice that the distribution of $\Delta(d)$ does not depend on the height $h_1$ of the level $L_1$, and that $f_d$ is a symmetric function.

\subsubsection*{Step 2: The sequence $(D_k,k\geq 0)$ is a martingale}

Denote by $D^{\lambda}(d)=X_2-X_1=\alpha^P(Z')-\alpha^P(Z)$ the new distance if the PPP $\Upsilon_1$ has intensity $\lambda>0$, and by $\Delta^\lambda(d)=D^\lambda(d)-d$. A simple scaling argument allows one to relate the distribution of $\Delta^\lambda(d)$ to that of $\Delta^1(d)\eqd\Delta(d)$~:
\ben\label{eq:efgz}
\Delta^{\lambda}(d)\eqd \frac{1}{\lambda} \Delta^1(\lambda \,  d),\quad \mbox{ for }d\geq 0,
\een
Now, since the intensity on $L_k$ is $n_k$, conditional on $D_{k-1}$,
\ben\label{eq:efgzk}
\bpar{ccl}
D_k&\eqd& D^{n_k}(D_{k-1})\\
\Delta_k &\eqd&\Delta^{n_k}(D_{k-1})\eqd \dis\frac{1}{n_k} \Delta^1( D_{k-1}\,n_k).
\epar
\een

Because of \eqref{eq:D}, $(D_k, k\geq 0)$ defines a martingale. The particular form of  $\mu_{d}$ makes it difficult to control the time at which it hits 0. We will dominate $D_k$ by another martingale that is easier to handle.

\subsubsection*{Step 3: Introduction of an auxiliary distribution $\overline{\mu}_d$}

We introduce  the following family of distributions indexed by $d>0$:
\begin{multline}\label{eq:fqfq2}
\overline{\mu}_d(du):=\alpha_d \delta_{-(c_d+d)}(du)+f_d(u)\ind_{[-d,d]}(u)\  du \\
+ de^{-u-d} \ind_{[d,+\infty)}(u) \ du+ \beta_d e^{-u-d} \1_{[d +\nu_d,+\infty)}(u)\, du.
\end{multline}
{Let $\overline{\Delta}(d)$ be a r.v. with distribution $\overline{\mu}_d$, and set $\overline{D}(d)$ to be the r.v. defined by
\[\overline{\Delta}(d)=\overline{D}(d)-d.\]
Our strategy is as follows: we will choose carefully the functions $\alpha_d,c_d,r_d,\beta_d$ satisfying for any $d>0$,
\ben\label{eq:conszef}
\alpha_d \leq (d+1)e^{-2d},\qquad
c_d \geq 0,\qquad
\nu_d  \geq 0,\qquad
\beta_d \geq 0.
\een
in such a way that for any $d>0$, $\overline{\mu}_d$ is a probability distribution with mean 0, and which will dominate $\mu_d$ in the sense of the forthcoming Lemma \ref{lem:comptaus}. The difference between $\overline{\mu}_d$ and $\mu_d$ is that the atom at $-d$ in $\mu_d$ is replaced by an atom at $-c_d-d<-d$ with, as a counterpart, a modification of the distribution at the right of $d$ which is replaced by a distribution larger for the stochastic order. \par
Proceeding like this, our idea is to bound stochastically the hitting of 0 by $(D_k,k\geq 0)$ (the coalescing time $\tau$) by the hitting time of $(-\infty,0)$ by an auxiliary Markov chain $(\overline{D}_k,k\geq 0)$.
}
\begin{pro} The measure $\overline{\mu}_d$ is a density probability with mean  $\E(\overline{\Delta}(d))=0$ iff
\begin{equation}
\alpha_d = e^{-2d}\frac{(1+d)(1+2d+\nu_d)}{1+2d+c_d+\nu_d}\qquad \mbox{ and }\qquad \beta_d = e^{\nu_d} \frac{(1+d)c_d}{1+2d+c_d+\nu_d}.\label{choix:alphabeta}
\end{equation}
\end{pro}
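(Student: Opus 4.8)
The plan is to turn the two requirements defining a centered probability density --- total mass one and vanishing first moment --- into a pair of linear equations in $\alpha_d$ and $\beta_d$ and solve them. First I would record the elementary integrals of the four pieces of $\overline{\mu}_d$. Since $f_d$ is even, $\int_{-d}^{d} u\,f_d(u)\,du = 0$, so the middle piece contributes nothing to the mean. For the mass and the mean of the remaining continuous pieces I would invoke Proposition~\ref{pro:repre}: the fact that $\mu_d$ of \eqref{def:mud} is a probability measure with mean $0$ yields (or one checks directly, using $\int_a^{+\infty}e^{-u}\,du=e^{-a}$ and $\int_a^{+\infty}u\,e^{-u}\,du=(a+1)e^{-a}$)
\[
\int_{-d}^{d} f_d(u)\,du = 1-(2d+1)e^{-2d},\qquad \int_{d}^{+\infty} d\,e^{-u-d}\,du = d\,e^{-2d},\qquad \int_{d}^{+\infty} u\, d\,e^{-u-d}\,du = d(d+1)e^{-2d},
\]
together with the tail integrals $\int_{d+\nu_d}^{+\infty} e^{-u-d}\,du = e^{-2d-\nu_d}$ and $\int_{d+\nu_d}^{+\infty} u\,e^{-u-d}\,du = (1+d+\nu_d)\,e^{-2d-\nu_d}$.

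Plugging these into the mass condition $\int_{\R}\overline{\mu}_d(du)=1$, the constant $1-(2d+1)e^{-2d}$ and $d\,e^{-2d}$ collapse against $1$ and one is left with
\[
\alpha_d + \beta_d\,e^{-2d-\nu_d} = (1+d)e^{-2d},
\]
while the mean condition $\int_{\R} u\,\overline{\mu}_d(du)=0$ becomes
\[
-\alpha_d(c_d+d) + d(d+1)e^{-2d} + \beta_d(1+d+\nu_d)\,e^{-2d-\nu_d} = 0.
\]
Substituting $\beta_d\,e^{-2d-\nu_d}=(1+d)e^{-2d}-\alpha_d$ from the first identity into the second, all the $\alpha_d$ terms collect into $-\alpha_d\bigl(c_d+2d+\nu_d+1\bigr)$ and the $e^{-2d}$ terms into $(1+d)e^{-2d}\bigl(2d+\nu_d+1\bigr)$, so
\[
\alpha_d = e^{-2d}\,\frac{(1+d)(1+2d+\nu_d)}{1+2d+c_d+\nu_d},
\]
which is the announced value; back-substitution then gives $\beta_d e^{-2d-\nu_d}=(1+d)e^{-2d}\frac{c_d}{1+2d+c_d+\nu_d}$, hence the announced $\beta_d$. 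Since this $2\times 2$ linear system is literally equivalent to (mass $=1$) and (mean $=0$) and has a unique solution, the ``iff'' is immediate. For completeness I would also note that under the standing constraints \eqref{eq:conszef} (namely $c_d,\nu_d\geq 0$) these formulas yield $0\le\alpha_d\le(1+d)e^{-2d}$ and $\beta_d\geq 0$, and that $f_d\geq 0$ on $[-d,d]$ --- its minimum there equals $d\,e^{-2d}$ --- so that $\overline{\mu}_d$ is indeed a nonnegative measure.

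There is no serious obstacle: the statement is a direct computation. The only points requiring care are the bookkeeping of the exponential-tail integrals $\int_{d+\nu_d}^{+\infty}$ and the algebraic simplification in the elimination step; the one mildly structural observation is that Proposition~\ref{pro:repre} lets us reuse the already-known moments of the $[-d,d]$ and $[d,+\infty)$ parts of $\mu_d$ instead of recomputing them.
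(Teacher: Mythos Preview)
Your proof is correct and follows exactly the same route as the paper: compute the total mass and the first moment of $\overline{\mu}_d$, obtain the linear system $\alpha_d+\beta_d e^{-2d-\nu_d}=(1+d)e^{-2d}$ and $-\alpha_d(c_d+d)+d(d+1)e^{-2d}+\beta_d(1+d+\nu_d)e^{-2d-\nu_d}=0$, and solve. The paper simply writes down these two equations and says ``solving provides the announced result'', whereas you carry out the elimination explicitly and add the (correct) remark on nonnegativity of $f_d$ and of $\alpha_d,\beta_d$; this is extra detail, not a different argument.
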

\begin{proof}
Compute the total mass of $\overline{\mu}_d$:
 \begin{align*}
 \langle \overline{\mu}_d,1\rangle= \beta_d e^{-2d-\nu_d}+\alpha_d+1-e^{-2d}d-e^{-2d}
 \end{align*}
and if the total mass is 1, the expectation of $\overline{\Delta}(d)$ is~:
 \begin{align*}
 \E\big(\overline{\Delta}(d)\big)
 = &  - \alpha_d \ (c_d +d)+ d(d+1)e^{-2d} + \beta_d  (d+\nu_d+1) e^{-2d-\nu_d}.
 \end{align*}Solving these equations in $\alpha_d$ and $\beta_d$ provides the announced result.
\end{proof}

We hence see that we have two degrees of freedom. In the sequel, we will choose:
\begin{equation}\label{choix:cd-nud}
c_d=1,\qquad \nu_d=2,
\end{equation}independent of $d$. This implies that:
\begin{equation}
\alpha_d=e^{-2d}\frac{2d^2+5d+3}{2(d+2)},\qquad \beta_d=e^2 \frac{d+1}{2(d+2)}.
\end{equation}From this, we can compute $\Var(\overline{\Delta}(d))$:
\begin{equation*}\Var(\overline{\Delta}(d))
=1+{\frac { \left( 4\,{d}^{4}+26\,{d}^{3}+66\,{d}^{2}+75\,d+27 \right)
{e^{-2\,d}}}{6\,d+12}}.
\end{equation*}

For the measure $\overline{\mu}_d$ which we have now completely constructed, we have:
\begin{lem}\label{lem:comptaus} Under \eqref{eq:conszef} and \eqref{choix:alphabeta}, we have for $d<d'$,
\ben\label{eq:gzr}
D(d)\,\1_{D(d)>0}\leq _S \overline{D}(d')\,\1_{\overline{D}(d')>0},
\een
in the sense that for all $t>0$, $\P(D(d)>t)\leq \P(\overline{D}(d')>t)$.
\end{lem}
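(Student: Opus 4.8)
The plan is to establish the stochastic domination by comparing the two laws $\mu_d$ and $\overline\mu_{d'}$ directly, restricted to the positive half-line, via their survival functions. First I would recall that $D(d)\mathbf 1_{D(d)>0}$ has, on $(0,+\infty)$, exactly the absolutely continuous part of $\mu_d$ shifted back by $d$: from \eqref{def:mud}, for $t>0$ one has $\P(D(d)>t) = \int_t^\infty \big[f_d(u-d)\,\mathbf 1_{[-d,d]}(u-d) + de^{-(u-d)-d}\,\mathbf 1_{[d,\infty)}(u-d)\big]\,du$, i.e. writing $v=u-d$ this is $\int_{t-d}^{d} f_d(v)\,dv\;\mathbf 1_{t-d<d} + \int_{\max(t-d,d)}^\infty de^{-v-d}\,dv$. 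Similarly, from \eqref{eq:fqfq2}, $\P(\overline D(d')>t)$ equals the same two pieces with $d$ replaced by $d'$, \emph{plus} the extra non-negative contribution $\int_{\max(t-d',\,d'+\nu_{d'})}^\infty \beta_{d'} e^{-v-d'}\,dv$ coming from the added atom-compensating mass. So the inequality $\P(D(d)>t)\le \P(\overline D(d')>t)$ will follow once I control the first two (common-shape) pieces.

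The key monotonicity facts I would isolate are: (a) for each fixed $v$, the tail mass $\int_{\max(s,d)}^\infty de^{-w-d}\,dw = e^{-\max(s,d)-d}\cdot d$ is, as a function of $d$ for $s$ fixed\dots — here one must be slightly careful since both the lower limit and the prefactor move; the cleaner route is to bound $\P(D(d)>t)$ from above by the full right tail of a Laplace-type variable and $\P(\overline D(d')>t)$ from below by the corresponding quantity for $d'$, using that $f_d(v) = -\tfrac{e^{-2d}}2 + e^{-2|v|}(|v|+\tfrac12)$ is monotone in a suitable sense; and (b) $d\mapsto$ (this whole survival function) is non-decreasing, which encodes the intuitive statement "paths that start farther apart stay farther apart". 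Concretely I would show: for fixed $t>0$, the map $d\mapsto \P(D(d)>t)$ is non-decreasing on $(0,\infty)$ — this is the "companion-model monotonicity" — and then $\P(D(d)>t)\le \P(D(d')>t)\le \P(\overline D(d')>t)$, the last step because $\overline\mu_{d'}$ is obtained from $\mu_{d'}$ by moving mass from the atom at $-d'$ (which does not touch the positive tail) out to $[d'+\nu_{d'},\infty)$ (which only increases the positive tail), together with \eqref{eq:conszef}--\eqref{choix:alphabeta} guaranteeing $\overline\mu_{d'}$ is a bona fide probability measure so no mass is lost elsewhere.

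The main obstacle I anticipate is step (b), the monotonicity in $d$ of $\P(D(d)>t)$: the density $f_d$ is not simply a dilation of a fixed profile (it has the $d$-dependent correction $-e^{-2d}/2$ and the interval endpoint $\pm d$ both move), so a one-line scaling argument is not available. I would handle this by an explicit computation of $\int_s^\infty$ of the positive part of $\mu_d$ and differentiating in $d$, checking the derivative is $\ge 0$ for all $s>0$; alternatively, and perhaps more robustly, I would use the scaling relation \eqref{eq:efgz}, $\Delta^\lambda(d)\eqd \tfrac1\lambda\Delta^1(\lambda d)$, to reduce everything to monotonicity of a single one-parameter family and couple $D^1(\lambda d)$ for different $\lambda$ on a common probability space by placing the \emph{same} Poisson configuration and reading off nearest neighbours — nearest-neighbour distances to two query points at separation $d$ are monotone in $d$ under the natural coupling, which gives (b) pathwise. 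Once (a) and (b) are in hand, the inequality $\P(D(d)>t)\le\P(\overline D(d')>t)$ for all $t>0$ is immediate, which is precisely \eqref{eq:gzr}.
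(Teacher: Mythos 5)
Your proposal follows essentially the same route as the paper: the identical two-step decomposition $\P(D(d)>t)\le \P(D(d')>t)\le \P(\overline D(d')>t)$, with the first inequality obtained (as you correctly identify as the robust route, over the explicit differentiation in $d$ you first sketch) by coupling both chains on a common Poisson configuration via a third starting point $Z''=(d',0)$ and using non-crossing of nearest-neighbour trajectories, and the second by noting that passing from $\mu_{d'}$ to $\overline\mu_{d'}$ only adds mass to the positive tail. This is precisely the paper's argument.
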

\begin{proof}
First, for any $d'$, by construction of the measure $\overline{\mu}_{d'}$,
$$
D(d')\1_{D(d')>0}\leq _S \overline{D}(d')\1_{\overline{D}(d')>0}.
$$
Now, recall that $D(d)$ provides the new distance in the model of Step 1 when the intensity of the PPP on $L_1$ is 1 and when the starting points $Z=(0,0)$ and $Z'(d,0)$ are at distance $d$. One can follow a third point $Z''(d',0)$.
Since these paths do not cross, the distance $D_1\eqd D(d)$ between $Z_1$ and $Z'_1$ remains smaller than the distance $D'_1=Z''_1-Z_1\eqd D(d')$. This implies that $D(d)\1_{D(d)>0} \leq _SD(d')\1_{D(d')>0}$ holds. This concludes the proof.
\end{proof}

\subsubsection*{Step 4: Introduction of an auxiliary Markov chain}

To dominate $(D_k,k\geq 0)$ we introduce the Markov chain $(\overline{D}_k, k \geq 0)$ whose distribution respects the same scaling \eref{eq:efgz} as $(D_k,k\geq 0)$: conditionally on $\overline{D}_{k-1}$, we let $\overline{\Delta}_k= \overline{D}_k-\overline{D}_{k-1}$ have distribution
\[\overline{\Delta}_k\eqd \frac{1}{n_k} \overline{\Delta}( \overline{D}_{k-1}\,n_k).\]
\begin{pro}\label{prop:tauR-}
Let us define
\begin{equation}\label{def:tauR-}
\tau_{\R^-}=\inf\{k \geq 0 : D_k\leq 0\},\qquad \mbox{ and  }\qquad  \overline{\tau}_{\R^-}=\inf\{k \geq 0 : \overline{D}_k\leq 0\}~.
\end{equation}
For any $d>0$, if $D_0=\overline{D}_0=d$, we have
\[\tau_{\R^-}\leq_S \overline{\tau}_{\R^-}.\]
\end{pro}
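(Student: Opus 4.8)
The plan is to establish the stochastic domination by constructing, on one probability space, a coupling $(\hat D_k,\hat{\overline D}_k)_{k\geq 0}$ whose first marginal is the chain $(D_k)_k$ and whose second is $(\overline D_k)_k$, both issued from $d$, and under which $\hat D$ enters $\R^-$ no later than $\hat{\overline D}$. Concretely I would maintain along the induction the invariant that, on the event $\{\hat D_0>0,\dots,\hat D_k>0\}$, one has $\hat{\overline D}_0>0,\dots,\hat{\overline D}_k>0$ and $\hat D_k\leq\hat{\overline D}_k$. Granting this, $\{\hat D_0>0,\dots,\hat D_K>0\}\subseteq\{\hat{\overline D}_0>0,\dots,\hat{\overline D}_K>0\}$, that is $\{\tau_{\R^-}>K\}\subseteq\{\overline\tau_{\R^-}>K\}$ under the coupling, whence $\P(\tau_{\R^-}>K)\leq\P(\overline\tau_{\R^-}>K)$ for every $K$ and the proposition follows.

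The heart of the induction step would be Lemma \ref{lem:comptaus}: I first note that its proof also yields $D(d)\1_{D(d)>0}\leq_S\overline D(d')\1_{\overline D(d')>0}$ in the boundary case $d=d'$, so it is available for all $0<d\leq d'$. Starting from $\hat D_0=\hat{\overline D}_0=d$, suppose the pair is built up to time $k$ with the invariant valid. If $\hat D$ has already reached $\R^-$ (i.e. $\min_{j\leq k}\hat D_j\leq0$), the invariant is vacuous thereafter and I would just run $\hat D_{k+1}$ and $\hat{\overline D}_{k+1}$ independently with their own kernels. Otherwise $\hat D_k=a$, $\hat{\overline D}_k=b$ with $0<a\leq b$; set $\lambda=n_{k+1}$. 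By the scaling identities behind \eqref{eq:efgz}--\eqref{eq:efgzk}, conditionally on $\hat D_k=a$ the law of $\hat D_{k+1}$ is that of $\tfrac1\lambda D(\lambda a)$ and, conditionally on $\hat{\overline D}_k=b$, the law of $\hat{\overline D}_{k+1}$ is that of $\tfrac1\lambda\overline D(\lambda b)$. Since $\lambda a\leq\lambda b$, Lemma \ref{lem:comptaus} gives $D(\lambda a)\1_{D(\lambda a)>0}\leq_S\overline D(\lambda b)\1_{\overline D(\lambda b)>0}$, so by Strassen's theorem (the elementary quantile coupling) I can realize $U\eqd D(\lambda a)\1_{D(\lambda a)>0}$ and $V\eqd\overline D(\lambda b)\1_{\overline D(\lambda b)>0}$ with $U\leq V$ a.s. On $\{U>0\}$ I set $\hat D_{k+1}=U/\lambda$ and $\hat{\overline D}_{k+1}=V/\lambda$: then $U>0$ forces $V\geq U>0$, so $\hat{\overline D}_{k+1}>0$ and $\hat D_{k+1}\leq\hat{\overline D}_{k+1}$, i.e. the invariant passes to $k+1$. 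On $\{U=0\}$ (which contains $\{V=0\}$) the chain $\hat D$ falls into $\R^-$ at this step, so no order relation is needed; I draw $\hat D_{k+1}$ from the conditional law of $\tfrac1\lambda D(\lambda a)$ given $\{D(\lambda a)\leq0\}$, and I set $\hat{\overline D}_{k+1}=V/\lambda$ if $V>0$ and otherwise draw it from the conditional law of $\tfrac1\lambda\overline D(\lambda b)$ given $\{\overline D(\lambda b)\leq0\}$. Because $\P(U=0)=\P(D(\lambda a)\leq0)$ and $\P(V=0)=\P(\overline D(\lambda b)\leq0)$ by construction, this re-drawing restores the exact one-step laws, and since the choice between the two coupling rules depends only on the past, both marginal processes remain Markov with the prescribed kernels. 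This closes the induction, and with it the proof.

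The main point to be careful about — and where I expect the only real bookkeeping — is precisely this last paragraph: checking that truncating at $0$ and then re-sampling the negative part reconstructs the true transition kernels of $(D_k)$ and $(\overline D_k)$, and that the past-measurable branching does not spoil the Markov property of either marginal. Beyond that, Proposition \ref{prop:tauR-} is a direct consequence of Lemma \ref{lem:comptaus} — all the genuine probabilistic content (the scaling \eqref{eq:efgz} and the no-crossing three-point comparison used to prove that lemma) is already there — so I would not anticipate any substantial difficulty; the argument is the textbook construction of a pathwise monotone coupling out of a family of one-step stochastic comparisons.
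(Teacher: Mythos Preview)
Your proposal is correct and follows exactly the same approach as the paper: use Lemma \ref{lem:comptaus} (noting, as you do, that its proof already covers the boundary case $d=d'$) together with the Markov property to build a monotone coupling step by step. The paper's proof is simply a two-line version of your argument --- it observes that the atoms at $0$ in \eqref{eq:gzr} encode entrance into $(-\infty,0]$ and then invokes the Markov property without spelling out the Strassen/quantile coupling or the re-sampling on $\{U=0\}$ --- so your write-up is a faithful and more careful expansion of the same idea.
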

\begin{proof}
Just observe that in \eqref{eq:gzr}, the r.v. in both sides have atoms at 0 that correspond to the entrance of $D_k$ and $\overline{D}_k$ in $(-\infty,0]$ (in fact the hitting time of $\{0\}$ for $D_k$ and of $(-\infty,0)$ for $\overline{D}_k$). The Markov property and \eqref{eq:gzr} allow to conclude.
\end{proof}

\subsubsection*{Step 5: Skorokhod embedding}
By the Skorokhod embedding theorem (see \cite[Th. 37.6, page 519]{billingsley1968-PM}), there exists a BM $B$ started at $0$ and a stopping time $T_1(d)$ such that $\overline{\Delta}(d)\stackrel{(d)}{=} B_{T_1(d)}$. Moreover, it is possible to construct two r.v. $U(d)\leq 0$ and $V(d)\geq 0$ such that $T_1(d)=\inf\{t\geq 0,\ B_t\notin [U(d),V(d)]\}$. $U(d)$ and $V(d)$ are independent from the BM $B$, but not independent (in general) one from the other. Since $B_{T_1(d)}=U(d)\leq 0$ or $B_{T_1(d)}=V(d)\geq 0$, $U(d)$ and $V(d)$ can be constructed from the distribution of $B_{T_1(d)}\stackrel{(d)}{=}\overline{\Delta}(d)$, i.e. $\overline{\mu}_d$, as follows (recall \eqref{choix:cd-nud}):
\begin{itemize}
\item With probability $2p_d$, $U(d)=-V(d)$ and $V(d)$ is a r.v. with density $\ind_{[0,d]}(v) (e^{-2v}(v+1/2)-e^{-2d}/2) /p_d$. We denote by $A_d$ this event.
\item With probability $1-2p_d=(1+2d)e^{-2d}$, on $A^c_d$, we set $U(d)=-d-1$. For $V(d)$, we have two cases since the right tail of $\overline{\mu}_d$ is the sum of two exponential tails, $de^{-d-u}\ind_{[d,+\infty)}(u)$ and $\beta_d e^{-d-u}\ind_{[d+\nu_d,+\infty)}(u)$. Conditionally on $A^c_d$:
\begin{itemize}
\item[$\bullet$] With probability $q_d=2d/(1+2d)$, $V(d)$ is a r.v. with density $ \frac{v+d+1}{2(1+d)}e^{d-v} \ind_{[d,+\infty)}(v)$ with respect to the Lebesgue measure. We call this event $E_d$.
\item[$\bullet$] With probability $1-q_d=1/(1+2d)$, $V(d)$ is a r.v. with density
$\frac{v+d+1}{3+2d} e^{d+2-v} \ind_{[d+2,+\infty)}(v)$. This event is $E^c_d\cap A^c_d$.
\end{itemize}
\end{itemize}

\begin{proof}[Justification of the construction of $U(d)$ and $V(d)$]
Recall from \eqref{def:mud} that $\overline{\mu}_d$ admits a symmetric density $f_d$ on $[-d,d]$. Thus, on the event $A_d=\{|\overline{\Delta}_d|<d\}$, which has probability
\begin{equation}
2p_d= \int_{-d}^d f_d(u)du=1-e^{-2d}-2d e^{-2d},
\end{equation}it is sufficient to define $U(d)=-V(d)$ with $V(d)$ a r.v. of density $\frac{f_d(v)}{p_d}\ind_{[0,d]}(v)$. Since the Brownian motion $B$ started at 0 exits the symmetric interval $[-V(d),V(d)]$ through the upper or lower bound with equal probabilities $1/2$, the likelihood of $B_{T_1(d)}$ for this part is as expected:
$$2p_d \Big(\frac{1}{2}\frac{f_d(v)}{p_d}\ind_{[0,d]}(v)+\frac{1}{2}\frac{f_d(-v)}{p_d}\ind_{[-d,0]}(v)\Big)=f_d(v)\ind_{[-d,d]}(v).$$
Let us now consider $A_d^c \cap E_d$. The lower bound is necessarily $U(d)=-d-1$, since it is the only possible value for $\overline{\Delta}_d$ below $-d$. As for the density of $V(d)$ conditionally to $A_d^c\cap E_d$, say $g(v)$, it has to be chosen such that we recover $d e^{-d-u}\ind_{[d,+\infty)}(u)$ once multiplied by $(1-2p_d)$, $q_d$ and by the probability that $B$ exits through the upper bound $V(d)$ rather than through the lower bound $U(d)=-d-1$:
\begin{align*}
& (1-2p_d) q_d \frac{d+1}{v+d+1} g(v)=d e^{-d-v}\ind_{[d,+\infty)}(v)\quad
\Rightarrow \quad q_d g(v)= \frac{e^{2d}}{1+2d} \frac{v+d+1}{d+1} d e^{-d-v}\ind_{[d,+\infty)}(v).
\end{align*}Since $g$ is a probability density, integrating over $v$ gives $q_d$: $q_d=2d/(1+2d)$. We then deduce the density of $V(d)$ conditionally to $A_d^c\cap E_d$. We proceed similarly for $A_d^c\cap E_d^c$.
\end{proof}

By recursion, we can define for $k\geq 1$ the time $T_k$ by
$$T_k=\inf\{t\geq T_{k-1},\ B_t-B_{T_{k-1}}\notin [U_k(\overline{D}_{k-1}),V_k(\overline{D}_{k-1})]\}$$where $U_k(\overline{D}_{k-1})$ and $V_k(\overline{D}_{k-1})$ are independent r.v. conditionally on $\overline{D}_{k-1}$, such that for any $D>0$,
\begin{equation}\label{def:Tk}
U_k(D)\stackrel{(d)}{=} \frac{1}{n_k} U(n_k D),\qquad V_k(D)\stackrel{(d)}{=} \frac{1}{n_k} V(n_k D),
\end{equation}where $U(d)$ and $V(d)$ have the law described above in the representation of $T_1(d)$ for $d>0$.
With this construction, we have that for $k\geq 1$, $B_{T_{k}}\stackrel{(d)}{=} \overline{D}_k-d$.

\subsubsection*{Step 6: Laplace transforms of $T_1(d)$ and $T_k(d)$:}

\begin{lem} For $\lambda>0$, there exists $c_0(\lambda)\in (0,1)$ independent of $d$ such that
\begin{equation}
0\leq \varphi_d(\lambda)=  \E\big(e^{-\lambda T_1(d)} \big)\leq c_0(\lambda)<1.\label{etape3}
\end{equation}Moreover, for $\lambda$ small, there exists a constant $C>0$ such that
$c_0(\lambda)\leq e^{-C\lambda}$.
\end{lem}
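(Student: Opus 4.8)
The plan is to reduce \eref{etape3} to a soft statement about the Brownian exit time of an interval that stays bounded away from its starting point. Recall that $T_1(d)=\inf\{t\ge 0: B_t\notin[U(d),V(d)]\}$, where $(U(d),V(d))$ is independent of the Brownian motion $B$, and note that by construction $U(d)<0<V(d)$ almost surely: on $A_d$ one has $U(d)=-V(d)$ with $V(d)$ absolutely continuous on $[0,d]$, while on $A_d^c$ one has $U(d)=-(d+1)$ and $V(d)\ge d$. Hence $0\le\varphi_d(\lambda)\le 1$ is automatic, and the real content is to bound $\varphi_d(\lambda)$ away from $1$ uniformly in $d$; I would do this by showing that $\P(T_1(d)\ge t_0)$ is bounded below by a constant independent of $d$, for a suitable fixed $t_0>0$.

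First I would prove that there are absolute constants $\varepsilon_1>0$ and $c_1>0$ with $\P\big(\min(|U(d)|,V(d))\ge\varepsilon_1\big)\ge c_1$ for all $d>0$. Using the explicit laws of $U(d)$ and $V(d)$ coming from the Skorokhod construction: the event $\{|U(d)|<\varepsilon_1\}$ can occur only on $A_d$, where $|U(d)|=V(d)$ has density $f_d/p_d$ on $[0,d]$ and $f_d(v)\le v+\tfrac12$, so $\P(|U(d)|<\varepsilon_1)\le 2\int_0^{\varepsilon_1}(v+\tfrac12)\,dv\le 2\varepsilon_1$ for $\varepsilon_1\le 1$; likewise $\{V(d)<\varepsilon_1\}$ can occur only on $A_d$ or on $A_d^c\cap E_d$ (on $A_d^c\cap E_d^c$ one has $V(d)\ge d+2\ge 2$), and on each of these pieces the density of $V(d)$ is bounded by an absolute constant on $[0,\varepsilon_1]$, which yields $\P(V(d)<\varepsilon_1)=O(\varepsilon_1)$. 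Choosing $\varepsilon_1$ small enough that these two probabilities add up to at most $\tfrac12$ gives the claim with $c_1=\tfrac12$.

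Then, on the event $\{\min(|U(d)|,V(d))\ge\varepsilon_1\}$ the interval $[U(d),V(d)]$ contains $[-\varepsilon_1,\varepsilon_1]$, so by independence of $(U(d),V(d))$ and $B$, conditionally on that event $T_1(d)\ge\tau^\ast:=\inf\{t\ge 0:|B_t|\ge\varepsilon_1\}$. Since $\tau^\ast>0$ a.s., I can fix $t_0>0$ with $c_2:=\P(\tau^\ast\ge t_0)>0$, and then $\P(T_1(d)\ge t_0)\ge c_1c_2$ for every $d$. It follows that
\[
\varphi_d(\lambda)\ \le\ \P(T_1(d)<t_0)+e^{-\lambda t_0}\,\P(T_1(d)\ge t_0)\ =\ 1-\P(T_1(d)\ge t_0)\big(1-e^{-\lambda t_0}\big)\ \le\ 1-c_1c_2\big(1-e^{-\lambda t_0}\big)=:c_0(\lambda),
\]
which is $<1$ for every $\lambda>0$ and lies in $(0,1)$ since $c_1c_2\le\tfrac12$. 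For the last assertion, $1-e^{-\lambda t_0}\ge\lambda t_0/2$ whenever $\lambda t_0\le 1$, so combined with $1-x\le e^{-x}$ this gives $c_0(\lambda)\le e^{-C\lambda}$ for $\lambda$ small, with $C=c_1c_2t_0/2$.

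The main obstacle is the uniform-in-$d$ estimate of the first step: one must go through the three cases $A_d$, $A_d^c\cap E_d$, $A_d^c\cap E_d^c$ of the Skorokhod representation and check that the densities of $|U(d)|$ and $V(d)$ near the origin are dominated by absolute constants, so that $\varepsilon_1$ (and therefore $c_1$, $t_0$ and $C$) can be chosen independently of $d$. Everything else is the routine fact that the exit time of a Brownian motion from an interval containing a fixed neighbourhood of the start does not concentrate at $0$.
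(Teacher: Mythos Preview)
Your proof is correct and takes a genuinely different route from the paper. The paper decomposes $\varphi_d(\lambda)$ according to $A_d$, $A_d^c\cap E_d$, $A_d^c\cap E_d^c$, bounds the first two conditional expectations trivially by $1$, and on the third piece uses that $[U(d),V(d)]\supset[-1,2]$ deterministically, so that $\E\big(e^{-\lambda T_1(d)}\mid A_d^c\cap E_d^c\big)\le \E(e^{-\lambda T'})$ with $T'$ the exit time from $[-1,2]$; since the weight $(1-2p_d)(1-q_d)=e^{-2d}$ of this piece tends to $1$ as $d\to 0$, they put $c_0(\lambda)=\E(e^{-\lambda T'})=\cosh\big(\sqrt{\lambda/2}\big)/\cosh\big(3\sqrt{\lambda/2}\big)$ and read off the small-$\lambda$ behaviour from this explicit formula. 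You instead prove directly that $\P\big(\min(|U(d)|,V(d))\ge\varepsilon_1\big)\ge c_1>0$ uniformly in $d$, by bounding the densities of $V(d)$ near $0$ on each of the three pieces, and then compare $T_1(d)$ with the exit time of the fixed interval $[-\varepsilon_1,\varepsilon_1]$.

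Your argument has the advantage of being manifestly uniform in $d$: the paper's displayed inequality yields only $\varphi_d(\lambda)\le 1-e^{-2d}\big(1-\E(e^{-\lambda T'})\big)$, which degrades to $1$ as $d\to\infty$, so to make it uniform one still needs exactly the density estimate you establish (that on $A_d$ the law of $V(d)$ near $0$ has an absolutely bounded density, uniformly in $d$). The paper's route, on the other hand, gives an explicit closed form for $c_0(\lambda)$ and hence for the constant $C$ in the small-$\lambda$ bound, whereas yours is existential.
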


\begin{proof}Using the Skorokhod embedding described above,
\begin{align}
\varphi_d(\lambda) = & \E\big(e^{-\lambda T_1(d)} \ |\ A_d \big)\ 2p_d + \E\big(e^{-\lambda T_1(d)} \ |\ A^c_d ,\ E_d\big) \ (1-2p_d)q_d \nonumber\\
& \hspace{1cm}+ \E\big(e^{-\lambda T_1(d)} \ |\ A^c_d ,\ E_d^c\big) \ (1-2p_d)(1-q_d).\label{etape2}
\end{align} Our purpose is to bound $\varphi_d(\lambda)$ uniformly in $d$ by a constant strictly smaller than 1. On the events, $A_d$ and $A^c_d \cap E_d$, the interval $[U(d),V(d)]$ which defines $T_1(d)$ has at least one extremity that gets closer and closer to zero when $d$ tends to zero. So upperbounding the expectations in the first and second terms of the r.h.s. of \eqref{etape2} by a constant strictly less than 1 uniformly in $d$ is difficult. For the third term of \eqref{etape2} however, because $U(d)<-c_d=-1$ and $V(d)>\nu_d=2$, we have that
$$\E\big(e^{-\lambda T_1(d)} \ |\ A^c_d ,\ E_d^c\big)\leq \E(e^{-\lambda T'})<1$$
where $T'=\inf\{t\geq 0,\ B_t \notin [-1,2]\}$.
Additionally, since
$(1-2p_d)(1-q_d)=e^{-2d}\rightarrow_{d\rightarrow 0} 1$, this shows \eqref{etape3} with
\begin{align*}c_0(\lambda)= & \E_0\big(e^{-\lambda T'}\big)
=  \frac{\cosh\big(\sqrt{\frac{\lambda}{2}}\big)}{\cosh\big(3\sqrt{\frac{\lambda}{2}}\big)}<1.
\end{align*}When $\lambda\rightarrow 0$, $c_0(\lambda)=  1-2\lambda+o(\lambda) \leq e^{-2\lambda}$ which shows the second assertion with $C=2$.
\end{proof}
From this by using \eqref{def:Tk} and the self-similarity of the standard BM started at 0,
\begin{align}
T_k-T_{k-1} \stackrel{(d)}{=}  & \inf\Big\{t\geq 0,\ B_t \notin \big[\frac{1}{n_k}U(n_k \overline{D}_{k-1}),\frac{1}{n_k}V(n_k \overline{D}_{k-1})\big]\Big\}\nonumber\\
\stackrel{(d)}{=}  & \inf\Big\{t\geq 0,\ \frac{1}{n_k} B_{n_k^2 t} \notin \big[\frac{1}{n_k}U(n_k \overline{D}_{k-1}),\frac{1}{n_k}V(n_k \overline{D}_{k-1})\big]\Big\}\nonumber\\
\stackrel{(d)}{=} & \frac{1}{n_k^2} T_1(n_k \overline{D}_{k-1}).
\end{align}Hence it follows that
\begin{equation}
\E\big(e^{-\lambda (T_k-T_{k-1})}\ |\ \mathcal{F}_{T_{k-1}}\big)=\varphi_{n_k \overline{D}_{k-1}}\Big(\frac{\lambda}{n_k^2}\Big).\label{laplace:k}
\end{equation}

\subsubsection*{Step 7: Estimate  for the tail distribution of the coalescing time}
With the ingredients developed above, we can now follow ideas developed in \cite{coletti2009} for instance.
Recall that $\overline{\tau}_{\R_-}=\inf\{k\in \N, \overline{D}_k\leq 0\}$ and define $\theta=\inf\{t\geq 0,\ B_t=-d\}$. Let us consider $\zeta>0$. Then for $K\in \N\setminus\{0\}$:
\begin{align}
\P\big(\overline{\tau}_{\R_-}>K\big)= & \P\big(\theta > T_K\big)\nonumber\\
\leq &  \P\big(\theta > \zeta h_K\big)+\P\big(\theta>T_K,\ T_K < \zeta h_K\big)\nonumber\\
\leq & \frac{Cd}{\sqrt{\zeta h_K}}+ e^{\lambda \zeta h_K}\E\big(e^{-\lambda \sum_{k=1}^K (T_k-T_{k-1})}\big).\label{etape1}
\end{align}
For the Laplace transform in the last term, using \eqref{laplace:k}:
\begin{align}
\E\big(e^{-\lambda \sum_{k=1}^K (T_k-T_{k-1})}\big) = & \E\Big(\E_d\big(e^{-\lambda \sum_{k=1}^K (T_k-T_{k-1})}\ |\ \mathcal{F}_{T_{K-1}}\big)\Big)\nonumber\\
= & \E\Big(e^{-\lambda \sum_{k=1}^{K-1} (T_k-T_{k-1})} \E\big(e^{-\lambda (T_K-T_{K-1})}\ |\ \mathcal{F}_{T_{K-1}}\big)\Big)\nonumber\\
 =
& \E_d\Big(e^{-\lambda \sum_{k=1}^{K-1} (T_k-T_{k-1})} \varphi_{n_K \overline{D}_{K-1}}\big(\frac{\lambda}{n_K^2}\big)\Big)\nonumber\\
\leq  & c_0 \Big(\frac{\lambda}{n_K^2}\Big)\E_d\Big(e^{-\lambda \sum_{k=1}^{K-1} (T_k-T_{k-1})}\Big)\leq \prod_{k=0}^{K-1}c_0 \Big(\frac{\lambda}{n_{k+1}^2}\Big)\nonumber\\
\leq & \exp\big(-2 \lambda \sum_{k=1}^{K} \frac{1}{n_{k}^2}\big).\label{etape4}
\end{align}
Recall from \eqref{def:Vk} that $h_K=V_K\sim \sum_{k=1}^K \frac{1}{2n_k^2}$. Thus, from \eqref{etape1} and \eqref{etape4}:
\begin{align}
\P\big(\overline{\tau}_{\R_-}>K\big)\leq  \frac{Cd}{\sqrt{\zeta h_K}}+ C'\exp\big(\lambda h_K(\zeta-4)\big).\label{etape5}
 \end{align}Because $h_K\rightarrow +\infty$, and because the term in the exponential is negative for $\zeta$ sufficiently small, there exists $\lambda_0>0$ and $\zeta_0>0$ such that the r.h.s. of \eqref{etape5} is smaller than $\frac{C'' d}{\sqrt{\zeta_0 h_K}}$ for $K$ large enough.\par

This together with Proposition \ref{prop:tauR-} allow to conclude the proof of Proposition \ref{pro:estimtpscoal}. Starting from two points $Z$ and $Z'$ of $L_0$ at distance $d$ and denoting by $\tau$ the index of the level at which they coalesce, we have for any $K\in \N\setminus \{0\}$,
$$\P\big(\tau>K\big)\leq \P\big(\overline{\tau}_{\R_-}>K\big)\leq \frac{Cd}{\sqrt{h_K}}.$$
\hfill $\Box$

Let us finish this subsection with the proof of Proposition \ref{pro:repre} that had been postponed.

\begin{proof}[Proof of Proposition \ref{pro:repre}]
First, notice that $X_1$ has density $e^{-2|x|} \1_{x \in \R}$. Then, we can compute the distribution of $X_2$ conditionally on $X_1$. In what follows, all r.v. are independent, $R$ is a Rademacher r.v., $\Exp(k)$ denotes an exponential r.v. with expectation $1/k$.
\begin{itemize}
\item[--] Conditional on $X_1=x_1>0$, with $x_1<d$: \\
$\bullet$  $X_2=-(d-x_1)$ (merge) with probability $e^{-2(d-x_1)}$,\\
$\bullet$ with probability$1-e^{-2(d-x_1)}$, $X_2\sim{\cal L}(R  \Exp(2) | \Exp(2) <d-x_1)$.
\item[--] Conditional on $X_1=-x_1 <0$, with $x_1<d$\\
$\bullet$ $X_2=-(d+x_1)$ (merge) with probability $e^{-2(d-x_1)-2x_1}$\\
$\bullet$ $X_2\sim {\cal L}(R  \Exp(2) | \Exp(2) <d-x_1)$ with probability  $1-e^{-2(d-x_1)}$\\
$\bullet$ $X_2\sim {\cal L}(\Exp(1)+d-x_1 | \Exp(1) <2x_1)$ with probability  $e^{-2(d-x_1)}(1-e^{-2x_1})$
\item[--] Conditional on $X_1=x_1>0$, with $x_1>d$: \\
$\bullet$  merge with probability1
\item[--] Conditional on $X_1=-x_1 <0$, with $x_1>d$\\
$\bullet$ $X_2=-(d+x_1)$ (merge) with probability $e^{-2d}$\\
$\bullet$ $X_2\sim {\cal L}(x_1-d+ \Exp(1) | \Exp(1) <2d)$ with probability $1-e^{-2d}$
\end{itemize}
This yields the announced result.
In particular, the two trajectories started at $(0,0)$ and $(d,0)$ merge at ordinate 1 with probability:
\be
`P(D(d)=0)&=&\int_{x=0}^d e^{-2x} (e^{-2(d-x)}+e^{-2(d-x)-2x})dx+\int_{x=d}^{+\infty} {e^{-2x}} (1+e^{-2d})dx,
\ee which is $(d+1)e^{-2d}$, as announced.
\end{proof}

\subsubsection{Extension to the shifted cylinder}\label{section:plan-to-cylindre}

We now conclude the section with a corollary establishing an estimate for the coalescence time in $\mathcal{W}^{(j)}$, which is the forest $\mathcal{W}$ shifted by $(0,-h_j)$ similarly to $\mathcal{T}^{(j)}$. Then, we enounce an estimate for the shifted cylindrical forest $\mathcal{T}^{(j)}$.
\begin{cor}\label{cor:CoalTime}
Let $d>0$ and $j\in \N$. \\
(i) Let us consider the paths in $\mathcal{W}^{(j)}$ started at $(0,0)$ and $(d,0)$ (if $(0,h_j)$ and $(d,h_j)$, these points are connected at the level $j+1$ to the closest point of $\mathcal{W}$). Define their coalescing time as $\tau=\inf\{k\geq j,\ D_k=0\}$. There exists a constant $C>0$ such that for any $K> j$,
$$\P\big(\tau>K\big)\leq \frac{Cd}{\sqrt{h_K-h_j}}.$$
This can be translated, for any $t_0>0$ as:
\begin{equation}
\P\big(\mathcal{W}^{(j)}_{(0,0)}(t_0)\not= \mathcal{W}^{(j)}_{(d,0)}(t_0)\big)\leq \frac{C\ d}{\sqrt{h_{R(h_j+t_0)-1}-h_j}} \rightarrow_{j\rightarrow +\infty} \frac{C\ d}{\sqrt{t_0}}.\label{lim_cor}
\end{equation}
$(ii)$ Let us consider the paths of $\mathcal{T}^{(j)}$ started at $(0,0)$ and $(d,0)$ for $d\in(0,1/2]$ ($d=1/2$ is the maximal distance in the cylinder). Then, there exists $C>0$ such that for any $t_0>0$:
\begin{equation}
\P\big(\mathcal{T}^{(j)}_{(0,0)}(t_0)\not= \mathcal{T}^{(j)}_{(d,0)}(t_0)\big)\leq \frac{C\ d}{\sqrt{h_{R(h_j+t_0)-1}-h_j}} \rightarrow_{j\rightarrow +\infty} \frac{C\ d}{\sqrt{t_0}}.\label{lim_cor2}
\end{equation}

\end{cor}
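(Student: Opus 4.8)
The plan is to derive (i) from Proposition \ref{pro:estimtpscoal} by re-rooting the planar forest at level $j$, and then to obtain (ii) from (i) through a coupling between $\mathcal{W}^{(j)}$ and $\mathcal{T}^{(j)}$ of the kind discussed in Section \ref{sec:FCP} and already used for \eqref{eta^O<eta}, encoding the principle that trajectories merge more easily on the cylinder than on the line.

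For (i): by independence of the Poisson point processes $(\Upsilon_k)_k$ across levels, the shifted forest $\mathcal{W}^{(j)}$ is exactly the planar forest associated with the intensities $(n_{j+k})_{k\geq 1}$ and the levels $(h_{j+k}-h_j)_{k\geq 1}$, and since $h_k=V_k$ the relation "height $=$ accumulated variance" is preserved: $h_{j+k}-h_j=\sum_{\ell=j+1}^{j+k}\sigma_\ell^2\sim\sum_{\ell=j+1}^{j+k}1/(2n_\ell^2)$. Re-running the proof of Proposition \ref{pro:estimtpscoal} with every sum $\sum_{k\geq 1}$ replaced by $\sum_{k>j}$ — the auxiliary chains, the Skorokhod embedding and the Laplace estimate \eqref{etape4} are unaffected, and the constants $c_0(\lambda)$ and $C$ depend only on Brownian hitting times, not on $(n_k)$, hence are uniform in $j$ and $d$ — yields $\P(\tau>K)\leq Cd/\sqrt{h_K-h_j}$ for $K>j$ (for the few small $j$ for which the index $R(h_j+t_0)-1$ would not exceed $j$ the bound below is trivially $+\infty$). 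To pass to continuous time, note that once the two paths have coalesced at the (original-index) level $R(h_j+t_0)-1$, the last level lying strictly below height $h_j+t_0$, their linear interpolations agree from that level onward, hence at time $t_0$; conversely the interpolations of two non-crossing discrete paths with $D_{R(h_j+t_0)-1}\neq 0$ still differ at any time strictly below $h_{R(h_j+t_0)}$. Thus $\{\mathcal{W}^{(j)}_{(0,0)}(t_0)\neq\mathcal{W}^{(j)}_{(d,0)}(t_0)\}\subseteq\{\tau>R(h_j+t_0)-1\}$, which gives the bound in (i), and \eqref{lim_cor} follows since $h_k\to\infty$ and $\sigma_k^2=h_{k+1}-h_k\to 0$ force $h_{R(h_j+t_0)-1}-h_j\to t_0$ as $j\to\infty$.

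For (ii): represent the circle as $[-1/2,1/2)$ and couple the cylindric processes $(\Xi_k)$ with the planar ones $(\Upsilon_k)$ by $\Xi_k=\Upsilon_k\cap[-1/2,1/2)$, the points of $\Upsilon_k$ outside this interval forming an independent Poisson process of the same intensity. Since the quotient map is $1$-Lipschitz, $d_{\rpuz}\leq|\cdot-\cdot|$, and since the two cylindric paths start at distance $d\leq 1/2$, on the event that every nearest-point search defining the two navigations lands inside $[-1/2,1/2)$ the cylindric and planar ancestor lines coincide and the cylindric distance process is dominated by the planar one $D_k$; moreover the two cylindric paths may also coalesce "around the circle", which can only move the coalescence level earlier. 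Turning this into a genuine stochastic domination of the cylindric coalescence time by $\tau$ — exactly as the insertion of the auxiliary trajectory $1+\mathcal{W}^{(j)}_{(0,0)}$ (and its integer translates) did for \eqref{eta^O<eta} — then gives $\P(\mathcal{T}^{(j)}_{(0,0)}(t_0)\neq\mathcal{T}^{(j)}_{(d,0)}(t_0))\leq\P(\mathcal{W}^{(j)}_{(0,0)}(t_0)\neq\mathcal{W}^{(j)}_{(d,0)}(t_0))$, and (i) concludes.

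The delicate point is this last domination: the nearest-point rule on the circle is not, strictly speaking, a local decision rule, so a naive "coincide on a fixed strip" coupling only holds with high probability over a finite horizon and produces an additive error that does not match the $d/\sqrt{\cdot}$ shape of \eqref{lim_cor2} for small $d$ or large $t_0$. The cure, and the technical heart of (ii), is to argue the domination one step at a time: under the above coupling the cylindric one-step distance out of $d\leq 1/2$ is stochastically dominated by the planar one-step distance out of $d$, the discrepancy being confined to the event that the relevant slice is empty or that a nearest point falls outside $[-1/2,1/2)$, of probability $O(e^{-n_k})$ — on which one falls back on the trivial bound $\leq 1/2$, the resulting correction being summable thanks to $\sum_k e^{-n_kf_k}<\infty$ in \eqref{eq:newcond} — and then to conclude by the Markov property exactly as in the passage from Lemma \ref{lem:comptaus} to Proposition \ref{prop:tauR-}.
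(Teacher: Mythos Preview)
Your part (i) is correct and matches the paper (redo Step~7 of Proposition~\ref{pro:estimtpscoal} summing from level $j$ to $K$; the constants are uniform).

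For (ii), your coupling strategy is right in spirit but the execution differs from the paper's and has a gap. The paper introduces an \emph{intermediate} Markov chain $(D'_k)$ with transitions $\mathcal{L}(D'_{k+1}\mid D'_k=d)=\mathcal{L}\big(\min(D_{k+1},|1-D_{k+1}|)\,\big|\,D_k=d\big)$, i.e.\ the planar one-step distance folded at $1/2$. This yields a \emph{clean} stochastic domination $\tau'_d\leq_S\tau_d$, since $\min(D_{k+1},|1-D_{k+1}|)\leq D_{k+1}$ and the planar hitting time is monotone in the starting distance --- which is precisely where the Lemma~\ref{lem:comptaus}~$\to$~Proposition~\ref{prop:tauR-} iteration applies. \emph{Separately}, the paper couples $D'$ with the true cylindric distance $\bar D$ on the global event that every jump after level $j$ is nonzero and smaller than $1/6$; Borel--Cantelli on $\sum e^{-n_k/3}<\infty$ makes this event have probability $\to 1$ as $j\to\infty$, so $\P(\bar\tau>K)\leq\P(\tau'>K)+o_j(1)\leq\P(\tau>K)+o_j(1)$.

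Your version skips the intermediate chain and claims a direct per-step domination of $\bar D$ by $D$ ``modulo summable errors'', then invokes ``exactly as in the passage from Lemma~\ref{lem:comptaus} to Proposition~\ref{prop:tauR-}''. But that iteration requires the one-step domination to hold at \emph{every} step without exception; with per-step bad events you cannot conclude $\bar\tau\leq_S\tau$, only the union-bound estimate $\P(\bar\tau>K)\leq\P(\tau>K)+\sum_{k>j}\P(G_k^c)$, which would indeed suffice for the limit in \eqref{lim_cor2} but is not the argument you describe (and ``fall back on the trivial bound $\leq 1/2$'' plays no role in it). Moreover, your good event --- nearest-point searches landing in $[-1/2,1/2)$ --- is phrased in terms of absolute positions rather than increments, and does not by itself guarantee that the planar and cylindric nearest points coincide: a path near one edge of the strip can have its circularly-nearest neighbour near the opposite edge. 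The paper's control via \emph{increment size} (jumps $<1/6$) is what makes the decision domains genuinely local and the coupling work.
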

\begin{proof}
The proof of (i) is an adaptation of the proof Step 7 of Prop. \ref{pro:estimtpscoal} by summing between levels $L_j$ and $L_K$. \\

Let us now consider $(ii)$. Intuitively, the coalescence time in the cylinder is stochastically dominated by the coalescence time in the plane. But since some slices in the cylinder may contain no points of the PPP (when no line $L_k$ in the plane is empty), and since the increments of the distance between the two paths are non standard when this distance is close to 0 and 1 (when only the case 0 matters in the plane), an additional argument is needed in the discrete case to establish the domination rigorously.

Recall the model introduced in Section \ref{section:plan1}. We consider the Markov chain $(D_k,k\geq 0)$ and denote by $\tau_d$ be stopping time at which the Markov chain started from $d$ hits 0. We also introduce similarly the distance process $(\bar{D}_k,k\geq 0)$ in the cylinder.\\
Now, let us define another Markov chain $(D'_k,k\geq 0)$ with the following transitions: 
\ben
{\cal L}(D'_{k+1} ~|~ D'_k=d)= {\cal L}\big( \min(D_{k+1},|1-D_{k+1}|) ~|~ D_k=d \big).\een
The distance $D'_{k}$ somehow mimics the distance on the cylinder by considering the minimum distance between two points of the same level in the clockwise and counter clockwise senses. Let us define by $\tau'_d$ the stopping time at which $(D'_k,k\geq 0)$ started from $d$ hits 0. Since $\min(D_{k+1},|1-D_{k+1}|)\leq D_{k+1}$, and since
\ben\label{eq:tjdktu}
\tau_d \leq_S \tau_{d'} \textrm{~when~} d<d',
\een
by using the same argument as in the proof of Lemma \ref{lem:comptaus}, we obtain by using iteratively \eref{eq:tjdktu} that
\[\tau'_d \leq_S  \tau_d.\]
To conclude, it remains to show that $(D'_k,k\geq 0)$ coincides with $(\bar{D}_k,k\geq 0)$, up to a probability going to 0 in $j$. Since we may produce a local coupling between $D'_k$ and $\bar{D}_k$ as long as $D_k$ possesses small fluctuations, it suffices to prove that all the increments of the paths $(\bar{Z}_k, k\geq 0)$ and $(\bar{Z}'_k,k\geq 0)$ that define $(D'_k,k\geq 0)$ in the cylinder are not 0 and smaller than $1/6$ after the slice $j$ with probability going to 1 when $j\to +\infty$. This indeed guarantees that the cylinder effects do not prevent the coupling: no jumps ``0'' occur and ``decision domains'' do not see that the environment is a cylinder.
The probability that there is no point within distance $\pm 1/6$ for a walk is $e^{-n_k/3}$, and by Borel-Cantelli's lemma, with probabilty 1 the two walks  $(\bar{Z}_k)$ and $(\bar{Z}'_k)$ will do a finite number of jumps larger than $1/6$. Hence, for any $`e>0$, for $j$ large enough, the distribution of $(D'_k,k\geq 0)$ and $(\bar{D}_k,k\geq 0)$ coincides with probability at least $1-`e$. Thus the coupling works, which allows to conclude.
\end{proof}

We have now the tools to prove the criteria of the convergence Theorem \ref{theo:conv_cyl}, (IO) and (EO). Both of these criteria make use of the estimates on coalescing time that we hve just established.

\subsection{Proof of $(IO)$}
\label{sect:ProofIO}

The purpose of this section is to prove the next Proposition which implies $(IO)$.
\begin{pro}
\label{prop:IO}
Assume \eref{eq:newcond}. Let $m\in \N\setminus\{0\}$ and $y_1=(x_1,t_1),\dots,y_m=(x_m,t_m)\in\Cyl^+$. For $j\geq 0$ and $1\leq \ell \leq m$, let us denote by $\gamma^{(j)}_{y_\ell}$ the path interpolating linearly the shifted ancestor line ${\sf AL}^{(j)}_{y_\ell+(0,h_j)}$.
Then, the sequence $(\gamma^{(j)}_{y_1},\dots \gamma^{(j)}_{y_m})$ converges in distribution, when $j\rightarrow +\infty$, to coalescing Brownian motions modulo 1 started at $y_1,\dots y_m$.
\end{pro}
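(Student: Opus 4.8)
The plan is to prove convergence of a finite collection of shifted ancestor lines to coalescing Brownian motions modulo $1$ in two stages: first establish that each marginal ancestor line converges to a Brownian motion modulo $1$ (this is essentially Lemma \ref{lem:convMB}, which I would first upgrade to handle arbitrary starting points $y_\ell=(x_\ell,t_\ell)\in\Cyl^+$ rather than just points of $\Xi_j$), and then show that, jointly, the paths evolve independently until they meet and coalesce upon meeting. Since the shift by $h_j$ pushes us higher and higher in the cylinder, we see denser and denser slices, and a single ancestor line started at $(x_\ell,t_\ell+h_j)$ has increments $\Delta X_k$ satisfying \eref{eq:displ}; by Lemma \ref{lem:convMB} the interpolated rescaled path $\bar X^{(j)}_{y_\ell}$ converges in $\Co(\R_+,\rpuz)$ to $(x_\ell + B^{(\ell)}_{t-t_\ell})\modu1$ for $t\geq t_\ell$. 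The only subtlety for arbitrary starting points is to check that the ancestor $\alpha(Z)$ of a point $Z$ not in $\Xi$ still has the Laplace-type displacement law (as noted in the text), and that the first jump's contribution vanishes after rescaling; this is routine.

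For the joint statement, the standard strategy (as in Section 2 of \cite{ferrarifonteswu} or in \cite{colettidiasfontes}) is to argue as follows. Before two ancestor lines meet, their increments depend on disjoint regions of the Poisson process $\Xi$ (the decision domain of a point $(x,h_k)$ is the nearest point of $\Xi_{k+1}$, which lies in an interval of random width around $x$), so conditionally on not having met, the two paths are genuinely independent; hence the rescaled pair converges to a pair of independent Brownian motions up to their meeting time. Once two paths occupy the same point of some slice $\Slice(h_k)$, they have the same ancestor forever after, so coalescence in the discrete model is exact and is preserved in the limit. The coalescence-time estimate of Corollary \ref{cor:CoalTime}$(ii)$ is exactly what is needed to control the meeting: it gives $\P(\mathcal{T}^{(j)}_{(0,0)}(t_0)\neq \mathcal{T}^{(j)}_{(d,0)}(t_0)) \leq C d/\sqrt{h_{R(h_j+t_0)-1}-h_j} \to Cd/\sqrt{t_0}$, which matches the corresponding probability for two Brownian motions modulo $1$ (up to constants, and exactly in the Brownian limit this is the probability that two coalescing BMs started at distance $d$ have not yet met by time $t_0$, $\asymp d/\sqrt{t_0}$ for small $d$). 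Combining the ``independence before meeting'' with the ``exact coalescence after meeting'' and the tightness of each marginal (already provided by Lemma \ref{lem:convMB}), one upgrades finite-dimensional convergence to convergence in $\Co$ of the whole $m$-tuple, and identifies the limit as $m$ coalescing Brownian motions modulo $1$ started at $y_1,\dots,y_m$.

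The main obstacle, I expect, is the joint (rather than marginal) analysis near the meeting times: one must show that the discrete paths do not ``cross without touching'' in a way that survives the limit, and that the pre-meeting independence is quantitatively strong enough. Concretely, the delicate point is to handle the event that two paths come within a small distance $\delta$ but have not yet coalesced — one needs the tail estimate of Corollary \ref{cor:CoalTime}$(ii)$ together with the non-crossing property of ancestor lines (they share an ancestor as soon as they share a point, and distinct ancestor lines have nested supports) to conclude that the limiting paths, once within distance $\delta$, coalesce within a time $O(\delta^2)$ uniformly in $j$. This is precisely where the hypothesis $\sum_k 1/n_k^2 = +\infty$ (equivalently $h_k\to\infty$) enters, guaranteeing that the denominator $h_{R(h_j+t_0)-1}-h_j$ behaves like $t_0$ in the limit rather than saturating. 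Once this uniform coalescence-after-proximity estimate is in hand, the passage to the limit is the now-classical argument, and $(IO)$ follows.
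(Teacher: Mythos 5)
Your proposal follows the same broad architecture as the paper's proof (marginal convergence via Lemma~\ref{lem:convMB}, reduction to pairs of paths, approximate independence before the paths get close, and fast coalescence once they are close via Corollary~\ref{cor:CoalTime}), but there is a real gap in the ``independence before meeting'' step. You assert that ``conditionally on not having met, the two paths are genuinely independent''; this is not correct as stated. Even before two ancestor lines touch, their decision domains --- the random intervals around the current positions that determine the nearest point of $\Xi_{k+1}$ --- may overlap, in which case the next increments use overlapping information from the Poisson process and are \emph{not} independent. So ``have not met'' is not the right dichotomy: one needs ``far enough apart that the decision domains are a.s.\ disjoint.''

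This is precisely where the second condition $\sum_k e^{-n_k f_k}<\infty$ of hypothesis~\eref{eq:newcond} is used, and your proposal never invokes it. The paper chooses an explicit closeness threshold $a^j(s)=f_{R(h_j+s)-1}$ and exploits the exponential tail $\P(|\Delta X_{j+k}|\geq r)=e^{-2n_{j+k}r}$ with $r=f_{j+k}/2$: by Borel--Cantelli and $\sum_k e^{-n_k f_k}<\infty$, for $j$ large all increments satisfy $|\Delta X_{j+k}|\leq f_{j+k}/2$, hence two paths at distance greater than $f_{j+k}$ on the slice of intensity $n_{j+k}$ have disjoint influence intervals and therefore evolve on disjoint pieces of the PPP. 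You instead attribute the role of \eref{eq:newcond} in this proposition entirely to $\sum_k 1/n_k^2=+\infty$, but that divergence is what drives the \emph{marginal} convergence in Lemma~\ref{lem:convMB} (it makes $h_k=V_k\to\infty$); the $f_k$ condition is the one that makes the pre-meeting independence quantitatively precise. Without it, the decomposition into ``far apart, hence independent'' and ``close, hence coalescing quickly'' does not close. Incorporating this choice of threshold and the Borel--Cantelli argument would repair the proposal and align it with the paper's proof.
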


Notice that the path $\gamma^{(j)}_{y_\ell}$ starts at $y_\ell$. We also recall that the ancestral line ${\sf AL}_{y_\ell+(0,h_j)}$ does not necessarily starts from a point of $\Xi$, but links the starting point $y_\ell+(0,h_j)$ to the closest point of $\Xi$ in the first non-empty slice of height greater or equal to $R(t_\ell+h_j)$. For the sequel, let us denote by $y_{j,\ell}$ this point.

\begin{proof}[Proof of Prop. \ref{prop:IO}]
The result for $m=1$ is due to Lemma \ref{lem:convMB} and the fact that $y_{j,1}-(0,h_j)$ converges a.s. to $y_1$.
The proof can be done by recursion, and we focus here on the case $m=2$ which can be generalized directly by following Arratia \cite{Arratia} and Ferrari, Fontes and Wu \cite[Lemmas 2.6 and 2.7]{ferrarifonteswu}.

Let us first recall a simple fact. Let $t\leq t'$ and $a,b \in \rpuz$. Two BM $(\W^\ua_{(a,t)}(s),s\geq t)$ and $(\W^\ua_{(b,t')}(s),s \geq t')$ on the cylinder are said to be coalescing BM if $\W^\ua_{(a,t)}(s)$ for $t\leq s \leq t'$ is a standard BM taken modulo 1, and if the two trajectories $(\W^\ua_{(a,t)}(s),s\geq t')$ and $(\W^\ua_{(b,t')}(s),s \geq t')$ are BM till their hitting time $\tau$. After this time, they coincide with $(\W^\ua_{(b,t')}(s), s\geq \tau)$.

To prove that $(\gamma^{(j)}_{y_1},\gamma^{(j)}_{y_2})$ converges to two coalescing BM, a strategy consists in decomposing the trajectories as follows, where we can assume to simplify that $y_1$ and $y_2$ are such that $t_1=t_2$:\\
$(a)$ as long as the two paths are far apart, say if
\ben\label{eq:far}d_{\R/\Z}\l(\gamma^{(j)}_{y_1}(s),\gamma^{(j)}_{y_2}(s)\r)>a^j(s)\een for a good sequence $a^j(s)\to 0$, then the next steps of these trajectories are likely to be characterized by $\Xi\cap I$ and $\Xi\cap I'$ for two random influence intervals $I$ and $I'$ that will not intersect. By the spatial properties of the PPP, it means that as long as $I\cap I'=\varnothing,$  the two trajectories behave as if they were constructed on different spaces, and then eventually behave as independent BM before their coalescing time (here, since the intensity is not constant, a dependence in $s$ is needed).\\
$(b)$  when \eref{eq:far} fails (the two paths are close) then another argument is developed to prove that the two paths will merge with a probability going to 1, within a $o(1)$ delay. This is given by the Corollary \ref{cor:CoalTime}.\\

\par
It remains to see in details how $(a)$ can be handled. Let us denote by $(\bar{X}^{(j)}_s)_{s\geq t}$ (as in \eqref{shifted_interpolation}) the path $\gamma^{(j)}_{(x,t)}$. The distribution of the increment $\Delta X_{j+k}$ (with the notation of section \ref{sect:Cv1path}) satisfies
\ben\label{eq:dsqd}
\P(|\Delta X_{j+k}|\geq r)=e^{-2n_{k+j} r}
\een
so that for $r=f_{j+k}/2$, with the $f_{j+k}$'s appearing in Assumption \eqref{eq:newcond}, the event $\{\forall k\geq 0,\ |\Delta X_{j+k}|\leq f_{j+k}/2\}$ will occur a.s. for $j$ large enough thanks to Borel-Cantelli's lemma. We then decree that two walks are close if when they get in the slice with intensity $n_{k+j}$, their distance is smaller than $f_{j+k}$, i.e. we choose $a^j(s)=f_{R(h_j+s)-1}$. This suffices to complete the proof.
\end{proof}

\subsection{Proof of $(EO)$}
\label{sect:ProofEO}

We follow the strategy developed in \cite{SSS}: we first show that the sequence $(\mathcal{T}^{(j)})_{j\geq 1}$ satisfies (\ref{but}), stated below, from which $(EO)$ follows.

\begin{pro}
\label{theo:conditionEO}
The sequence $(\mathcal{T}^{(j)})_{j\geq 1}$ satisfies for all $t_0,t>0$, $a>0$,
\begin{equation}
\label{but}
\limsup_{j\rightarrow +\infty} \E \big( \widehat{\eta}_{\mathcal{T}^{(j)}}^{O}(t_0,t ; [0 \to a]) \big) < +\infty ~.
\end{equation}
This implies that $(\mathcal{T}^{(j)})_{j\geq 1}$ satisfies (EO).
\end{pro}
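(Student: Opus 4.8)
The proof of Proposition \ref{theo:conditionEO} splits into two parts, mirroring the treatment of the planar case in \cite{SSS} that is sketched at the end of Section \ref{section:BWTopo}. First I would establish the uniform bound \eqref{but}, and then I would deduce \eqref{EO} from it by invoking the Markov property of any subsequential limit together with the convergence of finitely many paths to coalescing Brownian motions (which is (IO), proved in Section \ref{sect:ProofIO}).

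\textbf{Step 1: Reduce \eqref{but} to a coalescence-time estimate.} The quantity $\widehat{\eta}_{\mathcal{T}^{(j)}}^{O}(t_0,t ; [0 \to a])$ counts the distinct positions at time $t_0+t$ occupied by paths of $\mathcal{T}^{(j)}$ born at or before time $t_0$ and landing in the arc $[0\to a]$. Following the standard ``coming down from infinity'' argument, it suffices to bound the number of distinct positions at an \emph{earlier} time $t_0+\varepsilon$ (for small fixed $\varepsilon>0$) occupied by paths born before $t_0$, because trajectories only coalesce, so $s\mapsto \widehat\eta^O_{\mathcal{T}^{(j)}}(t_0,s;[0\to a])$ is non-increasing once one restricts attention to paths born before $t_0$. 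The key is then to discretize the arc $[0\to a]$ at scale $\delta$ and observe that two paths starting at points in the same $\delta$-subinterval at time $t_0$ and still distinct at time $t_0+\varepsilon$ contribute an ``excess'' whose probability is controlled by Corollary \ref{cor:CoalTime}(ii): with $j$ large, $\P(\mathcal{T}^{(j)}_{(x,t_0)}(t_0+\varepsilon)\neq \mathcal{T}^{(j)}_{(x+\delta,t_0)}(t_0+\varepsilon))\leq C\delta/\sqrt{\varepsilon}$ (after the shift, the denominator $\sqrt{h_{R(h_j+t_0+\varepsilon)-1}-h_{R(h_j+t_0)}}$ converges to $\sqrt\varepsilon$). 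Summing over the $\sim a/\delta$ subintervals and letting $\delta\to 0$ along a suitable diagonal shows that $\E\big(\widehat\eta^O_{\mathcal{T}^{(j)}}(t_0,\varepsilon;[0\to a])\big)$ is bounded uniformly in $j$, roughly by $C a/\sqrt\varepsilon$ plus a term accounting for paths born in the time window $(t_0,t_0+\varepsilon)$ --- but those have age $0$ at time $t_0$ and the definition of $\widehat\eta$ excludes them; a little care is needed here since in the cylindric Poisson tree a path ``born before $t_0$'' means born at a slice $\Slice(h_k)$ with $h_k\le t_0$, and between consecutive slices paths move deterministically (vertically or towards the ancestor), so the counting is clean. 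The density of $\mathcal{T}^{(j)}$ at a slice just below $t_0$ is $n_{R(h_j+t_0)}\to\infty$, but the \emph{surviving} paths after evolving for time $\varepsilon$ are $O(1/\sqrt\varepsilon)$ in expectation by the above, which is exactly \eqref{comingdowninf} adapted to the cylinder.

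\textbf{Step 2: From \eqref{but} to (EO).} Let $\chi$ be any subsequential limit of $(\mathcal{T}^{(j)})_{j\ge1}$. The bound \eqref{but} passes to the limit (by Fatou, using that $\widehat\eta^O$ is lower semi-continuous in the appropriate sense, or by the portmanteau argument used in \cite{SSS}), so $\chi(t_0;t_0+\varepsilon)$ --- the set of points of $\rpuz\times\{t_0+\varepsilon\}$ hit by paths of $\chi$ born before $t_0$ --- is a.s. locally finite, in fact finite since the circle is compact. Then (IO) combined with the strong Markov property of the CBW (established around \eqref{tribu:t-t'}) shows that the paths of $\chi$ emanating from $\chi(t_0;t_0+\varepsilon)$ are distributed as coalescing Brownian motions modulo $1$ started from these finitely many points. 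Therefore, exactly as in \eqref{limitEetaBW},
\[
\E\big(\widehat\eta^O_{\chi}(t_0,t;[a\to b])\big)\le \E\big(\widehat\eta^O_{\Wu}(t_0+\varepsilon,t-\varepsilon;[a\to b])\big)\xrightarrow[\varepsilon\to0]{}\E\big(\widehat\eta^O_{\Wu}(t_0,t;[a\to b])\big),
\]
where the last equality uses continuity in $\varepsilon$ of $\varepsilon\mapsto \E\big(\widehat\eta^O_{\Wu}(t_0+\varepsilon,t-\varepsilon;[a\to b])\big)$; this monotone limit in $\varepsilon$ yields (EO).

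\textbf{Main obstacle.} The delicate point is Step 1: unlike the planar BW, here $\E\big(\widehat\eta^O_{\Wu}(t_0,t;[a\to b])\big)$ does not tend to $0$ as $t\to\infty$ (there is the bi-infinite branch of Theorem \ref{theo:BiInfB}), so one cannot be cavalier about which time one performs the ``coming down from infinity'' at. One must localize at time $t_0+\varepsilon$ with $\varepsilon$ small and use the \emph{finite}-$\varepsilon$ estimate of Corollary \ref{cor:CoalTime}, whose right-hand side $Cd/\sqrt{h_{R(h_j+t_0)-1}-h_j}$ behaves like $Cd/\sqrt{\varepsilon}$ only \emph{after} the shift $j\to\infty$ --- so the $\limsup_j$ in \eqref{but} is essential and the bound is genuinely a statement about the shifted forests, not a uniform-in-$t_0$ statement. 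Care must also be taken that summing the pairwise ``excess'' probabilities over the $a/\delta$ discretization intervals really does bound the expected number of distinct points (a union/telescoping bound over consecutive discretization points, as in the planar proofs in \cite{fontes2004,SSS}), and that the discretization scale $\delta=\delta(j)$ can be sent to $0$ without the Poisson-tree environment introducing spurious ``births'' in the window --- but this is controlled since the slices near height $t_0$ are at spacing $\sigma^2_{R(h_j+t_0)}\to0$, so for $j$ large there are many slices in $(t_0,t_0+\varepsilon)$ and the navigation is a genuine random walk there, to which Corollary \ref{cor:CoalTime} applies.
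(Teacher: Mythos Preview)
Your Step~2 is essentially the paper's argument, including the reliance on (IO) and the Markov property. One correction: the limit $\E(\widehat\eta^O_{\Wu}(t_0+\varepsilon,t-\varepsilon;[a\to b])) \to \E(\widehat\eta^O_{\Wu}(t_0,t;[a\to b]))$ as $\varepsilon\to 0$ is not obviously a monotone limit; the paper obtains it by dominated convergence, first proving finiteness of the expectation at some $\varepsilon_0>0$ (via the inequality $\widehat\eta^O_{\Wu}(t_0,t;[0\to 1])\le \eta^O_{\Wu}(t_0,t;[0\to 1/2])+\eta^O_{\Wu}(t_0,t;[1/2\to 1])$, the stochastic domination \eqref{eta^O<eta} and the explicit planar formula), then using a.s. convergence of $\widehat\eta^O_{\Wu}(t_0,t-\varepsilon;\cdot)$ to $\widehat\eta^O_{\Wu}(t_0,t;\cdot)$.

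Your Step~1, however, conflates $\widehat\eta^O$ and $\eta^O$. The quantity $\widehat\eta^O_{\mathcal{T}^{(j)}}(t_0,s;[0\to a])$ counts paths born before $t_0$ that land in $[0\to a]$ at time $t_0+s$; it is \emph{not} non-increasing in $s$, since paths can wander in and out of the arc. What is non-increasing in $s$ is $\eta^O_{\mathcal{T}^{(j)}}(t_0,s;[0\to a])$, the number of distinct positions at time $t_0+s$ of paths that passed through $[0\to a]$ at time $t_0$. Your discretize-and-telescope argument on $[0\to a]$ bounds the latter, not the former: the paths starting in your $\delta$-subintervals are exactly those counted by $\eta^O$, and they need not be the ones landing in $[0\to a]$ at the later time. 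The paper bridges this gap with a rotational-invariance trick (Lemma~\ref{lem:EtaHatEta}): partition the circle into arcs of length $a$, bound $\widehat\eta^O(t_0,t;[0\to a])$ by $\sum_k \widehat\eta^O(t_0,t;[ka\to(k+1)a];[0\to a])$ where the extra argument records the arc containing the path at time $t_0$, rotate each summand so the source arc becomes $[0\to a]$, and observe that the rotated target arcs cover the circle at most twice, yielding $\E(\widehat\eta^O)\le 2\,\E(\eta^O)$. Only then does the paper discretize and apply the coalescence estimate (via Lemmas~\ref{lem:Aajm} and~\ref{lem:GridB1O}, after a further coupling with the planar forest $\mathcal{W}^{(j)}$). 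You could alternatively rescue your approach by discretizing the whole circle rather than just $[0\to a]$, since the total number of surviving paths at time $t_0+\varepsilon$ trivially upper-bounds $\widehat\eta^O(t_0,t;[0\to a])$; you then lose the factor $a$ in the bound (getting $\sim C/\sqrt\varepsilon$ rather than $\sim Ca/\sqrt\varepsilon$), but that still suffices for \eqref{but}. Either way you must also justify why deterministic grid points capture all Poisson-tree paths before telescoping (the paper's event $A_{a,j,r}$ and Lemma~\ref{lem:Aajm}); this is the step you flag only vaguely as ``$\delta=\delta(j)$ can be sent to $0$''.
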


The rest of this section is devoted to the proof of Proposition \ref{theo:conditionEO}.

\begin{proof}
Let us remark that, by translation invariance and linearity of the expectation, it is enough to prove \eqref{but} for small values of $a$.\\

Let us start with proving that \eqref{but} implies $(EO)$. We follow the corresponding proof in the planar context which is recalled at the end of Section \ref{section:BWTopo} (see also the end of Section 6.1 in \cite{SSS} or Section 6 of \cite{newmanravishankarsun}). Except that contrary to the planar context where explicit computation is possible, namely $\E(\widehat{\eta}_{W}(t_0,t ; a,b))=(b-a)/\sqrt{\pi t}$ where $W$ is the standard BW, another argument is needed to get the limit \eqref{limitEetaBW} in the cylinder context. We then have to show:
\begin{equation}
\label{LimitToEO}
\lim_{\varepsilon\to 0} \E \big( \widehat{\eta}^O_{\Wu}(t_0+\varepsilon,t-\varepsilon ; [a \to b]) \big) = \E \big( \widehat{\eta}^O_{\Wu}(t_0,t ; [a \to b]) \big) ~.
\end{equation}

Let us consider $t,t_0>0$ and $a,b\in\rpuz$. Let us first prove that there exists $\varepsilon_0\in(0,t)$ such that
\begin{equation}
\label{ExpWidehatEta0Finite}
\E \big( \widehat{\eta}^O_{\Wu}(t_0+\varepsilon_0,t-\varepsilon_0 ; [a \to b]) \big) < \infty ~.
\end{equation}
Without loss of generality, we still write $t_0$ and $t$ instead of $t_0+\varepsilon_0$ and $t-\varepsilon_0$. The inequality (be careful to the presence or not of the hat $\widehat{\phantom{\eta}}$ on $\eta$ ),
$$
\widehat{\eta}^O_{\Wu}(t_0,t ; [0 \to 1]) \, \leq \, \eta^O_{\Wu}(t_0,t ; [0 \to 1/2]) + \eta^O_{\Wu}(t_0,t ; [1/2 \to 1]) \;\; \mbox{a.s.}
$$
implies by rotational invariance
$$
\E \big( \widehat{\eta}^O_{\Wu}(t_0,t ; [a \to b]) \big) \leq 2 |a\to b|\,\E \big( \eta^O_{\Wu}(t_0,t ; [0 \to 1/2]) \big) ~.
$$
We finally get (\ref{ExpWidehatEta0Finite}) in combining the fact that $\eta^O_{\Wu}(t_0,t ; [0 \to 1/2])$ is stochastically dominated by $\eta_{W}(t_0,t ; 0,1/2))$ (already stated in (\ref{eta^O<eta})) and
$$
\E(\eta_{W}(t_0,t ; 0,1/2)) = 1+\E(\widehat{\eta}_{W}(t_0,t ; 0,1/2)) = 1+\frac{1}{2\sqrt{\pi t}} ~.
$$

{With $\widehat{\eta}^O_{\Wu}(t_0+\varepsilon,t-\varepsilon ; [a \to b]) \eqd \widehat{\eta}^O_{\Wu}(t_0,t-\varepsilon ; [a \to b])$, (\ref{ExpWidehatEta0Finite}) and
\begin{equation}
\label{CVpsEO}
\mbox{a.s. } \, \lim_{\varepsilon\to 0} \widehat{\eta}^O_{\Wu}(t_0,t-\varepsilon ; [a \to b]) = \widehat{\eta}^O_{\Wu}(t_0,t ; [a \to b]) ~,
\end{equation}
the Lebesgue's dominated convergence theorem applies and leads to the searched limit (\ref{LimitToEO}). Mainly because there is no coalescence on the arc $\{t_0+t\}\times [a \to b]$ for the trajectories starting before $t_0$ with probability $1$, there exists a random $\varepsilon>0$ such that for any $0\leq \varepsilon'\leq \varepsilon$, $\widehat{\eta}^O_{\Wu}(t_0,t-\varepsilon' ; [a \to b])$ is equal to the limit $\widehat{\eta}^O_{\Wu}(t_0,t ; [a \to b])$. This proves (\ref{CVpsEO}).}

\vskip 0.3cm

Now, let us show that $(\mathcal{T}^{(j)})_{j\geq 1}$ satisfies \eqref{but}. The strategy to get (\ref{but}) can be divided into two steps. First, we bound from above the expectation in \eqref{but} by twice the expected number of remaining paths $\gamma$ at time $t_{0}+t$ which are born before $t_{0}$ and such that $\gamma(t_{0})\in [0\to a]$, i.e. $2 \E(\eta^{O}_{\mathcal{T}^{(j)}}(t_0,t;[0 \to a]))$. See Lemma \ref{lem:EtaHatEta}. Thus, using the coalescence time estimate (Corollary \ref{cor:CoalTime}), we obtain an upper bound for this latter expectation when $j\rightarrow +\infty$. This is Lemma \ref{lem:GridB1O}.
The various lemma on which the proof of \eqref{but} is based are proved at the end of the present section.

\begin{lem}
\label{lem:EtaHatEta}
For all times $t_0,t>0$, for all $a>0$ and any $j\geq 1$, the following inequality holds:
\begin{equation}
\label{etape-but2}
\E\l(\widehat{\eta}^{O}_{\mathcal{T}^{(j)}}(t_0,t;[0 \to a])\r) \leq 2 \,\E\l(\eta^{O}_{\mathcal{T}^{(j)}}(t_0,t;[0 \to a])\r)~,
\end{equation}from which we deduce that:
\begin{equation}\label{etape-but2b}
\limsup_{j\rightarrow +\infty}\E\l(\widehat{\eta}^{O}_{\mathcal{T}^{(j)}}(t_0,t;[0 \to a])\r) \leq 2 \limsup_{j\rightarrow +\infty}\E\l(\eta_{\mathcal{W}^{(j)}}(t_0,t;0 , a)\r),
\end{equation}where $\mathcal{W}^{(j)}$ is the shifted forest introduced in Section \ref{section:plan-to-cylindre}.
\end{lem}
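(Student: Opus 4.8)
The plan is to establish the pointwise (a.s.) inequality between the random variables, then take expectations and pass to the limit. To prove \eqref{etape-but2}, I would compare, on a single realization of the Discrete Cylindric Poisson Tree $\mathcal{T}^{(j)}$, the two counts at time $t_0+t$: the number $\widehat{\eta}^O_{\mathcal{T}^{(j)}}(t_0,t;[0\to a])$ of distinct points of $[0\to a]\times\{t_0+t\}$ hit by paths born before $t_0$, and the number $\eta^O_{\mathcal{T}^{(j)}}(t_0,t;[0\to a])$ of distinct points hit at time $t_0+t$ by paths that pass through $[0\to a]$ at time $t_0$. The key observation is that the paths of $\mathcal{T}^{(j)}$ are non-crossing (they are linearly interpolated ancestor lines, and two ancestor lines, once they meet a common point, coincide; before that they do not cross on the cylinder by the same planarity argument used repeatedly in the paper). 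Because of non-crossing, the set of points counted by $\widehat{\eta}^O$ is partitioned according to whether the corresponding path is at time $t_0$ inside $[0\to a]$, or outside it. The points of the first type are (at most) counted by $\eta^O_{\mathcal{T}^{(j)}}(t_0,t;[0\to a])$. For the points of the second type: a path born before $t_0$, lying outside $[0\to a]$ at time $t_0$, but hitting $[0\to a]$ at time $t_0+t$, must "enter" the arc $[0\to a]$ through one of its two endpoints $0$ or $a$; by non-crossing, all such paths entering through the endpoint $0$ have merged, at time $t_0+t$, with the trajectory $\W^\ua_{(0,t_0)}$ (more precisely with the path of $\mathcal{T}^{(j)}$ started at $(0,t_0)$) and likewise for those entering through $a$. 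Hence the second type contributes at most $2$ extra points. This gives $\widehat{\eta}^O_{\mathcal{T}^{(j)}}(t_0,t;[0\to a]) \leq \eta^O_{\mathcal{T}^{(j)}}(t_0,t;[0\to a]) + 2$ a.s., which after taking expectations and using $\E(\eta^O)\geq 1$ (at least one point is always hit when $[0\to a]$ is non-degenerate) yields \eqref{etape-but2}; alternatively, one refines the endpoint bookkeeping to absorb the additive $2$ into a factor $2$ directly. I would favour the cleaner route: show $\widehat{\eta}^O \leq 2\,\eta^O$ a.s. by noting that each of the two "boundary" trajectories $\W^\ua_{(0,t_0)}$ and $\W^\ua_{(a,t_0)}$ is itself counted by $\eta^O$, so the two extra endpoint-entering families are each dominated by the count of paths merging with one already counted trajectory.

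For \eqref{etape-but2b}, the plan is to combine \eqref{etape-but2} with the stochastic domination of the cylindric count by the planar count. Precisely, the same coupling argument used to prove \eqref{eta^O<eta} and \eqref{eq:domi-eta} — unrolling the cylinder onto the plane, and adding the artificial shifted trajectory $1+\mathcal{W}^{(j)}_{(0,0)}$ as a barrier — shows that $\eta^O_{\mathcal{T}^{(j)}}(t_0,t;[0\to a]) \leq_S \eta_{\mathcal{W}^{(j)}}(t_0,t;0,a)$, where $\mathcal{W}^{(j)}$ is the shifted planar forest of Section \ref{section:plan-to-cylindre}. This requires checking that the coupling between the cylindric PPP $\Xi$ and the planar PPP $\Upsilon$ (on the corresponding slices/lines, with matching intensities $n_k$) can be arranged so that the cylindric navigation and the planar navigation agree on the relevant window, up to a vanishing-probability event as $j\to\infty$; this is exactly the content of the argument in the proof of Corollary \ref{cor:CoalTime}(ii), where one shows that after slice $j$ all increments are nonzero and smaller than $1/6$ with probability tending to $1$, so the cylinder effect is invisible to the local decision domains. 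Taking $\E$ of the domination, then $\limsup_{j\to\infty}$, and plugging into \eqref{etape-but2} gives \eqref{etape-but2b}.

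The main obstacle I anticipate is the careful justification of the non-crossing/endpoint argument for \eqref{etape-but2}: one must be sure that on the cylinder "entering the arc $[0\to a]$ from outside" genuinely forces a merge with one of the two boundary trajectories, which needs the non-crossing property of $\mathcal{T}^{(j)}$ and a clean treatment of the (a.s. absent) ambiguous cases where a path touches an endpoint exactly. A secondary technical point is that $\eta^O_{\mathcal{T}^{(j)}}$ and $\widehat{\eta}^O_{\mathcal{T}^{(j)}}$ are defined with slightly different conditions (paths passing through $[0\to a]$ at time $t_0$ versus paths born before $t_0$ and landing in $[0\to a]$ at time $t_0+t$), so the set inclusions have to be written with care; but since $\mathcal{T}^{(j)}$ consists of forward ancestor lines with well-defined birth times, this is bookkeeping rather than a real difficulty. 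The passage to \eqref{etape-but2b} is then routine given the coupling already set up in Section \ref{section:plan-to-cylindre}.
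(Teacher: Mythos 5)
Your pointwise approach to \eqref{etape-but2} has a genuine gap: the inequalities $\widehat{\eta}^O_{\mathcal{T}^{(j)}}\leq \eta^O_{\mathcal{T}^{(j)}}+2$ and $\widehat{\eta}^O_{\mathcal{T}^{(j)}}\leq 2\,\eta^O_{\mathcal{T}^{(j)}}$ are both false almost-surely. The barrier you invoke, namely ``the path of $\mathcal{T}^{(j)}$ started at $(0,t_0)$'', does not exist: $\mathcal{T}^{(j)}$ consists only of ancestor lines from atoms of $\Xi$, and a.s. none of them passes through $(0,t_0)$ or $(a,t_0)$. Even replacing it by the extended ancestor line ${\sf AL}^{(j)}_{(0,t_0+h_j)}$, the non-crossing argument does not give what you want: the two barrier lines $B_0,B_a$ started at $(0,t_0)$ and $(a,t_0)$ delimit a time-dependent ``flow tube'', and at time $t_0+t$ the arc $[0\to a]$ need not be contained in the image $[B_0(t_0+t)\to B_a(t_0+t)]$; the intersection of $[0\to a]$ with the \emph{outside} arc $[B_a(t_0+t)\to B_0(t_0+t)]$ can contain arbitrarily many distinct endpoints of paths that started outside $[0\to a]$. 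Concretely, on the event (of positive probability) that no path of $\mathcal{T}^{(j)}$ crosses $[0\to a]\times\{t_0\}$, one has $\eta^O_{\mathcal{T}^{(j)}}(t_0,t;[0\to a])=0$ while several paths (say $k\geq 3$) can slide into $[0\to a]$ through the same endpoint without coalescing, giving $\widehat{\eta}^O_{\mathcal{T}^{(j)}}(t_0,t;[0\to a])=k$, so neither bound can hold. Your fallback ``$\E(\eta^O)\geq 1$'' is also not available at this stage (it is not a priori true for the discrete forest, and even if it were, $\E(\eta^O)+2\leq 2\E(\eta^O)$ needs $\E(\eta^O)\geq 2$). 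The paper avoids all of this by working \emph{in expectation}: it partitions the time-$t_0$ circle into the $[1/a]+1$ arcs $[ka\to(k+1)a]$, decomposes $\widehat{\eta}^O$ accordingly, rotates each term back to $[0\to a]$ by rotational invariance of the law, and observes that the rotated target arcs cover the circle once with an overlap contained in $[0\to a]$; this yields exactly $\E(\widehat{\eta}^O)\leq \E(\eta^O)+\E(\widehat{\eta}^O(\cdot;[0\to a];[1-a[1/a]\to a]))\leq 2\E(\eta^O)$. This averaging step is essential and cannot be replaced by a pathwise comparison.

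For \eqref{etape-but2b} your intuition (couple the cylindric PPP $\Xi$ with the planar PPP $\Upsilon$ and transfer the count from the cylinder to the plane) is in the right direction, but the mechanism in the paper is not the one used in the proof of \eref{eta^O<eta} or in Corollary \ref{cor:CoalTime}(ii). One must first isolate the event $E_{R(h_j+t_0)-1}$ that all slices after the shifted starting level are non-empty (otherwise the cylindric navigation, which moves vertically through an empty slice, behaves differently from the planar one), bound the complementary contribution using $\sum_k e^{-n_k}<\infty$ and a truncation by $M_j=2n_{R(h_j+t_0)-1}$, and then, on $E_{R(h_j+t_0)-1}$, build a coupling between $\mathcal{T}^{(j)}$ restricted to a moving fundamental domain $\mathcal{R}_j$ (bounded on the left by the influence region of the leftmost relevant planar path and of width one) and $\mathcal{W}^{(j)}$. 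The pointwise inequality $\eta^O_{\mathcal{T}^{(j)}}\leq \eta_{\mathcal{W}^{(j)}}$ is then proved by a case analysis on the first time the rightmost surviving planar path leaves $\mathcal{R}_j$. In short, the overall strategy you describe for this second step is correct, but the domination requires the empty-slice truncation and a more careful wrapping argument than a direct appeal to the earlier coupling lemmas.
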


In view of \eqref{etape-but2b} of Lemma \ref{lem:EtaHatEta}, we now focus on showing that
\begin{equation}\label{but2}
\limsup_{j\rightarrow +\infty}\E\l(\eta_{\mathcal{W}^{(j)}}(t_0,t;0 , a)\r)<+\infty ~.
\end{equation}

Let us choose $r\in \N\setminus \{0\}$ (intended to be large) and $m(a,r):=\min\{m:\,m\geq ar\}$. We consider the grid
$$
\Gr(t_0,a,r) := \l\{\frac{k}{r}, k\in \{0,\dots r_a\} \r\} \times \{h_{R(h_j+t_0)-1}-h_j\} ~.
$$
The height $h_{R(h_j+t_0)-1}-h_j$ corresponds to the (shifted) largest slice just before height $h_j+t_0$ (possibly $h_j+t_0$ itself): see Figure \ref{fig:Aajm}. Since the sequence $h_k-h_{k-1}=\sigma^2_k$ tends to zero by \eqref{eq:displ}, there exists $j_0$ such that, for any $j\geq j_0$, there is a slice carrying points between $t_0$ and $t_0+t$ in $\mathcal{W}^{(j)}$, i.e. $t_0<h_{R(h_j+t_0)}-h_j<t_0+t$.

Let us focus on the paths starting from the points of $\Gr(t_0,a,r)$. For $0\leq k\leq m(a,r)$, denote by $\gamma_{k}(.)$ the ancestor line starting at $(k/r, h_{R(h_j+t_0)-1}-h_j)$. Even if the points $(k/r, h_{R(h_j+t_0)-1})$ do not belong to the point process $\Upsilon$ defined in Section \ref{section:plan1}, they connect to the nearest point of $\Upsilon\cap L_{R(h_j+t_0)}$. So each path $\gamma_k(.)$ a.s. coincides with a path of $\mathcal{W}^{(j)}$ after one step. Let us define the event
\begin{equation}
\label{def:Aajm}
A_{a,j,r} := \left\{\begin{array}{c}
\mbox{the ancestors of the points of }\mathcal{W}^{(j)} \cap \big([0, a]\times \{h_{R(h_j+t_0)-1}-h_j\}\big) \\
\mbox{are also ancestors of some points of the grid }\Gr(t_0,a,r)
\end{array}
\right\} ~.
\end{equation}
The event $A_{a,j,r}$ is described in Figure \ref{fig:Aajm}.
We claim that when the mesh $1/r$ of the grid $\Gr(t_0,a,r)$ tends to $0$, the probability of $A_{a,j,r}$ tends to $1$:

\begin{lem}
\label{lem:Aajm}
For all times $t_0,t>0$, for all $a>0$ and any $j\geq j_0$,
\begin{equation}
\label{etape-but3}
\lim_{r\rightarrow +\infty} \P\big(A_{a,j,r}^c\big) = 0 ~.
\end{equation}
\end{lem}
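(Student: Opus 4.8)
The statement asserts that, as the grid mesh $1/r \to 0$, the probability that every ancestor (at height $h_{R(h_j+t_0)}$) of a point of $\mathcal{W}^{(j)}$ lying in $[0,a]\times\{h_{R(h_j+t_0)-1}-h_j\}$ is also reached by some grid point of $\Gr(t_0,a,r)$ tends to $1$. The key observation is that failure of $A_{a,j,r}$ can only occur through a very specific geometric event: there is a point $Z\in\mathcal{W}^{(j)}$ in the strip $[0,a]\times\{h_{R(h_j+t_0)-1}-h_j\}$ whose ancestor $\alpha^P(Z)$ (the closest point of $\Upsilon_{R(h_j+t_0)}$) is strictly ``separated'' from all the ancestors of the neighbouring grid points $\lfloor Z\rfloor_r := (\lceil rx\rceil /r, \cdot)$ and $(\lfloor rx\rfloor/r,\cdot)$ bracketing $Z$. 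Since the navigation is monotone (paths do not cross), the ancestor of $Z$ always lies weakly between the ancestors of the two bracketing grid points; so if $Z$ and both bracketing grid points all had a common ancestor, $A_{a,j,r}$ would hold for that point. Hence $A_{a,j,r}^c$ forces, for some consecutive grid interval $[k/r,(k+1)/r]$, that the two endpoints $k/r$ and $(k+1)/r$ have distinct ancestors in the next non-empty slice.

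The plan is therefore: first, reduce $\P(A_{a,j,r}^c)$ to a union bound over the $m(a,r)+1 = O(ar)$ consecutive pairs of grid points, of the probability that two points at distance $1/r$ on a line have distinct ancestors in the first non-empty slice above. Concretely,
\[
\P(A_{a,j,r}^c) \;\leq\; \sum_{k=0}^{m(a,r)-1} \P\big(\alpha^P(k/r,h_{R(h_j+t_0)}) \neq \alpha^P((k{+}1)/r,h_{R(h_j+t_0)})\big).
\]
Second, estimate each summand. Two points at distance $\delta=1/r$ on a line carrying a PPP of intensity $n := n_{R(h_j+t_0)}$ fail to share their nearest neighbour with a probability that is $O(n\delta)$ as $\delta\to 0$: this follows from the explicit one-step merge probability computed in Proposition \ref{pro:repre} (after the scaling \eqref{eq:efgz}, the merge probability from distance $d$ at intensity $1$ is $(d+1)e^{-2d}$, so the non-merge probability at distance $n\delta$ is $1-(n\delta+1)e^{-2n\delta} = O((n\delta)^2)$ — in fact even smaller, but $O(n\delta)$ suffices), or directly from a one-dimensional covering computation. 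Third, combine: the union bound gives $\P(A_{a,j,r}^c) = O(ar)\cdot O(n/r) \to$ something that does NOT vanish if $n$ is fixed — so the argument must be set up with $j$ (hence $R(h_j+t_0)$, hence the slice index and intensity $n_{R(h_j+t_0)}$) fixed first, and $r\to\infty$ afterwards, exactly as the statement $\lim_{r\to\infty}$ with $j\geq j_0$ dictates. For fixed $j$, $n_{R(h_j+t_0)}$ is a fixed constant, so $O(ar)\cdot O((n/r)^2) = O(a n^2/r) \to 0$ as $r\to\infty$.

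The main obstacle — and the point requiring care — is handling the possibility that the slice at height $h_{R(h_j+t_0)}$ (or the one the grid points connect to) is empty, so that both $Z$ and the grid points must be routed to a higher non-empty slice; one must check that the monotonicity-of-ancestors argument still closes, i.e. that $\alpha(Z)$ lies between $\alpha(\lfloor Z\rfloor_r)$ and $\alpha(\lceil Z\rceil_r)$ even when this involves skipping empty slices (which is immediate from the construction, since all three points skip the same empty slices and then perform a monotone nearest-point selection). A second minor point is that $\mathcal{W}^{(j)}$ may contain points in $[0,a]\times\{h_{R(h_j+t_0)-1}-h_j\}$ but we need the claim phrased at the level of \emph{ancestors}, so the reduction should be stated as: the set of ancestors of $[0,a]$-points is contained in the set of ancestors of $\{k/r\}$-points precisely when no consecutive grid pair ``splits''. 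Beyond these bookkeeping issues, the estimate itself is a routine one-dimensional Poisson computation, so I would not belabor it.
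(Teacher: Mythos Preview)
Your reduction is correct --- $A_{a,j,r}^c$ does imply that some consecutive pair of grid points $k/r,(k+1)/r$ have distinct ancestors --- but the estimate you apply to each summand is wrong, and this is fatal. For two points at distance $d$ on a line carrying a PPP of intensity $1$, the merge probability is $(d+1)e^{-2d}$, so the non-merge probability is
\[
1-(d+1)e^{-2d} \;=\; d+O(d^{3}),
\]
which is $\Theta(d)$, \emph{not} $O(d^{2})$ as you claim. (Heuristically: two points at distance $d$ have different nearest upper-slice neighbours essentially iff there is a PPP point between them, an event of probability $1-e^{-d}\sim d$.) With the correct order, your union bound over the $m(a,r)\sim ar$ consecutive grid intervals gives
\[
\P(A_{a,j,r}^{c}) \;\lesssim\; ar\cdot\frac{n_{R(h_j+t_0)}}{r} \;=\; a\,n_{R(h_j+t_0)},
\]
a constant in $r$. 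The inclusion $A_{a,j,r}^{c}\subset\{\text{some grid pair splits}\}$ is simply too coarse: for a fixed-intensity PPP on the upper slice there will typically be $\Theta(a\,n)$ splitting pairs, however fine the grid.

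The paper extracts a sharper geometric consequence. If a lower-slice point $x\in[0,a]$ has an ancestor $Z$ on the upper slice that is shared with no grid point, then the Voronoi interval of $Z$ (the set of points whose nearest upper-slice neighbour is $Z$) contains $x$ but no grid point, hence has length $<1/r$; this forces the two neighbours $Z_{\text{left}},Z_{\text{right}}$ of $Z$ on the upper slice to satisfy $Z_{\text{right}}-Z_{\text{left}}<2/r$. Thus $A_{a,j,r}^{c}$ implies that three points of $\Upsilon_{R(h_j+t_0)}$ lie in a common window of length $2/r$ inside $[-1/r,a+1/r]$, whose probability tends to $0$ as $r\to\infty$ for fixed $j$. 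Your approach could be repaired by keeping track of this third point (the ancestor of $x$, distinct from both grid ancestors), but that is precisely the paper's argument.
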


\begin{figure}[!ht]
\centerline{\includegraphics{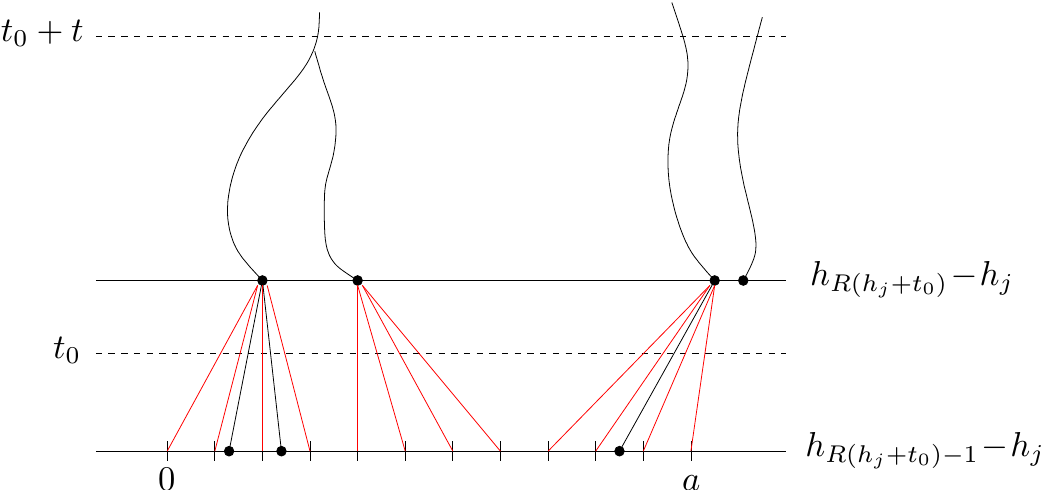}}
\caption{\label{fig:Aajm} {\small \textit{Edges and paths of the shifted forest $\mathcal{W}^{(j)}$ are in black whereas edges starting from points of the grid $\Gr(t_0,a,r)$ are in red. On this picture, the event $A_{a,j,r}$ holds: the vertices at the height $h_{R(h_j+t_0)}-h_j$ which are ancestors of points of $\mathcal{W}^{(j)}\cap\big([0, a]\times\{h_{R(h_j+t_0)-1}-h_j\})$-- i.e. the first and third ones, from left to right --are also ancestors of some points of the grid $\Gr(t_0,a,r)$.}}}
\end{figure}

The event $A_{a,j,r}$ has been introduced in order to compare $\eta_{\mathcal{W}^{(j)}}(t_0,t;0 , a)$ to the number of remaining paths at height $t_{0}+t$, starting from the deterministic points of $\Gr(t_0,a,r)$. Then, the coalescence time estimate (Corollary \ref{cor:CoalTime}) leads to the following bound:

\begin{lem}
\label{lem:GridB1O}
For all times $t_0,t>0$, for all $a>0$, there exists a constant $C>0$ and an integer $j_1=j_1(t_0,t)$ (which does not depend on $r$) such that for any $j\geq j_1$ and any $r\in \N\setminus\{0\}$,
$$
\E \big( \eta_{\mathcal{W}^{(j)}}(t_0,t ; 0,a) \ind_{A_{a,j,r}} \big) \leq 1 + \frac{2 m(a,r) C}{r \sqrt{t}} ~,
$$
where $C$ is the universal constant given by Corollary \ref{cor:CoalTime}.
\end{lem}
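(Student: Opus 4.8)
The plan is to bound the number of distinct positions at height $t_0+t$ occupied by the paths of $\mathcal{W}^{(j)}$ born before $t_0$ and touching $[0,a]$, on the event $A_{a,j,r}$, by the number of distinct positions at height $t_0+t$ occupied by the paths started from the $m(a,r)+1$ grid points of $\Gr(t_0,a,r)$. Indeed, on $A_{a,j,r}$, every ancestor (at height $h_{R(h_j+t_0)}-h_j$) of a point of $\mathcal{W}^{(j)}\cap([0,a]\times\{h_{R(h_j+t_0)-1}-h_j\})$ is also the ancestor of some grid point; since, after one step, each grid path $\gamma_k$ coincides with a path of $\mathcal{W}^{(j)}$, and since the paths are coalescing (non-crossing), we get
$$
\eta_{\mathcal{W}^{(j)}}(t_0,t;0,a)\,\ind_{A_{a,j,r}}\ \leq\ \Card\{\gamma_0(t_0+t),\dots,\gamma_{m(a,r)}(t_0+t)\}\ \leq\ 1+\sum_{k=1}^{m(a,r)}\ind_{\{\gamma_{k-1}(t_0+t)\neq\gamma_{k}(t_0+t)\}}.
$$
The first inequality uses that any remaining position at height $t_0+t$ coming from $[0,a]$ must be carried by one of the grid paths (two consecutive ones bracketing it have already merged into it); the second is the elementary bound on the number of distinct values of a finite sequence by one plus the number of consecutive changes.

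Next I would take expectations and control each term $\P(\gamma_{k-1}(t_0+t)\neq\gamma_{k}(t_0+t))$. The two paths $\gamma_{k-1},\gamma_k$ start from points of the line $\{h_{R(h_j+t_0)-1}-h_j\}$ at distance $1/r$; after the first step they follow ancestor lines of $\mathcal{W}^{(j)}$, and their coalescence time is exactly the quantity estimated in Corollary \ref{cor:CoalTime}(i). Applying \eqref{lim_cor} with $d=1/r$ gives, for $j$ large enough (say $j\geq j_1(t_0,t)$, chosen so that the slice structure near $t_0$ and $t_0+t$ is as in the statement and the limit in \eqref{lim_cor} is already essentially attained up to a factor $2$),
$$
\P\big(\gamma_{k-1}(t_0+t)\neq\gamma_{k}(t_0+t)\big)\ \leq\ \frac{2C/r}{\sqrt{h_{R(h_j+t_0+t)-1}-h_{R(h_j+t_0)-1}}}\ \leq\ \frac{2C}{r\sqrt{t}},
$$
where in the last step I use that $h_{R(h_j+t_0+t)-1}-h_{R(h_j+t_0)-1}\to t$ as $j\to\infty$ (by \eqref{def:Rt} and $h_k=V_k$, $h_{k}-h_{k-1}=\sigma_k^2\to0$), so for $j\geq j_1$ this difference is $\geq t/(2)$; adjusting the universal constant absorbs this. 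Summing over $k=1,\dots,m(a,r)$ yields
$$
\E\big(\eta_{\mathcal{W}^{(j)}}(t_0,t;0,a)\,\ind_{A_{a,j,r}}\big)\ \leq\ 1+\frac{2\,m(a,r)\,C}{r\sqrt{t}},
$$
which is the claimed bound (with $C$ the universal constant of Corollary \ref{cor:CoalTime}).

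The main obstacle is the bookkeeping in the first combinatorial inequality: one must argue carefully that on $A_{a,j,r}$ every "surviving" position at height $t_0+t$ that is reached from $[0,a]$ is reached by at least one grid path, and that the non-crossing property of $\mathcal{W}^{(j)}$ forces the count of such positions to be at most one plus the number of index-changes along the ordered family $\gamma_0,\dots,\gamma_{m(a,r)}$. This is where the definition \eqref{def:Aajm} of $A_{a,j,r}$ and the one-step coincidence of grid paths with paths of $\mathcal{W}^{(j)}$ are used in an essential way; the rest is a direct application of Corollary \ref{cor:CoalTime} together with the asymptotics $h_{R(h_j+s)}-h_j\to s$. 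Note that $j_1$ depends only on $t_0$ and $t$ (through how fast the slices near these heights settle) and not on $r$, as required.
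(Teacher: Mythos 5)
Your proof is correct and follows essentially the same route as the paper's: on $A_{a,j,r}$ bound $\eta_{\mathcal{W}^{(j)}}(t_0,t;0,a)$ by the number of distinct values among the grid paths $\gamma_0,\dots,\gamma_{m(a,r)}$, rewrite that cardinality as $1$ plus the number of consecutive non-coalescences, and control each $\P(\gamma_{k-1}(t_0+t)\neq\gamma_k(t_0+t))$ by Corollary \ref{cor:CoalTime}(i) together with $h_{R(h_j+t_0+t)}-h_{R(h_j+t_0)-1}\to t$, which furnishes the $j_1$ independent of $r$. The only nit is that your factor-of-$2$ bookkeeping is a bit redundant (you invoke the overshoot twice), but choosing $j_1$ so that $h_{R(h_j+t_0+t)}-h_{R(h_j+t_0)-1}\geq t/4$ and applying \eqref{lim_cor} once gives exactly the stated $2C/(r\sqrt t)$, so this is cosmetic.
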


{We can now conclude. Let $t_0,t>0$ and $a>0$. First, we bound $ \E(\eta_{\mathcal{W}^{(j)}}(t_0,t;0, a))$ from above by
\begin{equation}
\label{ConclusionEO}
\E \big(\eta_{\mathcal{W}^{(j)}}(t_0,t;0,a) \ind_{A_{a,j,r}}\big) + M \P \big( A_{a,j,r}^c\big) + \E \big( |\Upsilon_{R(h_j+t_{0})-1}([0,a])| \ind_{\{|\Upsilon_{R(h_j+t_{0})-1}([0,a])|>M\}} \big) ~,
\end{equation}
for any $j,M,r$. Let $j\geq j_0\vee j_1$. Then Lemma \ref{lem:GridB1O} applies and provides a bound for the first term of (\ref{ConclusionEO}). Then, take $M=M_{j}:=2 n_{R(h_j+t_{0})-1}$ (twice the intensity of the PPP). For $j$ large enough and with this choice of $M$, the third term of (\ref{ConclusionEO}) is smaller than $1$. It then follows:
$$
\E \l(\eta_{\mathcal{W}^{(j)}}(t_0,t;0,a) \r) \leq 2 + \frac{2 m(a,r) C}{r \sqrt{t}} + M_j \P \big( A_{a,j,r}^c\big) ~.
$$
Let us point out that till now, the parameter $r$ is totally free. So we can choose it large enough so that $m(a,r)/r\leq a+1/r\leq 2a$ and $M_j \P(A_{a,j,r}^c) \leq 1$ (by Lemma \ref{lem:Aajm}). In conclusion, for any $j$ large enough, $\E(\eta_{\mathcal{W}^{(j)}}(t_0,t;0,a))$ is bounded by $3 + \frac{4 a C}{\sqrt{t}}$. This gives (\ref{but2}), ending the proof of Proposition \ref{theo:conditionEO}.}
\end{proof}

This section ends with the proofs of Lemmas \ref{lem:EtaHatEta}, \ref{lem:Aajm} and \ref{lem:GridB1O}.

\begin{proof}[Proof of Lemma \ref{lem:EtaHatEta}]Let us first prove \eqref{etape-but2}. We denote, for an interval $I$ of $\rpuz$, by $\widehat{\eta}^{O}_{\mathcal{T}^{(j)}}(t_{0},t ; I ; [0 \to a])$ the number of paths $\gamma\in \mathcal{T}^{(j)}$ born before $t_0$ and such that $\gamma(t_0)\in I$ and $\gamma(t_0+t)\in [0\to a]$:
$$
\widehat{\eta}^{O}_{\mathcal{T}^{(j)}} (t_0,t ;I ; [0 \to a]) := \Card\big\{\gamma(t_0+t)\in [0\to a],\ \gamma\in \mathcal{T}^{(j)}, \ \gamma \mbox{ born before }t_0,\ \gamma(t_0)\in I\big\} ~.
$$
Then, the following inequality holds almost surely:
\begin{equation}
\label{etape-but1}
\widehat{\eta}^{O}_{\mathcal{T}^{(j)}}(t_0,t ; [0\to a])\leq \sum_{k=0}^{[1/a]} \widehat{\eta}^{O}_{\mathcal{T}^{(j)}}(t_0,t ; [ka \to (k+1)a] ;[0\to a]),
\end{equation}
where $[1/a]$ is the integer part of $1/a$. The inequality \eqref{etape-but1} is due to the fact that two paths starting from different intervals $[ka \to (k+1)a]$ and $[\ell a \to (\ell+1)a]$ can coalesce and give a single point in the l.h.s. while they are counted twice in the r.h.s. Notice that when $a$ is not the inverse of an integer, the right hand side is a bit larger than what is needed, because the first and last intervals intersect. We conclude using the rotational invariance:
\begin{align*}
\E\big(\widehat{\eta}^{O}_{\mathcal{T}^{(j)}}(t_0,t ; [0\to a])\big)\leq  & \sum_{k=0}^{[1/a]} \E\big(\widehat{\eta}^{O}_{\mathcal{T}^{(j)}}(t_0,t ; [ka \to (k+1)a] ; [0\to a])\big)\\
= & \sum_{k=0}^{[1/a]} \E\big(\widehat{\eta}^{O}_{\mathcal{T}^{(j)}}(t_0,t ; [0\to a] ; [-ka \to -(k-1)a])\big)\\
= & \E\big(\eta^{O}_{\mathcal{T}^{(j)}}(t_0,t ; [0\to a])\big) +  \E \big( \widehat{\eta}^{O}_{\mathcal{T}^{(j)}}(t_0,t ; [0\to a] ; [1-a[1/a]\to a])\big) \\
\leq &  2\E\big(\eta^{O}_{\mathcal{T}^{(j)}}(t_0,t ; [0\to a])\big) ~.
\end{align*}
since the arc $[1-a[1/a]\to a]$ is included in $[0\to a]$. This proves \eqref{etape-but2}.\\

We now turn to the proof of \eqref{etape-but2b}. Having proved \eqref{etape-but2}, it is sufficient to show that
$$\limsup_{j\rightarrow +\infty}\E\l(\eta^{O}_{\mathcal{T}^{(j)}}(t_0,t;[0 \to a])\r) \leq  \limsup_{j\rightarrow +\infty}\E\l(\eta_{\mathcal{W}^{(j)}}(t_0,t;0 , a)\r)~.
$$
For this, we construct a coupling between $\mathcal{W}^{(j)}$ and $\mathcal{T}^{(j)}$. Let us introduce the following event:
$$E_{j} := \left\{\begin{array}{c}
\mbox{for }k\geq j, \mbox{ none of the }\Slice(h_k)\mbox{ are empty}
\end{array}
\right\} ~.
$$
Notice that $\P(E^c_j)\leq \sum_{k\geq j}e^{-n_k}$ which converges to zero when $j\rightarrow +\infty$, and by Borel Cantelli's lemma and \eqref{eq:newcond}, there exists a random level $J$, finite a.s., such that $E_J$ holds. Following the idea in \eqref{ConclusionEO}, we have:
\begin{multline}\label{etape6}
\E \big(\eta^{O}_{\mathcal{T}^{(j)}}(t_0,t;[0 \to a])\big)
\leq  \E \big(\eta^{O}_{\mathcal{T}^{(j)}}(t_0,t;[0 \to a])\ind_{E_{R(h_j+t_0)-1}}\big)\\ + M \P \big( E_{R(h_j+t_0)-1}^c\big) + \E \big( |\Xi_{R(h_j+t_{0})-1}([0,1])| \ind_{\{|\Xi_{R(h_j+t_{0})-1}([0,1])|>M\}} \big) ~.
\end{multline}Choosing $M=M_j=2n_{R(h_j+t_0)-1}$, we can control the third term as in \eqref{ConclusionEO}. For this choice of $M=M_j$, the second term is upper bounded by $2n_{R(h_j+t_0)-1} \sum_{k\geq R(h_j+t_0)-1}e^{-n_k}$ which converges to zero when $j\rightarrow +\infty$ by \eqref{eq:newcond}.
Now for the first term in the r.h.s. of \eqref{etape6}, let us prove that
\begin{equation}\label{etape7}
\E \big(\eta^{O}_{\mathcal{T}^{(j)}}(t_0,t;[0 \to a])\ind_{E_{R(h_j+t_0)-1}}\big)\leq \E \big(\eta_{\mathcal{W}^{(j)}}(t_0,t;0 , a)\big),
\end{equation}
in which case, taking the $\limsup$ in \eqref{etape6} when $j\rightarrow +\infty$ gives \eqref{etape-but2b}.\\

To show \eqref{etape7}, we produce a coupling between $\mathcal{W}^{(j)}$ and $\mathcal{T}^{(j)}$ ensuring that on $E_{R(h_j+t_0)-1}$,
\begin{equation}
\label{etape8}
\eta^{O}_{\mathcal{T}^{(j)}}(t_0,t;[0 \to a]) \leq \eta_{\mathcal{W}^{(j)}}(t_0,t;0 , a)\big).
\end{equation}
Consider the paths of $\mathcal{W}^{(j)}$ touching $[0,a]\times \{t_0\}$ and that survive until level $t_0+t$. Let us denote by $K$ the random number of points in $[0,a]\times \{t_0\}$ corresponding to these paths ($K\geq \eta_{\mathcal{W}^{(j)}}(t_0,t;[0 \to a])$) and let us call $0\leq a_1<\dots <a_K\leq a$ the abscissa of these points. Recall that if $Z'=\alpha(Z)\in L_{k+1}$ is the ancestor of $Z\in L_k$, then base the isosceles triangle with apex $Z$ and admitting $Z'$ as other vertex contains only one atom of $\Xi_{k+1}$: $Z'$. This triangle is called the influence triangle of $Z$. Let us denote by $\theta_L$ the left border of the influence region of $\mathcal{W}^{(j)}_{(a_1,t_0)}$ (i.e. the union of the influence triangles of the vertices constituting $\mathcal{W}^{(j)}_{(a_1,t_0)}$). We consider the point process on the cylinder consisting of the atoms of $\Xi$ in the region
$$
\mathcal{R}_j = \{(x,t)\in \R\times [h_j+t_0,+\infty),\ \theta_L(t)\leq x<\theta_L(t)+1 \} ~,
$$
which is a PPP conditioned on $E_{R(h_j+t_0)-1}$. On this PPP, let us construct the corresponding forest $\mathcal{T}^{(j)}$ and compute the r.v. $\eta^{O}_{\mathcal{T}^{(j)}}(t_0,t;[0 \to a])$.

Let $\kappa\in \{1,\dots K\}$ be the (random) index $k$ of the rightmost path $\mathcal{W}^{(j)}_{(a_k,t_0)}$ that does not intersect the border $\theta_L(.)+1$. By construction, all the paths $\mathcal{W}^{(j)}_{(a_k,t_0)}$ for $k\in \{1,\dots \kappa\}$ are unchanged on the cylinder (meaning that the successive ancestors of $(a_k,t_0)$ remain the same as in the plane). Hence, among the paths started at $(a_1,t_0),\ldots,(a_{\kappa},t_0)$, the remaining ones at level $t_0+t$ are the same in the plane and in the cylinder: $\eta^{O}_{\mathcal{T}^{(j)}}(t_0,t;[0 \to a_{\kappa}])$ and $\eta_{\mathcal{W}^{(j)}}(t_0,t ; 0,a_{\kappa})$ are equal. If $\kappa = K$ then \eqref{etape8} is proved. Else, let us consider $\mathcal{W}^{(j)}_{(a_{\kappa+1},t_0)}$ and denote by $Z$ the first vertex of this path (in the plane) such that $Z\in \mathcal{R}_j$ and $Z'=\alpha(Z)\notin \mathcal{R}_j$. In other words, $\mathcal{W}^{(j)}_{(a_{\kappa+1},t_0)}$ intersects $\theta_L+1$ for the first time when going from $Z$ to $Z'$. In the cylinder, the paths $\mathcal{T}^{(j)}_{(a_{\kappa+2},t_0)},\dots,\mathcal{T}^{(j)}_{(a_K,t_0)}$ (if they exist) are all trapped between $\mathcal{T}^{(j)}_{(a_{\kappa+1},t_0)}$ and $\mathcal{T}^{(j)}_{(a_{1},t_0)}$. Two cases may be distinguished:
\begin{itemize}
\item[$\bullet$] If $\mathcal{T}^{(j)}_{(a_{\kappa+1},t_0)}$ coalesces with $\mathcal{T}^{(j)}_{(a_{1},t_0)}$ before time $t_0+t$ then the same holds for $\mathcal{T}^{(j)}_{(a_{\kappa+2},t_0)},\ldots,\mathcal{T}^{(j)}_{(a_{K},t_0)}$. In this case, the contribution of $\mathcal{T}^{(j)}_{(a_{\kappa+1},t_0)},\ldots,\mathcal{T}^{(j)}_{(a_{K},t_0)}$ to $\eta^{O}_{\mathcal{T}^{(j)}}(t_0,t;[0 \to a])$ is null. So,
\begin{eqnarray*}
\eta^{O}_{\mathcal{T}^{(j)}}(t_0,t;[0 \to a]) = \eta^{O}_{\mathcal{T}^{(j)}}(t_0,t;[0 \to a_{\kappa}]) & = & \eta_{\mathcal{W}^{(j)}}(t_0,t ; 0,a_{\kappa}) \\
& \leq & \eta_{\mathcal{W}^{(j)}}(t_0,t ; 0,a) ~.
\end{eqnarray*}
\item[$\bullet$] Since the influence triangle of the vertex $Z$ overlaps the influence region of $\mathcal{T}^{(j)}_{(a_{1},t_0)}$ then all the paths $\mathcal{T}^{(j)}_{(a_{\kappa+2},t_0)},\ldots,\mathcal{T}^{(j)}_{(a_{K},t_0)}$ have to coalesce with $\mathcal{T}^{(j)}_{(a_{\kappa+1},t_0)}$ or $\mathcal{T}^{(j)}_{(a_{1},t_0)}$ before time $t_0+t$. Either $\mathcal{T}^{(j)}_{(a_{\kappa+1},t_0)}$ coalesces with $\mathcal{T}^{(j)}_{(a_{\kappa},t_0)}$ before time $t_0+t$ and then $\eta^{O}_{\mathcal{T}^{(j)}}(t_0,t;[0 \to a])$ is still smaller than $\eta_{\mathcal{W}^{(j)}}(t_0,t ; 0,a)$ (as in the first case). Or, $\mathcal{T}^{(j)}_{(a_{\kappa+1},t_0)}$ does not coalesce with $\mathcal{T}^{(j)}_{(a_{\kappa},t_0)}$ before time $t_0+t$ and $\eta^{O}_{\mathcal{T}^{(j)}}(t_0,t;[0 \to a]) = \eta^{O}_{\mathcal{T}^{(j)}}(t_0,t;[0 \to a_{\kappa}])+1$. This also prevents the planar paths $\mathcal{W}^{(j)}_{(a_{\kappa+2},t_0)},\ldots,\mathcal{W}^{(j)}_{(a_{K},t_0)}$ to coalesce with $\mathcal{W}^{(j)}_{(a_{\kappa+1},t_0)}$. Their contribution to $\eta_{\mathcal{W}^{(j)}}(t_0,t ; 0,a)$ is at least $1$:
$$
\eta^{O}_{\mathcal{T}^{(j)}}(t_0,t;[0 \to a]) = \eta^{O}_{\mathcal{T}^{(j)}}(t_0,t;[0 \to a_{\kappa}]) + 1 \leq \eta_{\mathcal{W}^{(j)}}(t_0,t ; 0,a) ~.
$$
\end{itemize}
This shows \eqref{etape8} and concludes the proof of the Lemma.
\end{proof}

\begin{proof}[Proof of Lemma \ref{lem:Aajm}] Let $j\geq j_0$. On the event $A_{a,j,r}^c$, there exists a point of $\mathcal{W}^{(j)}$ at height $h_{R(h_j+t_0)}-h_j$, say $Z$, which is the ancestor of an element of $\mathcal{W}^{(j)}\cap\big([0, a]\times\{h_{R(h_j+t_0)-1}-h_j\}\big)$ but of none of the points of the grid $Gr(t_0,a,r)$. This occurs only if $Z$ belongs to the segment $[0, a]\times\{h_{R(h_j+t_0)}-h_j\}$ and is surrounded by two other points of $\mathcal{W}^{(j)}$ on the same level which are very close to it. Precisely, $A_{a,j,r}^c$ implies the existence of a segment in $[-1/r, a+1/r]\times\{h_{R(h_j+t_0)}-h_j\}$ with length $2/r$ and containing at least $3$ points of the Poisson point process $\Upsilon_{R(h_j+t_0)}$ of intensity $n_{R(h_j+t_{0})}$. The number of points of $\Upsilon_{R(h_j+t_0)}$ in $[-1/r, a+1/r]\times\{h_{R(h_j+t_0)}-h_j\}$ being a Poisson r.v. with parameter $(a+2/r) n_{R(h_j+t_0)-1}$, we can deduce that the minimum distance between two consecutive points of this PPP possesses a density. Thus, the probability of $A_{a,j,r}^c$ tends to $0$ as $r\to+\infty$.
\end{proof}

\begin{proof}[Proof of Lemma \ref{lem:GridB1O}] This last proof is based on the proof of Lemma 2.7 in \cite{newmanravishankarsun}. Let us denote by $\eta_{r,j}(t_0,t;0,a)$ the number of remaining paths $\gamma_{k}(.)$, $k=0,\ldots,m(a,r)$, in $\mathcal{W}^{(j)}$ at time $t_0+t$, that started from the grid $Gr(t_0,a,r)$. The event $A_{a,j,r}$ has been introduced in order to write:
\begin{equation}
\label{E0etape3}
\eta_{\mathcal{W}^{(j)}}(t_0,t ; 0,a) \ind_{A_{a,j,r}} \leq \eta_{r,j}(t_0,t;0,a) ~.
\end{equation}
Besides, the number of paths counted by $\eta_{r,j}(t_0,t;0,a)$ is upper bounded by the number $m(a,r)+1$ of paths $\gamma_{k}(.)$'s starting from the grid $Gr(t_0,a,r)$ minus the number of pairs $(k,k+1)$ that have coalesced before height $t_{0}+t$, i.e.
\begin{equation}
\label{E0etape4}
\eta_{r,j}(t_0,t;0,a) \leq (m(a,r)+1) - \sum_{k=0}^{m(a,r)-1} \ind_{\gamma_{k}(t_{0}+t)=\gamma_{k+1}(t_{0}+t)} ~.
\end{equation}
Using \eqref{lim_cor}, we have for sufficiently large $j$:
\begin{eqnarray}
\label{E0etape5}
\E\Big( \sum_{k=0}^{m(a,r)-1} \ind_{\gamma_{k}(t)=\gamma_{k+1}(t)} \Big) & = & m(a,r) \big( 1 - \P(\gamma_{0}(t_{0}+t)\not=\gamma_{1}(t_{0}+t)) \big) \nonumber \\
& \geq & m(a,r) - \frac{m(a,r) C}{r \sqrt{h_{R(h_j+t_0+t)}-h_{R(h_j+t_0)-1}}} \nonumber \\
& \geq & m(a,r) - \frac{2 m(a,r) C}{r \sqrt{t}} ~,
\end{eqnarray}
since $h_{R(h_j+t_0+t)}-h_{R(h_j+t_0)-1}$ tends to $t$ as $j\to\infty$. Finally, (\ref{E0etape3}), (\ref{E0etape4}) and (\ref{E0etape5}) lead to the expected result.
\end{proof}

{\begin{rem}[Comparison between navigation in different spaces]
Just above Corollary \ref{cor:rtheer}, we observed that certain navigations on the cylinder can be sent onto navigations in the radial plane, and that both navigations are very similar in nature. In the whole paper, we often used that some structures can be transported from the plane onto the cylinder, and to the radial plane, provided the transport keeps the crucial features of the models considered. \\
With the example adapted from the work of Coletti and Valencia \cite{colettivalencia}, in this Section \ref{sec:RBW}, we illustrate how working on the cylinder allows us to state global convergence results for the radial tree correctly renormalized. However, in some cases such as the radial spanning tree (RST) of Baccelli and Bordenave \cite{baccellibordenave}, the cylindrical forest can appear very complicated so that is can be easier to stick to the original radial problem, showing the limitations of this method.\\
In the RST, a homogeneous PPP is given in the plane. A radial tree with vertex set the points of the PPP and rooted at the origin $O$ is constructed. In this tree, the ancestor of a vertex $x$ is the closest point of the PPP with smaller radius. When we send this tree and PPP in the cylinder, the circle of radius $\rho$ is sent on the slice of height $h(\rho)$. The resulting vertex set on the cylinder is not a homogeneous PPP. Additionally, the neighborhood of a given point becomes complicated in the cylinder, as shown in Fig. \ref{fig:decdo}.
\begin{figure}[htbp]
\centerline{\includegraphics[height=3cm,width=8cm]{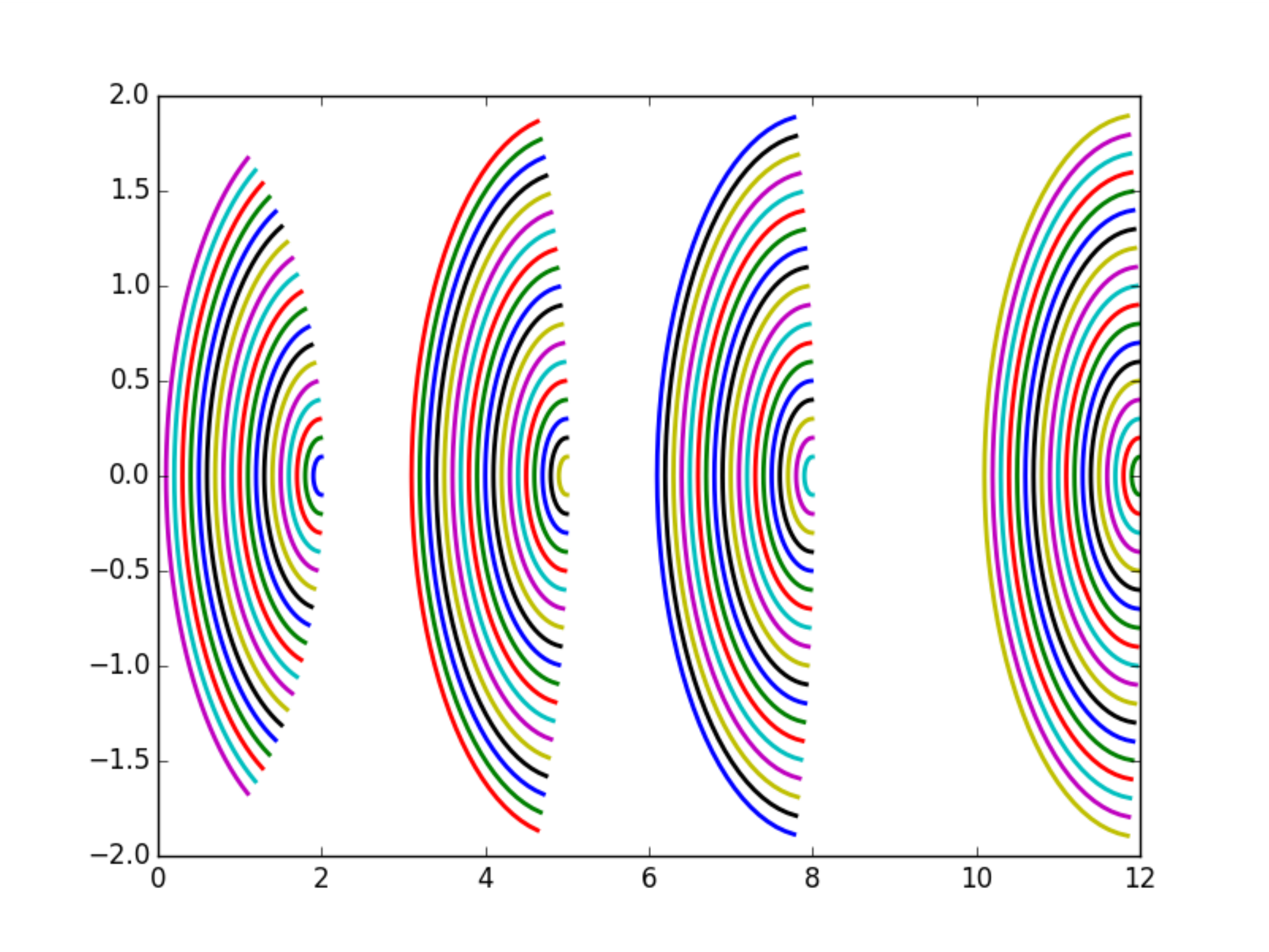}\includegraphics[height=6cm,width=4cm]{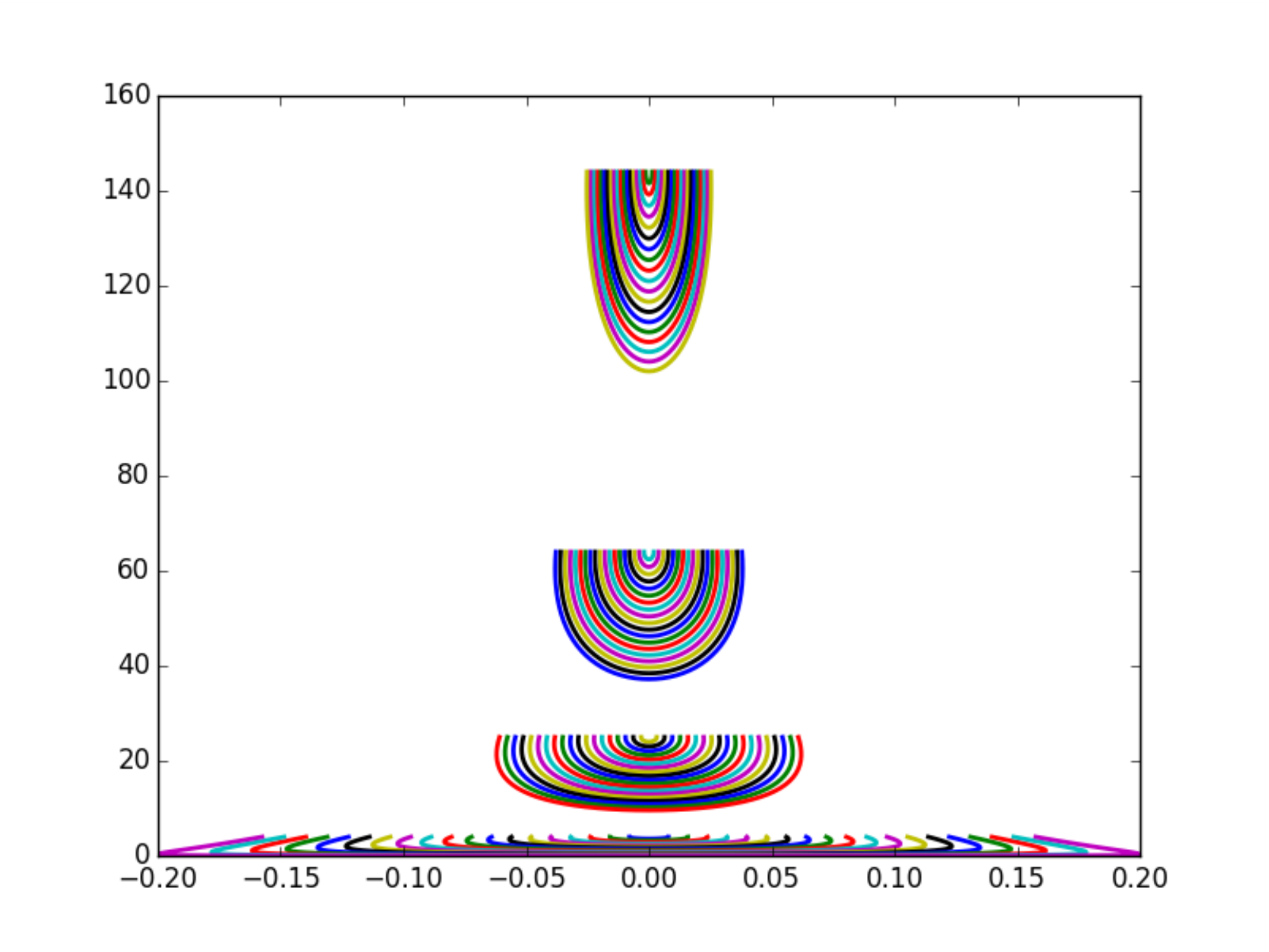}}
\caption{\label{fig:decdo} {\small \textit{Decision domains for Baccelli-Bordenave navigation on the ``radial plane'' according to the distance to the origin, and their images on the cylinder by $\varphi_{h}^{-1}$ where $h(x)=x^2$. One sees that the decision domains are much thinner far from the bottom of the cylinder. This explains somehow the non convergence to the BW, and the existence of asymptotic directions.}}}
\end{figure}\end{rem}
}

{\footnotesize

}

\end{document}